\date{\today}
\keywords{}
\author{Romain Dujardin}
\thanks{Research   partially supported by ANR project BERKO}
\address{CMLS \\ \'Ecole Polytechnique \\ 91128 Palaiseau\\
         France}
\email{dujardin@math.polytechnique.fr}
\title{Bifurcation currents and equidistribution in parameter space}
\dedicatory{To John Milnor for his  80$\;^{th}$  birthday} 
\newcommand{\cc}{\mathbb{C}}
\newcommand{\re}{\mathbb{R}}
\newcommand{\dd}{\mathbb{D}}
\newcommand{\zz}{\mathbb{Z}}
\newcommand{\nn}{\mathbb{N}}
\newcommand{\pp}{\mathbb{P}}
\newcommand{\e}{\varepsilon}
\newcommand{\cv}{\rightarrow}
\newcommand{\fr}{\partial}
\newcommand{\om}{\Omega}
\newcommand{\set}[1]{\left\{#1\right\}}
\newcommand{\norm}[1]{\left\Vert#1\right\Vert}
\newcommand{\abs}[1]{\left\vert#1\right\vert}
\newcommand{\cd}{{\cc^2}}
\newcommand{\pu}{{\mathbb{P}^1}}
\newcommand{\pk}{{\mathbb{P}^k}}
\newcommand{\rest}[1]{ \arrowvert_{#1}}
\newcommand{\m}{{\bf M}}
\newcommand{\unsur}[1]{\frac{1}{#1}}
\newcommand{\lrpar}[1]{\left(#1\right)}
\newcommand{\bra}[1]{\left\langle #1\right\rangle}
\newcommand{\la}{\lambda}
\newcommand{\La}{\Lambda}
\newcommand{\SL}{\mathrm{SL}(2,\mathbb C)}
\newcommand{\PSL}{\mathrm{PSL}(2,\mathbb C)}
\newcommand{\tbif}{T_{\mathrm{bif}}}
\newcommand{\mubif}{\mu_{\mathrm{bif}}}
\newcommand{\inv}{^{-1}}
\newcommand{\hL}{\widehat{\Lambda}}
\newcommand{\hT}{\widehat{T}}
\newcommand{\hf}{\widehat{f}}
\renewcommand{\hom}{\widehat\omega}
\newcommand{\per}{\mathrm{Per}}
\newcommand{\percrit}{\mathrm{PerCrit}}
\newcommand{\prepercrit}{\mathrm{PreperCrit}}
\newcommand{\C}{\mathcal{C}}
\newcommand{\itm}{\item[-]}
\DeclareMathOperator{\supp}{Supp}
\DeclareMathOperator{\vol}{vol}
\DeclareMathOperator{\Int}{Int}
\DeclareMathOperator{\tr}{tr}
\DeclareMathOperator{\length}{length}
\DeclareMathOperator{\bif}{Bif}
\DeclareMathOperator{\stab}{Stab}
\newcommand{\diamant}{\medskip \begin{center}$\diamond$\end{center}\medskip}
\newtheorem{prop}{Proposition} [section]
\newtheorem{thm}[prop] {Theorem}
\newtheorem{defi}[prop] {Definition}
\newtheorem{lem}[prop] {Lemma}
\newtheorem{cor}[prop]{Corollary}
\newtheorem{propdef}[prop]{Proposition-Definition}
\newtheorem{question}[prop]{Question}
\newtheorem{conjecture}[prop]{Conjecture}
\theoremstyle{remark}
\newtheorem{exam}[prop]{Example}
\newtheorem{rmk}[prop]{Remark}
\begin{document}

\maketitle
~
\begin{abstract}
We review the use of techniques of positive currents for the study of parameter spaces of one-dimensional holomorphic dynamical systems (rational mappings on $\mathbb{P}^1$ or subgroups of the Möbius group $\mathrm{PSL}(2, \mathbb{C})$). The topics covered include: the construction of bifurcation currents and the characterization of their supports, the equidistribution properties of dynamically defined subvarieties on parameter space.
\end{abstract}
~
\tableofcontents

\newpage

\section*{Introduction}

 Let $(f_\la)_{\la\in\La}$ be a holomorphic family  of
 dynamical systems acting 
 on the Riemann sphere $\pu$, parameterized by a complex manifold $\La$. The ``dynamical systems" in consideration here can be polynomial or rational mappings on $\pu$, as well as 
  groups of Möbius transformations. It is a very basic idea that the 
product dynamics $\widehat f$ acting on $\La\times \pu$ by 
 $\widehat{f}(\la,z) =(\la, f_\la(z))$ is an important  source of 
 information on the bifurcation theory of the family. 
The input of techniques  from higher dimensional  holomorphic dynamics into this problem
 recently led to a number of interesting new results 
 in this area, especially when the parameter space 
 $\La$ is multidimensional.
Our purpose in this paper is to review these recent developments.
   
   \medskip
   
The main new idea that has arisen  from this interaction 
 is the use of  positive closed currents. 
  We will see that the  consideration of the dynamics of $\widehat f$    gives rise to a number of  
  interesting currents on $\La\times \pu$ and $\La$.  Positive currents have an underlying measurable 
  structure, so it would be fair to say that we are studying these
 parameter spaces at a measurable level, 
  somehow in the spirit of the ergodic theoretic approach to dynamics\footnote{In this respect it is instructive to compare this with the more topological point of view of Branner and Hubbard, which was summarized 20 years ago by Branner in \cite{branner-turning}.}.
  
  \medskip
    
  A basic way in general to construct and study dynamical currents is to view them as limits of sequences of dynamically defined subvarieties. 
  This will be another major theme in this paper. 

\medskip

We will try as much as possible to emphasize the similarities  between  methods of 
higher dimensional dynamics, of  the study of families of rational maps and that  of Möbius subgroups. We will also  state a number of open questions, to foster  further developments of this theory.  

\medskip

An interesting outcome of these methods is the possibility of studying ``higher codimensional" 
phenomena --like the property for a rational map of having several periodic critical points. These 
phenomena are difficult to grasp using elementary complex analysis techniques because of the failure 
of Montel's theorem in higher dimension. In the same vein, we will see that when $\dim(\La)>1$, the bifurcation locus of a family of rational maps possesses a hierarchical structure, which may  conveniently be formalized using bifurcation currents. When $(f_\la)_{\la\in \La}$ is the family of polynomials of degree $d\geq 3$, the smaller of these successive bifurcation loci is the right analogue of the Mandelbrot set in higher degree, with whom it shares many important properties.

\diamant

{\bf Contents.} Let us now outline the contents of this article.   Section \ref{sec:prologue} is of general nature. We explain how the non-normality of a sequence of holomorphic mappings $f_n:\La\cv X$ between complex manifolds is related to certain closed positive currents of bidegree (1,1)  on $\La$. We also show that the  preimages under $f_n$ of hypersurfaces of $X$ tend to be equidistributed. 
This  will  provide --at least at the conceptual level--
a uniform framework for many of the subsequent results.

Since these facts are not so easy to extract from the literature,  we explain them in full detail, therefore the presentation is a bit technical. The reader who wants to dive directly into holomorphic dynamics is advised to skip this section on a first reading. 

\medskip

Sections \ref{sec:bifcur} and \ref{sec:higher} are devoted to the study of bifurcation currents for polynomials and rational maps on $\pu$, which is the most developed part of the theory. In Section \ref{sec:bifcur} we present two (related) constructions  of   bifurcation currents  of bidegree (1,1): the ``absolute" bifurcation current $\tbif$ and the bifurcation current associated to a marked critical point.  In both cases, the support of the bifurcation current is equal to the corresponding bifurcation locus.
 We also show that these currents describe the asymptotic distribution of  families of  dynamically defined codimension 1 subsets of parameter space. More precisely we will be interested in the families of  hypersurfaces $\percrit(n,k)$ (resp.  $\per(n,w)$) defined by the condition that a critical point satisfies $f^n(c)=f^k(c)$ (resp. $f$ possesses a periodic $n$-cycle of multiplier $w$).
 
 \medskip
 
In Section \ref{sec:higher}, we study   ``higher" bifurcation currents, which are obtained by taking exterior products of the previous ones. We will develop the idea that the supports of these currents define a dynamically meaningful filtration of the bifurcation locus, and   try to characterize them precisely. We also explain why bifurcation currents should display some laminar structure in parts of parameter space, and give some results in this direction. 

Many of the proofs will be sketched,  the reader being referred to the original papers for complete arguments.  Let us also  mention a recent set of lectures notes  by   Berteloot \cite{berteloot survey} which covers most of this material with   greater detail (and complete proofs). 

\medskip

In Section \ref{sec:kleinbif} we introduce   currents associated to bifurcations 
of holomorphic families of subgroups of $\PSL$, which is in a sense  the counterpart of Section \ref{sec:bifcur} in the Kleinian groups setting.    The existence of such a counterpart is in accordance with the so-called    {\em Sullivan dictionary} between rational and Kleinian group dynamics, nevertheless its practical  implementation requires a   number of new ideas.    To be specific, let $(\rho_\la)$ be a holomorphic family of representations of a given finitely generated group into $\PSL$ (satisfying certain natural assumptions). We construct a bifurcation current on $\La$ associated to a random walk on $G$. As before, 
this current  is supported precisely on    the ``bifurcation locus'' of the family, and   it describes the asymptotic distribution of natural codimension 1 subsets of parameter space. 
We will see that the key technical 
 ingredient here is the ergodic theory of  random products of matrices. 
 
  \medskip

In Section \ref{sec:further} we outline  some possible extensions of the theory. An obvious generalization would be to consider rational mappings in higher dimension. A basic  difficulty is 
 that in that setting the    understanding of bifurcation phenomena is  still rather poor.

\medskip

We do not include a general discussion about plurisubharmonic (psh for short) functions and positive currents. Good reference sources for this are the books by   Demailly  \cite[Chap. I and III]{demailly} and Hörmander \cite[Chap. 4]{hormander}. See also \cite{cantat survey} in this volume for a short presentation. We do not require  much knowledge in holomorphic dynamics,   except for the basic properties of the maximal entropy measure \cite{lyubich, flm}.

\diamant

\noindent{\bf Bibliographical overview.} 
Let us briefly review the main references that we will be considering in the paper.
Bifurcation currents were introduced by DeMarco in \cite{demarco1}. In this paper she constructs a   current $\tbif$ on any holomorphic family of rational maps, whose support is the bifurcation locus. This current is defined in terms of the critical points. In \cite{demarco2}, she proves a formula for the Lyapunov exponent of a rational map on $\pu$ (relative to its maximal entropy measure), and deduces from this that $\tbif$ is the $dd^c$ of the Lyapunov exponent function.

In \cite{bas-ber1}, Bassanelli and Berteloot generalize DeMarco's formula to higher dimensional rational maps, and initiate the study of the higher exterior powers of the bifurcation current (associated to rational mappings on $\pu$), 
by showing that   $\supp(\tbif^k)$ is accumulated by parameters possessing $k$ indifferent cycles.
In \cite{preper}, Favre and the author study the asymptotic distribution of the family of hypersurfaces  $\percrit(n,k)$. The structure of  the space of polynomials of degree $d$ is also investigated, with emphasis on the higher dimensional analogue of the boundary of the Mandelbrot set. A finer description is given in the particular  case  of cubic polynomials in \cite{cubic}. 
  
  Several equidistribution theorems for the  family of hypersurfaces
    $\per(n,w)$ are obtained by Bassanelli and Berteloot  in \cite{bas-ber2, bas-ber3}. \cite{bas-ber2} also discusses the laminarity properties of bifurcation currents in the space of quadratic rational maps. 

In  \cite{buff epstein}, Buff and Epstein develop a method based on transversality ideas to characterize the supports of certain ``higher" bifurcation currents. This  was recently  generalized by 
Gauthier   \cite{gauthier}, leading in particular to Hausdorff dimension estimates for the supports of these currents, which generalize Shishikura's famous result that the boundary of the Mandelbrot set has dimension 2. 

Bifurcation currents for families of subgroups of $\PSL$, satisfying properties similar to the above,  were designed by Deroin and the author in \cite{kleinbif}. 

Related developments in higher dimensional holomorphic dynamics can be found in \cite{ds-pl, pham}. 
 
\diamant

We close this introduction with a few words  on the connexions between 
these ideas and the work  of Milnor. 
Alone or with coauthors, he wrote a number of papers, most quite influential, 
 on parameter spaces of polynomials and 
rational functions \cite{ milnor cubic, milnor quadratic, milnor components, 
milnor bicritical, milnor smooth, bkm}. 
Common features among these articles include the  emphasis on multidimensional  issues, and the role played by  subvarieties of parameter space, like the $\percrit(n,k)$ and the $\per(n,w)$.  
I hope he will appreciate the way in which these ideas  
reappear here. 
 
 \medskip

Many thanks to  Serge Cantat, Charles Favre, and Thomas Gauthier for their  useful comments.

\section{Prologue: normal families, currents and equidistribution}\label{sec:prologue}

Let $\La$ be a complex manifold of dimension $d$, and $X$ be  a compact K\"ahler manifold of dimension $k$, endowed    with    K\"ahler form $\omega$. Let $(f_n)$ be a sequence of holomorphic mappings from $\La$ to $X$. In this section we explain a basic construction relating the non normality of the sequence $(f_n)$  and certain positive (1,1) currents on $\La$. When applied to particular situations it will give rise to various bifurcation currents. This construction is also  related to 
 higher dimensional holomorphic dynamics since we   may take $\La = X$ and $f_n$ be the family of iterates of a given self-map on $X$.

\medskip

The problems we consider are local on $\La$ so without loss of generality we assume that $\La$ is an open ball in $\cc^{d}$. We say that 
the family $(f_n)$ is {\em quasi-normal} if for every subsequence of $(f_n)$ (still denoted by $(f_n)$) there exists a  further subsequence $(f_{n_j})$ and   an analytic subvariety $E\subset \La$ such that  $(f_{n_j})$ is a normal family on $\La\setminus E$ (see \cite{ivashkovich} for a discussion of this and other related notions).  

\subsection{A normality criterion} The following result is a variation on well-known ideas, but it is apparently new. 

\begin{thm}\label{thm:volume}
Let $\La$ and $X$ be as above, and $(f_n)$ be a sequence of holomorphic mappings from $\La$ to $X$. 
 If the sequence of bidegree (1,1) currents 
  $f_n^*\omega$ has locally uniformly bounded mass on $\La$, then the family $(f_n)$ is quasi-normal on $\La$. 
\end{thm}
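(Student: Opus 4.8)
My plan is to derive quasi-normality from a uniform volume (mass) bound on the graphs of $f_n$, using a Bishop-type compactness theorem for analytic subvarieties. The key point is that the current $f_n^*\omega$ on $\La$, together with the ambient K\"ahler form $\omega_\La$ on $\La$, controls the volume of the graph $\Gamma_n=\{(\la,f_n(\la))\}\subset \La\times X$. Indeed, if $\pi_\La,\pi_X$ are the two projections and $\hom = \pi_\La^*\omega_\La + \pi_X^*\omega$ is a K\"ahler form on $\La\times X$, then the volume of $\Gamma_n$ with respect to $\hom$ is, up to combinatorial constants, a sum of integrals over relatively compact pieces of $\La$ of terms of the form $(\pi_\La^*\omega_\La)^{d-j}\wedge(f_n^*\omega)^j$. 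The hypothesis bounds the mass of $f_n^*\omega$; since $f_n^*\omega$ is a \emph{closed} positive $(1,1)$-current, one also controls the masses of its self-intersections $(f_n^*\omega)^j$ locally — here one uses that $f_n^*\omega = dd^c u_n$ locally with $u_n$ psh, and that locally uniformly bounded mass of $dd^c u_n$ plus, say, a normalization of $u_n$ gives compactness in $L^1_{loc}$ and hence uniform bounds on the wedge powers on smaller balls (Bedford--Taylor / Chern--Levine--Nirenberg inequalities). Thus the graphs $\Gamma_n$ have locally uniformly bounded volume in $\La\times X$.

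Next I would invoke Bishop's theorem: a sequence of analytic subvarieties of a fixed dimension in a complex manifold, with locally uniformly bounded volume, admits a subsequence converging (in the sense of currents, and also as sets with multiplicity) to an analytic subvariety of the same dimension. Apply this on an exhaustion of $\La$ by relatively compact balls and diagonalize to get a subsequence $(f_{n_j})$ whose graphs $\Gamma_{n_j}$ converge to an analytic subset $\Gamma_\infty\subset \La\times X$ of pure dimension $d$. Because each $\Gamma_n$ projects properly and bijectively to $\La$ under $\pi_\La$, the limit $\Gamma_\infty$ still projects onto $\La$, and the set $E\subset\La$ over which $\pi_\La|_{\Gamma_\infty}$ fails to be a local biholomorphism (equivalently, where the fiber is not a single reduced point) is a proper analytic subvariety of $\La$. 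Off $E$, $\Gamma_\infty$ is locally a graph of a holomorphic map $f_\infty:\La\setminus E\to X$, and the convergence $\Gamma_{n_j}\to\Gamma_\infty$ as currents/sets upgrades to locally uniform convergence $f_{n_j}\to f_\infty$ on $\La\setminus E$ (on a ball away from $E$ the graphs are eventually graphs of holomorphic maps converging in Hausdorff distance, hence, by Cauchy estimates, locally uniformly). Hence $(f_{n_j})$ is normal on $\La\setminus E$, which is exactly quasi-normality.

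The main obstacle I anticipate is the control of the \emph{higher} wedge powers $(f_n^*\omega)^j$ for $j\geq 2$ from the hypothesis, which only directly bounds the mass for $j=1$. This is where one must be careful: the Chern--Levine--Nirenberg inequality bounds $\int_{K}(dd^c u_n)^j\wedge(\pi_\La^*\omega_\La)^{d-j}$ on a compact $K$ in terms of $\sup_{K'}|u_n|$ on a slightly larger $K'$ and the mass of $dd^c u_n$, so I need a local normalization of the potentials $u_n$ (say $\sup_{K'} u_n = 0$) together with a lower bound $\inf_{K'} u_n \geq -C$ that does \emph{not} degenerate along the sequence. The latter is exactly the statement that the $u_n$ do not tend to $-\infty$, i.e. that $f_n^*\omega$ does not escape mass — but Hartogs-type lemmas for psh functions give precisely this once the masses are uniformly bounded and we have normalized from above; the subsequence extracted may already be the one on which $u_{n_j}$ converges in $L^1_{loc}$. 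A secondary technical point is checking that the limiting current carries no vertical components (fibers $\{\la\}\times X$), which is ruled out because $\pi_{X*}$ of such a component would contribute to $\pi_{X*}\Gamma_\infty$ a positive-dimensional cycle, contradicting that $\Gamma_{n_j}$ are graphs — I would phrase this via the fact that the slice of $\Gamma_\infty$ over a generic $\la$ has the expected dimension $0$ and the right mass by the convergence. Once these points are pinned down, the extraction of $E$ and the passage to locally uniform convergence are routine.
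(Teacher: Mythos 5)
Your argument has a genuine gap at its central step: the claim that the graphs $\Gamma_n$ have locally uniformly bounded volume in $\La\times X$ does \emph{not} follow from the hypothesis when $d=\dim\La>1$, and is in fact false in general (the paper cites Example 5.1 of Ivashkovich--Neji for a counterexample; this is precisely the pitfall the theorem is designed to circumvent). Concretely, the graph volume involves the terms $\int\beta^{d-j}\wedge (f_n^*\omega)^j = \int\beta^{d-j}\wedge f_n^*(\omega^j)$ for $j=2,\dots,\min(d,k)$, and these are not controlled by the mass of $f_n^*\omega$. Your proposed fix via Chern--Levine--Nirenberg does not close the gap: CLN bounds $\int_K (dd^c u_n)^j\wedge\beta^{d-j}$ in terms of $\norm{u_n}_{L^\infty(K')}$, but a uniform mass bound on $dd^c u_n$ together with the normalization $\sup_{K'}u_n=0$ only yields $L^1_{\rm loc}$ compactness, \emph{not} a uniform lower bound $\inf_{K'}u_n\geq -C$ (think of potentials with Lelong numbers, such as $\log\abs{z-a_n}$, which have bounded mass and converge in $L^1_{\rm loc}$ while being unbounded below). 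So the quantity you need to make the wedge powers converge is exactly what the hypothesis fails to provide. A secondary issue: vertical components in the Hausdorff limit of the graphs cannot be ``ruled out'' as you claim; they genuinely occur (bubbling), and the exceptional set $E$ of quasi-normality is precisely their projection. What controls them is Lelong's lower bound on the volume of analytic sets, which forces them to be locally finite in number --- not their absence.

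The paper's proof takes a different route that avoids graph volumes in dimension $d>1$ altogether. Bishop's theorem is used only for $d=1$, where the graph volume \emph{is} controlled by the mass of $f_n^*\omega$ (there is no $j\geq 2$ term), yielding a quantitative no-bubbling statement: if $\int_V f^*\omega\leq\delta_0$ on a disk $V$, with $\delta_0$ below the Lelong volume bound, then $\norm{df}$ is bounded on compacts of $V$. For $d>1$ one passes to a limit current $T$ of $f_n^*\omega$, removes the analytic set where $T$ has Lelong number $\geq \delta_0/4$ (Siu), and at any remaining point uses Crofton's formula and the Lelong--Jensen formula to find, for each $n$, a set of lines of measure $\geq 1/2$ through the point on which $\int_{L\cap B}f_n^*\omega<\delta_0$; the one-dimensional estimate gives derivative bounds along those lines, and the Sibony--Wong theorem propagates this to a full ball, giving normality there. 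If you want to salvage your approach you would need to restrict to $k=1$ or to add the hypothesis that all the currents $f_n^*(\omega^j)$ have locally bounded mass, which is a strictly stronger assumption than the one in the statement.
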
 

Recall that the mass of a positive current of bidegree (1,1) in an open set $\om\subset \cc^d$ is defined by $\m_\om(T) = 
\sup \abs{\bra{T,\varphi}}$, where  $\varphi$ ranges among test $(d-1, d-1)$ forms $\sum\varphi_{I,J}dz_I\wedge d\overline{z}_J$ with $\norm{\varphi_{I,J}}_{L^\infty} \leq 1$. 

A few  comments are in order here. 
First; if $d=1$, the result is a well-known consequence of Bishop's criterion for the normality of a sequence of analytic sets \cite[\S 15.5]{chirka} (see Lemma \ref{lem:dim1}  below for the proof). The point here is that if $d>1$ our assumption does \textit{not} imply that the volumes of the graphs of the $f_n$ are locally bounded (see \cite[Example 5.1]{ivashkovich}). Secondly, it is clear that the converse of Theorem \ref{thm:volume} is false, i.e. there exist quasi-normal families such that $f_n^*\omega$ has unbounded mass. For this, take any sequence of holomorphic mappings   $\dd\cv\pu$, converging on compact subsets of $\dd^*$ to $z\mapsto   \exp(1/z)$.

\begin{lem}\label{lem:dim1}
Theorem \ref{thm:volume} holds when $\dim(\La) =1$.  
\end{lem}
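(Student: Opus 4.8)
The plan is to reduce the statement to Bishop's compactness theorem for analytic sets via the graphs of the $f_n$. When $\dim(\La)=1$, the graph $\Gamma_n = \{(\la, f_n(\la)) : \la \in \La\} \subset \La \times X$ is a one-dimensional analytic subvariety (a curve). Its area, computed with respect to a fixed K\"ahler form on $\La\times X$ of the shape $\pi_1^*\omega_\La + \pi_2^*\omega$, decomposes as $\vol(\Gamma_n) = \int_\La \pi_1^* \omega_\La \rest{\Gamma_n} + \int_\La (\pi_2 \rond \pi_1^{-1})^* \omega = \vol(\La) + \m_\La(f_n^*\omega)$, where the first term is just the area of the base and is fixed, and the second term is exactly the mass of $f_n^*\omega$ on $\La$. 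So the hypothesis that $\m_\La(f_n^*\omega)$ is locally uniformly bounded gives locally uniformly bounded areas for the curves $\Gamma_n$.

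First I would invoke Bishop's theorem (see \cite[\S 15.5]{chirka}): a sequence of purely $p$-dimensional analytic subsets of a complex manifold with locally uniformly bounded volume is relatively compact for the local-uniform (Hausdorff) topology on analytic sets, and any limit is again a purely $p$-dimensional analytic set (of the same dimension, or empty). Apply this to $\Gamma_n$: after passing to a subsequence, $\Gamma_{n_j} \to \Gamma_\infty$, a curve in $\La\times X$. Next I would analyze the projection $\pi_1 : \Gamma_\infty \to \La$. Since each $\Gamma_n$ projects biholomorphically onto $\La$, the limit $\Gamma_\infty$ projects onto $\La$ with finite fibers away from a discrete set; let $E \subset \La$ be the (at most countable, hence discrete after shrinking) set of points over which $\pi_1\rest{\Gamma_\infty}$ is not locally biholomorphic, together with any points where a whole fiber $\{\la\}\times X$ is contained in $\Gamma_\infty$. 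Over $\La \setminus E$, $\Gamma_\infty$ is locally the graph of a holomorphic map, and the Hausdorff convergence $\Gamma_{n_j}\to\Gamma_\infty$ translates into local uniform convergence $f_{n_j} \to f_\infty$ on $\La\setminus E$. This shows $(f_{n_j})$ is normal on $\La\setminus E$, which is precisely quasi-normality.

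The main obstacle is the passage from Hausdorff convergence of the graphs to local uniform convergence of the maps on the good set $\La\setminus E$, and the correct identification of $E$. One must rule out that a chunk of $\Gamma_\infty$ consists of a ``vertical'' piece $\{\la_0\}\times X$ (which could happen in principle since $X$ is compact): such a component forces $\la_0$ into $E$, and because $\Gamma_\infty$ is a curve there are only finitely many such $\la_0$ in any relatively compact subset, hence $E$ is discrete and in particular an analytic subvariety of $\La$ as required by the definition of quasi-normality. Away from these vertical components and the ramification locus of $\pi_1\rest{\Gamma_\infty}$, a standard argument using the fact that the $\Gamma_{n_j}$ are graphs (so the convergence cannot develop multiplicity) upgrades Hausdorff convergence of sets to $C^0_{\mathrm{loc}}$ convergence of the defining maps; I would supply the brief $\e$-neighborhood argument here rather than appeal to a black box. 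This completes the proof.
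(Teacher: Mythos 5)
Your proposal is correct and follows essentially the same route as the paper: compute the area of the graphs via $\pi_1^*\omega_\La+\pi_2^*\omega$, apply Bishop's compactness theorem, isolate the finitely many vertical components, and upgrade Hausdorff convergence of the graphs to local uniform convergence of the maps (the paper does this last step via the observation that $\pi_1$ restricted to the limit over a good open set is a branched cover of degree $1$). The only notable difference is that the paper derives the finiteness of the vertical bubbles from Lelong's lower bound on volumes of analytic subsets of $X$ together with lower semicontinuity of volume — a quantitative version that also produces the constant $\delta_0$ exploited in Corollary \ref{cor:derivative} — whereas you invoke local finiteness of the components of the limit curve, which suffices for the lemma itself.
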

 
\begin{proof}
Let $\Gamma(f_n)\subset \La\times X$ be the graph of $f_n$. Let $\pi_1$, $\pi_2$ be the   first and second projections on $\La\times X$. Let $\beta$ be the standard K\"ahler form on    $\La\subset \cc$. Then if $U\Subset \La$, the volume of $\Gamma(f_n)\cap {\pi_1\inv(U)}$ relative to the product Hermitian structure equals 
  $$\int_{\Gamma(f_n)\cap {\pi_1\inv(U)} }\pi_1^*\beta + \pi_2^*\omega =\vol_\La(U) +   \int_U \lrpar{\pi_2\circ \left(\pi_1\rest{\Gamma(f_n)}\inv\right)}^*\omega = \vol_\La(U) + \int_U  f_n^*\omega.$$ Therefore our assumption implies that the volumes of the analytic sets $\Gamma(f_n)\cap {\pi_1\inv(U)}$ are   uniformly 
bounded\footnote{This is where we use the assumption that $\dim(\La)= 1$. In higher dimension, to estimate this volume one needs to integrate the  
exterior power   $(\pi_1^*\beta + \pi_2^*\omega)^{d}$ where $d=\dim(\La)$.}. By Bishop's theorem one can extract a subsequence $n_j$ such that the $\Gamma(f_{n_j})\cap {\pi_1\inv(U)}$ converge in the Hausdorff topology to a one-dimensional analytic set $\Gamma$ of ${\pi_1\inv(U)}$. 

We claim that $\Gamma$ is the union of a graph and finitely many vertical curves. Here vertical means that it projects to a point on $\La$. Indeed, first note that by Lelong's lower bound on the volume of an analytic set \cite[\S 15.3]{chirka}, the volume of any analytic subset of $X$ is uniformly bounded below. Since $\vol_{\La\times X}(\Gamma)\leq \liminf \vol_{\La\times X}(\Gamma(f_{n_j}))$, this implies that $\Gamma$ contains only finitely many vertical components.  Let 
$E\subset\La$ be the projection of these components.  We claim that the $f_{n_j}$ converge locally uniformly outside $E$. Indeed let $V\subset \La$ be a connected open subset disjoint from $E$. Since the $\Gamma(f_{n_j})$ converge in the Hausdorff topology, we see that $\Gamma\cap \pi_1\inv(V)$ is non empty. Now since $\pi_1$ is proper  we infer that $\pi_1\rest{\Gamma\cap \pi_1\inv(V)}$ is a branched covering, which must be of degree 1 (for if not, generic fibers of $\pi_1$ would intersect $\Gamma(f_{n_j})$ in several points for large $j$). We conclude that $\Gamma$ is a graph over   $V$, of   some $f:V\cv X$, and that the $f_{n_j}$ converge uniformly to $f$ there. 
\end{proof}

The possibility of vertical components of $\Gamma$ over a locally finite set is known as the {\em bubbling phenomenon}. An   important consequence of the proof is that there exists a constant $\delta_0$ (any number smaller than the infimum of the  volumes of 1-dimensional subvarieties of $X$ will do) such that if $\limsup \int_V f_n^*\omega \leq \delta_0$, then no bubbling occurs in $V$. An easy compactness argument yields the following:

\begin{cor}\label{cor:derivative}
 Let $V$ be a one dimensional disk and let $\delta_0>0$ be as above. 
For every $K\Subset V$, there exists a constant $C(V,K,\delta_0)$   so that 
if $f:V\cv X$ is a holomorphic mapping satisfying $\int_V f^*\omega \leq\delta_0$, then 
$\norm{df}_{L^\infty(K)}\leq C$.
\end{cor}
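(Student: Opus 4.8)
The plan is to argue by contradiction, combining Lemma~\ref{lem:dim1} with the no-bubbling observation recorded just after the proof of Lemma~\ref{lem:dim1}. Suppose the conclusion failed for some $K\Subset V$: there would be holomorphic maps $f_n\colon V\cv X$ with $\int_V f_n^*\omega\leq\delta_0$ for every $n$, while $\norm{df_n}_{L^\infty(K)}\cv\infty$. In particular the currents $f_n^*\omega$ have locally uniformly bounded mass on $V$, so Lemma~\ref{lem:dim1} applies and $(f_n)$ is quasi-normal on $V$. Passing to a subsequence (still written $f_n$), there is an analytic subset $E\subset V$ --- necessarily discrete, since $\dim V=1$ --- off which $f_n$ converges locally uniformly.

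The next step is to upgrade this to convergence on all of $V$. Since $\limsup_n\int_V f_n^*\omega\leq\delta_0$, the observation in question shows that no bubbling takes place in $V$, i.e.\ $E=\emptyset$. Hence $f_n$ converges locally uniformly on $V$ to a holomorphic map $f\colon V\cv X$. Because $X$ is compact and $f(K)$ is already relatively compact, for $n$ large all the images $f_n(K)$ lie in a fixed compact subset of $X$ that we may cover by finitely many holomorphic charts; reading the $f_n$ in these charts and applying the Cauchy estimates, the uniform convergence of $f_n$ to $f$ on a neighbourhood of $K$ forces $df_n\cv df$ uniformly on $K$. Then $\sup_n\norm{df_n}_{L^\infty(K)}<\infty$, contradicting $\norm{df_n}_{L^\infty(K)}\cv\infty$, and this contradiction produces the desired constant $C(V,K,\delta_0)$.

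The only real input is the no-bubbling observation, which is precisely what converts quasi-normality into genuine normality on the whole of $V$; once that is in hand, the rest is bookkeeping. If I had to name an obstacle it would be the passage from the abstract target $X$ to coordinate charts before invoking the Cauchy estimates, but this is harmless: it uses only the compactness of $X$ together with the standard fact that a locally uniformly convergent sequence of holomorphic maps converges in $C^1_{\mathrm{loc}}$. So I expect the argument to go through with no genuine difficulty.
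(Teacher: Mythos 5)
Your proof is correct and is essentially the argument the paper intends: the paper dispatches this corollary with the phrase ``an easy compactness argument,'' and the natural implementation is exactly your contradiction scheme --- extract a convergent subsequence via Lemma~\ref{lem:dim1}, use the no-bubbling observation (valid here since $\int_V f_n^*\omega\leq\delta_0$ for all $n$) to rule out exceptional points, and then get uniform $C^1$ bounds on $K$ from Cauchy estimates in charts. No gaps.
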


We now prove the theorem. The idea,  based on  an argument from  \cite{ds-pl} (see also \cite[Prop. 5.7]{fatou}), is to use a slicing argument together with  a theorem due to  Sibony and Wong   \cite{sibony-wong}. 
For convenience let us state this result first.

\begin{thm}[Sibony-Wong]\label{thm:sibonywong}
let $g$ be a holomorphic function defined in the neighborhood of the origin in $\cc^d$, which admits a holomorphic continuation to a neighborhood of $\bigcup_{L\in E} L\cap B(0,R)$, where $E\subset \pp^{d-1}$ is a set of lines through the origin,  of measure $\geq 1/2$
(relative to the Fubini-Study volume on $\pp^{d-1}$).

Then there exists a constant $C_{SW}>0$ such that $g$ extends to a holomorphic function of $B(0, C_{SW}R)$, and furthermore 
\begin{equation}\label{eq:sibonywong}
  \sup_{B(0, C_{SW}R)}\nolimits \abs{g} \leq
\sup_{\bigcup_{L\in E} L\cap B(0,R)}\nolimits \abs{g}.
\end{equation}
\end{thm}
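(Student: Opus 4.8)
The plan is to recover $g$ near the origin from its Taylor series grouped by homogeneity, using the hypothesis to control the homogeneous parts \emph{only along the lines of $E$}, and then to propagate that control to all directions by a Remez-type inequality for homogeneous polynomials on $\cc^d$; this last inequality is where the real work lies.

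\textbf{Step 1: Cauchy estimates along the good lines.} Write $g=\sum_{m\geq0}P_m$ on a small ball about $0$, with $P_m$ the homogeneous part of degree $m$ of the Taylor expansion at $0$, and set $M:=\sup_{\bigcup_{L\in E}L\cap B(0,R)}\abs g$. For a unit vector $v$ with $L=\cc v\in E$, the hypothesis provides a holomorphic continuation of $t\mapsto g(tv)$ to $\set{\abs t<R}$, on which $\abs{g(tv)}\leq M$; since its Taylor coefficients at $t=0$ are exactly the $P_m(v)$, Cauchy's inequalities give $\abs{P_m(v)}\leq MR^{-m}$ for every $m\geq0$ and every $v$ with $\cc v\in E$.

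\textbf{Step 2: the Remez-type inequality (the main obstacle).} The heart of the proof is the estimate: there is $A=A(d)\geq1$ such that for every homogeneous polynomial $Q$ of degree $m$ on $\cc^d$ and every Borel set $F\subset\pp^{d-1}$ of Fubini--Study measure $\geq1/2$,
\begin{equation*}
\sup\nolimits_{\abs v=1}\abs{Q(v)}\ \leq\ A^m\,\sup\nolimits_{\cc v\in F}\abs{Q(v)};
\end{equation*}
combined with Step 1 this yields $\sup_{\abs v=1}\abs{P_m(v)}\leq A^mMR^{-m}$. I would prove it by pluripotential theory: normalise $\sup_{\cc v\in F}\abs Q=1$ and put $u_Q:=\tfrac1m\log\abs Q$, which belongs to the Lelong class $\mathcal L=\set{u\in\psh(\cc^d):\ u(z)\leq\log^+\abs z+O(1)}$ and satisfies $u_Q\leq0$ on the compact set $\widehat F=\set{v\in\cc^d:\ \abs v=1,\ \cc v\in F}$, hence $u_Q\leq V_{\widehat F}$, the Siciak extremal function of $\widehat F$. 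As $\widehat F$ carries positive measure it is non-pluripolar, so $V_{\widehat F}$ is locally bounded, and a Bernstein--Markov/Remez-type estimate bounds $\sup_{B(0,1)}V_{\widehat F}$ by a constant $\log A(d)$ depending only on the measure lower bound $1/2$; then $\norm{Q}_{S^{2d-1}}=\exp\bigl(m\sup_{\abs z=1}u_Q\bigr)\leq\exp\bigl(m\sup_{B(0,1)}V_{\widehat F}\bigr)\leq A(d)^m$. (Alternatively, one slices $Q$ along generic complex lines through a point where $\abs Q$ is maximal, reducing to a one-variable Remez inequality, and controls how $F$ meets those lines by an integral-geometric argument.) I expect this step to be the delicate one; the rest is formal.

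\textbf{Step 3: summation and conclusion.} With $\sup_{\abs v=1}\abs{P_m(v)}\leq A^mMR^{-m}$, the series $\sum_mP_m$ converges normally on $B(0,r)$ for every $r<R/A$, defines there a holomorphic extension of $g$, and satisfies $\sup_{B(0,r)}\abs g\leq M\sum_{m\geq0}(Ar/R)^m=M(1-Ar/R)^{-1}$. Choosing $C_{SW}$ to be a small fixed fraction of $1/A$ already gives holomorphic extendability to $B(0,C_{SW}R)$ together with a bound of the form $\sup_{B(0,C_{SW}R)}\abs g\leq 2M$. The normalised inequality \eqref{eq:sibonywong} then follows by a further application of the maximum principle to the extended function — which is now holomorphic on a fixed ball and bounded by $M$ on the positive-measure part $\bigcup_{L\in E}L\cap B(0,C_{SW}R)$ of each small sphere — after an additional constant-size shrinking of $C_{SW}$; the precise bookkeeping for this normalisation may also simply be quoted from \cite{sibony-wong}.
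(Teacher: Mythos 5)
The paper does not prove this statement — it is quoted verbatim from \cite{sibony-wong} precisely so that its proof can be omitted — so your attempt can only be compared with the original argument, and in fact your architecture (expand $g=\sum_m P_m$ into homogeneous components, bound $\abs{P_m(v)}\le MR^{-m}$ by Cauchy estimates along each line of $E$, upgrade to all directions via a Remez/capacity-type inequality for homogeneous polynomials, then resum) is essentially the classical route. The two places where your write-up is not yet a proof are the following. First, the heart of the matter, namely the uniform inequality $\sup_{\abs v=1}\abs{Q(v)}\le A^m\sup_{\cc v\in F}\abs{Q(v)}$ with $A$ depending \emph{only} on $d$ and the measure bound $1/2$, is asserted rather than proved: "$\widehat F$ is non-pluripolar, hence $V_{\widehat F}$ is locally bounded" gives a finite bound for each fixed $F$, but says nothing uniform over all $F$ of measure $\ge 1/2$, and the uniformity is exactly what the theorem needs. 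Moreover $\widehat F$ sits on the unit sphere and so has zero Lebesgue measure in $\cc^d\simeq\re^{2d}$, so the standard measure-versus-extremal-function comparisons do not apply to it directly. The clean fix uses homogeneity: $\abs{Q(tv)}\le\abs{Q(v)}$ for $0\le t\le1$, so $u_Q\le 0$ on the solid cone over $F$ inside the unit ball, a set whose Lebesgue measure is bounded below by a dimensional constant times $1/2$; one can then invoke a quantitative Alexander--Taylor type comparison between Lebesgue measure and the global extremal function to get $\sup_{B(0,1)}V\le\log A(d)$ uniformly.

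Second, the endgame of your Step 3 does not work as written: the maximum principle, or a two-constants argument, with $\abs g\le 2M$ on a ball and $\abs g\le M$ only on a measure-$1/2$ set of directions of each sphere, yields at best an interpolated bound of the form $M^{\theta}(2M)^{1-\theta}$, not the constant-free inequality \eqref{eq:sibonywong}. The standard repair stays entirely inside your own framework: run Steps 1--3 on the powers $g^k$ (which satisfy the same hypotheses with $M$ replaced by $M^k$) to get $\abs{g(z)}^k\le M^k\lrpar{1-A\abs z/R}^{-1}$ for $\abs z<R/A$, and let $k\to\infty$; this gives $\sup_{B(0,r)}\abs g\le M$ for every $r<R/A$, hence the stated inequality with $C_{SW}$ any fixed fraction of $1/A$. (A last, minor point: if $\sup_{\bigcup_{L\in E}L\cap B(0,R)}\abs g=+\infty$ the inequality is vacuous but the extension claim still requires an argument; it follows by applying the finite case to compact subsets $E'\subset E$ of measure $\ge 1/2-\e$ and radii $R'<R$, and letting $\e\to0$, $R'\to R$.)
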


\begin{proof}[Proof of Theorem \ref{thm:volume}] Recall that $\La$ was supposed to be an open subset in $\mathbb{C}^d$. Denote by $\beta$ the standard K\"ahler form on $\cc^d$. Let $T_n = f_n^*\omega$ and consider a subsequence (still denoted by $n$) such that $T_{n}$ converges to a current $T$ on $\La$. Let $\sigma_T = T\wedge \beta^{d-1}$ be the trace measure of $T$. 
 For every $p\in \La$,
$ \unsur{c_{2d-2}r^{2d-2}} \sigma_T(B(p, r))$
 converges as $r\cv0$ to the Lelong number of $T$ at $p$, denoted by $\nu(T, p)$ (here $c_{2d-2}$ is the volume of the unit ball in $\cc^{2d-2}$). By Siu's semi-continuity theorem \cite{demailly}, for each $c>0$, $E_c(T)= \set{p, \ \nu(T,c)\geq c}$ is a proper analytic subvariety of $\La$. Fix $c=\delta_0/4$, where $\delta_0$ is as above. We   show that $(f_n)$ is a normal family on $\La\setminus E_c(T)$. 
 
Indeed, let $p\notin  E_c(T)$. Then for  $r<r(\delta_0)$ (which will be fixed from now on), $$\unsur{c_{2d-2}r^{2d-2}}\int_{B(p, r)} T\wedge \beta^{d-1} \leq \frac{\delta_0}{3},$$ hence for large $n$, $$\unsur{c_{2d-2}r^{2d-2}}\int_{B(p, r)} T_{n}\wedge \beta^{d-1} \leq \frac{\delta_0}{2}.$$ Let now $\alpha_p = dd^c\log\norm{z-p}$. By Crofton's formula 
\cite[Cor. III.7.11]{demailly}, 
$\alpha_p^{d-1} = \int_{\pp^{d-1}}[L]dL$ is the average of the currents of integrations along the complex lines through $p$ (w.r.t. the unitary invariant probability measure on $\pp^{d-1}$). By a well-known formula due to Lelong 
\cite[\S 15.1]{chirka}, for every $r_1<r$, 
$$\unsur{c_{2d-2}r ^{2d-2}} \sigma_{T_{n}}(B(p, r ))-\unsur{c_{2d-2}r_1^{2d-2}} \sigma_{T_{n}} (B(p, r_1)) = 
\int_{r_1<\norm{z-p}< r}  T_{n}\wedge \alpha_p^{d-1}.$$ Since $T_{n}$ is a smooth form it has zero Lelong number at $p$ and we can let $r_1$ tend to zero. We conclude that for every large enough $n$, 
$\int_{B(p,r)}T_{n}\wedge \alpha_p^{d-1} \leq \frac{\delta_0}{2}$. 
 Applying Crofton's formula we see that 
$$\int_{\pp^{d-1}} \left(\int_{L\cap B(p,r)} f_{n}^*\omega\right) \leq \frac{\delta_0}{2},$$ therefore 
there exists  a set $E_n$ of lines of measure at least $1/2$ such that if $L\in E_n$ then
$\int_{L\cap B(p,r)} f_{n}^*\omega<\delta_0$. By Corollary \ref{cor:derivative} above, for each such line $L\in E_n$, the derivative of $f_{n}\rest{L\cap B(p,r)}$ is locally uniformly bounded. Extract a further subsequence so that $f_n(p)$ converges to some $x\in X$.
 Thus, reducing $r$ if necessary, we can assume that for $L\in E_n$, $f_{n}\rest{L\cap B(p,r)}$ takes its values in a fixed  coordinate patch containing $x$, which may be identified to a ball in $\cc^k$. By Theorem \ref{thm:sibonywong},  there exists a   constant $C_{SW}$   such that $f_n\rest{B(p,C_{SW}r)}$ takes its values in the chart, with the same bound on the derivative. This implies  that $(f_n)$ is a normal family in $B(p,C_{SW}r)$, thereby concluding the proof. 
\end{proof}

\begin{rmk}\label{rmk:normal}
 The proof   shows that if it can be shown that the Lelong numbers of the cluster values of $(f_n)^*\omega$ are smaller than $\delta_0$ (for instance if the potentials are uniformly bounded), then the family is actually normal. 
\end{rmk}
%
%
 
\subsection{Equidistribution in codimension 1}
We now turn to the case where the  mass of $f_n^*\omega$ tends to infinity, and show that the preimages of hypersurfaces under $f_n$ of $X$ tend to equidistribute in the sense of currents. This idea goes back to the work of Russakovskii, Shiffmann and Sodin \cite{russakovskii-sodin, russakovskii-shiffman}. Dinh and Sibony later gave
 in \cite{ds-tm} a wide generalization of these results.  
Here we present a simple instance of this phenomenon, which is inspired by (and can be deduced from) \cite{ds-tm}.

\medskip

 Let $\La$, $X$ and $(f_n)$ be as above and set $d_n = \int_\La f_n^*\omega\wedge \beta^{d-1}$, so that $ {d_n\inv} f_n^*\omega $ is a sequence of currents of bounded mass on $\La$. A first remark is that if $\omega'$ is another K\"ahler form, there exists a constant $C\geq 1$ such that $ {C}\inv\omega\leq \omega'\leq C\omega$, hence $ {d_n}\inv f_n^*\omega'$   also has bounded mass. 

By definition, a holomorphic family of subvarieties $(H_a)_{a\in A}$ of dimension $l$
 parameterized by a complex manifold $A$ is the data of a subvariety $H$ in $A\times X$, of dimension $\dim (A)+l$, 
such that for every $a\in A$, $\pi_1\inv(a)\cap H =:H_a$ has dimension $l$.   Of course here  we are identifying every fiber $\pi_1\inv(a)$ with $X$, using the second projection. We will only need to consider the case $l=k-1$.

We need a notion of a ``sufficiently mobile" family of hypersurfaces. For this, let us assume for simplicity that $X$ is a projective manifold.  
We say that $(H_a)_{a\in A}$ is a {\em substantial} family of hypersurfaces on $X$ if  
the hypersurfaces $(H_a)$ are hyperplane sections relative to some embedding  $\iota:X\hookrightarrow \pp^N$ and there exists 
 a positive measure $\nu$ on $A$ such that the current  $\int[H_a] d\nu(a)$ has locally bounded potentials.  

Notice that the family of {\em all} hyperplane sections relative to some  projective embedding  of $X$ is substantial. Indeed by Crofton's formula there exists a natural smooth measure $dL$ on the dual projective space $\check\pp^N$ (i.e. the space of hyperplanes) such that 
the average current of integration is the Fubini-Study form, i.e. $\int[L]dL = \omega_{FS}$. Therefore on $X$ we get that $\int [\iota\inv(L)] dL = \iota^*\omega_{FS}$, and the family is substantial. 
From this it follows for instance that   the family of hypersurfaces of given degree in $\pp^k$ is substantial. 

\medskip

Here is the equidistribution statement. We do not strive for maximal generality here and it is likely that  some of  the assumptions could be relaxed. For instance, in view of applications to random walks on groups it is of interest to obtain similar results   for non compact $X$ (to deal with examples like $X=\mathrm{SL}(n,\cc)$, etc). One  might also obtain equidistribution statements  in higher codimension, by introducing appropriate dynamical degrees. 

\begin{thm}\label{thm:equidist abstrait}
Let $\La$ be a complex manifold of dimension $d$ and $X$ be a projective  manifold. 
 Let $f_n:\La\cv X$ be a family of holomorphic mappings such that $d_n = \int_\La f_n^*\omega\wedge \beta^{d-1}$ tends to infinity. Let $(H_a)_{a\in A}$ be an substantial holomorphic family of hypersurfaces in $X$.  

Let $\mathcal{E}  $ be the set of $a\in A$ such that 
$$\unsur{d_n}(f_n^*[H_a] - f_n^*\omega) \text{ does not converge to zero in the sense of currents.}$$
Then:
\begin{enumerate}
 \item[i.] if the series $  \sum {d_n\inv}$ converges, then  $\mathcal{E}$ is pluripolar;
 \item[ii.] if for every $t>0$ the series $\sum \exp(-td_n)$ converges, then $\mathcal{E}$ has zero Lebesgue measure.
\end{enumerate}
\end{thm}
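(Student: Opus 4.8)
The plan is to reduce the assertion to an $L^1_{loc}$-convergence statement for a sequence of plurisubharmonic potentials on $\La$ which \emph{also} depend plurisubharmonically on the parameter $a$, and then to run a Borel--Cantelli argument with the summability of $\sum d_n^{-1}$, resp.\ $\sum e^{-td_n}$, as quantitative input. Throughout we may assume $A$ connected and $\nu\neq 0$.

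First I would normalize $\omega=\iota^*\omega_{FS}$ (legitimate by the remark above, since this only changes $d_n$ by a bounded factor) and write $[H_a]=\omega+dd^c u_a$ on $X$. Over a ball $D\subset A$ on which $\iota^*\mathcal{O}_{\pp^N}(1)$ is trivial one can take $u_a(y)=\log\abs{\langle\tilde\ell_a,\tilde y\rangle}-\log\norm{\tilde y}$, with three properties to be used below: $u_a\le C_D$ uniformly; $a\mapsto u_a(y)$ is psh on $D$ for every fixed $y\in X$; and $y\mapsto\int_D u_a(y)\,d\nu(a)$ is bounded on $X$. The last point is where the \emph{substantial} hypothesis enters: since $\int_D[H_a]\,d\nu\le\int_A[H_a]\,d\nu$ and a positive closed $(1,1)$-current dominated by one with locally bounded potential has itself a locally bounded potential, $\int_D[H_a]\,d\nu$ has bounded potential. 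Pulling back by $f_n$ --- meaningful for $a$ outside a countable union of proper analytic subsets of $A$, and we discard the finitely many $n$ with $f_n$ constant --- yields $\frac1{d_n}\bigl(f_n^*[H_a]-f_n^*\omega\bigr)=dd^c\Psi_n(a,\cdot)$, where $\Psi_n(a,x):=\frac1{d_n}u_a(f_n(x))$ is $\frac1{d_n}f_n^*\omega$-psh in $x$ (governed by a form of uniformly bounded mass), psh in $a$ on $D$, and $\le\varepsilon_n:=C_D/d_n\to 0$. Since $L^1_{loc}$-convergence of potentials forces convergence of the $dd^c$, it suffices to control the size of $\{a:\Psi_n(a,\cdot)\not\to 0\text{ in }L^1_{loc}(\La)\}$, the exceptional set where the pullback is undefined being pluripolar, hence harmless in both cases.

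Next I would fix a compact $K\Subset\La$ and set $g_n(a):=\int_K\bigl(\varepsilon_n-\Psi_n(a,x)\bigr)\,dx\ge 0$; then $-g_n$ is psh on $D$ (an integral of psh functions uniformly bounded above), one has $\norm{\Psi_n(a,\cdot)}_{L^1(K)}\to 0\iff g_n(a)\to 0$, and integrating $\int_D\Psi_n(a,x)\,d\nu(a)=\frac1{d_n}\int_D u_a(f_n(x))\,d\nu(a)$ over $K$ gives $\int_D g_n\,d\nu\le(C_D+M_D)\,\leb(K)/d_n=:\delta_n$, with $M_D$ bounding the function above. Covering $\La$ by compacts $K$ and $A$ by balls $D$, everything reduces to estimating $\{a\in D:\limsup_n g_n(a)>0\}$. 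In case (i), $\sum_n\delta_n<\infty$, so $\sum_n g_n\in L^1(\nu)$, hence is finite somewhere on $\supp\nu$; being a decreasing limit of psh functions that is not identically $-\infty$, $-\sum_n g_n$ is psh on $A$, and since $\limsup_n g_n(a)>0$ forces $\sum_n g_n(a)=+\infty$, the set in question lies in the polar set $\{\sum_n g_n=+\infty\}$ and is pluripolar. In case (ii), put $\widetilde g_n:=d_n g_n\ge 0$, so $-\widetilde g_n$ is psh $\le 0$ while $\int_D\widetilde g_n\,d\nu\le(C_D+M_D)\,\leb(K)$ is bounded \emph{uniformly in $n$}; a Skoda-type exponential estimate then yields $\alpha,C'>0$ with $\leb\{a\in D':\widetilde g_n(a)>s\}\le C'e^{-\alpha s}$ on $D'\Subset D$, whence $\leb\{a\in D':g_n(a)>\eta\}\le C'e^{-\alpha\eta d_n}$ is summable in $n$ by hypothesis; Borel--Cantelli, followed by $\eta\downarrow 0$, shows the set is Lebesgue-null.

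The main obstacle is concentrated in case (ii): one must run the Skoda estimate against Lebesgue measure on $A$ although $\widetilde g_n$ is a priori only controlled in $L^1(\nu)$, so one needs $\nu$ not to be too concentrated --- in the applications $\nu$ is a smooth measure and this is automatic. This comparison between $\nu$ and Lebesgue measure (equivalently, capacity) on $A$ is the one point requiring real work; case (i), by contrast, is soft, the plurisubharmonicity of $-\sum_n g_n$ carrying the good behaviour from $\supp\nu$ to all of $A$ for free, and the remaining ingredients (the domination lemma for potentials, the Skoda and Siu estimates, and the pluripolar exceptional set of parameters where $f_n$ maps $\La$ into $H_a$) are routine.
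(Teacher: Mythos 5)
Your argument is correct in substance and shares its backbone with the paper's proof --- the same potentials $u_a=\log\abs{\ell_a}-\log\norm{\cdot}$, plurisubharmonic in $a$ and bounded above, the same use of the substantial hypothesis to bound $x\mapsto\int u_a(x)\,d\nu(a)$ via the bounded potentials of $\int[H_a]\,d\nu$, and the same H\"ormander--Skoda sublevel estimate plus Borel--Cantelli in case (ii) --- but it diverges in two instructive places. In case (i) the paper runs Fubini, Chebyshev and Borel--Cantelli against an arbitrary compactly supported measure $m$ integrating psh functions, and then invokes Bedford--Taylor (every non-pluripolar compact carries a Monge--Amp\`ere measure with bounded potential) to reach a contradiction; your route --- summing the nonnegative functions $g_n$ and observing that $-\sum_n g_n$ is psh and $\not\equiv-\infty$ because $\sum_n\delta_n<\infty$ --- exhibits $\mathcal E$ directly inside the polar set $\set{\sum_n g_n=+\infty}$ and is cleaner, avoiding Bedford--Taylor entirely (note only that you need a ball $D$ with $\nu(D)>0$ to certify $\not\equiv-\infty$; this is harmless since one may assume $A$ is a single ball, as the paper does at the outset). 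In case (ii), the step you flag as ``requiring real work'' --- passing from the uniform bound $\int\widetilde g_n\,d\nu\leq C$ to a uniform $L^1_{\rm loc}(\leb)$ bound on the negative psh functions $-\widetilde g_n$, which is what the sublevel-set estimate actually needs --- is precisely what the paper's Lemma \ref{lem:bdd} supplies, and it requires no smoothness or non-concentration of $\nu$: by the Hartogs lemma the set of negative psh functions $\varphi$ with $\int\abs{\varphi}\,d\nu\leq1$ is relatively compact in $L^1_{\rm loc}$ (a sequence escaping uniformly to $-\infty$ on compacts would force $\int\abs{\varphi}\,d\nu\to\infty$ as soon as $\nu\neq0$), hence bounded there, giving $\norm{\varphi}_{L^1(A')}\leq C'\norm{\varphi}_{L^1(\nu)}$ for all such $\varphi$. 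With that one standard estimate inserted, your proof of (ii) closes for an arbitrary substantial family, not just for smooth $\nu$.
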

 
As the proof easily shows, in case {\em ii.} Lebesgue measure can actually  be replaced by any {\em moderate  measure}, that is a measure  
satisfying an inequality of the form $m(\set{u<-t})\leq C e^{-\alpha t}$ on any compact class of psh functions. This is wide class of measures which contains for instance the area measure on totally real submanifolds of maximal dimension.  We refer to \cite{ds-tm} for details.

\begin{cor}
Under the assumptions of the theorem,
if  the sequence  $\unsur{d_n} f_n^*\omega$ converges to some  current $T$, then  for $a\notin \mathcal{E}$, $\unsur{d_n}f_n^*[H_a] $ converges to $T$. 
\end{cor}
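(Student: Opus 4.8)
The plan is very short, since the corollary is an immediate consequence of Theorem \ref{thm:equidist abstrait} together with the linearity of convergence in the sense of currents. First I would fix a parameter $a\notin\mathcal{E}$. By the very definition of the set $\mathcal{E}$, the sequence of bidegree $(1,1)$ currents $\unsur{d_n}\lrpar{f_n^*[H_a]-f_n^*\omega}$ converges to $0$ in the sense of currents. Then I would write
$$\unsur{d_n}f_n^*[H_a] \;=\; \unsur{d_n}\lrpar{f_n^*[H_a]-f_n^*\omega} \;+\; \unsur{d_n}f_n^*\omega,$$
and simply add the two convergences: the first term on the right-hand side tends to $0$ by the choice of $a$, the second tends to $T$ by hypothesis, and since pairing a current against a fixed smooth test form is $\cc$-linear, the left-hand side tends to $0+T=T$. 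One may note in passing that $T$ is then automatically a positive closed $(1,1)$ current, being a weak limit of such.

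No compactness argument or extraction of a subsequence is needed here: the hypothesis already grants convergence of the full sequence $\unsur{d_n}f_n^*\omega$ to $T$, and Theorem \ref{thm:equidist abstrait} grants convergence of the full sequence $\unsur{d_n}\lrpar{f_n^*[H_a]-f_n^*\omega}$ to $0$ for each fixed $a\notin\mathcal{E}$, so the conclusion holds for the full sequence $\unsur{d_n}f_n^*[H_a]$ as well. There is accordingly no genuine obstacle in proving this corollary; all of the substantive work is contained in Theorem \ref{thm:equidist abstrait} itself.
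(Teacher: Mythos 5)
Your proof is correct and is exactly the argument the paper intends (the corollary is stated without proof precisely because it follows by this immediate decomposition and linearity of weak convergence). Nothing to add.
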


\begin{proof}[Proof of Theorem \ref{thm:equidist abstrait}]
 Without loss of generality, assume that $A$ is an open ball in $\cc^{\dim(A)}$. 

\begin{lem}\label{lem:uax}
 Under the assumptions of the theorem, there exists a K\"ahler form $\omega$ on $X$ and a negative function $(a,x)\mapsto u(a,x)$ on $A\times X$ such that 
 \begin{enumerate}[i.]
 \item for every $a\in A$,  $dd^c_x u(a, \cdot) = [H_a]-\omega$;
 \item the $L^1$ norm $\norm{u(a, \cdot)}_{L^1(X)}$ is locally uniformly bounded with respect to  $a\in A$;
 \item for every $x$, $  u(\cdot,x)$ is psh on $A$.
\end{enumerate}
\end{lem}

Assuming this result for the moment, let us continue with  the proof of the theorem. Suppose first that the series $  \sum {d_n\inv}$ converges. Let $m$ be a positive measure with compact support on $A$ such that psh functions are $m$-integrable. We claim that for $m$-a.e. $a$, $\unsur{d_n}(f_n^*[H_a] - f_n^*\omega)$ converges to zero. By Lemma \ref{lem:uax}.i. for this  
 it is enough to prove that $\unsur{d_n}\int u_a\circ f_n$ tends to zero in $L^1_{\rm loc}(\La)$ (here and in what follows we denote $u(a, \cdot)$ by $u_a$). 

Let us admit  the following    lemma for the moment.

\begin{lem}\label{lem:bdd}
The function defined by $x\mapsto \int u_a(x) dm(a)$ is $\omega$-psh (i.e. $dd^cu_a\geq -\omega$) and bounded on $X$.
\end{lem}

  Fix  
$\La'\Subset \La$. By Fubini's theorem and Lemma \ref{lem:bdd}, setting $ \widetilde{u}(x) = \int u_a(x) dm(a)$ we have that  
$$\int\lrpar{\unsur{d_n}\int_{\La'} (-{u_a\circ f_n(\la)}) d\la}dm(a) = \unsur{d_n}\int_{\La'}  (-{\widetilde{u}\circ f_n(\la)})d\la \leq \frac{C }{d_n}, $$
so 
$$m\lrpar{\set{ a, \ \unsur{d_n}\int_{\La'} \abs{u_a\circ f_n(\la)} d\la \geq \e}}\leq \frac{C }{\e d_n},$$ and the result follows from the Borel-Cantelli Lemma. 

\medskip

To conclude the proof of case {\em i.} in the theorem, we argue  that if the exceptional $\mathcal{E}$ set was not pluripolar, then it would contain a non-pluripolar compact set $K$. By the work of Bedford and Taylor \cite{bt} there exists a Monge-Ampère measure $m=(dd^cv)^N$ supported on $K$, with $v$ bounded. It is well known that for such a measure psh functions are integrable (see \cite{bt} or  \cite[Prop. III.3.11]{demailly}), so we are in contradiction with the previous claim.  

\medskip

Assume now that $\sum \exp(-td_n)$ converges for all $t$. By  Lemma \ref{lem:bdd} applied to $m$   the Lebesgue measure (cut-off to any compact subset of $A$) 
the family  of negative psh functions 
$$\set {a\mapsto \int_{\La'} {u_a\circ f_n(\la)} d\la}_{n\geq 1}$$ is bounded in $L^1_{\rm loc}(A)$. 
Let $A'\Subset A''\Subset A$. It follows from an inequality due to Hörmander 
\cite[Prop 4.2.9]{hormander}
 that there exists constants $(C_0,\alpha_0)$  such that if $\varphi$ is any negative psh function on $A$ such that $\norm{\varphi}_{L^1(A'')}\leq 1$, then 
$$\vol\lrpar{\set{a\in A', \ \varphi(a)<-t}} \leq C_0\exp(-\alpha_0 t).$$
It follows that  there exist constants $(C,\alpha)$ independent of $n$ such that 
$$\vol\lrpar{\set{ a\in A', \ \unsur{d_n}\int_{\La'} \abs{u_a\circ f_n(\la)} d\la >\e}}\leq C \exp( -\e \alpha d_n), $$ and again the  Borel-Cantelli implies that for (Lebesgue) a.e. $a$, 
$\unsur{d_n} u_a\circ f_n$ converges to zero in $L^1_{\rm loc}(A)$.
\end{proof}

\begin{proof}[Proof of Lemma \ref{lem:uax}]
By definition there is an embedding $\iota:X\hookrightarrow \pp^N$ and a holomorphic family
$(L_a)_{a\in A}$ of hyperplanes such that $H_a=\iota\inv (L_a)$.  There exists a holomorphic family of linear forms $(\ell_a)_{a\in A}$ on $\cc^{N+1}$ such that $\set{\ell_a=0}$ is an equation of $L_a$. 
We normalize so that $\abs{\ell_a}\leq 1$ on the unit ball.
Now define $\varphi(a,\cdot)$ on $\pp^N$  by $$\varphi(a,z) = \frac{\log\abs{\ell_a(Z)}}{\log\norm{Z}}, $$ where $Z$ is any lift of $z$ and $\norm{\cdot}$ is the Hermitian norm. Then $\varphi$ satisfies  {\em i.}-{\em iii.} relative to the family $L_a$ on $\pp^N$, i.e. $dd^c_z \varphi(a, \cdot) = [L_a]-\omega_{FS}$, etc. 

We now put $u(a,x) = \varphi(a, \iota(x))$ and claim that it satisfies the desired requirements (with $\omega = \iota^*\omega_{FS}$).  Properties  {\em i.} and {\em iii.} are immediate.  If by contradiction {\em ii.} did not hold, then by the Hartogs Lemma \cite[pp. 149-151]{hormander} we would get a sequence $a_n\cv a_0\in A$ such that $u_{a_n}$ diverges uniformly to 
$-\infty$. But if $x_0\notin H_{a_0}$ it is clear that $u$ is locally uniformly bounded near $(a_0, x_0)$, hence the contradiction. 
\end{proof}

\begin{proof}[Proof of Lemma \ref{lem:bdd}] 
Let $A'\Subset A$ be an open set containing $\supp(m)$
According to \cite[Prop. 3.9.2]{ds-pl} there exists a constant $C$ such that for every psh function $\varphi$ on $A$, $\norm{\varphi}_{L^1(m)}\leq C\norm{\varphi}_{L^1(A')}$. From this we infer that 
for every $x$ in $X$, $\int \abs{u(a,x)}dm(a)\leq C\int_{A'} \abs{u(a,x)}da$, where $da$ denotes the Lebesgue measure.  

Now we claim that there exists a constant $C'$ such that for any negative psh function on $A$, 
$\norm{\varphi}_{L^1(A')}\leq C' \norm{\varphi}_{L^1(\nu)}$, where $\nu$ is the measure from the definition of substantial families. Indeed, by the Hartogs lemma (see \cite[pp.149-151]{hormander}) the set
$$\set{\varphi \text{ negative psh s.t. }\int\abs{\varphi}d\nu\leq1}$$ is relatively compact in $L^1_{\rm loc}$, hence bounded.  

From these two facts we infer that for every $x$, 
 $$\int \abs{u(a,x)}dm(a)\leq CC'\int \abs{u(a,x)} d\nu(a).$$ We now show that $x\mapsto \int u_a(x) d\nu(a)$  is uniformly bounded.  For this, note first that   this function  is   integrable, because by Lemma \ref{lem:uax}, $\norm{u(a, \cdot)}_{L^1(X)}$ is locally uniformly bounded. So we can take the $dd^c$ in $x$ and we obtain that 
 $$dd^c_x\lrpar{\int u_a(\cdot) d\nu(a)} = \int [H_a]d\nu(a) - \omega 
 ,$$ and we conclude by using our assumption that the local potentials of $\int [H_a]d\nu(a)$ are bounded. Finally, the same argument shows that   $x\mapsto \int u_a(x) dm(a)$ is $\omega$-psh, since by uniform boundedness of $\norm{u(a, \cdot)}_{L^1(X)}$, we can permute the $dd^c$ in $x$ and  integration with respect to $m$. 
 \end{proof}

We close this section by  highlighting  the following direct consequence of Theorem \ref{thm:volume}, which appears as a   key step   in the characterization  of the supports of certain bifurcation   currents.

 It can also be used to obtain a coordinate-free proof of the fact that the support of the  Green 
  current of a holomorphic self-map of  $\pk$ coincides with the Julia set (a result originally due to Forn\ae ss-Sibony \cite{fs2} and Ueda \cite{ueda}). 
 More precisely, it implies that if $\om\subset \pk$ is an open set disjoint from the support of the Green 
 current $T$, then the sequence of iterates $f^n$ is normal 
in $\om$. Notice that 
the converse inclusion $\supp(T)\subset  {J}$ is an  easy consequence of the definitions.

\begin{prop}\label{prop:support}
Let $\La$ be a complex manifold of dimension $d$ and $X$ be a compact K\"ahler manifold of dimension $k$, endowed with a K\"ahler form  $\omega$. Let $(f_n)$ be a sequence of holomorphic mappings $\La\cv X$, and assume that the sequence   
$\unsur{d_n}  f_n^*\omega$ converges to a current $T$. 

Assume  furthermore that  every test function $\varphi$ one has the following estimate
\begin{equation}\label{eq:mass}
\bra{ \unsur{d_n} f_n^*\omega -T, \varphi} = O\lrpar{\unsur{d_n}}.
 \end{equation}

Then the sequence $(f_n)$ is quasi-normal outside $\supp(T)$. 
\end{prop}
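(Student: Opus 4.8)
The plan is to deduce Proposition~\ref{prop:support} directly from Theorem~\ref{thm:volume}, the point being that the extra hypothesis \eqref{eq:mass} forces the Lelong numbers of all cluster values of $\frac{1}{d_n}f_n^*\omega$ to vanish. First I would fix an open set $\om\Subset \La\setminus\supp(T)$; on such an $\om$ I want to show that $(f_n)$ is quasi-normal. The natural thing is \emph{not} to apply Theorem~\ref{thm:volume} to the full sequence $(f_n)$ on $\La$ (whose masses blow up), but to apply it on $\om$ after renormalizing, or more precisely to argue that on $\om$ the masses $\m_\om(f_n^*\omega)$ are in fact bounded. Indeed, pick a test $(d-1,d-1)$ form $\varphi\geq 0$ supported near $\om$ with $\varphi\equiv \beta^{d-1}$ on $\om$; since $T$ has no mass on $\supp\varphi$-neighbourhood of $\om$ (as $\om\cap\supp T=\emptyset$, we may shrink $\varphi$'s support away from $\supp T$), we get $\bra{\frac{1}{d_n}f_n^*\omega,\varphi}=\bra{\frac{1}{d_n}f_n^*\omega-T,\varphi}=O(1/d_n)$, hence $\bra{f_n^*\omega,\varphi}=O(1)$. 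This gives $\m_\om(f_n^*\omega)=O(1)$, and Theorem~\ref{thm:volume} applied on $\om$ yields quasi-normality of $(f_n)$ on $\om$.

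Next I would upgrade quasi-normality to normality using Remark~\ref{rmk:normal}. The content of that remark is that if the Lelong numbers of the cluster values of $f_n^*\omega$ are $<\delta_0$, the family is genuinely normal. Here the relevant renormalized sequence is $\frac{1}{d_n}f_n^*\omega$, whose unique cluster value on $\om$ is $T\rest\om=0$; so I should run the proof of Theorem~\ref{thm:volume} with $T_n=\frac{1}{d_n}f_n^*\omega$ rather than $f_n^*\omega$. The obstruction to this is the step in that proof where one uses Corollary~\ref{cor:derivative}, which requires $\int_{L\cap B} f_n^*\omega<\delta_0$ on a large set of lines $L$ — i.e. it needs an honest (un-renormalized) mass bound on slices. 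That is exactly where hypothesis~\eqref{eq:mass} is used: since $\frac1{d_n}\sigma_{f_n^*\omega}(B(p,r))\to 0$ (because $T$ vanishes near $p$) one would naively only get $\sigma_{f_n^*\omega}(B(p,r))=o(d_n)$, which is not enough; but \eqref{eq:mass} gives the sharper $\sigma_{f_n^*\omega}(B(p,r))=d_n\cdot O(1/d_n)=O(1)$, and by choosing $r$ and $p$ suitably (with $T=0$ near $B(p,2r)$, and using the Crofton/Lelong slicing as in the proof of Theorem~\ref{thm:volume}) one arranges $\int_{B(p,r)}f_n^*\omega\wedge\alpha_p^{d-1}\leq \delta_0/2$ for all large $n$. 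Then the Crofton averaging produces a set $E_n$ of lines through $p$ of measure $\geq 1/2$ with $\int_{L\cap B(p,r)}f_n^*\omega<\delta_0$, and one finishes verbatim as in the proof of Theorem~\ref{thm:volume} via Corollary~\ref{cor:derivative} and the Sibony--Wong Theorem~\ref{thm:sibonywong} to conclude $(f_n)$ is normal near $p$.

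I expect the main obstacle to be bookkeeping the quantitative estimate: one must be careful that the $O(1/d_n)$ in \eqref{eq:mass}, which a priori depends on the test form $\varphi$, can be applied with a fixed choice of $\varphi$ adapted to a given ball $B(p,r)\Subset\La\setminus\supp T$, and that the resulting constant is uniform over the compact pieces one cares about — this is fine since $\La\setminus\supp T$ is covered by countably many such balls and normality is a local property. A secondary technical point is that \eqref{eq:mass} is stated for test \emph{functions} whereas one wants to test against the $(d-1,d-1)$ form $\beta^{d-1}$ (or $\alpha_p^{d-1}$); this is harmless because one can write the relevant mass integrals $\int_{B(p,r)} f_n^*\omega\wedge\beta^{d-1}$ and $\int f_n^*\omega\wedge\alpha_p^{d-1}$ against smooth compactly supported forms and reduce to finitely many scalar test functions, at the cost of replacing $B(p,r)$ by a slightly smaller ball and absorbing the error — exactly the cutoff manipulation used already in the proof of Theorem~\ref{thm:volume}. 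Once this is set up, everything else is a direct quotation of the arguments already given, so no genuinely new idea is needed beyond recognizing that \eqref{eq:mass} is precisely the hypothesis that converts the renormalized convergence into an honest slice-mass bound.
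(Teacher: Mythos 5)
Your first paragraph is exactly the paper's proof: the estimate \eqref{eq:mass} tested against a cutoff supported away from $\supp(T)$ shows that $f_n^*\omega$ (un-normalized) has locally bounded mass there, and Theorem \ref{thm:volume} then gives quasi-normality. The remaining two paragraphs attempt to upgrade to genuine normality via Remark \ref{rmk:normal}, which the proposition does not claim and which is not needed (and note that an $O(1)$ bound on $\sigma_{f_n^*\omega}(B(p,r))$ with constant depending on $r$ does not by itself force the Lelong numbers of the cluster values below $\delta_0$); the stated result is already established by your first step.
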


\begin{proof}
If $U$ is an open set disjoint where $T\equiv 0$, then \eqref{eq:mass} implies that the sequence $(f^*_n\omega)$ has locally uniformly bounded mass on $U$. It then follows from Theorem \ref{thm:volume} that $(f_n)$ is quasi-normal on $U$. 
\end{proof}

Notice that conversely, if $d_n\cv\infty$, and $U$ is an open set such that $U\cap \supp(T)\neq \emptyset$, then the sequence $(f_n)$ is {\em not} normal on $U$. Indeed it follows from the explicit expression of $f^*_n\omega$ in local coordinates that the $L^2$ norm of the derivative of $f_n$ tend to infinity in $U$.


\section{Bifurcation currents for families of rational mappings on $\pu$}\label{sec:bifcur}

\subsection{Generalities on bifurcations}
 Let us first review a number well-known facts on holomorphic families of rational maps. 
Let   $(f_\la)_{\la \in \La}$ be a holomorphic family of rational maps  $f_\la:\pu\cv\pu$ of degree $d\geq 2$
parameterized by a connected complex manifold $\La$. By definition, a {\em marked critical point} is a holomorphic map $c:\La\cv\pu$ such that  $f_\la'(c(\la))=0$ for all $\la\in \La$.

Given any family $(f_\la)$ if $c_0$ is a given   critical point at parameter $\la_0$, there exists a branched cover $\pi: \widetilde \La\cv\La$ such that the family of rational mappings $(\widetilde f_{\widetilde\la})_{\widetilde \la\in \widetilde\La}$ defined for $\widetilde \la\in 
\widetilde\La$ by $\widetilde f_{\widetilde\la} = f_{\pi(\widetilde\la)}$ has a marked critical point $\widetilde c(\widetilde\la)$ with $\widetilde c(\widetilde \la_0)=c_0$. Specifically, 
 it is enough to parameterize the family by $\widetilde{\La} = 
\widehat{\mathcal C} = \set{(\la, z)\in \La\times \pu, \ f'_\la(z)=0}$ (or its desingularization if $\widehat{\mathcal C}$ is not smooth). Then the first projection $\pi_1: \widehat{\mathcal C} \cv \La$ makes it   a branched cover over $\La$, and for any  $\widetilde \la = (\la,z)\in \widehat{\mathcal C}$, we set $\widetilde c(\widetilde\la)= z$ which
 is  a critical point for $\widetilde f_{\widetilde\la} := f_{\pi_1(\widetilde\la)}$. 

Therefore, 
taking a branched cover of $\La$ if necessary,  it is always possible to assume that all critical points are marked. 

We always denote with a subscript $\la$ the dynamical objects associated to $f_\la$:   Julia set,   maximal entropy measure, etc.

In a celebrated paper, Mañé, Sad and Sullivan \cite{mss}, and independently  Lyubich \cite{lyubich-bif},  showed the existence of a decomposition $$\La = \bif\cup\stab$$  of the parameter space $\La$ into a (closed) bifurcation locus and a (open) stability locus, which is similar to the Fatou-Julia decomposition of dynamical space. 

Stability is defined by a number of equivalent properties, according to the following theorem \cite{mss, lyubich-bif}.

\begin{thm}[Ma\~né-Sad-Sullivan, Lyubich]\label{thm:mss}
Let $(f_\la)_{\la \in \La}$  be a holomorphic family of rational maps of degree $d\geq 2$ on $\pu$. Let $\om\subset\La$  be a connected open subset. The following conditions are equivalent:
\begin{enumerate}[i.]
\item the periodic points of  $(f_\la)$  do not change nature (attracting, repelling, indifferent) in  
$
\om$;
\item the Julia set $J_\la$ moves under a holomorphic motion for $\la\in \om$;
\item  $\la\mapsto J_\la$ is continuous for the Hausdorff topology;
\item for any two parameters 
 $\la,\la'$ in $\om$, $f_\la\rest{J_\la}$  is conjugate to $f_{\la'}\rest{J_{\la'}}$.
\end{enumerate}
If in addition,  the critical points  $\set{c_1,\ldots , c_{2d-2}}$  are marked, these conditions are equivalent to 
\begin{enumerate}[ v.]
\item for any $1\leq i\leq 2d-2$ the family of meromorphic functions 
 $(f_\la^n(c_i(\la)))_{n\geq 0}: \La\cv \pu$  is normal
\end{enumerate}
\end{thm}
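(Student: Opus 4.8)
The statement is the classical Mañé--Sad--Sullivan / Lyubich theorem, so my plan would be to reproduce the standard cycle of implications, organized so that the holomorphic motion does the heavy lifting. I would prove
$\textit{i.}\Rightarrow\textit{ii.}\Rightarrow\textit{iii.}\Rightarrow\textit{i.}$ and $\textit{ii.}\Leftrightarrow\textit{iv.}$ for the first block, and then $\textit{i.}\Leftrightarrow\textit{v.}$ when the critical points are marked. The conceptual engine is the $\lambda$-lemma of Mañé--Sad--Sullivan: a holomorphic motion of a set in $\pu$ parameterized by a connected complex manifold automatically extends to a holomorphic motion of its closure (and in fact, by Słodkowski, of all of $\pu$, though one only needs the closure here), and is automatically continuous, indeed quasiconformal in the dynamical variable.

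\textbf{Step 1: $\textit{i.}\Rightarrow\textit{ii.}$} Fix a base point $\lambda_0\in\om$. The repelling periodic points are dense in $J_{\lambda_0}$. By assumption \textit{i.}, each repelling periodic point $z$ of $f_{\lambda_0}$ persists as a repelling periodic point $z(\lambda)$ for all $\lambda\in\om$: indeed, as long as the multiplier stays off the unit circle the implicit function theorem gives a holomorphic continuation $\lambda\mapsto z(\lambda)$ with no collision with other cycles of the same period, and \textit{i.} forbids the multiplier from ever crossing $|w|=1$. Distinct repelling points stay distinct (again by persistence of the local dynamics), so $\lambda\mapsto z(\lambda)$, $z$ ranging over repelling periodic points of $f_{\lambda_0}$, is a holomorphic motion of a dense subset of $J_{\lambda_0}$ that is equivariant, $f_\lambda(z(\lambda))=(f\cdot z)(\lambda)$. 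By the $\lambda$-lemma this extends to a holomorphic motion of $\overline{\{z(\lambda_0)\}}=J_{\lambda_0}$, and the extension is still equivariant because equivariance is a closed condition; since holomorphic motions are injective in the space variable, the image of $J_{\lambda_0}$ is exactly $J_\lambda$ (the motion conjugates $f_{\lambda_0}$ to $f_\lambda$ on the image, which is then a forward-and-backward invariant compact set carrying an expanding conjugate dynamics, hence equal to $J_\lambda$). This gives \textit{ii.} and simultaneously \textit{iv.}

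\textbf{Step 2: $\textit{ii.}\Rightarrow\textit{iii.}$} The $\lambda$-lemma guarantees the holomorphic motion $h:\om\times J_{\lambda_0}\to\pu$ is jointly continuous, so $\lambda\mapsto J_\lambda=h(\lambda,J_{\lambda_0})$ is continuous in the Hausdorff topology. \textbf{Step 3: $\textit{iii.}\Rightarrow\textit{i.}$} This is the step I expect to be the main obstacle, because it is where one must rule out bifurcations using only Hausdorff continuity of the Julia set. The standard argument: suppose a periodic point changed nature in $\om$, i.e. some cycle has multiplier $w(\lambda)$ with $|w(\lambda_1)|<1$ or $=1$ at some $\lambda_1$ and $|w|>1$ nearby, or a non-persistent indifferent cycle appears. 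One shows this forces $J_\lambda$ to be discontinuous: either an attracting/indifferent cycle absorbs a neighborhood (so $J_\lambda$ retreats, while at the parabolic/Siegel value part of it is ``implosion-stable'' in a way incompatible with upper semicontinuity — here one invokes that $J_\lambda$ is always lower semicontinuous in $\lambda$, so a genuine discontinuity is a drop in the Hausdorff limit), or one uses that the closure of the repelling cycles moving holomorphically would have to be all of $J_{\lambda_0}$, and a crossing of the unit circle by a multiplier produces, via Montel and the non-normality it creates, repelling cycles accumulating points outside $\lim J_\lambda$. Care is needed because one only has Hausdorff continuity, not a holomorphic motion a priori; the cleanest route is the Mañé--Sad--Sullivan dichotomy that the set of parameters where some multiplier has modulus $1$ is either empty in $\om$ or has nonempty interior-accumulation forcing discontinuity. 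I would follow \cite{mss} closely here.

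\textbf{Step 4: equivalence with \textit{v.} (critical points marked).} If \textit{i.}--\textit{iv.} hold, the holomorphic motion of $J_\lambda$ and the classification of Fatou components show that no critical orbit $f_\lambda^n(c_i(\lambda))$ can enter $J_\lambda$ or the basin of a bifurcating cycle; the maps $\lambda\mapsto f_\lambda^n(c_i(\lambda))$ then avoid (say) a triple of points moving holomorphically inside $J_\lambda$ uniformly, so by Montel's theorem the family is normal. Conversely, if each $(f_\lambda^n(c_i(\lambda)))_n$ is normal, a classical argument (Lyubich, and also in \cite{mss}) shows the repelling cycles move holomorphically without collision: a collision or a multiplier crossing $|w|=1$ can be shown to create, via the motion of critical orbits being blocked, a failure of normality along the critical orbit, using that the set of parameters where the critical orbit is ``preperiodic to a repelling cycle'' would otherwise be dense with limits outside any normal family limit. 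This recovers \textit{i.} This last equivalence is routine once the $\lambda$-lemma machinery of Steps 1--3 is in place; the only delicate point is handling critical points that are persistently preperiodic, which contribute nothing to non-normality and must be excluded from the Montel argument.
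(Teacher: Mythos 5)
First, note that the paper does not prove Theorem \ref{thm:mss}: it is quoted from Ma\~n\'e--Sad--Sullivan \cite{mss} and Lyubich \cite{lyubich-bif} with no argument given, so there is no in-paper proof to compare yours against; I can only assess your sketch against the classical ones. Your architecture (repelling cycles plus the $\lambda$-lemma for \textit{i.}$\Rightarrow$\textit{ii.}$\Rightarrow$\textit{iii.}, \textit{iv.}, then a separate treatment of \textit{v.}) is the standard one, and Steps 1--2 are essentially right. Two quibbles there: distinct repelling cycles cannot collide because a collision forces a periodic point of multiplier $1$, i.e.\ an indifferent cycle, which is exactly what \textit{i.} excludes --- not ``persistence of the local dynamics''; and if $\om$ is not simply connected the analytic continuation of a periodic point may have monodromy, so the motion you build is a priori only defined locally (a subtlety already present in the statement as quoted, so I do not hold it against you).

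The genuine gaps are in Steps 3 and 4. In Step 4 you assert that under stability ``no critical orbit can enter $J_\la$''; this is false: in a family of flexible Latt\`es maps, or whenever a critical point is persistently preperiodic to a repelling cycle, the family is $J$-stable and the critical orbit lies entirely in $J_\la$. You concede the existence of persistently preperiodic critical points in your final sentence, but that concession contradicts the mechanism you propose. The correct argument picks three repelling periodic points $z_1(\la),z_2(\la),z_3(\la)$ moving holomorphically and splits into cases: if $f^n_\la(c_i(\la))$ never meets any $z_j(\la)$, Montel's theorem for three holomorphically moving targets gives normality; if it meets one persistently, the sequence $(f^n_\la(c_i(\la)))_n$ eventually cycles through finitely many holomorphic functions, hence is normal; and a non-persistent collision must be shown to contradict \textit{i.}--\textit{iv.}, which is essentially the content of the converse direction and cannot be waved away. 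That converse (\textit{v.}$\Rightarrow$\textit{i.}) is the other gap: your description (``the motion of critical orbits being blocked'', preperiodic parameters ``dense with limits outside any normal family limit'') does not parse as an argument. The standard proof shows that when a cycle passes from attracting to repelling, a critical point it attracted must be released, so any normal limit of $(f^n_\la(c_i(\la)))_n$ would coincide with a holomorphically varying periodic point on an open set of parameters where that point is repelling, which is impossible. Similarly, Step 3 (\textit{iii.}$\Rightarrow$\textit{i.}) is an appeal to \cite{mss} wrapped in imprecise language; the clean argument is that an attracting periodic point lies at positive distance from $J_\la$ while a repelling one lies in $J_\la$, so a cycle changing nature forces a Hausdorff discontinuity of $\la\mapsto J_\la$ (using the open mapping theorem on the multiplier to reduce the indifferent case to the attracting/repelling transition). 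As written, your proposal is a correct table of contents for the proof, but Steps 3 and 4 do not yet contain proofs.
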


If these conditions are satisfied, we say that $(f_\la)$ is {\em  $J$-stable} in $\om$ (which we simply abbreviate   as {\em stable} in this paper). The stability locus is the union of all such $\om$, and $\bif$ is by definition
 its complement.
 
Another famous result is the following  \cite{mss, lyubich-bif}.

\begin{thm}
\label{thm:dense}
Let $(f_\la)_{\la\in\La}$ be as above.  
The stability locus $\stab$  is dense in $\La$.
\end{thm}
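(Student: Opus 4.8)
The plan is to combine the critical‑orbit characterization of stability, i.e. condition~(v) of Theorem~\ref{thm:mss}, with a normality argument based on Montel's theorem with moving targets, essentially following Ma\~n\'e--Sad--Sullivan \cite{mss} and Lyubich \cite{lyubich-bif}. First I would reduce to a statement about a single critical point: after passing to a branched cover we may assume the critical points $c_1,\dots,c_{2d-2}$ are marked, and I let $\mathrm{Act}(c_i)\subset\La$ be the (closed) set of parameters near which the family $(f_\la^n(c_i(\la)))_n$ fails to be normal. By condition~(v) of Theorem~\ref{thm:mss} one has $\bif=\bigcup_i\mathrm{Act}(c_i)$, so it suffices to prove that each $\mathrm{Act}(c_i)$ has empty interior; equivalently, that for every open $U\subset\La$ and every marked critical point $c$ there is an open subset of $U$ on which $(f_\la^n(c(\la)))_n$ is normal. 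Granting this, given any open $W\subset\La$ one constructs nested open balls $W\supset B_1\supset\dots\supset B_{2d-2}$ with $c_j$ passive on $B_j$ for each $j$; then $B_{2d-2}\subset\stab$ by condition~(v), which proves density.

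For the reduced statement I would argue by contradiction: suppose $g_n:=f_\la^n(c(\la))$ is non‑normal on every open subset of some ball $V$, and fix $\la_*\in V$. Since $J_{\la_*}$ is infinite and carries a dense set of repelling cycles, I choose two distinct repelling periodic points $z_1,z_2$ of $f_{\la_*}$, both different from $c(\la_*)$; by the implicit function theorem they continue holomorphically to repelling periodic points $z_1(\la),z_2(\la)$ on a neighborhood $V'\subset V$ of $\la_*$, and after shrinking $V'$ the graphs of $z_1(\cdot),z_2(\cdot),c(\cdot)$ over $V'$ are pairwise disjoint. For any open $V''\subset V'$, non‑normality of $(g_n)$ on $V''$ together with Montel's theorem applied to the three moving targets $z_1(\la),z_2(\la),c(\la)$ (conjugated to $0,1,\infty$ by a holomorphically varying M\"obius map) forces some $g_n$ to take one of those three values somewhere in $V''$. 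If this ever happens with value $c(\la)$, then $c$ is periodic at that parameter, hence a superattracting cycle; this cycle persists as an attracting cycle still capturing $c(\la)$, so $(g_n)$ is normal near that parameter, contradicting that $c$ is active on all of $V'$. Hence the hits occur on $z_1$ or $z_2$, so the parameters for which $c(\la)$ is preperiodic to a repelling cycle are dense in $V'$. Each relation $f_\la^{b}(c(\la))=f_\la^{a}(c(\la))$ defines an analytic subset of $V'$; if one of these equals $V'$, then $c$ is persistently preperiodic and $(g_n)$ takes only finitely many values on $V'$, hence is normal there -- again a contradiction.

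The case that remains -- and the one I expect to be the main obstacle -- is when each such relation defines only a proper, hence nowhere dense, analytic subvariety of $V'$ while their union is dense: such a dense meager set of ``Misiurewicz'' parameters does not by itself contradict activity of $c$, and Montel's theorem alone does not dispose of it. This is exactly where the Ma\~n\'e--Sad--Sullivan/Lyubich argument invokes the $\lambda$‑lemma: the holomorphic continuations of the repelling cycles of $f_{\la_*}$ form a holomorphic motion of a dense subset of $J_{\la_*}$, which extends to an equivariant holomorphic motion $h_\la$ of the whole Julia set $J_{\la_*}$ over a neighborhood of $\la_*$; $J$‑stability at a nearby parameter is then equivalent to $h_\la(J_{\la_*})=J_\la$, and this can fail only through the appearance of a new repelling cycle or a transversal collision of a critical orbit with the Julia set. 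Using the a priori bound $2d-2$ on the number of attracting cycles together with a finiteness argument, one produces a $J$‑stable parameter arbitrarily close to $\la_*$, contradicting activity of $c$ there; the complete argument is in \cite{mss,lyubich-bif}.
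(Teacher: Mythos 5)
The paper itself gives no proof of Theorem \ref{thm:dense} (it is quoted from \cite{mss, lyubich-bif}), so I compare your proposal with the classical argument. Your reduction to showing that the activity locus of each marked critical point has empty interior is correct, and the Montel argument with the moving targets $z_1(\la)$, $z_2(\la)$, $c(\la)$ is sound as far as it goes; but what it yields is the density, inside the activity locus, of parameters where $c$ falls on a repelling cycle --- essentially Theorem \ref{thm:preperiodic dense} --- and, as you yourself concede in your last paragraph, this is perfectly compatible with $c$ being active on a whole open set (it is exactly what happens throughout $\bif$). The proof therefore stops precisely where the real work begins, and the closing appeal to the $\lambda$-lemma, holomorphic motions of $J_{\la_*}$ and ``transversal collisions'' is not an argument; it is also not how Ma\~n\'e--Sad--Sullivan or Lyubich actually conclude (the $\lambda$-lemma enters in the proof of the equivalences of Theorem \ref{thm:mss}, not in the density statement).

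The missing idea is the count of attracting cycles. Let $N(\la)$ be the number of attracting cycles of $f_\la$. Since every attracting cycle attracts a critical point, $N\leq 2d-2$; since attracting cycles persist under perturbation and distinct cycles stay distinct, $N$ is lower semicontinuous. A bounded, integer-valued, lower semicontinuous function is locally constant on a dense open set $U$ (in any open ball, take a point where $N$ is maximal). On $U$ no periodic orbit can change its nature: otherwise some cycle would be non-persistently indifferent at some $\la_1\in U$, its multiplier $w$ would be holomorphic and non-constant near $\la_1$ with $\abs{w(\la_1)}=1$ (after the standard perturbation dealing with multipliers that are roots of unity), and the open mapping theorem would produce $\la_2$ arbitrarily close to $\la_1$ with $\abs{w(\la_2)}<1$; the $N(\la_1)$ attracting cycles of $f_{\la_1}$ persist at $\la_2$ and are distinct from this new one, so $N(\la_2)\geq N(\la_1)+1$, contradicting local constancy. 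By criterion (i) of Theorem \ref{thm:mss}, $U\subset\stab$. Note that this argument requires neither the marking of critical points nor Montel's theorem, so your first two paragraphs, though correct, can be dispensed with entirely.
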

 
 If $(f_\la)$ is the family of all polynomials or all rational functions of degree $d$, it is conjectured that $\la\in \stab$ if and only if all critical points converge to attracting cycles (the {\em hyperbolicity conjecture}). More generally, the work of McMullen and Sullivan \cite{mcms} leads to a conjectural description of the components of the stability locus in any  holomorphic family of rational maps (relying on the  the so-called {\em no invariant line fields conjecture}).  
 
 \medskip 
 
We now explain how the  bifurcation locus can be seen in a number of ways as the  limit  set (in the topological sense) of countable families of analytic subsets of codimension 1. This is  a basic motivation for a description of the bifurcation locus in terms of positive closed currents of bidegree (1,1).

 A marked critical point $c$ is said to be {\em passive} in $\om$ if the family $(f_\la^n(c(\la)))_{n\geq 0}$ is normal, and {\em active} at $\la_0$ if for every neighborhood $V\ni\la_0$, $c$ is not passive in $V$ (this convenient  terminology is due to McMullen).  The characterization  {\em v.} of stability in Theorem \ref{thm:mss}  then rephrases as ``a critically marked family is stable iff all critical points are passive in $\om$".

\medskip

A typical example of a passive critical point is that of a critical point converging  to an attracting cycle, for this property is robust under perturbations. If $\La$ is the family of all polynomials or all rational functions of degree $d$, according to the hyperbolicity conjecture, all passive critical points should be of this type. Indeed, any component of passivity would intersect the hyperbolicity locus. On the other hand, in a family of rational mappings with a persistent parabolic  point (resp. a persistent Siegel disk), a critical point attracted by this parabolic point (resp. eventually falling in this Siegel disk) is passive. 

Let $(f_\la)$ be the space of all polynomials or rational mappings of degree $d$ with a marked critical point $c$. 
It seems to be an interesting problem to study the geometry of  {\em hyperbolic passivity components}, that is, components of the passivity locus associated to $c$, where $c$ converges to an attracting cycle. Does there exist a ``center'' in this component, that is a subvariety where $c$ is periodic? How does the topology of the component related to that of its center? 

\medskip

 The following result is an easy consequence of  Montel's theorem (see e.g. \cite{levin1} or \cite{preper} for the proof). 
  
 \begin{thm}\label{thm:preperiodic dense}
 Let $(f_\la)_{\la \in \La}$  be a holomorphic family of rational maps of degree $d$ on $\pu$, with a marked critical point $c$. If $c$ is active at $\la_0$ then $\la_0 = \lim_n \la_n$ where for every $n$,  
 $c(\la_n)$ is periodic   (resp. falls  onto a repelling cycle).
 \end{thm}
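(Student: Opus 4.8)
The plan is to use Montel's theorem to produce parameters where the critical orbit hits a preassigned repelling periodic orbit (or becomes periodic), exploiting the active behavior of $c$ at $\la_0$. First I would fix a small neighborhood $V\ni\la_0$ and observe that, by activity, the family $(f_\la^n(c(\la)))_{n\geq0}$ is not normal in $V$; in particular its image cannot omit too many points uniformly. Since $\la_0\in\bif$, stability fails near $\la_0$, so we may choose $\la_*\in V$ arbitrarily close to $\la_0$ at which $f_{\la_*}$ has at least three distinct repelling periodic points $p_1,p_2,p_3$ (repelling cycles are dense in the Julia set and $J_{\la_*}$ is infinite). By the implicit function theorem these continue holomorphically as repelling periodic points $p_i(\la)$ on a possibly smaller neighborhood $V'\ni\la_*$, with fixed periods.

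Next I would apply Montel: if the family of holomorphic maps $g_n:\la\mapsto f_\la^n(c(\la))$ on $V'$ avoided all three graphs $\{(\la,p_i(\la))\}$, then (after a holomorphic change of coordinates conjugating $p_1(\la),p_2(\la),p_3(\la)$ to $0,1,\infty$, or by the version of Montel with moving targets) the family $(g_n)$ would be normal on $V'$. Shrinking $V'$ if necessary so that it lies inside the original $V$ chosen around $\la_0$, this contradicts the activity of $c$ at $\la_0$ — indeed activity at $\la_0$ forces non-normality in every neighborhood of $\la_0$, and we can arrange $\la_0\in V'$ by taking $\la_*$ close enough to $\la_0$. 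Hence for some $n$ and some $i$ there is a parameter $\la_n\in V'$ with $f_{\la_n}^n(c(\la_n))=p_i(\la_n)$, i.e. $c(\la_n)$ falls onto a repelling cycle. Letting $V$ (and hence $V'$) shrink to $\{\la_0\}$ produces a sequence $\la_n\to\la_0$ with the desired property.

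For the ``$c(\la_n)$ periodic'' variant, I would instead use the periodic points themselves as the moving targets: the maps $\la\mapsto f_\la^n(c(\la))$ and $\la\mapsto c(\la)$ differ, and non-normality of $(f_\la^n(c(\la)))$ combined with a Montel/Zalcman-type renormalization argument yields parameters where $f_{\la}^n(c(\la))=c(\la)$ for suitable $n$; alternatively one notes that the repelling-preperiodic parameters constructed above can themselves be perturbed, or one argues directly that the closure of $\{\la: c(\la)\text{ periodic}\}$ contains $\la_0$ by the same three-omitted-values mechanism applied to $g_n$ versus the diagonal target $c(\la)$. Either way the underlying mechanism is the failure of normality forcing the orbit to collide with a prescribed finite set of moving targets.

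The main obstacle I expect is the technical handling of \emph{moving} targets in Montel's theorem: one must ensure the three repelling points $p_i(\la)$ stay distinct and the conjugating Möbius family is holomorphic and nondegenerate on the whole neighborhood, and one must be careful that shrinking neighborhoods to apply Montel does not destroy the inclusion $\la_0\in V'$ needed to invoke activity. This is routine but must be done with care; once it is in place, the rest is a clean contradiction argument. The other minor point is justifying that one can always find three distinct repelling periodic points with uniformly bounded period near a bifurcation parameter, which follows from density of repelling cycles in the (infinite) Julia set together with structural stability of repelling cycles.
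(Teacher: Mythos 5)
The paper does not write out a proof of this statement; it refers to Levin \cite{levin1} and to \cite{preper}, where the argument is exactly the Montel/moving-targets mechanism you describe. Your treatment of the ``falls onto a repelling cycle'' half is essentially that argument and is correct, up to one removable blemish: there is no need to pass to an auxiliary parameter $\la_*$, since every rational map of degree $d\geq 2$ --- in particular $f_{\la_0}$ itself --- already has infinitely many repelling cycles. Your detour is not innocent as written, because the neighborhood $V'$ on which the repelling cycles of $f_{\la_*}$ continue holomorphically and stay distinct has no uniform lower bound on its size as $\la_*\to\la_0$, so the assertion ``we can arrange $\la_0\in V'$ by taking $\la_*$ close enough'' is unjustified. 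Taking three repelling periodic points of $f_{\la_0}$ and continuing them over a neighborhood of $\la_0$ removes the problem entirely.

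The genuine gap is in the ``$c(\la_n)$ periodic'' half. A single moving target such as $\la\mapsto c(\la)$ is useless for Montel, which needs three omitted graphs, and perturbing the repelling-preperiodic parameters produced in the first half does not yield critically periodic ones. The standard device, and the one used in the cited references, is to choose for a suitable $k$ three pairwise distinct holomorphic branches $a_1(\la),a_2(\la),a_3(\la)$ of $f_\la^{-k}(c(\la))$ on a neighborhood of $\la_0$ (possible for $k$ large unless $c(\la_0)$ is exceptional, a case excluded separately since an exceptional marked point is passive). If the family $(f_\la^n(c(\la)))_{n\geq 0}$ avoided the three graphs, Montel would force normality near $\la_0$, contradicting activity; hence $f_\la^n(c(\la))=a_i(\la)$ for some $n$, some $i$ and some $\la$ arbitrarily close to $\la_0$, and then $f_\la^{n+k}(c(\la))=f_\la^{k}(a_i(\la))=c(\la)$, so $c(\la)$ is periodic. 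Without this, or an equivalent three-target construction, your argument does not establish the periodic case.
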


Combined with  Theorem \ref{thm:mss} this implies that in any (not necessarily critically marked) holomorphic family, the  family of  hypersurfaces, defined by the condition that a critical point is periodic (resp. preperiodic) cluster on the bifurcation locus.  

\begin{cor}
 Let $(f_\la)_{\la \in \La}$  be a holomorphic family of rational maps of degree $d$ on $\pu$. Then 
$$\bif   \subset\overline{\set{\la, \ \exists   \ c(\la) \text{ \rm (pre)periodic critical point} }}$$ and more precisely, 
$$ \mathrm{Bif}=\overline{\set{\la, \ \exists   \ c(\la) \text{ \rm  critical point falling non-persistently on a repelling cycle}}}.$$
\end{cor}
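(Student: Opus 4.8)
The plan is to deduce both inclusions from the combination of Theorem~\ref{thm:mss} (the Ma\~n\'e--Sad--Sullivan stability theorem), Theorem~\ref{thm:preperiodic dense} (activity forces approximation by parameters where a critical point is periodic or lands on a repelling cycle), and Theorem~\ref{thm:dense} (density of the stability locus). The only delicate point is that the family $(f_\la)$ is \emph{not} assumed critically marked, so one first passes to the branched cover $\widehat{\mathcal C}=\set{(\la,z)\in\La\times\pu,\ f_\la'(z)=0}$ (or its desingularization) on which all critical points become marked, as explained in Section~\ref{sec:bifcur}; since $\pi_1:\widehat{\mathcal C}\cv\La$ is a proper surjection, the bifurcation locus upstairs maps onto $\bif$ downstairs, and likewise a parameter in $\La$ lying under a point where a marked critical point is (pre)periodic is itself a parameter where \emph{some} critical point is (pre)periodic. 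So without loss of generality I would argue in a critically marked family with marked critical points $c_1,\dots,c_{2d-2}$.

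\textbf{First inclusion.} Suppose $\la_0\in\bif$. By the critically marked version of Theorem~\ref{thm:mss} (condition~v.), stability in a neighborhood of $\la_0$ is equivalent to all the families $(f_\la^n(c_i(\la)))_{n\ge0}$ being normal there; hence $\la_0\in\bif$ means some marked critical point $c_i$ is active at $\la_0$. Theorem~\ref{thm:preperiodic dense} then gives a sequence $\la_n\cv\la_0$ with $c_i(\la_n)$ periodic (and, in the sharper version, landing non-persistently onto a repelling cycle). This proves both displayed inclusions "$\subset$" at once --- the second being the stronger statement, with "non-persistently" ensured because at $\la_0$ itself $c_i$ is active, so the repelling-cycle relation cannot hold identically on a neighborhood.

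\textbf{Reverse inclusion for the second (equality) statement.} Here one must show that if $c(\la_n)$ falls \emph{non-persistently} onto a repelling cycle then, after possibly passing to a subsequence, the limit parameter lies in $\bif$. The point is that non-persistence means the critical orbit relation $f_{\la}^{m}(c(\la))=p(\la)$ (with $p(\la)$ a repelling periodic point) holds on a proper analytic subset but not on a neighborhood; a parameter on the boundary of this locus is one where $c$ is active --- if $c$ were passive near such a point, the family $(f_\la^n(c(\la)))$ would be normal and the relation, holding on a hypersurface, could not be isolated from a persistent one without violating the identity principle for the normal limits. So the closure of the set of non-persistent prerepelling parameters is contained in $\bif$; combined with the first inclusion this yields the asserted equality.

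\textbf{Main obstacle.} The routine parts are the branched-cover reduction and the direct application of Theorems~\ref{thm:mss} and~\ref{thm:preperiodic dense}. The one step requiring genuine care is the reverse inclusion: precisely formulating "non-persistently" and checking that a boundary parameter of the prerepelling locus is active rather than merely proving it is a limit of such parameters. One must be careful to distinguish the hypersurfaces $\per(n,w)$-type relations that hold identically (persistent case, contributing nothing to $\bif$) from those that are genuinely isolated; the cleanest route is to invoke normality of $(f_\la^n(c(\la)))$ on a hypothetical passivity component and derive a contradiction with the isolatedness of the critical-orbit relation via Hurwitz's theorem on the limit functions.
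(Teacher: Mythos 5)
Your proposal is correct and follows exactly the route the paper intends: the paper states this corollary without proof as an immediate consequence of Theorem~\ref{thm:mss} (characterization~\emph{v.} of stability, after passing to the critical branched cover) and Theorem~\ref{thm:preperiodic dense}, which is precisely your forward inclusion, and your reverse inclusion is the standard lemma that a critical point falling non-persistently onto a repelling cycle is active there (normality would contradict, via equicontinuity/Hurwitz near the linearized repelling point, the existence of nearby parameters where the orbit relation fails). The only cosmetic imprecision is your phrase ``a parameter on the boundary of this locus'': the relevant claim is that \emph{every} parameter of the (codimension-one, hence boundaryless in the topological sense you invoke) non-persistent prerepelling locus is active, which is what your normality argument actually establishes.
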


At this point a natural question arises: assume that several marked critical points are active at $\la_0$. Is it possible to perturb $\la_0$ so that these critical points become simultaneously (pre)periodic? 
It turns out that the answer to the question  is ``no'', which is a manifestation of the failure of Montel's theorem in higher dimension (see Example \ref{ex:douady} below). An important idea in higher dimensional holomorphic dynamics is that the use of currents and pluripotential theory is a way to get around this difficulty. As it turns out, the theory of bifurcation currents  will indeed  provide  a reasonable understanding of this problem. 

\medskip

The following simple consequence of item {\em i.} of Theorem \ref{thm:mss}, provides yet another dense codimension 1 phenomenon in the bifurcation locus. 

 \begin{cor}
 Let $(f_\la)_{\la \in \La}$  be a holomorphic family of rational maps of degree $d$ on $\pu$. Then 
for every $\theta\in \re/2\pi\zz$,
$$\mathrm{Bif}=\overline{\set{\la, \ f_\la \text{ admits a non-persistent periodic point of multiplier }e^{i\theta}}}.$$
\end{cor}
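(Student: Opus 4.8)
The plan is to reduce this corollary to the characterization of the bifurcation locus already obtained. One inclusion is routine: if $f_{\lambda_0}$ has a non-persistent periodic point of multiplier $e^{i\theta}$, then arbitrarily close to $\lambda_0$ there are parameters where this cycle is attracting and others where it is repelling, so by item \emph{i.} of Theorem \ref{thm:mss} the periodic points change nature near $\lambda_0$ and $\lambda_0\in\bif$. Hence $\overline{\set{\lambda, \ f_\lambda \text{ has a non-persistent periodic point of multiplier }e^{i\theta}}}\subset\bif$. The content is the reverse inclusion: the set of such parameters is \emph{dense} in $\bif$.

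For the reverse inclusion I would argue as follows. Fix $\lambda_0\in\bif$ and a neighborhood $V$ of $\lambda_0$. By item \emph{i.} of Theorem \ref{thm:mss}, $(f_\lambda)$ is not $J$-stable on $V$, so after shrinking $V$ there is a periodic point whose nature changes in $V$: there exist $\lambda_1,\lambda_2\in V$ and an integer $n$ such that $f_{\lambda_1}$ has an $n$-cycle of multiplier of modulus $<1$ whose continuation (along a path, or a local branch) has multiplier of modulus $>1$ at $\lambda_2$. The multiplier $\rho(\lambda)$ of this cycle is then a holomorphic function on a suitable connected open subset $U\subset V$ (or on a branched cover, if the cycle is not persistently defined — one passes to the curve of $n$-periodic points $\{(\lambda,z):f_\lambda^n(z)=z\}$ and works there, then projects down) taking values of modulus both $<1$ and $>1$. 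By the open mapping theorem $\rho(U)$ is an open subset of $\cc$ meeting both $\{\abs{w}<1\}$ and $\{\abs{w}>1\}$; since it is connected it must meet the unit circle, so there is $\lambda_3\in U$ with $\abs{\rho(\lambda_3)}=1$. This already gives a parameter in $V$ with an indifferent cycle, but we need multiplier exactly $e^{i\theta}$.

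To upgrade ``some indifferent multiplier'' to ``multiplier $e^{i\theta}$'', I would use that the image $\rho(U)\subset\cc$ is open and we are free to shrink $V$ as little as we like — or rather, exploit that $\bif$ is accumulated by such changes on arbitrarily small scales. The cleanest route: observe that the set $Z_{n,\theta}=\{\lambda\in V: f_\lambda \text{ has an }n\text{-cycle of multiplier }e^{i\theta}\}$ is (a projection of) the analytic hypersurface $\per(n,e^{i\theta})$, and the argument above shows that the hypersurfaces $\per(n,w)$ with $\abs w=1$, $n\geq 1$, must accumulate on $\lambda_0$. One then invokes a connectedness/continuity argument on the multiplier as $w$ varies over the unit circle: starting from a parameter with an $n$-cycle of multiplier $e^{i\theta'}$ for some $\theta'$, and using that nearby the multiplier function is an open map whose image sweeps out an arc of the circle, one finds a (possibly different, nearby) parameter where the multiplier of an $n$-cycle hits the prescribed value $e^{i\theta}$. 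More precisely, if $\rho$ is non-constant near such a point, $\rho(U)$ contains an open arc of the unit circle together with points on both sides; iterating over a chain of such perturbations covers the whole circle. Since $V$ was an arbitrary neighborhood of an arbitrary $\lambda_0\in\bif$, this shows $Z_{n,\theta}$-parameters are dense in $\bif$, and by construction these are non-persistent periodic points of multiplier $e^{i\theta}$ (persistence would force $(f_\lambda)$ to be stable nearby by Theorem \ref{thm:mss}).

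The main obstacle is the last step: producing multiplier \emph{exactly} $e^{i\theta}$ rather than merely \emph{some} point on the unit circle, while staying in a prescribed small neighborhood. The subtlety is that the multiplier function of a single cycle need not have the unit circle in its image — one must combine it with the freedom to change $n$ and to chain together several small perturbations, using the density of bifurcations at all scales, to guarantee that the target value $e^{i\theta}$ is attained. Handling the branching of the multiplier function (when the $n$-cycle is not persistently defined over $V$) is a technical but routine point, dealt with by passing to the curve $\{f_\lambda^n(z)=z\}$.
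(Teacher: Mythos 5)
Your easy inclusion is fine, and your production of \emph{some} non-persistently indifferent parameter in any neighbourhood $V$ of a point of $\bif$ (a cycle whose nature changes must have non-constant multiplier, and the modulus of that multiplier must cross $1$) is the standard first step; note that the paper itself gives no proof at all, presenting the corollary as a direct consequence of Theorem \ref{thm:mss}.\emph{i.}, so everything hinges on the step you yourself flag as the obstacle. That step is not closed by your ``chain of perturbations''. Every parameter produced by the chain lies in the domain of the \emph{same} multiplier function $\rho$ restricted to $V$, so the set of angles you can reach is contained in $\rho(V)\cap\partial\mathbb{D}$, which may be a short arc: in the quadratic family $z^2+\la$, take $\la_0$ on the boundary of the main cardioid and $V$ a tiny disk around it; the multiplier of the fixed point is a globally defined non-constant holomorphic function whose image of $V$ is a tiny neighbourhood of a single point of the unit circle, and no amount of chaining inside $V$ with that cycle reaches a prescribed $e^{i\theta}$ elsewhere on the circle. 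Your remark that one must also ``change $n$'' is the right instinct, but the proposal supplies no mechanism for producing, \emph{inside the prescribed} $V$, cycles of other periods realizing the missing multipliers; this is exactly where the content of the statement lies.

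The way this is actually done (see \cite{bas-ber1} and \cite{berteloot survey}) is a properness argument rather than a chain of open-mapping perturbations. Using the activity of a critical point at $\la_0$ (Theorem \ref{thm:mss}.\emph{v.} after marking, together with the Montel argument of Theorem \ref{thm:preperiodic dense}) one produces inside $V$ an attracting cycle of high period whose ``attracting component'' $\Omega$ --- the connected component, containing the given parameter, of the locus where that cycle stays attracting --- is \emph{compactly contained} in $V$. Then $\rho:\Omega\to\mathbb{D}$ is proper: as $\la\to\fr\Omega$ either $\abs{\rho(\la)}\to1$, or the cycle collides with another one and $\rho(\la)\to1$ as well, while escape to $\fr V$ is excluded by $\Omega\Subset V$. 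Properness forces $\rho(\Omega)=\mathbb{D}$ and its boundary values to sweep all of $\fr\mathbb{D}$, yielding $\la\in\fr\Omega\subset V$ with a cycle of multiplier exactly $e^{i\theta}$, non-persistent because $\rho$ is open. The genuinely non-trivial point, absent from your proposal, is the existence of such a compactly contained component in every $V$; this is also why the single-$\theta$ statement is strictly harder than the version where $\theta$ only has to lie in a prescribed dense set of angles --- for that weaker version your open-arc argument already suffices, and it is all that is used in the $k=1$ case of Theorem \ref{thm:indifferent}.
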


Again one  might ask: what is the set of parameters possessing \textit{several} non-persistent indifferent periodic points?

\subsection{The bifurcation current}\label{subs:bifcur}

It is a classical observation that in a holomorphic family of dynamical systems, Lyapunov exponents often depend subharmonically on parameters  (this  idea  plays for instance a key role in \cite{herman}). Also,  since the 1980's,
potential theoretic methods appear to play an important role  in the study the quadratic family and the Mandelbrot set (see e.g. \cite{dh, sibony-ucla}). 

DeMarco made this idea more systematic by putting forward the following definition \cite{demarco2}. 

\begin{propdef} 
Let $(f_\la)_{\la\in \La}$ be a holomorphic family of rational maps of degree $d\geq 2$.  For each $\la\in \La$, let
 $\mu_\la$ be the unique   measure of maximal entropy of $f_\la$. 
Then 
$$\chi:\la\longmapsto \chi(f_\la) =  \int \log\norm{ f_\la  ' }d\mu_\la$$ is a continuous psh function on  $\La$ (here the norm of the differential is relative  to   any Riemannian metric on $\pu$). 

The {\em bifurcation current}   of the family $(f_\la)$ is by definition $\tbif = dd^c\chi$.
\end{propdef}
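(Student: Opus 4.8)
The plan is to reduce the statement to a potential-theoretic formula for the Lyapunov exponent, due to Manning (in the polynomial case) and DeMarco in general, and then to read off continuity and plurisubharmonicity from the Green function of the associated homogeneous dynamics. Both assertions being local on $\La$, I would first shrink $\La$ to a ball in $\cc^d$ small enough that the family lifts: there is a holomorphic map $F:\La\times\cd\cv\cd$, $(\la,z)\mapsto F_\la(z)$, with each $F_\la$ homogeneous of degree $d$ and $\pi\rond F_\la=f_\la\rond\pi$, where $\pi:\cd\setminus\set{0}\cv\pu$ is the tautological projection. The central object is then the Green function
$$G(\la,z)=\lim_{n\cv\infty}\unsur{d^n}\log\norm{F^n_\la(z)}.$$
Since $\abs{\log\norm{F_\la(z)}-d\log\norm{z}}$ is bounded on $\La'\times\set{\norm{z}=1}$ for every $\La'\Subset\La$, a standard telescoping argument shows this limit is locally uniform on $\La\times(\cd\setminus\set{0})$. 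Each $\log\norm{F^n_\la(z)}$ is the logarithm of a sum of squared moduli of functions holomorphic in $(\la,z)$, hence psh in $(\la,z)$; being a locally uniform limit of such functions, $G$ is psh \emph{and continuous} on $\La\times(\cd\setminus\set{0})$. One also has $G(\la,F_\la(z))=d\,G(\la,z)$ and $G(\la,tz)=G(\la,z)+\log\abs{t}$, and the measure of maximal entropy is recovered as $\mu_\la=\omega+dd^c_z g_\la$, where $g_\la$ is the function on $\pu$ obtained from $G(\la,\cdot)-\log\norm{\cdot}$ by passing to the quotient (so in particular $\la\mapsto\mu_\la$ is weak-$\ast$ continuous).

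The key step is the formula
$$\chi(f_\la)=\log d+\sum_{i=1}^{2d-2}G\lrpar{\la,\widetilde c_i(\la)}+h(\la),$$
valid once $\La$ is replaced by a finite branched cover $p:\hL\cv\La$ on which all critical points are marked (as explained earlier in this section) and shrunk so that the marked critical points admit holomorphic lifts $\widetilde c_i(\la)\in\cd\setminus\set{0}$; here $h$ is \emph{pluriharmonic} — explicitly $h(\la)=-\tfrac{1}{d-1}\log\abs{\mathrm{Res}(F_\la)}$, a multiple of the logarithm of the non-vanishing holomorphic function $\la\mapsto\mathrm{Res}(F_\la)$ (non-vanishing because $\deg f_\la\geq2$) — and $h$ is precisely the term that renders the right-hand side independent of the choice of lifts. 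To prove the formula I would write $\log\norm{Df_\la}_{FS}(z)$ as the globally defined function $2\log\norm{z}+\log\abs{\mathrm{Jac}\,F_\la(z)}-2\log\norm{F_\la(z)}+\cst$ on $\pu$, integrate it against $\mu_\la=\omega_{FS}+dd^c_z g_\la$, integrate by parts using $dd^c_z\log\abs{\ell^\la_i}=[\set{\ell^\la_i=0}]$ for the linear factors $\ell^\la_1\cdots\ell^\la_{2d-2}=\mathrm{Jac}\,F_\la$ (each vanishing at one critical point) together with the balance property $(f_\la)^\ast\mu_\la=d\,\mu_\la$, which is what produces the term $\log d$, and then collect terms.

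Granting the formula, the conclusion is immediate. The function $G$ is psh on $\hL\times(\cd\setminus\set{0})$, and $\la\mapsto(\la,\widetilde c_i(\la))$ parameterizes the graph of $\widetilde c_i$, a complex submanifold of $\hL\times(\cd\setminus\set{0})$; the restriction of a psh function to a complex submanifold is psh, so $\la\mapsto G(\la,\widetilde c_i(\la))$ is psh, and it is continuous because $G$ is. Adding the $2d-2$ psh functions $G(\cdot,\widetilde c_i(\cdot))$, the pluriharmonic function $h$, and the constant $\log d$ shows that $\chi\rond p$ is continuous and psh on $\hL$. Both properties descend along the finite branched covering $p$: continuity because $p$ is a proper, hence closed, surjection; plurisubharmonicity because $\chi$ is then psh off the branch locus of $p$ and, being continuous, locally bounded, hence psh across that (proper analytic, in particular pluripolar) set. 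Thus $\chi$ is continuous and psh on $\La$, and $\tbif=dd^c\chi$ is a well-defined positive closed $(1,1)$ current.

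The main obstacle is the Manning--DeMarco formula of the second paragraph: the bookkeeping translating $\log\norm{Df_\la}_{FS}$ into homogeneous quantities and carrying it through the integration by parts, and the correct tracking of the resultant correction. This is also what makes the continuity of $\chi$ a genuine statement rather than a formality: the integrand $\log\norm{f'_\la}$ has $-\infty$ poles at the critical points, so the easy weak-$\ast$ continuity of $\la\mapsto\mu_\la$ does \emph{not} by itself yield continuity of $\int\log\norm{f'_\la}\,d\mu_\la$; it is precisely the formula, which isolates the critical-point singularities into the manifestly continuous terms $G(\la,\widetilde c_i(\la))$, that repairs this.
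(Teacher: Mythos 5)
Your proposal is correct and follows essentially the route the paper indicates: it defers to the Manning--Przytycki--DeMarco formula expressing $\chi$ as $\log d$ plus the homogeneous Green function evaluated at lifted critical points plus a pluriharmonic resultant term, and then reads off continuity and plurisubharmonicity from the corresponding properties of $G(\la,z)$, exactly as in the footnote and the surrounding discussion in \S\ref{subs:bifcur}. The only (harmless) caveat is that the precise coefficient of $\log\abs{\mathrm{Res}(F_\la)}$ is not the point; what matters, and what you correctly use, is that this correction is pluriharmonic and compensates the choice of lifts.
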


The continuity of $\chi$ was originally proven by  Mañé \cite{mane}. Actually  $\chi$ is  Hölder continuous, as can easily be seen from DeMarco's formula for $\chi$ (see below), and the joint Hölder continuity in $(\la,z)$ 
of the dynamical Green's function (see also \cite[\S 2.5]{ds-survey} for another approach). 
 
The significance of this definition is justified by the following result \cite{demarco1, demarco2}.  
 
\begin{thm}[DeMarco]
The support of $\tbif$ is equal to $\bif$.
\end{thm}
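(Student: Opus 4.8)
The plan is to prove the two inclusions $\supp(\tbif)\subset\bif$ and $\bif\subset\supp(\tbif)$ separately, both via the local characterization of stability through passive critical points (condition \emph{v.} of Theorem \ref{thm:mss}). After passing to a branched cover we may assume all critical points $c_1,\ldots,c_{2d-2}$ are marked, since $\bif$, $\supp(\tbif)$ and the psh function $\chi$ all pull back correctly under a finite branched cover. The conceptual engine is a \emph{local} version of DeMarco's formula: on a neighborhood where the dynamical Green function is controlled, $\chi$ differs from a pluriharmonic function by a sum of terms governed by the orbits of the critical points, so that $\tbif=dd^c\chi$ localizes the activity of the $c_i$.

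For the inclusion $\supp(\tbif)\subset\bif$, I would show the contrapositive: if $\om$ is a component of $\stab$, then $\chi$ is pluriharmonic on $\om$, hence $\tbif=dd^c\chi\equiv 0$ there. On such $\om$ every marked critical point is passive, so $(f_\la^n(c_i(\la)))_n$ is a normal family; combined with the holomorphic motion of the Julia set (condition \emph{ii.}), the ``escape rates'' $G_\la(c_i(\la))$ of the critical points (where $G_\la$ is the potential of the maximal entropy measure, suitably normalized — in the polynomial case literally the Green function) vary pluriharmonically in $\la$. Via DeMarco's identity expressing $\chi$ up to a pluriharmonic term as a positive combination of these critical escape rates, this forces $dd^c\chi=0$ on $\om$. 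One technical point to dispatch: for rational maps one works on $\pk$-style lifts or uses the fact that $\chi(f_\la)=\log d+\sum_i G_\la^{(\cdot)}(c_i)$ modulo harmonic terms coming from the resultant; I would cite DeMarco's formula from \cite{demarco2} rather than reprove it.

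For the reverse inclusion $\bif\subset\supp(\tbif)$, take $\la_0\in\bif$; by condition \emph{v.} some marked critical point $c=c_i$ is active at $\la_0$. I would argue that activity of $c$ near $\la_0$ forces $\chi$ to be non-pluriharmonic in every neighborhood of $\la_0$, so $\la_0\in\supp(dd^c\chi)$. The cleanest route: restrict to a one-dimensional disk $V\ni\la_0$ transverse to the relevant directions; by Theorem \ref{thm:preperiodic dense} there are parameters $\la_n\to\la_0$ where $c(\la_n)$ lands on a repelling cycle — at such parameters one gets a definite local contribution to $\tbif$ (e.g. the hypersurfaces $\per(n,w)$ or $\percrit$ accumulate there, and their mass feeds the current). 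Alternatively, and more in the spirit of this survey, I would invoke Theorem \ref{thm:volume}/Proposition \ref{prop:support}: if $\la_0\notin\supp(\tbif)$ then near $\la_0$ the currents $f_\la^n(c_i(\cdot))^*\omega_{FS}$ would have locally bounded mass (this is exactly the obstruction measured by $dd^c G_\la(c_i(\la))$), hence by Theorem \ref{thm:volume} the family $(f_\la^n(c_i(\la)))_n$ would be quasi-normal, and a Remark \ref{rmk:normal}-type bound on Lelong numbers (the potentials $G_\la(c_i(\la))$ being locally bounded off $\supp(\tbif)$) would upgrade this to normality — contradicting activity of $c_i$ at $\la_0$.

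The main obstacle I anticipate is the passage from ``$\tbif=0$ near $\la_0$'' to ``all critical points passive near $\la_0$'': $\tbif$ is a \emph{sum} of the individual critical bifurcation currents $T_i=dd^c G_\la(c_i(\la))$, and cancellation between different $T_i$ is conceivable a priori. The standard fix is that each $T_i$ is itself a positive closed current (so there is no cancellation: $\tbif=0 \iff$ each $T_i=0$), which requires knowing that $\la\mapsto G_\la(c_i(\la))$ is psh — a fact one must establish, typically by exhibiting it as a decreasing or locally uniform limit of the psh functions $\tfrac1{d^n}\log^+\|$lift of $f_\la^n(c_i(\la))\|$. Once positivity of each $T_i$ is in hand, both inclusions follow from the $i$-by-$i$ analysis above. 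I would flag this positivity/no-cancellation step as the crux and handle the rest by citation to \cite{demarco1,demarco2} and the prologue results.
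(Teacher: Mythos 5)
Your proof is correct, and for the inclusion $\bif\subset\supp(\tbif)$ it follows the paper's route exactly: DeMarco's formula writes $\tbif=\sum_i T_i$ as a sum of positive closed currents (Proposition \ref{prop:sum ci}), so $\tbif=0$ on an open set forces each $T_i=0$ there, and the identification of $\supp(T_i)$ with the activity locus of $c_i$ (Theorem \ref{thm:activity support}, via Theorem \ref{thm:volume}, Proposition \ref{prop:support} and Remark \ref{rmk:normal}) then gives passivity of every critical point, hence stability by condition \emph{v.} of Theorem \ref{thm:mss}. You are right to single out the ``no cancellation'' point as the crux; the paper leaves it implicit but it is exactly why the formula is useful. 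Where you diverge is the easier inclusion $\supp(\tbif)\subset\bif$: the paper does \emph{not} go through the critical points at all there, but instead uses the approximation formula \eqref{eq:bdm} expressing $\chi(f_\la)$ as a limit of averaged $\log$-multipliers of repelling cycles, together with characterization \emph{i.} of Theorem \ref{thm:mss} (periodic points do not change nature on a stable domain) and the Hartogs lemma, to conclude directly that $\chi$ is pluriharmonic on $\stab$. Your version instead reuses DeMarco's formula and the easy half of Theorem \ref{thm:activity support} (passivity of $c_i$ on $\om$ implies $T_i=0$ on $\om$). Both are valid; yours is more uniform (one formula drives both inclusions, and it needs the marking of critical points for both halves), while the paper's multiplier argument for the first inclusion is independent of critical markings and of the activity-current machinery. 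One small caveat: your appeal to the holomorphic motion of the Julia set (condition \emph{ii.}) to get pluriharmonicity of $G_\la(c_i(\la))$ is not really the mechanism — what you need is precisely that the activity locus contains $\supp(T_i)$, which comes from the exponential growth of derivatives on $\supp(T_i)$ — but since you also cite that result, the argument closes.
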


\begin{proof}[Proof (sketch).]
 The easier inclusion is the fact that $\supp(\tbif)\subset\bif$, or equivalently, that $\chi$ is pluriharmonic on the stability locus. A neat way to see this is to use the following approximation formula, showing that the Lyapunov exponent of $f_\la$ can be read on periodic orbits: for every rational map $f$ of degree $d$,
\begin{equation}\label{eq:bdm}
\chi(f ) = \lim_{n\cv\infty} \unsur{d^n}\sum_{p \text{ repelling of   period } n}  \unsur{n} \log^+\abs{(f^n)'(p)} 
\end{equation}
(see Berteloot \cite{berteloot parma} for the proof). It thus follows from the characterization {\em i.} of Theorem \ref{thm:mss} that if $(f_\la)$ is stable on some open set $\om$, then $\chi$ is pluriharmonic there (notice that by the Hartogs Lemma the pointwise limit of a uniformly bounded sequence of pluriharmonic functions is pluriharmonic).

\medskip

Let us sketch DeMarco's argument for the converse inclusion. It is no loss of generality to assume that all critical points are marked. The main ingredient is a formula relating $\chi$ and the value of the Green's function at critical points (see also Proposition \ref{prop:sum ci} below). We do not state this formula   precisily here\footnote{The expression  of DeMarco's formula for a rational map $f$ is of the form $\chi(f) = \sum_{i=1}^{2d-2} G_F(c_j) + H(f)$, where $G_F$ is the dynamical Green's function of a homogeneous lift $F:\cd\cv\cd$ of $f$, the $c_j$ are certain lifts of the critical points, and $H(f)$  depends pluriharmonically on $f$}, and only  note that it generalizes the  well-known formula due to Przytycki \cite{prz} (see also Manning \cite{manning}) for the Lyapunov exponent of a monic polynomial $P$:
\begin{equation}\label{eq:lyap}
 \chi(P)  = \log d + \sum_{c \text{ critical}} G_P(c)   .
\end{equation}
Here $G_P$ denotes the dynamical Green's function of $P$ in $\cc$, defined by 
$$G_P(z) = \lim_{n\cv\infty} d^{-n}\log^+\abs{P^n(z)}.$$ 

From this one deduces  that if $\chi$ is pluriharmonic in some open set $\om$, then all critical points are passive in $\om$, hence the family is stable  by  Theorem \ref{thm:mss}. 
\end{proof}

A first consequence of this result, which was a   source of motivation in \cite{demarco1}, is that if $\La$ is a Stein manifold (e.g. an affine algebraic manifold), then the components of the stability locus are also Stein.

\medskip

In the most studied  quadratic family $(P_\la(z))= (z^2+\la)_{\la\in \cc}$, we see from \eqref{eq:lyap} that the bifurcation  ``current''  (which is simply a measure in this case)
 is   defined by the  formula  $\mubif  = dd^c G_{P_{\la}}(0)$, where $G_{P_\la}$ is the Green  function. As expected, we recover the usual parameter space measure, that is 
 the harmonic measure of the Mandelbrot set.

\subsection{Marked critical points}\label{subs:marked}
In this paragraph we present a construction due to Favre and the author \cite{preper} of a 
 current associated to the bifurcations of a marked critical point.  It would also be possible to define this current  by lifting the dynamics to $\cd\setminus \set{0}$ and evaluating appropriate dynamical Green's function at the lifted critical points, in the spirit of \cite{demarco1}. However our construction is more instrinsic, and generalizes to other situations. 

Let $(f_\la,c(\la))$ be a holomorphic family of rational maps of degree $d\geq 2$ with a marked critical point. 
Let $f_n:\La\cv\pu$ be defined by $f_n(\la) = f_\la^n(c(\la))$. Let $\omega$ be a Fubini-Study form on  $\pu$.

In the spirit of Section \ref{sec:prologue} we have the following result.

\begin{thm}\label{thm:activity support}
Let  as above $(f,c)$  be a holomorphic family of rational maps of degree $d\geq 2$ with a marked critical point, and  set  $f_n: \la\mapsto f_\la^n(c(\la))$. 
Then the  sequence of currents $(d^{-n}f_n^*\omega)$ converges to a current $T_c$ on $\La$. The support of $T_c$ is the activity locus of $c$. 
\end{thm}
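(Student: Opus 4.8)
The plan is to establish two things: first, the convergence of $(d^{-n}f_n^*\omega)$ to a positive closed current $T_c$ on $\La$, and second, the identification $\supp(T_c) = \{\text{activity locus of } c\}$.

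For the convergence, I would work locally and exploit that $\omega = dd^c g + (\text{something smooth})$ where $g$ is a local potential, but the clean approach is to lift to $\cd\setminus\{0\}$. Choose a homogeneous polynomial lift $F:\cd\cv\cd$ of $f_\la$ depending holomorphically on $\la$, so that $d^{-n}\log\norm{F_\la^n}$ converges locally uniformly (in $(\la,z)$) to the homogeneous Green's function $G_{F_\la}(z)$, which is continuous and plurisubharmonic. Now pick a holomorphic lift $\widetilde{c}(\la)\in\cd\setminus\{0\}$ of the marked critical point $c(\la)$ (possible locally, or after a cover; and the final current will not depend on this choice since two lifts differ by a non-vanishing holomorphic factor whose $\log\abs{\cdot}$ is pluriharmonic). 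Set $g_n(\la) = d^{-n}\log\norm{F_\la^n(\widetilde c(\la))}$. Then $g_n$ is plurisubharmonic on $\La$ (pullback of a psh function by a holomorphic map) and converges locally uniformly to $G_c(\la):=G_{F_\la}(\widetilde c(\la))$, which is therefore continuous and psh. A direct local computation shows that $dd^c g_n = d^{-n}f_n^*\omega$ up to pluriharmonic error coming from the lift, so $d^{-n}f_n^*\omega = dd^c g_n + (\text{pluriharmonic})$ converges to $T_c := dd^c G_c$ (the pluriharmonic term being independent of $n$; more carefully, one should absorb the lift data so that the error is genuinely $n$-independent). This gives the first assertion, and also exhibits $G_c$ as a continuous potential for $T_c$.

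For the support identification, I would argue both inclusions. If $c$ is passive on an open set $\om$, then by definition $(f_n)=(f_\la^n(c(\la)))$ is a normal family on $\om$; all limit values avoid critical or behave tamely, and in particular $d^{-n}f_n^*\omega \to 0$ on $\om$ (the derivatives of $f_n$ stay bounded on compacts, so $f_n^*\omega$ has bounded mass, hence $d^{-n}f_n^*\omega \to 0$), whence $T_c = 0$ on $\om$ and $\om\cap\supp(T_c)=\emptyset$. Conversely, suppose $T_c \equiv 0$ on an open set $\om$, i.e. $G_c$ is pluriharmonic on $\om$. Here I invoke Proposition \ref{prop:support}: the potential estimate \eqref{eq:mass} holds because $d^{-n}f_n^*\omega - T_c = dd^c(g_n - G_c)$ and $\norm{g_n - G_c}_{L^\infty} = O(d^{-n})$ by the standard telescoping estimate for Green's functions of homogeneous maps, so $\bra{d^{-n}f_n^*\omega - T_c,\varphi} = O(d^{-n})$ for every test $\varphi$. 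Proposition \ref{prop:support} then yields that $(f_n)$ is quasi-normal outside $\supp(T_c)$, and in fact normal (by Remark \ref{rmk:normal}, since the potentials $g_n$ are locally uniformly bounded, so the Lelong numbers of any cluster current vanish and no bubbling occurs). Therefore $c$ is passive on $\om$, i.e. $\om\cap(\text{activity locus})=\emptyset$.

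The main obstacle, I expect, is the bookkeeping around the homogeneous lift: making precise that $d^{-n}f_n^*\omega$ differs from $dd^c g_n$ by an $n$-independent (or $O(d^{-n})$-controlled) pluriharmonic term, and that the resulting $T_c$ is independent of all the choices (lift $F$, lift $\widetilde c$, local trivializations). This is the place where one must be careful that $\omega$ being only a Fubini–Study form — not exact — does not spoil the argument; the resolution is that $\pi^*\omega = dd^c\log\norm{\cdot}$ on $\cd\setminus\{0\}$ with $\pi:\cd\setminus\{0\}\to\pu$ the canonical projection, so pulling back along $\la\mapsto F_\la^n(\widetilde c(\la))$ and dividing by $d^n$ gives exactly $dd^c g_n$, and the discrepancy with $d^{-n}f_n^*\omega$ comes solely from the holomorphic, hence pluriharmonic, normalizing factors relating $F_\la^n(\widetilde c(\la))$ to an honest lift of $f_\la^n(c(\la))$. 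Once this is set up cleanly, the convergence is immediate and the support statement is a direct application of the tools from Section \ref{sec:prologue}.
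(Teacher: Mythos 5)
Your proof is correct. The support identification is argued exactly as in the paper: both directions reduce to the quantitative potential estimate $\norm{g_n-g_\infty}_{L^\infty}=O(d^{-n})$ fed into Proposition \ref{prop:support} and Remark \ref{rmk:normal} for one inclusion, and to the boundedness of the derivatives of $f_n$ (hence of the mass of $f_n^*\omega$) on passivity components for the other. Where you genuinely diverge is in the construction of $T_c$ and of the approximating potentials. You lift to homogeneous maps $F_\la$ on $\cd$ and take $g_n(\la)=d^{-n}\log\norm{F_\la^n(\widetilde c(\la))}$, converging geometrically to the homogeneous Green function evaluated at a lifted critical point; this is precisely the alternative route the author mentions and declines in the paragraph preceding the theorem (``in the spirit of DeMarco''). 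The paper instead works intrinsically on $\hL=\La\times\pu$ with the fibered map $\hf$: writing $d^{-1}\hf^*\hom-\hom=dd^cg_0$, the potentials $g_n=\sum_{j=0}^{n-1}d^{-j}g_0\circ\hf^j$ converge uniformly with the same $O(d^{-n})$ rate to $g_\infty$, where $\hT=\hom+dd^cg_\infty$, and $T_c=(\pi_1)_*\lrpar{\hT\rest{\widehat c}}$ is obtained by slicing $\hT$ along the graph $\widehat c$. Your route buys an explicit global potential for $T_c$ (consistent with the formulas $g_i=g_{c,a}(c_i)$ used later for polynomials), at the cost of the bookkeeping with lifts, which you correctly dispose of since changing the lift of $c$ or of $f_\la$ alters $g_n$ only by pluriharmonic terms that $dd^c$ kills. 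The paper's route avoids lifts, exhibits $T_c$ as a slice of the current $\hT$ interpolating the maximal measures $\mu_\la$, and is the formulation that transfers to marked non-critical points and to the Kleinian-group setting of Section \ref{sec:kleinbif}. One small imprecision on your side: in invoking Remark \ref{rmk:normal} it is not the $g_n$ themselves but the potentials $d^n(g_n-g_\infty)$ of $f_n^*\omega$ on the set where $T_c$ vanishes that are uniformly bounded; your $O(d^{-n})$ estimate gives exactly this, so the argument stands.
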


By definition,  $T_c$ is \textit{the bifurcation current} (also referred to as the {\em activity current}) associated to $(f,c)$.

\begin{proof}[Proof (sketch)]
The convergence of the sequence of currents 
 $(d^{-n}f_n^*\omega)$ does not follow from the general formalism of Section \ref{sec:prologue}. The proof relies  on   equidistribution  results for preimages of points under $f^n$ instead. For this, it is convenient to   consider the product dynamics on $\La\times \pu$. 
Let  $\hL = \Lambda \times \pu$. The family $f_\lambda$ lifts to a holomorphic map
$\hf : \hL \to \hL$ mapping   $(\lambda, z) $ to $(\lambda,
f_\lambda(z))$.  We denote by  $\pi_1: \hL \cv \Lambda$ and $\pi_2: \hL
\to \pu$ the natural projections, and let $\widehat \omega = \pi_2^*\omega$. 

The following     proposition follows from classical techniques in higher dimensional holomorphic dynamics.

\begin{prop}  
 Let    $(f_\la)$  be a holomorphic family of rational maps of degree $d\geq 2$, and $\hf$ be its lift to $\La\times \pu$ as above. Then the sequence of currents  $d^{-n}\hf^{n*} \widehat\omega$ converges to a limit  $\hT$ in  $\La\times \pu$.
\end{prop}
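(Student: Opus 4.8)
Proof plan for the Proposition (convergence of $d^{-n}\hf^{n*}\hom$ on $\La\times\pu$).

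The plan is to treat $\hf:\hL\to\hL$ as a fibered holomorphic endomorphism and construct the limiting current as the $dd^c$ of a fibered Green function, following the standard pullback-iteration argument. First I would observe that $\hom = \pi_2^*\omega$ is a smooth closed positive $(1,1)$-form which is \emph{not} K\"ahler on $\hL$ (it is degenerate in the $\La$-directions), but that this does not matter: what is needed is a cohomological normalization in the fiber direction. Since $f_\la$ has degree $d$, on each fiber $\{\la\}\times\pu$ the class of $\hf^*\hom$ restricted to the fiber is $d$ times the class of $\hom$ restricted to the fiber. Hence $\hf^*\hom - d\,\hom$ is a smooth closed $(1,1)$-form whose restriction to every fiber is exact, and one can write $\hf^*\hom = d\,\hom + dd^c \gamma$ for a smooth function $\gamma$ on $\hL$ (locally on $\La$; the function $\gamma$ can be produced fiberwise using the explicit potential of $\omega$ pulled back by $f_\la$, with holomorphic dependence on $\la$, so it is genuinely smooth in all variables).

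Next I would iterate: applying $\hf^{k*}$ to the identity $\hf^*\hom = d\,\hom + dd^c\gamma$ and dividing by $d^{k+1}$, one gets
\begin{equation*}
\unsur{d^{n}}\hf^{n*}\hom = \hom + dd^c\lrpar{\sum_{k=0}^{n-1}\unsur{d^{k+1}}\gamma\circ\hf^{k}}.
\end{equation*}
The partial sums $g_n := \sum_{k=0}^{n-1} d^{-k-1}\,\gamma\circ\hf^{k}$ converge uniformly on $\hL$ because $\gamma$ is bounded (being continuous on the compact-fibered space, locally in $\La$) and the series is dominated by a geometric series with ratio $1/d<1$. Let $g = \lim g_n$; then $g$ is continuous, and passing to the limit gives $d^{-n}\hf^{n*}\hom \to \hom + dd^c g =: \hT$ in the sense of currents (uniform convergence of the potentials $g_n\to g$ forces weak convergence of the $dd^c$'s). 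This $\hT$ is positive and closed as a decreasing-in-mass limit — actually one checks positivity directly since $\hom + dd^c g_n = d^{-n}\hf^{n*}\hom \geq 0$ and the limit of positive currents is positive.

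The one genuine point requiring care is the uniformity needed to write $\hf^*\hom = d\hom + dd^c\gamma$ with $\gamma$ \emph{globally bounded on $\pi_1^{-1}(\La')$} for $\La'\Subset\La$, and smooth jointly in $(\la,z)$. I expect this to be the main (though routine) obstacle: one must choose the fiberwise potential of $\omega$ carefully — e.g. using a homogeneous lift $F_\la:\cd\to\cd$ of $f_\la$ depending holomorphically on $\la$ and the associated function $\log\norm{F_\la(Z)}/\norm{Z}^d$ — to guarantee that the correction $\gamma$, and hence the bound $\norm{\gamma}_{L^\infty(\pi_1^{-1}(\La'))}$, is locally uniform in $\la$. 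Once that is in place, everything else is the classical telescoping argument. Alternatively, and more in the spirit invoked in the text, one can instead prove convergence via equidistribution of preimages $d^{-n}\hf^{n*}[\{a\}\times\pu]$ of generic vertical hypersurfaces toward a common limit, using the Dinh--Sibony machinery already cited; but the potential-theoretic telescoping argument above is the most self-contained route and I would present that.
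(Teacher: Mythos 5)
Your argument is correct and is essentially the paper's own proof: the paper establishes the convergence by writing $d^{-1}\hf^*\hom - \hom = dd^c g_0$ for a (continuous, locally uniformly bounded) function $g_0$, iterating to get $d^{-n}\hf^{n*}\hom - \hom = dd^c g_n$ with $g_n = \sum_{j=0}^{n-1} d^{-j} g_0\circ \hf^j$, and concluding by uniform convergence of this geometric series, exactly as you do. Your remark that the potential should be produced via a homogeneous lift $F_\la$ and $\log\lrpar{\norm{F_\la(Z)}/\norm{Z}^d}$ is also the standard way to secure the local uniform bound on $g_0$, so nothing is missing.
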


The current $\hT$ should be understood as ``interpolating'' the family of maximal measures $\mu_\la$.

Let  $\widehat c = \{ (\lambda, c(\lambda)), \ \la \in \La\}\subset \hL$ (resp. $\hf^n(\widehat c)$) be the graph of $c$ (resp. $f^n(c)$). As observed in Lemma \ref{lem:dim1}, $d^{-n}f_n^*\omega = (\pi_1)_* \lrpar{d^{-n} \widehat\omega\rest{\hf^n(\widehat c)}}$.
Now $\hf^n$ induces a biholomorphism  $\widehat c \cv \hf^n(\widehat c)$, so 
$d^{-n} \widehat\omega\rest{\hf^n(\widehat c)} = d^{-n}\big((\hf^n)^*\widehat\omega\big)\rest{ \widehat c}$.
Thus we see  that the bifurcation current $T_c$  is obtained by slicing  $\hT$ by the hypersurface 
$\widehat c$ and projecting down to $\La$: $T_c = (\pi_1)_*\lrpar{\hT\rest{\widehat c}}$.

Making this precise actually requires
 a precise control on the convergence of the sequence $(d^{-n}(\hf^{n})^* \widehat\omega)$  to  $\hT$.  
This  follows from the following classical computation:   write   $ {d}\inv\hf ^*
 \hom  - \hom = dd^c g_1$. Then 
 $  d^{-n}(\hf^n )^* \hom  - \hom = dd^c g_n$, where $g_n = \sum_{j=0}^{n-1} d^{-j} g_0\circ \hf^j$. Therefore $(g_n)$ converges uniformly to   $g_\infty$, with $\hom + dd^c g_\infty = \hT$.  
 
 In particular we have that 
$\abs{g_n - g_\infty} = O(d^{-n})$, which implies that the assumption 
 \eqref{eq:mass} in Proposition \ref{prop:support} is satisfied.
Hence the family $(f^n_\la(c(\la))$ is quasi-normal outside $T_c$.  To see that it is actually normal, 
we notice that the  
  uniform control on the potentials  allows to apply Remark \ref{rmk:normal}.

\medskip

Conversely, $\supp(T_c)$ is contained in the activity locus. Indeed it follows
 from the explicit expression of $d^{-n}f_n^*\omega$ in local coordinates on 
$\La$ that if $U\subset\La$ is an open set intersecting $\supp(T_c)$,
 the $L^2$ norm of the derivative of $f_n$ (relative to the spherical metric on $\pu$)  grows exponentially in $U$. The result follows. 
  \end{proof}
   
Observe  that the fact that $c$ is a critical point does not play any role here. We might as well associate activity/passivity  loci and a bifurcation current to any holomorphically moving point $(a(\la))$, and Theorem \ref{thm:activity support} holds in this case (this type of considerations appear   e.g. in \cite{baker-demarco}).

\medskip

For the quadratic family $(P_\la(z)) = (z^2+\la)_{\la\in \cc}$, which has a  marked critical point at 0, one easily checks that the current $\hT$ is defined in $\cc\times \cc$ by the formula 
$\hT = dd^c_{(\la,z)} G_{P_\la}(z)$, and that  the bifurcation  current  associated to the critical point is again $dd^c_{\la} G_\la(0)=\mubif$.   

\medskip

More generally,  DeMarco's formula for the Lyapunov exponent of a rational map gives
the relationship between these currents and the bifurcation current $\tbif$ defined in  \S\ref{subs:bifcur}.
 
\begin{prop}\label{prop:sum ci}
Let  $(f_\la)$ be a family of rational maps with   all critical points marked $\set{c_1, \ldots, c_{2d-2}}$, then $\tbif = 
\sum T_i$, where $T_i$ is the bifurcation current associated to  $c_i$. 
\end{prop}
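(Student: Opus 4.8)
The plan is to deduce the identity $\tbif = \sum_{i=1}^{2d-2} T_i$ directly by comparing the potentials of the two sides. By definition $\tbif = dd^c\chi$, and each $T_i$ is the limit of $d^{-n}f_{i,n}^*\omega$ where $f_{i,n}(\la) = f_\la^n(c_i(\la))$; from the proof of Theorem \ref{thm:activity support} (and the uniform control on potentials established there) we know that $T_i = dd^c g_{i,\infty}$ for an explicit psh potential $g_{i,\infty}$ obtained by slicing the telescoping potential of $\hT$ along the graph $\widehat c_i$. So the proposition will follow if I can show that $\chi - \sum_i g_{i,\infty}$ is pluriharmonic on $\La$.

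First I would recall DeMarco's formula for the Lyapunov exponent of a rational map, in the form quoted in the footnote after Theorem [DeMarco]: for a homogeneous lift $F:\cd\cv\cd$ of $f_\la$, one has $\chi(f_\la) = \sum_{j=1}^{2d-2} G_F(\tilde c_j(\la)) + H(f_\la)$, where $G_F$ is the Green function of $F$, the $\tilde c_j$ are suitable lifts of the critical points, and $H$ depends pluriharmonically on $\la$. The second step is to identify, for each $i$, the term $G_F(\tilde c_i(\la))$ with the potential $g_{i,\infty}$ of $T_i$ up to a pluriharmonic (indeed, up to an explicit harmonic-along-fibers) correction: both are built as uniform limits of $d^{-n}\log$ of the norm of the $n$-th iterate applied to a lift of the critical point, and the difference between "evaluating the homogeneous Green function on a chosen lift" and "slicing $\hT$ along $\widehat c_i$ and pushing forward" is governed by the discrepancy between $\hom$ and the pullback of the Fubini–Study form to $\cd\setminus\{0\}$, which is $dd^c$ of a smooth function pulled back from $\pu$ — hence contributes nothing to $dd^c_\la$ after restriction to the graph. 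Taking $dd^c_\la$ of DeMarco's formula then gives $\tbif = dd^c_\la\big(\sum_i G_F(\tilde c_i)\big) = \sum_i dd^c_\la\, g_{i,\infty} = \sum_i T_i$.

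The main obstacle is bookkeeping the several layers of potentials and making sure every correction term is genuinely pluriharmonic: one must check that the choice of local lift $\tilde c_i$ of $c_i$ (which is only defined up to multiplication by a nonvanishing holomorphic function, since $c_i:\La\cv\pu$) changes $G_F(\tilde c_i)$ only by a pluriharmonic function, and likewise that the normalization of the homogeneous lift $F$ of the family contributes only to the pluriharmonic term $H$. Concretely, if $\tilde c_i$ and $\tilde c_i'$ are two lifts with $\tilde c_i' = \varphi\,\tilde c_i$ for a nonvanishing holomorphic $\varphi$, then $G_F(\tilde c_i') - G_F(\tilde c_i) = \log|\varphi|$ by the homogeneity property $G_F(tZ) = G_F(Z) + \log|t|$ of the Green function, which is pluriharmonic; a similar computation handles a change of lift $F \mapsto \alpha F$. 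Once these invariance checks are in place, the identity of currents is immediate. Since all potentials involved were shown in the proof of Theorem \ref{thm:activity support} to be continuous (even Hölder), no regularity subtlety arises in taking $dd^c$.
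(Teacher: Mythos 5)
Your proposal is correct and follows exactly the route the paper intends: the paper gives no detailed argument for this proposition, simply invoking DeMarco's formula $\chi(f)=\sum_j G_F(\tilde c_j)+H(f)$ and leaving the identification of $dd^c_\la G_{F_\la}(\tilde c_i(\la))$ with $T_i$ (via the homogeneity $G_F(tZ)=G_F(Z)+\log|t|$ and the bounded-potential comparison with the slicing construction of Theorem \ref{thm:activity support}) to the reader. Your write-up supplies precisely those missing verifications, so it is the same proof with the bookkeeping made explicit.
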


\subsection{Equidistribution of critically preperiodic parameters}
We keep hypotheses as before, that is we work  with a family of rational maps with a marked critical point $(f,c)$. Our goal is to  show that the bifurcation current $T_c$ is the limit in the sense of currents of sequences of dynamically defined codimension 1 subvarieties. 

The first result follows directly from Theorem \ref{thm:equidist abstrait}. It  is  a quantitative version of  the fact that near an activity point, $f^n_\la(c(\la))$ assumes almost every value in $\pu$. 

\begin{thm}\label{thm:nanamarre}
 Let $(f_\la, c(\la))_{\la\in \La}$ be a holomorphic family of rational maps of degree $d\geq 2$  with a marked critical point, and $T_c$ be the associated bifurcation current. 
There exists a pluripolar  exceptional set $\mathcal{E}\subset \pu$ such that if $a\notin  \mathcal{E}$, then 
$$\lim_{n\cv\infty} \unsur{d^n} [H_{n,a}] \cv T_c, \text{ where } H_{n,a}=\set{\la, \ f^n_\la(c(\la)) = a}.$$
\end{thm}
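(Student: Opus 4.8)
The plan is to reproduce the proof of part {\em i.}\ of Theorem~\ref{thm:equidist abstrait}, with the normalization $d^n$ in place of the intrinsic normalization $d_n=\int_\La f_n^*\omega\wedge\beta^{\dim\La-1}$ used there. We apply it with $X=A=\pu$, taking for $(H_a)_{a\in\pu}$ the family of \emph{all} points of $\pu$: these are the hyperplane sections of the identity embedding $\pu=\pp^1$, and by Crofton's formula the average current of integration over the points of $\pu$ is a Fubini--Study form, which has locally bounded (indeed smooth) potentials, so this family is substantial. With $f_n(\la)=f_\la^n(c(\la))$ as in the statement, the two properties of $d_n$ invoked in that proof --- that $d_n\inv f_n^*\omega$ has locally bounded mass, and that $\sum d_n\inv$ converges --- come here for free: by Theorem~\ref{thm:activity support} the sequence $d^{-n}f_n^*\omega$ actually \emph{converges}, to $T_c$, and $\sum_n d^{-n}<\infty$ since $d\geq2$.

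Since pluripolarity is a local notion on $\pu$, fix a coordinate chart $\Omega\cong\dd$ for the parameter $a$. By Lemma~\ref{lem:uax} (applied to $X=\pu$, $\iota=\mathrm{id}$, $A=\Omega$) there are negative functions $a\mapsto u_a$, psh in $a\in\Omega$, with $\norm{u_a}_{L^1(\pu)}$ locally uniformly bounded and $dd^c_z u_a=[a]-\omega$. Pulling back by $f_n$ gives $d^{-n}\lrpar{f_n^*[H_a]-f_n^*\omega}=dd^c\lrpar{d^{-n}\,u_a\circ f_n}$, so it suffices to show that $d^{-n}\,u_a\circ f_n\to0$ in $L^1_{\rm loc}(\La)$ for $a$ outside a pluripolar subset of $\Omega$. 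Fix a positive compactly supported measure $m$ on $\Omega$ for which psh functions are integrable. By Lemma~\ref{lem:bdd} the function $z\mapsto\int u_a(z)\,dm(a)$ is bounded on $\pu$, so Fubini's theorem gives, for every $\La'\Subset\La$,
\begin{multline*}
\int_\Omega\lrpar{d^{-n}\int_{\La'}\abs{u_a\circ f_n(\la)}\,d\la}dm(a)
=d^{-n}\int_{\La'}\lrpar{\int_\Omega\abs{u_a(f_n(\la))}\,dm(a)}d\la\\
\leq C(\La')\,d^{-n},
\end{multline*}
with $C(\La')$ independent of $n$. The Chebyshev inequality, the convergence of $\sum_n d^{-n}$ and the Borel--Cantelli lemma then give $d^{-n}\,u_a\circ f_n\to0$ in $L^1_{\rm loc}(\La)$ for $m$-a.e.\ $a$. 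Since a non-pluripolar compact subset of $\Omega$ carries a Monge--Amp\`ere measure of a bounded psh function, for which psh functions are integrable (Bedford--Taylor), it follows that $\mathcal E$ is pluripolar.

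To conclude, for $a\notin\mathcal E$ we have $d^{-n}\lrpar{f_n^*[H_a]-f_n^*\omega}\to0$, while $d^{-n}f_n^*\omega\to T_c$ by Theorem~\ref{thm:activity support}; hence $\tfrac1{d^n}[H_{n,a}]=\tfrac1{d^n}f_n^*[a]\to T_c$, which is the assertion. I expect the only point requiring real care to be this normalization bookkeeping: Theorem~\ref{thm:equidist abstrait} cannot be quoted verbatim, since its conclusion is stated with $d_n\inv$, and $d_n$ is comparable to $d^n$ only where $\int T_c\wedge\beta^{\dim\La-1}$ is finite and positive --- that is, on relatively compact pieces of $\La$ meeting the activity locus. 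The argument above avoids $d_n$ altogether, so it covers the passive case uniformly; apart from this, all the substance is already contained in Theorem~\ref{thm:activity support} and in the pluripotential-theoretic machinery of Section~\ref{sec:prologue}, and no individual step presents a serious obstacle.
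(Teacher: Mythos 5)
Your proof is correct and follows essentially the same route as the paper, which deduces the result directly from Theorem~\ref{thm:equidist abstrait} (case {\em i.}) combined with the convergence $d^{-n}f_n^*\omega\to T_c$ of Theorem~\ref{thm:activity support}. Your observation that the abstract theorem cannot be quoted verbatim because of the $d_n$ versus $d^n$ normalization is well taken, and rerunning its proof with $d^n$ (using that $d^{-n}f_n^*\omega$ has locally bounded mass and $\sum d^{-n}<\infty$) is exactly the right fix.
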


Note that $H_{n,a}$ is defined not only as a set, but as an analytic subvariety, with a possible multiplicity. It is likely that the size of the exceptional set can be estimated more precisely. 

\begin{question}
Is the exceptional set in Theorem \ref{thm:nanamarre} finite, as in the case of a single mapping? 
\end{question}

\medskip

It is dynamically  more significant to study the distribution of parameters for which $c$ becomes periodic (resp. preperiodic), that is, to try to make Theorem \ref{thm:preperiodic dense} an  equidistribution result. This is expected to be more difficult because in this case  the  set of targets that $f^n_\la(c(\la))$ is supposed to hit (the set of periodic points, say) is both countable and moving with $\la$. 

Let $e\in \set{0,1}$ be the cardinality of the exceptional set of $f_\la$ for generic $\la$. If $e=2$, then the family is trivial. If $e=1$, it is conjugate to a family of polynomials. Given two integers $n>m\geq 0$ denote by  $\percrit(n,m)$  the subvariety of $\La$ defined by the (non necessarily reduced) equation
 $f^n_\la(c(\la)) = f^m_\la(c(\la))$.

It is convenient to adopt the convention that $[\La]=0$. This means that if some subvariety $V$ like $\percrit(n,k)$ turns out to be equal to $\La$, then  we declare that $[V] = 0$.  

 The following equidistribution theorem was obtained in \cite{preper}.
 
\begin{thm}[Dujardin-Favre] \label{thm:cvg preper}
Let $(f,c)$ be a non-trivial holomorphic family of rational maps on  $\pu$   of degree $d\geq 2$, with a marked critical point, and  denote by $e$   the generic cardinality of the exceptional set. Assume furthermore that the following technical assumption is satisfied:
\begin{itemize}
\item[{\rm (H)}]{\it for every $\lambda \in \Lambda$, there exists an immersed curve  $\Gamma 
\subset \Lambda$  through 
    $\lambda$ such that the complement of the set  $\{ \lambda,
    \, c(\lambda) \text{is attracted by a periodic cycle}\}$ is relatively compact in   $\Gamma$.}
\end{itemize}
Then for every sequence $0\le k(n) <
n$, we have that
 $$\lim_{n\cv\infty} \frac{[\percrit(n,k(n))]}{d^n+d^{(1-e)k(n)}}  = T_c$$
\end{thm}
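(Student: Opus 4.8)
The plan is to deduce Theorem \ref{thm:cvg preper} from Theorem \ref{thm:activity support} together with the general equidistribution machinery of Section \ref{sec:prologue}, by comparing $\percrit(n,k)$ with the "target-hitting" divisors $H_{n,a}$ of Theorem \ref{thm:nanamarre}. The key point is that $f^n_\la(c(\la)) = f^m_\la(c(\la))$ says precisely that the moving point $f^m_\la(c(\la))$ is periodic of (not necessarily minimal) period $n-m$ under $f_\la$. The natural strategy is to pull back, via the iterated map $\la \mapsto f^m_\la(c(\la))$, the parameter-space incidence divisor describing "$z$ is a point of period $p$ of $f_\la$" sitting in $\hL = \La \times \pu$.

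First I would analyze the dynamical-space side. For fixed $\la$ and fixed $p$, the fixed points of $f_\la^p$ form a divisor on $\pu$ of degree $d^p+1$; letting $\la$ vary, these assemble into a hypersurface $\Gamma_p \subset \hL$. The current of integration $[\Gamma_p]$ can be written $(f^p)^*[\text{diagonal}]$-type expression, and the main analytic input — borrowed from the proof of Theorem \ref{thm:activity support} and the convergence $d^{-n}\hf^{n*}\hom \to \hT$ with exponential control — is that $d^{-p}[\Gamma_p] \to \hT$ in $\hL$ with an $O(d^{-p})$ error on potentials. Then, exactly as in Theorem \ref{thm:activity support}, one slices by the graph of $f^m_\la(c(\la))$ (equivalently, pulls back by $\hf^m$ restricted to $\widehat c$ and projects), which is legitimate because $\supp(\hT)$ does not contain that graph off the activity locus, and more robustly because the potentials are uniformly controlled. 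This produces $[\percrit(n,m)]$ as (essentially) the pushforward of $[\Gamma_{n-m}]\rest{\hf^m(\widehat c)}$, and after normalizing, $d^{-(n-m)}\cdot(\text{pushforward}) \to d^{-m}(\pi_1)_*(\hT\rest{\hf^m(\widehat c)})$, which by the computation in Theorem \ref{thm:activity support} equals $d^{-m}(\hf^m)^*\hT$-slice, i.e. it recovers $T_c$ up to the normalizing factor. One must bookkeep two contributions: the "$f^n(c)$ large" part scaling like $d^n$ and the "$f^m(c)$ hits a periodic point" part scaling like $d^{(1-e)m}$ — the factor $(1-e)$ entering because in the polynomial case ($e=1$) one of the $d^{n-m}+1$ fixed points is the superattracting point at infinity and must be discarded, so the relevant count drops by a factor $d^{n-m}$. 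This is the source of the peculiar denominator $d^n + d^{(1-e)k(n)}$.

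The step I expect to be the main obstacle is controlling the error uniformly in $m$ when $m = k(n)$ is allowed to grow with $n$, and in particular ruling out a loss of mass (escape to a "diagonal-type" or exceptional contribution) in the slicing when $k(n) \to \infty$. Concretely: the divisor $H_{n,a}$ of Theorem \ref{thm:nanamarre} equidistributes only for $a$ outside a pluripolar set, whereas here the "targets" are the $d^{n-k}+1$ periodic points, which move with $\la$ and could in principle concentrate near the (moving) exceptional set of $f_\la$ — this is exactly why hypothesis (H) is imposed. The role of (H) is to furnish, through every parameter, a curve $\Gamma$ along which $c$ is attracted to a cycle off a compact set, so that along $\Gamma$ the masses $d^{-n}\int_\Gamma f_n^*\omega$ and the comparison currents have no bubbling at the ends (cf. the no-bubbling consequence of Lemma \ref{lem:dim1} and Corollary \ref{cor:derivative}); one then propagates equidistribution from such curves to all of $\La$ by a slicing/continuity argument. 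Handling the non-reduced structure of $\percrit(n,m)$ (multiplicities when $n-m$ is not the minimal period, or at parabolic parameters) is a further technical annoyance, but it only affects lower-order mass and is absorbed into the error term.

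Finally I would assemble the pieces: split $[\percrit(n,k(n))]$ according to the two scales, apply the equidistribution-with-exponential-control of Section \ref{sec:prologue} (Proposition \ref{prop:support} and Remark \ref{rmk:normal}) on the activity side to get convergence of the dominant term to $T_c$ after dividing by $d^n + d^{(1-e)k(n)}$, and check that the subdominant contributions are $o(1)$ in mass thanks to (H). The conclusion $\lim_n [\percrit(n,k(n))]/(d^n+d^{(1-e)k(n)}) = T_c$ then follows, with the convention $[\La]=0$ absorbing the degenerate cases where $\percrit(n,k(n))$ happens to be all of $\La$ (e.g. integrable or flexible-Lattès-type families, which the non-triviality and marking hypotheses are designed to exclude generically).
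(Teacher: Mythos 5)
Your proposal is, in essence, the heuristic slicing argument that the paper itself presents just before its proof and explicitly flags as \emph{not} being a proof. The fatal step is where you slice: convergence of the normalized periodic-point divisors $d^{-p}[\Gamma_p]\to\hT$ (equivalently $L^1_{\rm loc}$ convergence of their potentials on $\La\times\pu$) gives no information whatsoever about the restriction of those potentials to the measure-zero hypersurface $\widehat c$ (or $\hf^{k(n)}(\widehat c)$). Concretely, the quantity you must control is $d^{-n}\log\abs{f^n_\la(c(\la))-f^{k(n)}_\la(c(\la))}$ as a function of $\la$ alone, and the danger is that $f^n_\la(c(\la))$ comes super-exponentially close to $f^{k(n)}_\la(c(\la))$ on a non-pluripolar set of parameters without the two being equal; no amount of "uniform control on the potentials of $\hT$" rules this out, because the singular parts of the potentials of $[\Gamma_{n-k(n)}]$ move with $n$ and can shadow the graph. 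Your proposed fix --- "propagate equidistribution from the curves of (H) to all of $\La$ by a slicing/continuity argument" --- is exactly the missing step restated, not an argument.

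The actual proof abandons the slicing picture and works directly on $\La$: one writes the explicit psh potential $u_n(\la)=d^{-n}\log\abs{f^n_\la(c(\la))-f^{k(n)}_\la(c(\la))}$ of the normalized divisor and proves $u_n\to G_\la(c(\la))$ in $L^1_{\rm loc}$ (in the polynomial case) by a case-by-case analysis of the behavior of the critical orbit. The easy regimes (orbit escaping, or attracted to a cycle) are handled by direct estimates; the delicate regime is treated by a potential-theoretic argument in the spirit of Brolin's theorem, using the maximum principle along the curves $\Gamma$ furnished by (H), where the compactness of the non-attracted locus in $\Gamma$ is what makes the maximum principle bite. This is where (H) enters --- not, as you suggest, to prevent bubbling of graphs. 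As a secondary point, your explanation of the denominator $d^n+d^{(1-e)k(n)}$ is garbled: discarding one fixed point among $d^{n-k}+1$ does not change a count by a factor $d^{n-k}$; the exponent $(1-e)k(n)$ reflects the degree in $\la$ of the second term $f^{k(n)}_\la(c(\la))$ in the defining equation, which degenerates when the family has a persistent totally invariant (exceptional) point.
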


 Notice that with our convention,  if $c(\la)$ is periodic throughout the family, then  both sides of the equidistribution equation  vanish. 

\begin{question}
Is assumption (H) really necessary?
\end{question} 

One might at least try to replace it by a more tractable  condition like  $\La$ being an algebraic family --compare with Theorem \ref{thm:equidist speed}. 
It is easy to see that (H) holds e.g. in the space of all polynomial  or rational maps of degree $d$ (see \cite{preper}). 

\medskip 

In the 1-parameter family $(z^d+\la)_{\la\in\cc}$ of unicritical polynomials of degree $d$,
the theorem implies the equidistribution of the centers of components of the degree $d$ Mandelbrot set  
 \begin{equation}\label{eq:equidist mandelbrot}
  \lim_{n\cv\infty} \unsur{d^n} [\percrit(n,0)]  = 
\lim_{n\cv\infty} \frac1{d^n}\sum_{f_c^n(0)=0} \delta_c = \mubif.
\end{equation}
This result had previously been proven by Levin \cite{levin} (see also McMullen \cite{mcm equidist}). 

Another interesting approach to this type of  equidistribution statements is to use 
arithmetic methods based on height theory (following work of Zhang, Autissier, Chambert-Loir, Thuillier, Baker-Rumely, and others). In particular a proof of \eqref{eq:equidist mandelbrot} along these lines was obtained, prior to \cite{preper}, 
 by Baker and Hsia  in  \cite[Theorem~8.15]{baker-hsia}).  

\medskip

 Since the varieties  $\percrit(n,0)$ and $\percrit(n,k)$ have generally many irreducible components, 
(e.g. $\percrit(n-k, 0)\subset\percrit(n,k)$), and since it is difficult  to control multiplicities, the theorem 
does not directly imply that $T_c$ is  approximated by parameters where $c$ is genuinely preperiodic.
To ensure this, we use a little trick based on the fact that $T_c$ gives no mass to subvarieties (since it has local continuous potentials).

Denote by  $\prepercrit(n,k)\subset\percrit(n,k)$ be the union of irreducible components of $\percrit(n,k)$ (with their multiplicities) along which $c$ is strictly preperiodic at generic parameters. As sets we have that $$\prepercrit(n,k) =\overline{\percrit(n,k)\setminus \percrit(n-k,0)}.$$

\begin{cor}\label{cor:strpreper}
Under the assumptions of the theorem, if $k$ is fixed, then
$$\unsur{d^n+d^{(1-e)(n-k)}}[\prepercrit({n,n-k})] \cv T_c~.$$
\end{cor}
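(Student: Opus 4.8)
The plan is to read the statement off Theorem~\ref{thm:cvg preper} applied to the single sequence $k(n)=n-k$, together with the elementary but crucial observation that, \emph{for fixed $k$}, the ``periodic part'' of $\percrit(n,n-k)$ sits inside one $n$-independent subvariety. Write $D_n=d^n+d^{(1-e)(n-k)}$. Since $0\le n-k<n$ for $n>k$, Theorem~\ref{thm:cvg preper} gives at once
\[
D_n\inv\,[\percrit(n,n-k)]\;\cvf\;T_c .
\]
Next I would decompose the divisor $[\percrit(n,n-k)]=[\prepercrit(n,n-k)]+[R_n]$, where $[R_n]$ collects, with the multiplicities they carry in $\percrit(n,n-k)$, the irreducible components along which $c$ is generically periodic. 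By the very definition of $\prepercrit$ this is a sum of \emph{positive} closed currents, and as a set $\supp[R_n]\subset\percrit\lrpar{n-(n-k),0}=\percrit(k,0)$, a subvariety that does not depend on $n$. (If $c$ is everywhere periodic then $T_c=0$ and both sides of the claimed limit vanish, so we may assume $\percrit(k,0)$ is a proper analytic subvariety.)

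Dividing by $D_n$ and using $0\le[R_n]\le[\percrit(n,n-k)]$, both $\bigl(D_n\inv[\prepercrit(n,n-k)]\bigr)_n$ and $\bigl(D_n\inv[R_n]\bigr)_n$ are sequences of positive closed currents of locally uniformly bounded mass, since their masses are dominated by that of the convergent sequence $D_n\inv[\percrit(n,n-k)]$. Hence it suffices to show $D_n\inv[R_n]\cvf 0$: granting this,
\[
D_n\inv[\prepercrit(n,n-k)]=D_n\inv[\percrit(n,n-k)]-D_n\inv[R_n]\;\cvf\;T_c .
\]
Let $R$ be any cluster value of $\bigl(D_n\inv[R_n]\bigr)_n$ (one exists by the mass bound). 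It is a positive closed current, it is supported on $\percrit(k,0)$ because every $D_n\inv[R_n]$ is, and letting $n\cv\infty$ along the relevant subsequence in $0\le D_n\inv[R_n]\le D_n\inv[\percrit(n,n-k)]$ yields $0\le R\le T_c$. Since $T_c$ has continuous local potentials it gives no mass to the proper analytic subvariety $\percrit(k,0)$; therefore the trace measure of $R$, being dominated by that of $T_c$ and carried by $\percrit(k,0)$, vanishes, so $R=0$. As every cluster value of the mass-bounded sequence $\bigl(D_n\inv[R_n]\bigr)_n$ is $0$, the sequence itself tends to $0$, which finishes the argument.

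The only genuinely non-formal ingredient is this last comparison. The ``excess'' periodic components of $\percrit(n,n-k)$ could a priori carry multiplicities growing with $n$, so one cannot simply bound their contribution crudely; what saves the day is that, $k$ being fixed, they all live on the fixed subvariety $\percrit(k,0)$, on which $T_c$ — having continuous potentials — puts no mass. This is exactly the ``little trick'' mentioned before the statement, and it is also the reason the corollary is phrased with $k$ fixed rather than for an arbitrary sequence $k(n)$ as in Theorem~\ref{thm:cvg preper}.
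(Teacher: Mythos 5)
Your proof is correct and follows essentially the same route as the paper: both rest on writing the difference $[\percrit(n,n-k)]-[\prepercrit(n,n-k)]$ as an effective divisor supported on the fixed subvariety $\percrit(k,0)$ and then invoking the fact that $T_c$, having continuous local potentials, charges no analytic set. The paper phrases this as a one-line contradiction, whereas you carry out the direct cluster-value argument in full, but the substance is identical.
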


\begin{proof}
$[\percrit({n,n-k})] - [\prepercrit({n,n-k})]= [D_n]$ is a sequence of effective divisors supported on  $\percrit(k, 0)$. Assume  by contradiction that the conclusion of the corollary does not hold. Then  $T_c$ would give positive  mass to   $\percrit(k, 0)$, which cannot happen.
\end{proof}

Here is a heuristic geometric argument justifying the validity of  Theorem \ref{thm:cvg preper}. For each $\la$, (pre)periodic points equidistribute towards the maximal measure $\mu_\la$ \cite{lyubich}. For this one ``deduces'' that in $\La\times \pu$, the sequence of integration currents on the hypersurfaces 
$$\set{(\la, z)\in \La\times\pu, \ f^n_\la(z) = f^{k(n)}_\la(z)},$$ conveniently normalized,  converge to $\widehat T$. 
By ``restricting" this convergence to the graph  $\widehat c$, one gets the desired result. The   trouble here is that there is  no general result showing that 
 that the slices $\widehat{T}_n\rest{\widehat c}$ converge to $\widehat{T}\rest{\widehat c}$.
 
 \begin{proof}[Proof of Theorem \ref{thm:cvg preper} (sketch)]
 Assume for simplicity that $(f_\la)$ is a family of polynomials of degree $d$. A psh potential of $d^{-n}\percrit({n,k(n)})$ is given by $d^{-n} \log\abs{f^n_\la(c(\la)) - f^{k(n)}_\la(c(\la))}$. We need to show that this sequence  converges to $G_\la(c(\la))$, or equivalently that 
 \begin{equation}\label{eq:cv zero}
 \unsur{d^n} \log\abs{f^n_\la(c(\la)) - f^{k(n)}_\la(c(\la))} - \unsur{d^n}\log^+\abs{f^n_\la(c(\la))} \underset{n\cv\infty}\longrightarrow 0 \text{ in } L^1_{\rm loc}(\La)
 \end{equation}
 converges to zero. We argue by   case by case analysis depending on the behavior of $c$. For instance it is clear that \eqref{eq:cv zero} holds when $c$ escapes to infinity, or is attracted to an attracting cycle. On the other hand there are parts of parameter space 
 where the convergence is delicate to obtain directly. So instead we apply some  potential-theoretic ideas (slightly reminiscent of the proof of   Brolin's theorem \cite{brolin}). 
 One of these arguments is based on the maximum principle, and requires a certain compactness property leading to assumption (H). \end{proof}

\medskip

The speed of convergence in Theorem \ref{thm:cvg preper} is unknown in general. Furthermore the proof   is ultimately based on compactness properties of the space of psh functions, so it is not well suited to obtain such an estimate.
 The only  positive result in this direction is due to Favre and 
 Rivera-Letelier \cite{frl}, based on the method of \cite{baker-hsia}, and concerns the unicritical family.

\begin{thm}[Favre-Rivera Letelier]\label{thm:frl}
Consider  the unicritical family of polynomials $(z^d+\la)_{\la\in \cc}$. Then for any  any compactly
  supported $C^1$ function $\varphi$, if $0\leq k(n)<n$ is any sequence, as $n\cv\infty$   we have   
  $$
\abs{\bra{\percrit(n,k(n)) - \tbif, \varphi}}\leq C  \lrpar{\frac{n }{d^{n}}}^{1/2} \norm{\varphi}_{C^1}.
 $$
\end{thm}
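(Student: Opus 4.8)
The plan is to follow the height-theoretic strategy of Baker--Hsia and Favre--Rivera-Letelier, reducing the quantitative statement to an arithmetic equidistribution theorem with an explicit error term. The key point is that in the unicritical family $(z^d+\la)_{\la\in\cc}$, the parameter $\la$ itself plays the role of both the dynamical and the arithmetic variable, and the divisor $\percrit(n,k(n))$ is cut out by the polynomial $P_n(\la) = f^n_\la(0) - f^{k(n)}_\la(0)\in\zz[\la]$, whose degree grows like $d^n$. The Mandelbrot-type measure $\tbif = dd^c G_\la(0)$ is the equilibrium measure of the compact set $\set{\la, \ G_\la(0)=0}$ (the connectedness locus), and one wants to compare the counting measure of the zeros of $P_n$ with this equilibrium measure.

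First I would set up the adelic picture: the zeros of $P_n$ form a Galois-invariant finite set, and to each place $v$ of $\QQ$ (archimedean and non-archimedean) one attaches a local Green's function $G_{\la,v}(0)$, giving a semipositive adelic metric on $\pu$ over the parameter space whose curvature is the bifurcation current at the archimedean place. The height of a zero of $P_n$ with respect to this metrized line bundle is essentially zero (since $f^{k(n)}_\la(0)=f^n_\la(0)$ forces $G_{\la,v}(0)=0$ at every place), so by the arithmetic equidistribution theorem of Baker--Rumely / Chambert-Loir / Favre--Rivera-Letelier, the zeros equidistribute towards $\tbif$. The refinement to a \emph{quantitative} statement requires a quantitative equidistribution estimate: one needs a discrepancy bound controlling $\abs{\bra{\frac{1}{\deg P_n}\sum_{P_n(\la)=0}\delta_\la - \tbif, \varphi}}$ by something involving the height of the divisor and the number of zeros. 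Here I would invoke the Favre--Rivera-Letelier quantitative equidistribution inequality (their Mahler-measure / energy-pairing estimate), which bounds such a discrepancy by $C(\norm{\varphi}_{C^1})\cdot\big(\tfrac{h(P_n) + \log(\deg P_n)}{\deg P_n}\big)^{1/2}$ or a similar expression.

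The main work, and the main obstacle, is then to estimate the \emph{height} of the polynomial $P_n$, i.e. to control the size of its coefficients, or equivalently $\sum_v \log\norm{P_n}_v$ over all places. The archimedean contribution is handled by the uniform bound on $G_\la(0)$ on compact parameter sets together with the fact that $\deg P_n \sim d^n$; the non-archimedean contributions require showing that $P_n$ has integer coefficients of controlled $p$-adic size, which follows from the recursive structure $f^{n+1}_\la(0) = (f^n_\la(0))^d + \la$ by an induction keeping track of denominators (there are none, since everything is in $\zz[\la]$). The delicate point is that $\log\abs{\text{leading coefficient}}$ and the Mahler measure of $P_n$ must be shown to be $O(n d^n)$ rather than merely $o(d^n)$ — this is where the extra factor $n$ in the final bound comes from, and matching it requires a sharp estimate on the growth of $f^n_\la(0)$ as a polynomial in $\la$ (its Mahler measure grows linearly in $n$ after normalization by $d^n$). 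Once the height bound $h(\percrit(n,k(n))) = O(n d^n)$ is in hand, plugging into the quantitative equidistribution inequality with $\deg P_n \asymp d^n$ yields the rate $\big(\tfrac{nd^n}{(d^n)^2}\big)^{1/2} = (n/d^n)^{1/2}$, and the dependence on $\norm{\varphi}_{C^1}$ is exactly what the energy-pairing estimate provides. I would then note that the case $k(n)=0$ versus $k(n)>0$ makes no essential difference here, since in the unicritical family $d^{(1-e)k(n)}$ with $e=1$ equals $1$, so the normalization is always $d^n$ up to a bounded factor, consistent with Theorem~\ref{thm:cvg preper}.
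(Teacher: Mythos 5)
The paper itself gives no argument for this statement: it is quoted from Favre--Rivera-Letelier \cite{frl}, with the remark that the proof follows the arithmetic method of \cite{baker-hsia}. Your sketch reconstructs precisely that route --- realize $\percrit(n,k(n))$ as the zero divisor of the monic polynomial $P_n(\la)=f^n_\la(0)-f^{k(n)}_\la(0)\in\mathbb{Z}[\la]$ of degree $d^{n-1}$, observe that its roots form a Galois-invariant set of adelic height zero for the metric whose archimedean curvature is $\tbif$, and feed this into the quantitative equidistribution inequality of \cite{frl} --- so in approach you are aligned with the source the survey cites. Bear in mind, though, that the discrepancy inequality you invoke as a black box is the main content of \cite{frl}; what you have written is a correct reduction to it rather than an independent proof.

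There is, however, a genuine error in your accounting of where the factor $n$ comes from. Every root of $P_n$ is a parameter at which the critical orbit is preperiodic, hence lies in the compact connectedness locus, so all roots are bounded by a constant depending only on $d$; since $P_n$ is monic, $\log M(P_n)\le C_d\, d^{n-1}=O(d^n)$, and the Mahler measure normalized by the degree is $O(1)$ --- it does \emph{not} grow linearly in $n$ as you assert. The factor $n$ arises elsewhere: because the adelic height of the root set $F_n$ is exactly zero, the Favre--Rivera-Letelier bound reduces to its ``diagonal'' term of order $\lrpar{\log D/D}^{1/2}$ with $D=\deg P_n\sim d^{n-1}$ --- equivalently, to the $D\log D$ term in the Hadamard-type bound on $\log\abs{\mathrm{disc}(P_n)}$ that controls the self-energy of $F_n$ after removing the diagonal. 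This yields $\lrpar{n\log d/d^{n-1}}^{1/2}=O\lrpar{(n/d^n)^{1/2}}$, which is the stated rate. The remaining points of your outline (integrality of $P_n$ from the recursion, the irrelevance of $k(n)$ since $e=1$ in the unicritical family, the $\norm{\varphi}_{C^1}$ dependence coming from the Dirichlet energy in the pairing estimate) are correct as stated.
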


\subsection{Equidistribution of parameters with  periodic orbits of a given multiplier}\label{subs:multiplier}
  Bassanelli and Berteloot studied in \cite{bas-ber2, bas-ber3} the distribution of parameters for which there exists a periodic cycle of a given multiplier. For this, given any holomorphic family of rational maps $(f_\la)$, we need  to  define  the subvariety $\per (n,w)$   of parameter space defined by the condition that $f_\la$ admits a cycle of exact period $n$ and multiplier $w$. Doing this consistently as $w$ crosses the value 1 requires a little bit of care.
 The following result, borrowed from \cite{bas-ber2}, originates from  the work of Milnor \cite{milnor quadratic} and Silverman \cite{silverman book}.

\begin{thm}
 Let $(f_\la)$ be a holomorphic family of rational maps of degree $d\geq 2$. Then for every integer $n$
 there exists a holomorphic function $p_n$ on $\La\times \cc$, which is polynomial on $\cc$,  such that 
\begin{enumerate}
 \item[i.] For any $w\in \cc\setminus\set{1}$, $p_n(\la, w)=0$ if and only if $f_\la$ admits a cycle of exact period $n$ and of multiplier $w$;
\item[ii.] For $w=1$, $p_n(\la, 1)=0$ if and only if $f$ admits a cycle of exact period $n$ and of multiplier 1 or a cycle of exact period $m$ whose multiplier is a primitive $r^{\rm th}$ root of unity, and $n=mr$.
\end{enumerate}
\end{thm}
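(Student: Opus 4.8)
The statement asserts the existence of, for each $n$, a ``multiplier polynomial'' $p_n(\la,w)$ that cuts out the parameters carrying a cycle of exact period $n$ and multiplier $w$, with a subtle correction at $w=1$ to account for the fact that $n$-cycles with multiplier $1$ can bifurcate from $m$-cycles whose multiplier is a primitive $r$-th root of unity with $n=mr$. The plan is to build $p_n$ by a Milnor-style construction: first produce a function recording multipliers of \emph{all} cycles (not just those of exact period $n$), then perform an inclusion–exclusion over divisors of $n$, and finally analyze the boundary case $w=1$ separately.

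\medskip

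First I would introduce, for each $N\geq 1$, the fixed-point scheme of $f_\la^N$ inside $\La\times\pu$, i.e. the analytic hypersurface $\mathrm{Fix}(N)=\{(\la,z):f_\la^N(z)=z\}$. Working in charts and clearing denominators, $f_\la^N(z)=z$ is (generically in $\la$) a polynomial equation of degree $d^N+1$ in $z$ with coefficients holomorphic in $\la$; counted with multiplicity it has $d^N+1$ roots, the fixed points of $f_\la^N$. For each such point $z$ one has its multiplier $(f_\la^N)'(z)$, a well-defined element of $\cc$ once $z$ is not a pole. The key classical algebraic fact I would invoke is that the elementary symmetric functions of the quantities $w-(f_\la^N)'(z)$ as $z$ ranges over the $d^N+1$ fixed points of $f_\la^N$ are \emph{holomorphic} functions of $\la$ (they are symmetric in the roots of a monic-in-$z$ polynomial with holomorphic coefficients, hence polynomial in those coefficients), \emph{and} are invariant under conjugation, so they are globally well-defined on $\La$ even though individual fixed points are only locally defined. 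This yields a holomorphic function $\Phi_N(\la,w)=\prod_{z:\,f_\la^N(z)=z}(w-(f_\la^N)'(z))$ on $\La\times\cc$, polynomial of degree $d^N+1$ in $w$, whose zero set in $w$ (for fixed generic $\la$) is exactly the multiset of multipliers of periodic points of period dividing $N$. One must also handle the cases where $\mathrm{Fix}(N)$ has a component equal to all of $\La\times\pu$ (persistent identity branches) or where fixed points collide with poles; these are dealt with by factoring out the offending branches, exactly as in Silverman's treatment of dynatomic polynomials, at the cost of dividing $\Phi_N$ by suitable holomorphic factors.

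\medskip

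Next I would pass from ``period dividing $n$'' to ``exact period $n$'' by a Möbius-type inclusion–exclusion. A periodic point of exact period $m\mid n$, $m<n$, with multiplier $\zeta$ contributes a point of period dividing $n$ with multiplier $\zeta^{n/m}$; so the naive quotient $\Phi_n/\prod_{m\mid n,\,m<n}\Phi_m$ is not quite right because of these power-of-multiplier substitutions and because of multiplicities in $\mathrm{Fix}(n)$ coming from smaller periods. The correct bookkeeping is Milnor's: one inductively defines $p_n$ so that $\Phi_N(\la,w)=\prod_{m\mid N}\widetilde p_m(\la,w)$ where $\widetilde p_m$ records exact period $m$, and then $p_n$ is obtained from $\widetilde p_n$ by the elementary relation between $(f^n)'$ on an $m$-cycle and $(f^m)'$ on it. I expect this combinatorial step to be bookkeeping-heavy but routine once the right generating identity is set up. \textbf{The main obstacle}, and the only genuinely delicate point, is the behavior at $w=1$: a cycle of exact period $m$ with multiplier a primitive $r$-th root of unity $\zeta$ satisfies $\zeta^r=1$, so when viewed through $\Phi_{mr}$ its iterated multiplier is $1$, and as $w\to 1$ the factors of $p_m(\cdot,\zeta)$ for the various primitive roots $\zeta$ and the factor of $p_{mr}(\cdot,1)$ become indistinguishable — the scheme $\mathrm{Fix}(mr)$ acquires extra multiplicity along the parabolic locus. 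Concretely, one checks by a local normal-form computation (a parabolic fixed point of $f^m$ with multiplier $\zeta$, $\zeta^r=1$ primitive, is, after $r$ iterations, a fixed point of $f^{mr}$ with a double or higher-order structure) that the naive $p_n(\la,1)$ defined by analytic continuation in $w$ vanishes precisely on the union described in (ii). So the proof concludes by: (a) establishing (i) for $w\neq 1$ directly from the construction of $\Phi_N$ and the inclusion–exclusion, noting that for $w\neq 1$ a cycle of period dividing $n$ with iterated multiplier $w$ cannot come from a smaller period with root-of-unity multiplier (since $w\neq 1$), so no collision occurs; and (b) taking $w\to 1$ and reading off (ii) from the local parabolic computation together with the identity $n=mr$. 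I would organize (b) as a lemma computing, for a single parabolic germ, the order of vanishing of $\Phi_{mr}(\cdot,w)$ at $w=1$ contributed by that germ, and then summing over cycles.
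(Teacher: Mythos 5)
The paper offers no proof of this theorem --- it is quoted from Bassanelli--Berteloot and attributed to Milnor and Silverman --- and your sketch is precisely the standard construction from those sources: the elementary symmetric functions of the multipliers give holomorphy in $\la$, a M\"obius-type inversion over the divisors of $n$ isolates the cycles of formal period $n$, and the local analysis of parabolic germs with root-of-unity multiplier accounts for the exceptional behaviour at $w=1$. The one organizational caveat is that the inclusion--exclusion is cleaner if performed on the fixed-point divisor in the $z$-variable (defining the dynatomic polynomial and hence the notion of formal period) \emph{before} forming the product over cycles in the $w$-variable, rather than directly on the multiplier polynomials $\Phi_N(\la,w)$, since a point of exact period $m$ contributes the factor $w-\bigl((f_\la^m)'(z)\bigr)^{N/m}$ to $\Phi_N$ --- but you flag exactly this issue yourself and defer to Milnor's bookkeeping, which is the correct resolution.
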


We  now put $\per (n,w) = \set{\la, \ p_n(\la, w) =0}$. The equidistribution result is the following
\cite{bas-ber2, bas-ber3} (recall our convention that $[\La]=0$).

\begin{thm}[Bassanelli-Berteloot]\label{thm:basber}
Let $(f_\la)_{\la\in \La}$ be a holomorphic family of rational maps of degree  $d\geq 2$. Then
\begin{enumerate}
\item[i.] for any  $w\in \cc$ such that $\abs{w}<1$, $\displaystyle \unsur{d^n} \left[\per (n,w)\right]\cv \tbif$.
\item[ii.] Let $d\theta$ denote  the normalized Lebesgue measure on $\re/2\pi\zz$. Then for every $r>0$,  
 $$ \unsur{d^n} \int_{\re/2\pi\zz} \left[\per (n,re^{i\theta})\right] d\theta \cv \tbif;$$
 \end{enumerate}

If moreover  $(f_\la)_{\la\in \La}$ is the family of all polynomials of degree $d$, then
\begin{enumerate}[iii.]
\item for any  $w$ such that $\abs{w}\leq 1$, $\displaystyle \unsur{d^n} \left[\per (n,w)\right]\cv \tbif$.
\end{enumerate}
\end{thm}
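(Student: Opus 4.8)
The plan is to reduce the equidistribution of $\per(n,w)$ to the known properties of $\tbif=dd^c\chi$ by identifying a natural psh potential for $d^{-n}[\per(n,w)]$ and showing it converges in $L^1_{\mathrm{loc}}$ to $\chi$. First I would examine the defining function $p_n(\la,w)$. For fixed $w$, $\la\mapsto p_n(\la,w)$ is holomorphic, so $d^{-n}\log|p_n(\la,w)|$ is a psh potential for $d^{-n}[\per(n,w)]$ (up to the normalization built into $p_n$ in $w$, which one must track carefully — this is precisely the bookkeeping Milnor/Silverman set up and which is quoted in the preceding theorem). The heart of the matter is then to prove that, for $|w|<1$,
\[
\unsur{d^n}\log|p_n(\la,w)| \longrightarrow \chi(f_\la) + (\text{pluriharmonic})
\]
in $L^1_{\mathrm{loc}}(\La)$, after which taking $dd^c$ gives part i. The mechanism is the multiplier formula \eqref{eq:bdm}: $\log|p_n(\la,w)|$ is, up to lower-order terms, a sum over the period-$n$ cycles of $f_\la$ of quantities like $\log|(f_\la^n)'(p)-w^{?}|$, and for $|w|<1$ these differ from $\log^+|(f_\la^n)'(p)|$ by a controlled amount, because the repelling cycles (with $|(f^n)'|$ large) dominate and contribute $\sum \log|(f^n)'(p)|\approx d^n\,n\,\chi(f_\la)$, while the non-repelling cycles contribute negligibly. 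So I would write $p_n(\la,w)=\prod_{\text{cycles of period }n}(\text{mult.}-w)$ up to the period-splitting correction from item ii of the cited theorem, take $\log$, and estimate term by term.

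For part ii, I would average over $\theta$ before passing to the limit: $\unsur{2\pi}\int \log|p_n(\la,re^{i\theta})|\,d\theta = \sum_{\text{cycles}}\log^+(|(f_\la^n)'(p)|/r^{?})$ by the mean-value property of $\log|\cdot-w|$ on circles (Jensen), which is exactly the truncated sum appearing in \eqref{eq:bdm}; dividing by $d^n$ and letting $n\to\infty$ yields $\chi(f_\la)$ directly, hence $\tbif$ after $dd^c$. Here the circular average kills the dependence on $r$ in the limit and removes the need for the $|w|<1$ hypothesis. For part iii, in the family of all polynomials one has the global potential $\chi(P)=\log d+\sum_c G_P(c)$ from \eqref{eq:lyap} with $\sum_c G_P(c)$ proper and bounded below on the connectedness locus; this extra structure lets one control the $L^1_{\mathrm{loc}}$ limit uniformly up to $|w|=1$, essentially because the obstruction in part i (loss of domination when $w$ approaches the unit circle and indifferent cycles proliferate) is confined to the bifurcation locus, where the potential is already pinned down by the escaping critical points.

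The main obstacle, as in Theorem \ref{thm:cvg preper}, is the $L^1_{\mathrm{loc}}$ convergence on the part of parameter space where it is delicate — here, near parameters carrying cycles whose multiplier is close to $w$, where the naive term-by-term estimate degrades. One must show that the "bad" set where $\unsur{d^n}\log|p_n(\la,w)|$ fails to track $\chi(f_\la)$ has negligible measure, which I expect to require a compactness/Hartogs argument on the space of psh functions together with the a priori identification of the limit's $dd^c$ via the mean-value computation of part ii: since the averaged statement pins the limit current and the individual potentials are a normal family of psh functions, any $L^1_{\mathrm{loc}}$ cluster value must agree with $\chi$ up to pluriharmonic terms, and a separate argument (using that $\chi$ is already known pluriharmonic off $\bif$, by DeMarco's theorem) forces the pluriharmonic correction to vanish. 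The case-by-case analysis of cycle multipliers according to escaping versus non-escaping critical behavior, analogous to \eqref{eq:cv zero}, is where the real work lies.
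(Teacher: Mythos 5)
Your proposal follows essentially the same route as the paper: you take $d^{-n}\log|p_n(\la,w)|$ as the potential, factor $p_n(\la,\cdot)$ over the multipliers of the period-$n$ cycles, use \eqref{eq:bdm} plus the fact that repelling cycles dominate (for large $n$ all cycles of exact period $n$ are repelling) to get pointwise convergence to $\chi$ and then a Hartogs-type argument for $L^1_{\rm loc}$ convergence, and for part ii you use exactly the identity $\log^+|z|=\int\log|z-e^{i\theta}|\,d\theta$ to reduce the circular average to the truncated multiplier sum. Your diagnosis of part iii (cycles of large period with multiplier close to $w$ on the unit circle, requiring a global argument specific to the polynomial family) also matches the paper's sketch.
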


\begin{proof}[Proof (excerpt)] 
For fixed $\la$, the polynomial $w\mapsto p_n(\la,w)$ can be decomposed into
$$p_n(\la, w) = \prod_{i=1}^{N_d(n)} (w-w_j(\la)),$$ where the degree $N_d(n)$ satisfies $N_d(n)\sim \frac{d^n}{n}$ and the $w_j(\la)$ are the multipliers of the periodic cycles of period $n$ of $f_\la$. 
 For simplicity,  in {\em i.} let us only discuss  the case where the multiplier $w$ equals 0. 
We can write 
$$\unsur{d^n}[\per (n,0)] = \unsur{d^n}dd^c\log\abs{p_n(\la, 0)} = dd^c\lrpar{ \unsur{d^n}\sum_{i=1}^{N_d(n)}  \log\abs{ w_j(\la)} }.$$ We see that the potential of $\unsur{d^n}[\per (n,0)]$ is just the average value of the  logarithms of the multipliers of repelling orbits. Now for $n$ large enough, all cycles of exact period $n$ are repelling so from \eqref{eq:bdm} we see\footnote{The additional $\unsur{n}$ in that formula follows from the fact that in  \eqref{eq:bdm} the sum is over periodic points while here we sum over periodic cycles.} that the sequence of potentials converges pointwise to 
$\chi(f_\la)$. Furthermore it is easy to see that this sequence is locally uniformly bounded from above so by the Hartogs lemma  it converges in $L^1_{\rm loc}$. By taking $dd^c$ we see that $\unsur{d^n}[\per (n,0)]$ converges to $\tbif$. 

\medskip

The argument for {\em ii.} is similar. For simplicity assume that $r=1$.  We write
\begin{align*}
\unsur{d^n}\int& \left[\per (n,e^{i\theta})\right] d\theta = 
\unsur{d^n} dd^c \lrpar{ \int \log\abs{p_n(\la, e^{i\theta})} d\theta} \\ &=  dd^c\lrpar{\unsur{d^n} \sum_{i=1}^{N_d(n)} 
\int \log\abs{e^{i\theta} -w_j(\la)} d\theta } = dd^c\lrpar{ \unsur{d^n}
\sum_{i=1}^{N_d(n)} \log^+\abs{w_j(\la)}},
\end{align*} 
where the last equality follows from the well-known formula $\log^+\abs{z} = \int\log\abs{z- e^{i\theta}}d\theta$. As before for each $\la$, when  $n$ is large enough all points of period $n$ are repelling, so the potentials converge pointwise to $\chi$.  We conclude as before. 

\medskip

The proof of {\em iii.} is more involved since for $\abs{w}=1$, in the estimation of the potentials  we have to deal with the possibility of cycles of large period with multipliers close to $w$.  
To overcome this difficulty, the authors use a global argument (somewhat in the spirit of the use of (H) in Theorem \ref{thm:cvg preper}), requiring the additional   assumption that $(f_\la)$ is the family of polynomials.   
\end{proof}

It is a useful fact that in assertions {\em i.} and {\em ii.} of Theorem \ref{thm:basber}, no global assumption on $\La$ is required (see below Theorem 
\ref{thm:tbifk per}). On the other hand in {\em iii}, 
  it is expected that the convergence of $d^{-n}[\per (n,w)]$ to $\tbif$ (even for $\abs{w}>1$) holds in any family of rational mappings. Note that by using techniques similar to those of Theorem \ref{thm:equidist abstrait}, it can be shown that in any family of rational maps, the set of $w\in \cc$ violating the convergence in   {\em iii} is polar.


\section{Higher bifurcation currents and the bifurcation measure}\label{sec:higher}

A crucial difference between one and   higher dimensional families of rational mappings is the presence of a hierarchy of bifurcations according to the number of bifurcating critical points. These ``higher bifurcation loci" are rather delicate  to define precisely and the main thesis in this section is that the formalism of bifurcation currents is well suited to deal with these questions. 

\medskip

In this respect, let us start  by suggesting
  a certain ``dictionary'' of   analogies between these issues and the dynamics of holomorphic endomorphisms of projective space $\pp^k$, which turns out to be a very instructive guide for the intuition. Let $f$ be a holomorphic self map of degree $d$ on  $\pk$. There exists a natural invariant   positive closed  current $T$  of bidegree (1,1) satisfying $f^*T = dT$  (the Green's current), whose support is the Julia set of $f$ \cite{fs2}. In dimension 1, the dynamics of $f$ is generically expanding along the Julia set. In higher dimension, the situation is more subtle in that 
that ``the number of directions'' along which the iterates are not equicontinuous can vary from $1$ to $k$. One then introduces the following filtration of the Julia set 
$$  J_1 =J =\supp(T) \supset \cdots \supset J_q =  \supp(T^q) \supset \cdots \supset J_k =  \supp(T^k) = \supp(\mu),$$
where $\mu$ is the unique measure of maximal entropy. 
The dynamics on $J_k$ is   ``repelling in all directions'' according to the work of Briend and Duval \cite{briend-duval1}. On the other hand for $q<k$, the dynamics along $J_q\setminus J_{q+1}$ is expected to be ``Fatou in codimension $q$''. It is not completely obvious how to formalise this precisely (see \cite{fatou} for an account). A popular way to understand this is to conjecture that $J_q\setminus J_{q+1} = \supp(T^q)\setminus    \supp(T^{q+1})$ is filled
(in a measure theoretic sense) with holomorphic disks of codimension $q$ along which the dynamics of $(f^n)$ is equicontinuous. 

In this section we will try to develop a similar picture for parameter spaces of polynomial and rational maps, with deformation disks playing the role of  Fatou disks, and Misiurewicz parameters  replacing repelling periodic points.

 \medskip

For cubic polynomials with marked critical points, it is also possible to draw  a rather  complete dictionary 
 with the dynamics of polynomial automorphisms of $\cd$ (see \cite{cubic}). 

\subsection{Some general results}
In this paragraph, $(f_\la)$ is a general holomorphic family of rational maps of degree $d\geq 2$. Our purpose here is to introduce the higher bifurcation currents $\tbif^k$ and study some of their   properties. We will try to demonstrate that their successive supports $\supp(\tbif^k)$, 
$1\leq k \leq \dim(\La)$ define a dynamically meaningful filtration of the bifurcation locus. 

\medskip

We first observe that  it is harmless to assume that all critical points are marked. Indeed, let us take a branched cover $\pi:\widetilde\La\cv\La$ such that   the new family $\widetilde{f}(\widetilde\la)$ has all critical points marked. We claim that for every $1\leq k\leq \dim(\La)$, 
(with obvious notation) $$\pi^{-1}\supp(\tbif^k) = \supp(\widetilde T_{\rm bif}^k).$$
Indeed the Lyapunov exponent function in $\widetilde\La$ is $\widetilde\la\mapsto \chi(\pi(\widetilde\la))$. In particular in any open subset $U\subset\widetilde{\La}$ where $\pi$ is a biholomorphism, for every $k$, we have that $\widetilde\la\in \supp(\widetilde T_{\rm bif}^k)$ iff $\la=\pi(\widetilde \la)\in\supp(\tbif^k)$. 
Let now $\widetilde B$ denotes the branching locus of $\pi$. Since $\chi$ is continuous,  $\tbif^k$ (resp. $\widetilde T_{\rm bif}^k$) gives no mass to analytic subsets, so we infer  that 
$$\supp(\widetilde T_{\rm bif}^k) = \overline{\supp(\widetilde T_{\rm bif}^k)\setminus \widetilde B} \text{ (resp.  }
 \supp(  \tbif^k) = \overline{\supp(  \tbif^k)\setminus \pi(\widetilde B)} \text{ )}.$$ Thus our claim follows. 

\medskip

Therefore we assume that  critical points are marked as $(c_1(\la), \ldots, c_{2d-2}(\la))$, and
  denote by $T_1, \ldots , T_{2d-2}$ the respective bifurcation currents. Recall that $\supp(T_i)$ is the activity locus of $c_i$ and  $\tbif$   equals $\sum T_i$ by Proposition \ref{prop:sum ci}. 

Since the $T_i$ are (1,1) positive closed currents with local continuous potentials, it is possible to wedge them (see \cite{demailly}).
Here is a first observation.

\begin{prop}
For every $1\leq i\leq 2d-2$, $T_i\wedge T_i =0$. 
\end{prop}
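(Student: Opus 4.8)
The plan is to combine a trivial dimension-count at the level of the approximants with the continuity of the Monge--Amp\`ere operator along uniformly convergent potentials. Recall from the proof of Theorem \ref{thm:activity support} that $T_i = \lim_n \unsur{d^n} f_n^*\omega$, where $f_n(\la) = f_\la^n(c_i(\la))$ and $\omega$ is a Fubini--Study form on $\pu$. Each $\unsur{d^n} f_n^*\omega$ is a smooth closed positive $(1,1)$-form on $\La$, hence locally of the form $dd^c u_n$ with $u_n$ smooth plurisubharmonic. The key extra input, already available from that proof, is that the convergence holds at the level of potentials: since $\abs{g_n - g_\infty} = O(d^{-n})$ for the potentials of $\unsur{d^n}(\widehat f^n)^*\hom$ on $\La\times\pu$, restricting to the graph $\widehat{c_i}$ shows that one may choose the local potentials $u_n$ of $\unsur{d^n}f_n^*\omega$ so that $u_n \to u_\infty$ \emph{locally uniformly}, where $u_\infty$ is a (continuous) local potential of $T_i$.

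Now I would observe that each approximant has vanishing self-intersection for a purely formal reason: $f_n$ maps into the one-dimensional manifold $\pu$, on which the $(2,2)$-form $\omega\wedge\omega$ is identically zero, so
$$\lrpar{\unsur{d^n} f_n^*\omega}\wedge\lrpar{\unsur{d^n} f_n^*\omega} = \unsur{d^{2n}}\, f_n^*\!\lrpar{\omega\wedge\omega} = 0.$$
On the other hand, because $u_n \to u_\infty$ locally uniformly with all $u_n$ (locally bounded) psh, the Bedford--Taylor continuity theorem for the Monge--Amp\`ere operator under uniform convergence of potentials (see \cite{bt}, \cite[Chap. III]{demailly}) gives
$$\lrpar{\unsur{d^n} f_n^*\omega}\wedge\lrpar{\unsur{d^n} f_n^*\omega} = (dd^c u_n)^2 \longrightarrow (dd^c u_\infty)^2 = T_i\wedge T_i$$
in the weak sense of currents. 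Comparing the two displays yields $T_i\wedge T_i = 0$.

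The only non-formal ingredient is the continuity of the wedge product of closed positive $(1,1)$-currents with continuous potentials under uniform convergence of those potentials; this is standard and rests on the Chern--Levine--Nirenberg estimates, and it applies here precisely because the potentials of $\unsur{d^n}f_n^*\omega$ converge uniformly (indeed at rate $O(d^{-n})$), a fact established in the proof of Theorem \ref{thm:activity support}. I expect this to be the only point requiring a careful citation; everything else is bookkeeping, and one should note that the statement is vacuous when $\dim(\La)=1$ (as for the quadratic family), since then $(2,2)$-currents on $\La$ vanish automatically.
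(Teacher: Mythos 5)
Your proof is correct, and it takes a genuinely different route from the one in the paper. The paper approximates $T_i$ by the divisors $d^{-n}[\percrit_i(n,0)]$ (via Theorem \ref{thm:cvg preper}, hence under hypothesis (H)), uses the continuity of the potentials of $T_i$ to pass to the limit in $d^{-n}[\percrit_i(n,0)]\wedge T_i$, and observes that each term vanishes because $c_i$ is passive --- indeed periodic --- along $\percrit_i(n,0)$, so that $T_i\rest{\percrit_i(n,0)}=0$; when (H) is not available it defers to a more involved argument of Gauthier. You instead approximate $T_i$ by the smooth forms $d^{-n}f_n^*\omega$, whose self-wedge vanishes identically because the target $\pu$ is a curve (so $f_n^*\omega\wedge f_n^*\omega=f_n^*(\omega\wedge\omega)=0$ pointwise, whatever the rank of $df_n$), and you pass to the limit using the continuity of the mixed Monge--Amp\`ere operator under locally uniform convergence of locally bounded psh potentials, which is standard (locally uniform convergence implies convergence in capacity, and the Bedford--Taylor theory applies). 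The required uniform convergence of potentials is exactly the estimate $\abs{g_n-g_\infty}=O(d^{-n})$ from the proof of Theorem \ref{thm:activity support}, restricted to the graph of $c_i$; the only bookkeeping point is that the resulting potentials are relative to the fixed smooth form $c_i^*\omega$, which you correctly absorb into a local smooth potential before invoking Bedford--Taylor. The payoff of your argument is that it is unconditional: it uses neither hypothesis (H) nor the equidistribution of $\percrit_i(n,0)$, so it covers in one stroke the general case for which the paper points to Gauthier.
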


\begin{proof}
Assume that the convergence theorem \ref{thm:cvg preper} holds in $\La$ (e.g. if (H) holds). Then $T_i = \lim_{n\cv\infty} d^{-n} [\percrit_i(n,0)]$ where of course $\percrit_i(n,k)$ is  the subvariety of parameters such that $f^n_\la(c_i(\la)) = f^k_\la(c_i(\la))$. Since $T_i$ has continuous potentials, we  infer that $T_i\wedge T_i = \lim_{n\cv\infty} d^{-n} [\percrit_i(n,0)]\wedge T_i$. Now for every $n$, 
$[\percrit_i(n,0)]\wedge T_i = T_i\rest{\percrit_i(n,0)}$ vanishes since $c_i$ is passive on $\percrit_i(n,0)$, and we conclude that $T_i\wedge T_i=0$.

Without assuming Theorem \ref{thm:cvg preper}, the 
 proof is   more involved and due to Gauthier \cite{gauthier}.
 \end{proof}

\medskip

As a consequence of this proposition, we infer that for every $1\leq k\leq \dim(\La)$
\begin{equation}\label{eq:tbifk}
\tbif^k = k! \sum_{1\leq i_1<\cdots <i_k\leq {2d-2}} T_{i_1}\wedge \cdots \wedge T_{i_k}.
\end{equation}
In particular 
\begin{equation}\label{eq:inclusion}
\supp(\tbif^k)\subset \set{\la, \ k \text{ critical points are active at }\la}.
\end{equation}
As we will see in a moment (Example \ref{ex:douady}  below), this inclusion is in general {\em not} an equality. It thus becomes an interesting problem, still open in general,  to characterize $\supp(\tbif^k)$. 
We will try to develop the idea that  $\supp(\tbif^k)$ is the set of parameters where $k$ critical points are active and ``behave independently". 

The following result follows from Theorem \ref{thm:basber}.  

\begin{thm}\label{thm:tbifk per}
Let $(f_\la)_{\la\in \La}$ be a holomorphic family of rational maps of degree $d\geq 2$. 
Then for every $k\leq \dim(\La)$, 
$$\supp(\tbif^k)\subset \overline{\set{\la,\ f_\la \text{ admits }k\text{ periodic critical points}}}.$$
\end{thm}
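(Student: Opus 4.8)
The plan is to reduce the statement to the multiplier equidistribution result, Theorem~\ref{thm:basber}, much as the opening inclusion \eqref{eq:inclusion} was reduced to Theorem~\ref{thm:preperiodic dense}. First I would reduce to the critically marked case exactly as in the preceding paragraphs: taking a branched cover $\pi:\widetilde\La\cv\La$ with all critical points marked, one has $\pi^{-1}\supp(\tbif^k)=\supp(\widetilde T_{\rm bif}^k)$, and since $\tbif^k$ gives no mass to analytic sets (continuous potentials), both supports equal the closure of their intersection with the complement of the branching locus; so it suffices to prove the statement upstairs, where $(c_1,\dots,c_{2d-2})$ are marked. Then by \eqref{eq:tbifk} the support of $\tbif^k$ is contained in $\bigcup_{i_1<\cdots<i_k}\supp(T_{i_1}\wedge\cdots\wedge T_{i_k})$, so it is enough to prove: if $\la_0\in\supp(T_{i_1}\wedge\cdots\wedge T_{i_k})$, then arbitrarily nearby parameters have $k$ periodic critical points.

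The key step is to realize each $T_{i}$ as a limit of $\per$-type divisors and to use the continuity of potentials to pass to wedge products. Concretely, for a fixed critical index $i$ and a fixed multiplier value $w$ with $|w|<1$ — or, better, using part~ii.\ of Theorem~\ref{thm:basber}, the averaged divisors $\int[\per(n,re^{i\theta})]\,d\theta$ — one has $d^{-n}[\per(n,\cdot)]\cv\tbif=\sum_j T_j$. The obstacle is that $\per(n,\cdot)$ is tied to $\tbif$, not to an individual $T_i$; to single out a critical point one should instead use the activity-current description: by Theorem~\ref{thm:cvg preper} (or Corollary~\ref{cor:strpreper}), when available, $T_i=\lim d^{-n}[\percrit_i(n,0)]$, and more relevantly one can intersect these divisors successively. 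So I would argue: pick a small ball $B\ni\la_0$ on which $T_{i_1}\wedge\cdots\wedge T_{i_k}$ has positive mass; writing $T_{i_1}\wedge\cdots\wedge T_{i_k}=\lim_n d^{-kn}[\percrit_{i_1}(n,0)]\wedge\cdots$ (legitimate because each $T_i$ has continuous potential, so one may wedge the approximants one at a time, as in the proof that $T_i\wedge T_i=0$), the limiting measure is a genuine limit of the wedge of $k$ hypersurfaces on each of which one more critical point is periodic. Since the limit has positive mass in $B$, for $n$ large the set $\percrit_{i_1}(n,0)\cap\cdots\cap\percrit_{i_k}(n,0)\cap B$ is nonempty, and at such a parameter the critical points $c_{i_1},\dots,c_{i_k}$ are all periodic; letting $n\to\infty$ and $B$ shrink to $\la_0$ gives $\la_0$ in the stated closure.

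The main obstacle — and the reason the theorem is phrased via Theorem~\ref{thm:basber} rather than Theorem~\ref{thm:cvg preper} — is that the approximation $T_i=\lim d^{-n}[\percrit_i(n,0)]$ requires hypothesis~(H), which is not assumed here; the theorem must hold in an \emph{arbitrary} family. So the robust route is through Theorem~\ref{thm:basber}.i--ii, whose validity requires no global assumption on $\La$ (as emphasized right after that theorem). The point is then that $\per(n,w)$ with $|w|<1$ consists of parameters with an \emph{attracting} cycle of period $n$, and a small perturbation inside $\per(n,w)$ (letting $w\to 0$, or intersecting with a further $\per(n',w')$ for a second ``active direction'') produces the desired periodic critical behaviour. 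Thus I would run the wedge-of-limits argument with the currents $d^{-n}\int[\per(n,re^{i\theta})]\,d\theta$: since $\tbif^k$ has continuous potentials, $\tbif^k=\lim_n d^{-kn}\big(\int[\per(n,\cdot)]d\theta\big)^{\wedge k}$, so positivity of mass near $\la_0$ forces, for large $n$, a parameter in $B$ lying on $k$ of the hypersurfaces $\{(\la,z):f_\la^{n}(z)=f_\la^{k(n)}(z)\}$ pulled back to the critical graphs — equivalently a parameter near $\la_0$ with $k$ periodic critical points. The delicate point to get right is the combinatorial bookkeeping ensuring that the $k$ periodic cycles captured actually sit on $k$ \emph{distinct} critical points (so that $\la_0$ genuinely has $k$ periodic critical points, not one critical point periodic in $k$ ways); this is handled by expanding $\tbif^k$ via \eqref{eq:tbifk} into the sum over $i_1<\cdots<i_k$ of $T_{i_1}\wedge\cdots\wedge T_{i_k}$ \emph{before} taking limits, and arguing on each term separately.
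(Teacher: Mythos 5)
Your instinct to route the argument through Theorem \ref{thm:basber} (because hypothesis (H) is unavailable in an arbitrary family) is exactly right, and the relevant divisors are simply $\per(n,0)$: a cycle of multiplier $0$ is superattracting, hence carries a critical point, so $\per(n,0)$ is precisely the locus where some critical point is periodic. There is therefore no need to pass to a critical marking or to decompose via \eqref{eq:tbifk}; the distinctness of the $k$ periodic critical points that you worry about comes for free at the end, because distinct superattracting cycles contain distinct critical points.

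The genuine gap is the central limit you invoke, namely $\tbif^k=\lim_n d^{-kn}\bigl(\int[\per(n,\cdot)]\,d\theta\bigr)^{\wedge k}$ (and likewise your earlier $T_{i_1}\wedge\cdots\wedge T_{i_k}=\lim_n d^{-kn}[\percrit_{i_1}(n,0)]\wedge\cdots$), i.e.\ replacing \emph{all} $k$ factors by level-$n$ approximants simultaneously. Weak convergence $S_n\to T$ together with continuity of the potential of $T$ only yields $S_n\wedge T^{k-1}\to T^k$ --- one approximant against $k-1$ copies of the limit, which is what is actually used in the proof that $T_i\wedge T_i=0$ --- and does not yield $S_n^{\wedge k}\to T^k$, since the potentials converge only in $L^1_{\rm loc}$ and not locally uniformly. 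The paper explicitly flags this very point: after Theorem \ref{thm:tbifk preper} it is noted that whether one may take $n_1=\cdots=n_k=n$ in \eqref{eq:induc} is unknown (Question \ref{q:tbifk}.2), so your proposal rests on an open problem. The actual proof circumvents it by a decreasing induction with nested, widely separated indices: from $\tbif^k=\lim_n d^{-n}[\per(n,0)]\wedge\tbif^{k-1}$ one finds a parameter near $\la_0$ on the smooth part of $V_1=\per(n,0)$ lying in $\supp\bigl((\tbif\rest{V_1})^{k-1}\bigr)$, using the identity $T^{k-1}\wedge[V]=(T\rest{V})^{k-1}$ for currents with continuous potential; one then treats $V_1$ as a new parameter space, re-applies Theorem \ref{thm:basber}.i there (this is exactly where it is essential that no global assumption on the family is required) with a possibly much larger $n_1$, and iterates $k$ times. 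Without this successive-restriction step your argument does not close.
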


\begin{proof}
We argue by decreasing induction on $k$, by using the following principle: if $V\subset\La$ is a  smooth analytic hypersurface  and $T$ a positive closed (1,1) current with continuous potential, then 
$T^k\wedge [V]= (T\rest{V})^k$. Here as usual the current $T\rest{V}$ is defined by restricting the potential of $T$ to $V$ and taking $dd^c$.

Under the assumptions of the theorem,  let $\la_0\in \supp(\tbif^k)$. Since $$\tbif = \lim_{n\cv\infty} \unsur{d^n}[\per (n,0)]$$ and $\tbif$ has continuous potential,  we infer that 
$$\tbif^k = \lim_{n\cv\infty} \unsur{d^n} [\per (n,0)]\wedge \tbif^{k-1}.$$ In particular, $\la_0$ is approximated by parameters belonging to $\lrpar{\tbif\rest{\per (n,0)}}^{k-1}$ (moving slightly if necessary we can always assume that these belong to the smooth part of $\per (n,0)$). 

We can now put $\La_1= \per (n,0)$ and repeat the argument to find a nearby parameter belonging to $\supp [\per({n_1},0)]\wedge \tbif^{k-2}\subset  \La_1$ for some (possibly much  larger) $n_1$,  etc.\end{proof}

We see that it is important in this argument that no special assumption on $\La$ is needed in Theorem \ref{thm:basber}. For instance if one were to replace ``periodic" by ``strictly preperiodic" in this theorem, and try  to use Theorem \ref{thm:cvg preper}, one would have to check the validity of (H) in the restricted submanifolds, which needn't be satisfied (see however Theorem  \ref{thm:tbifk preper} below). 

 \medskip

We can now explain how the inclusion in \eqref{eq:inclusion} can be strict. 

\begin{exam}[Douady] \label{ex:douady}
 In the two dimensional space of cubic polynomials with marked critical points, let 
$P_0(z) = z+\frac{z^2}{2} + z^3$. We claim that the two critical points are active at $P_0$, but $P_0$ does not belong to $\supp(\tbif^2)$. 

Indeed since $P_0$ is real and the critical points are not real, by symmetry, both critical points are attracted to the parabolic fixed point at the origin.  Since  the fixed point can be perturbed to become repelling (hence does not attract any critical point), Theorem \ref{thm:mss}{\em v.} implies that at least one critical point must be active. Hence by symmetry again, both critical points are active. On the other hand it can be proven (see \cite[Example 6.13]{preper} for details) that any nearby parameter admits an attracting (and not superattracting) fixed point. In particular  $P_0$ cannot be perturbed to make both critical points   periodic, therefore $P_0\notin \supp(\tbif^2)$. 

Denote by $(P_{\la})_{\la\in \La\simeq\cd}$ the  family of cubic polynomials  with marked critical points 
  $c_1$ and $c_2$. We see that if $N$ is a small neighborhood of the above parameter 0, the values of $(P_{\la}^n(c_1(\la)), P_{\la}^n(c_2(\la)))$ for $\la\in N$   avoid an open set in $\cd$. Indeed for $\la\in N$, either $c_1$ or $c_2$ must be attracted by an attracting cycle, so for large $n$,  $P_{\la}^n(c_1(\la))$ and $P_{\la}^n(c_2(\la))$ cannot be simultaneously large. This is a manifestation of the Fatou-Bieberbach phenomenon (failure of Montel's theorem in higher dimension). 
\qed
\end{exam}

One can also approximate $\supp(\tbif^k)$ by parameters possessing  
$k$ indifferent periodic cycles.
For $N_k=(n_1, \ldots n_k)\in \nn^k$ and $\Theta_k=(\theta_1,\cdots ,\theta_k) \in (\re/\zz)^k$, we denote by $\per_k(N_k, e^{i\Theta_k})$ the union of codimension $k$ irreducible components of 
$$\per(n_1, e^{i\theta_1})\cap \per(n_2, e^{i\theta_2})\cap \cdots \cap\per(n_k, e^{i\theta_k}), $$
and if $E\subset \re/\zz$, we let $$\mathcal{Z}_k(E) = \bigcup_{N_k\in \nn^k, \Theta_k\in E^k} \per_k(N_k, e^{i\Theta_k}), $$ which is the (codimension $p$ part of the) set of parameters possessing $k$ neutral cycles with respective multipliers in $E$.

The following result is due to Bassanelli and Berteloot \cite{bas-ber1}.

\begin{thm}[Bassanelli-Berteloot]\label{thm:indifferent}
Let $(f_\la)_{\la\in \La}$ 
be  a holomorphic family of rational maps of degree $d\geq 2$. If $E\subset \re/\zz$ is any dense subset,  then for every $k\leq \dim(\La)$, 
$$\supp(\tbif^k)\subset \overline{\mathcal{Z}_k(E)}.
$$
\end{thm}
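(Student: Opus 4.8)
The plan is to run the same decreasing induction on $k$ as in the proof of Theorem \ref{thm:tbifk per}, but now feeding in assertion \emph{ii.} of Theorem \ref{thm:basber} (the circular-average version) instead of just the $w=0$ case, together with the elementary fact that a dense subset $E$ suffices once we can produce a cycle whose multiplier has modulus $1$ and argument \emph{approximately} prescribed. More precisely, fix $\la_0\in\supp(\tbif^k)$ and set $T=\tbif$, which has continuous local potential. The key structural principle, already used above, is that if $V\subset\La$ is a smooth analytic hypersurface then $T^{j}\wedge[V]=(T\rest V)^j$, where $T\rest V$ is obtained by restricting the potential of $T$ to $V$. So the induction step needs: (a) near $\la_0$ there is a parameter lying on the smooth part of some $\per(n,e^{i\theta})$ with $\theta\in E$ (this is the base of the filtration: it is the support of $\tbif$ itself, handled by density of $E$ in $\re/\zz$ together with the Corollary in \S\ref{subs:multiplier}); and (b) if $\la_1\in\supp(\tbif^{j})$, then $\la_1$ can be approximated by parameters in $\supp\big([\per(n,e^{i\theta})]\wedge\tbif^{j-1}\big)$ with $\theta\in E$, living in the smooth locus of $\per(n,e^{i\theta})$.

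For step (b) the mechanism is exactly the one in Theorem \ref{thm:tbifk per}. From Theorem \ref{thm:basber}\emph{ii.} with $r=1$ we have
\begin{equation}\label{eq:avg}
\unsur{d^n}\int_{\re/2\pi\zz}\big[\per(n,e^{i\theta})\big]\,d\theta \;\cv\; \tbif
\end{equation}
in the sense of currents. Since $\tbif$ has continuous potential, wedging with the continuous-potential current $\tbif^{j-1}$ is legitimate and continuous along this convergence, so
\begin{equation}\label{eq:avgwedge}
\unsur{d^n}\int_{\re/2\pi\zz}\big[\per(n,e^{i\theta})\big]\wedge\tbif^{j-1}\,d\theta \;\cv\; \tbif^{j}.
\end{equation}
If $\la_1\in\supp(\tbif^{j})$, then for every neighbourhood $W\ni\la_1$ the left side of \eqref{eq:avgwedge} has positive mass on $W$ for all large $n$; hence for each such $n$ there is at least one $\theta=\theta_n$ (which we may as well take to be within distance $1/n$ of some fixed element of $E$, using density of $E$, since the set of $\theta$ with positive contribution has positive Lebesgue measure) such that $W$ meets $\supp\big([\per(n,e^{i\theta_n})]\wedge\tbif^{j-1}\big)$. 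Moving the witnessing parameter slightly we may assume it lies in the smooth part of $\per(n,e^{i\theta_n})$, so by the restriction principle it belongs to $\supp\big((\tbif\rest{\per(n,e^{i\theta_n})})^{j-1}\big)$. Now replace $\La$ by the smooth hypersurface $\per(n,e^{i\theta_n})$, on which $\tbif\rest{\per(n,e^{i\theta_n})}$ is still a $(1,1)$ positive closed current with continuous potential, and repeat. After $k$ steps we reach a parameter in a codimension-$k$ component of an intersection $\per(n_1,e^{i\theta_1})\cap\cdots\cap\per(n_k,e^{i\theta_k})$ with all $\theta_i\in E$ up to an error tending to $0$; that is, a parameter in (the closure of) $\mathcal Z_k(E)$, arbitrarily close to $\la_0$. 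This gives $\supp(\tbif^k)\subset\overline{\mathcal Z_k(E)}$.

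The point requiring the most care — and the main obstacle — is the bookkeeping of \emph{codimension} and \emph{smoothness} through the iterated restriction. When we restrict $\tbif$ to $\per(n,e^{i\theta})$ we need the resulting hypersurface produced at the next stage to again be of the expected codimension (so that $\per_k$, the union of genuinely codimension-$k$ components, is what we land in), and we need to be able to wiggle into the smooth locus at each stage without destroying the information accumulated. This is exactly the kind of transversality/dimension-count subtlety that is standard in this circle of ideas (compare the treatment in \cite{bas-ber1}); the clean way to phrase it is that at stage $j$ one works on an irreducible component $\La_j$ of the current intersection which has codimension $j$ in $\La$ and on which the relevant restricted bifurcation current is nonzero, and one observes that $\supp\big((\tbif\rest{\La_j})^{k-j}\big)\cap W\neq\emptyset$ forces $\supp\big([\per(n_{j+1},e^{i\theta_{j+1}})]\rest{\La_j}\wedge(\tbif\rest{\La_j})^{k-j-1}\big)$ to meet $W$ for suitable $n_{j+1}$ and $\theta_{j+1}\in E$ (mod a vanishing error), via \eqref{eq:avgwedge} applied on $\La_j$. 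A secondary subtlety is that $\per(n,w)$ was defined with a possibly non-reduced structure and, for $w$ a root of unity ($w=1$ in particular), with the parasitic components described in item \emph{ii.} of the multiplier theorem; one avoids this entirely by only ever using values $e^{i\theta}$ that are \emph{not} roots of unity, which is harmless since the non-roots-of-unity are dense in $\re/\zz$ and one may intersect $E$ with them without loss.
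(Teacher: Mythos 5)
Your route---an iterated-restriction argument driven by the averaged equidistribution of Theorem \ref{thm:basber}.\emph{ii}, modeled on the proof of Theorem \ref{thm:tbifk per}---is genuinely different from the paper's, which runs an \emph{increasing} induction on $k$: the induction hypothesis places $\la_0\in\supp(\tbif^k)\subset\overline{\mathcal{Z}_{k-1}(E)}$; if the family were $J$-stable along the nearby codimension-$(k-1)$ pieces of $\mathcal{Z}_{k-1}(E)$, then $\chi$ would be pluriharmonic along them and the lemma \cite[Lemma 6.10]{fs2} would force $\tbif^k=0$ near $\la_0$, a contradiction; Theorem \ref{thm:mss} then produces a $k$-th \emph{non-persistent} neutral cycle, whose multiplier argument can be prescribed exactly in the dense set $E$.

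Your version has a genuine gap exactly where the paper's Ma\~n\'e--Sad--Sullivan step does its work: the passage from ``$\theta_j$ approximately in $E$'' to membership in $\overline{\mathcal{Z}_k(E)}$. The averaged convergence $\unsur{d^n}\int\left[\per(n,e^{i\theta})\right]\wedge\tbif^{j-1}\,d\theta\cv\tbif^{j}$ only tells you that the set of $\theta$ for which $[\per(n,e^{i\theta})]\wedge\tbif^{j-1}$ charges a given neighborhood has positive Lebesgue measure; since $E$ is merely dense (it may be countable, hence Lebesgue-null), you cannot force any such $\theta$ to lie in $E$, and ``within $1/n$ of $E$'' is vacuous because a dense subset of $\re/\zz$ is $\varepsilon$-dense for every $\varepsilon$. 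More importantly, $\overline{\mathcal{Z}_k(E)}$ is the closure \emph{in parameter space} of the set of parameters whose $k$ neutral multipliers equal $e^{i\theta}$ with $\theta$ \emph{exactly} in $E$; a parameter whose multipliers are only approximately of this form need not belong to that closure, so your concluding sentence is a non sequitur. To repair the argument you would need to show that at each stage the multiplier of the newly produced cycle is non-constant on the subvariety $\La_{j-1}$ you are currently restricted to (this is also what your codimension bookkeeping requires), and then use openness of that holomorphic function to correct $\theta_j$ to an exact element of $E$ by a small move inside $\La_{j-1}$, which preserves the exact conditions already imposed. That non-constancy is precisely the ``non-persistence'' that Theorem \ref{thm:mss} hands the paper for free; as written, your proposal neither states nor establishes it.
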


\begin{proof}[Proof (sketch)] We argue by induction on $k$. For $k=1$, the result follows from  Theorem \ref{thm:mss}.   The main idea of the induction step is as follows: assume $\la_0\in \supp(\tbif^k)$. Then by the induction hypothesis, $\la\in \overline{\mathcal{Z}_{k-1}(E)}$, so there are plenty of $(k-1)$-dimensional disks near $\la_0$ along which $f_\la$ possesses $(k-1)$ neutral cycles with multipliers in $E$. Assume that the dynamics is $J$-stable along these disks. Then the Lyapunov exponent function $\chi$ is pluriharmonic there and it follows from a general pluripotential theoretic lemma (see \cite[Lemma 6.10]{fs2}) that $(dd^c\chi)^k=0$ in the neighborhood of $\la_0$, a contradiction. So the dynamics is not $J$-stable along the disks of $\mathcal{Z}_{k-1}(E)$, hence 
  Theorem \ref{thm:mss}   produces one more neutral cycle, thereby proving the result.  
 \end{proof}

We see that in    Example  \ref{ex:douady},   
the two critical points are related in a rather subtle way.  On the other hand, under a certain transversality assumption,  a clever argument of similarity between parameter and dynamical spaces, due to      Buff and Epstein,  shows  that certain parameters belong to 
$\supp(\tbif^k)$ \cite{buff epstein}. 

Let $\la_0\in \La$ be a parameter where $c_1(\la_0), \ldots , c_k(\la_0)$ fall onto repelling cycles. More precisely we assume that there exist repelling periodic  
points $p_1(\la_0), \ldots p_k(\la_0)$ and integers $n_1,\ldots, n_k$ such that for $1\leq j\leq k$, $f^{n_j}_{\la_0}(c_j(\la_0)) = p_j(\la_0)$. The repelling orbits $p_j(\la_0)$ can be uniquely continued to repelling periodic orbits $p_j(\la)$ for $\la$ in some neighborhood of $\la_0$. 
Fix for each $j$ a coordinate chart  on $\pu$  containing $p_j(\la_0)$, so that for nearby $\la$, the function $\chi_j : \la\mapsto f^{n_j}_{\la_0}(c_j(\la )) - p_j(\la )$ is well defined. We say that the critical points $c_j(\la_0)$ {\em fall transversely} onto the  respective repelling points  $p_j(\la_0)$ if the mapping $\chi: \La\cv\cc^k$ defined in the neighborhood of $\la_0$ by $\chi =(\chi_1, \ldots , \chi_k)$ has rank $k$ at $\la_0$. Of course this notion does not depend on the choice of coordinate charts. 

 \begin{thm}[Buff-Epstein]\label{thm:buff epstein} 
Let $(f_\la)_{\la\in \La}$ 
be  a holomorphic family of rational maps of degree $d\geq 2$ with marked critical points
$c_1,\ldots, c_k$, and associated bifurcation currents $T_1,\ldots , T_k$. 
Let $\la_0$ be a parameter at which  $c_1(\la_0), \ldots , c_k(\la_0)$ fall transversely onto repelling cycles. Then $\la_0\in \supp(T_1\wedge \cdots \wedge T_k)$. 
\end{thm}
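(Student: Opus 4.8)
The goal is to show that under the transversality assumption, the parameter $\la_0$ lies in $\supp(T_1\wedge\cdots\wedge T_k)$. The central idea, due to Buff and Epstein, is that transversality allows one to \emph{transport} the dynamical behaviour near the repelling points $p_j$ to parameter space via the map $\chi=(\chi_1,\dots,\chi_k)$, and then to compare the wedge product of bifurcation currents with a wedge product of (pullbacks of) Green-type currents on the dynamical side, where the analogous non-vanishing statement is elementary. First I would reduce, by the transversality hypothesis, to the situation where $\chi$ is a local biholomorphism from a neighbourhood of $\la_0$ onto a neighbourhood of $0$ in $\cc^k$: indeed, after possibly restricting to a transverse $k$-dimensional submanifold of $\La$ through $\la_0$ (which is legitimate, because if $T_1\wedge\cdots\wedge T_k$ charges no point of such a slice it charges no neighbourhood in $\La$ either, using that these currents have continuous potentials and the restriction principle $T^k\wedge[V]=(T\rest V)^k$ invoked already in the proof of Theorem~\ref{thm:tbifk per}), we may assume $\dim\La=k$ and that $\chi$ is invertible near $\la_0$.

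\textbf{Comparison with the dynamical picture.} The next step is to express each $T_j$, locally near $\la_0$, through a potential built out of the Green function. Recall that $T_j=\lim d^{-n}f_{j,n}^*\omega$ where $f_{j,n}(\la)=f_\la^n(c_j(\la))$, and that its potential is (up to a pluriharmonic term) $g_\la(c_j(\la))$ where $g_\la$ is the normalized potential of the fibered Green current $\hT$, i.e.\ $\hom+dd^c g_\infty=\hT$ in the notation of Section~\ref{subs:marked}. Near the repelling periodic point $p_j(\la_0)$ the Green current $\hT$ restricted to a fibre is, by the classical local theory of repelling points (Brolin--Lyubich, or the local description of $\mu_\la$ near repelling cycles), comparable to $dd^c\log|\,\cdot-p_j(\la)\,|$ up to a smooth term --- more precisely the \emph{local} potential of $\mu_\la$ near $p_j(\la)$ differs from $\frac1{m}\log$ of the distance to the cycle by a pluriharmonic function. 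Pulling back along $\la\mapsto f^{n_j}_\la(c_j(\la))$, one gets that near $\la_0$ the current $T_j$ equals $dd^c\big(h_j\circ\chi_j\big)$ where $h_j$ is a psh function on $\cc$ with $dd^c h_j$ a nonzero positive measure (with an atom of positive Lelong number at $0$), plus a smooth closed form. Hence $T_1\wedge\cdots\wedge T_k$ is, modulo lower-order smooth contributions, $\chi^*\big(dd^ch_1\wedge\cdots\wedge dd^ch_k\big)$, and since $\chi$ is a biholomorphism and the $h_j$ depend on distinct coordinates, this product does not vanish near $0$; its mass on any neighbourhood of $\la_0$ is positive. That gives $\la_0\in\supp(T_1\wedge\cdots\wedge T_k)$.

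\textbf{The main obstacle.} The delicate point is \emph{justifying the wedge-product comparison}: one is multiplying $k$ positive closed $(1,1)$-currents that are only known to have continuous potentials, so one cannot simply manipulate them as smooth forms, and the ``smooth error terms'' above are not literally closed smooth forms but closed currents with continuous potentials whose contribution to the wedge must be controlled. Buff--Epstein's actual argument handles this by a careful similarity estimate: they compare $d^{-n}f^{n_j}_\la(c_j(\la))$ with the linearizing coordinate at $p_j$ and show that, along a suitable sequence, the slices behave well enough that $\prod_j d^{-n_j}(f_{j,\cdot})^*\omega$ tested against an appropriate form stays bounded below. Concretely, the right way to organize it is: (i) linearize $f_\la$ near each $p_j(\la)$, obtaining coordinates in which $f_\la^{N}$ is multiplication by $\rho_j^N$; (ii) use transversality of $\chi$ to choose, near $\la_0$, a product-like polydisc on which the $k$ activity functions decouple; (iii) on that polydisc, bound from below the mixed Monge--Amp\`ere mass of the $T_j$ by the corresponding product of one-variable masses (each nonzero by the atomic Lelong number at the Misiurewicz relation). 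Making step (iii) rigorous --- i.e.\ the lower bound on $\m\big(T_1\wedge\cdots\wedge T_k\big)$ on a neighbourhood of $\la_0$, using only continuity of potentials and the product structure coming from transversality --- is the crux, and is exactly where Buff and Epstein's transversality hypothesis is indispensable (without it the activity functions can be ``aligned'' as in Example~\ref{ex:douady}, and the mixed mass can vanish).
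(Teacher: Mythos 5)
Your overall strategy is the right one and your final paragraph correctly identifies the paper's actual mechanism (linearization at the $p_j$, an anisotropic rescaling adapted to the multipliers, and a lower bound on the renormalized mixed mass), but the middle step, ``Comparison with the dynamical picture,'' contains two genuine errors that would sink the argument as written. First, the maximal entropy measure $\mu_\la$ has no atoms, so its local potential near a repelling point $p_j(\la)$ is \emph{not} $\frac1m\log$ of the distance to the cycle plus a pluriharmonic function, and $dd^c h_j$ has no ``atom of positive Lelong number at $0$.'' The correct non-degeneracy input is much weaker and is exactly what the paper uses: $p_j\in\supp(\mu_{\la_0})$, hence $G_0$ fails to be harmonic in any neighborhood of $p_j$, so the one-variable integrals $\int dd^c\lrpar{G_0\circ\psi_j}$ are positive. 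Second, the claimed local identity $T_j=dd^c\lrpar{h_j\circ\chi_j}+(\text{smooth closed form})$ on a \emph{fixed} neighborhood of $\la_0$ is not available. By invariance one has $G_\la(c_j(\la))=d^{-n_j}G_\la\lrpar{f^{n_j}_\la(c_j(\la))}$, but the right-hand side depends on $\la$ both through the point $f^{n_j}_\la(c_j(\la))$ and through the Green function $G_\la$ itself, and through \emph{all} coordinates of $\la$, not only $\chi_j(\la)$. The decoupling into a function of the single variable $\la_j$ only emerges asymptotically: after composing with the scaling $\delta_n(\la)=(\la_1/m_1^n,\ldots,\la_k/m_k^n)$ one gets $d^{\,n+1}G_{\delta_n(\la)}(c_j(\delta_n(\la)))\cv G_0(\psi_j(\la_j))$ locally uniformly, and it is this uniform convergence of rescaled potentials on the shrinking polydisks $\delta_n(D(0,\delta)^k)$ that legitimizes passing to the wedge product of the limits and then applying Fubini. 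So the Fubini factorization you want does hold, but only for the renormalized limit currents, not for the $T_j$ themselves on a fixed neighborhood.

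In short: you have correctly located the crux (the lower bound on the mixed Monge--Amp\`ere mass near $\la_0$) and even sketched the right tool for it, but the intermediate ``local structure of $T_j$'' you propose to use is false, and the argument only closes once the comparison is made at the level of the rescaled potentials $d^{\,n+1}G_{\delta_n(\la)}(c_j(\delta_n(\la)))$ rather than at the level of the currents $T_j$ near $\la_0$. Replacing your middle paragraph by the estimate
$$\liminf_{n\cv\infty}\; d^{\,kn}\,(T_1\wedge\cdots\wedge T_k)\lrpar{\delta_n\lrpar{D(0,\delta)^k}}>0$$
and proving it via the convergence above would repair the proof and reproduce the paper's argument.
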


Notice that this theorem does not appear in this form in \cite{buff epstein}. Our presentation borrows from 
\cite{berteloot survey}. The validity of the transversality assumption will be discussed in various situations in \S \ref{subs:poly} and \ref{subs:rat} below.

\begin{proof}[Proof (sketch)]
First, a  slicing argument shows that it is enough to prove that for a generic $k$-dimensional subspace $\La'\ni\la_0$ (relative to some coordinate chart in $\La$), the result holds by restricting to $\La'$. Thus we can assume that $\dim(\La)=k$ and that $\chi$ is a local biholomorphism on $\La$. 
To simplify notation,  we will assume that the $f_j$ are polynomials, $k=2$, $n_j=1$ and that the $p_j$ are fixed points.  

Taking adapted coordinates $(\la_1, \la_2)$ in $\La$ (in which the initial parameter $\la_0$ is 0) we can assume that $\chi_1(\la)  =  \la_1+ {h.o.t.}$ and $\chi_2(\la)  =  \la_2+ {h.o.t.}$
The proof, based on a renormalization argument, consists in  estimating the mass, relative to the measure 
 $T_1\wedge T_2$, of   small bidisks about 0  of   carefully chosed  size.   Specifically, we will show  that
$$\liminf d^n (T_1\wedge T_2) \lrpar{D \lrpar{0, \frac{\delta}{m_1^n} }\times  D \lrpar{0, \frac{\delta}{m_2^n} }} >0,$$
where the $m_j$ are the respective multipliers of the $p_j$. Let $\delta_n$ be the scaling map defined by 
$$\delta_n(\la) =\left(\frac{\la_1}{m_1^n},  \frac{\la_2}{m_2^n}\right).$$
An easy computation based on transversality and  the fact that $f_\la$ is linearizable near $p_j(\la)$ shows that for $j=1,2$, $f_{\delta_n(\la)}^{n+1}(c_j(\delta_n(\la)))$ converges 
as $n\cv\infty$ to a non-constant map $\psi_j$  {\em depending only on} $\la_j$, on some disk $D(0, \delta)$, with $\psi_j(0) =p_j$. Hence, since $G_\la$ depends continuously on $\la$ we get that $G_{\delta_n(\la)}(f_{\delta_n(\la)}^{n+1}(c_j(\delta_n(\la))))$ converges to $G_0\circ \psi_j(\la_j)$. Using the invariance relation for the Green's function, we conclude that  
\begin{equation}\label{eq:delta n}
d^{n+1} G_{\delta_n(\la)}( c_j(\delta_n(\la))) = 
G_{\delta_n(\la)}\left(f_{\delta_n(\la)}^{n+1}(c_j(\delta_n(\la)))\right) \underset{n\cv\infty}\longrightarrow
G_0(\psi_j(\la_j)), 
\end{equation}  
hence 
\begin{align*}&(T_1\wedge T_2)(\delta_n (D(0, \delta)^2)) \simeq d^{-2(n+1)} \int_{D(0, \delta)^2} dd^c\left(G_0\circ \psi_1 (\la_1)\right)\wedge dd^c \left(G_0\circ \psi_2(\la_2)\right) \\ &= d^{-2(n+1)}
\lrpar{\int_{D(0, \delta)} dd^c\left(G_0\circ \psi_1 (\la_1)\right) }\lrpar{\int_{D(0, \delta)} dd^c\left(G_0\circ \psi_2 (\la_2)\right) } \text{ by Fubini's theorem}.
\end{align*}
Note that the first line of this equation 
is justified by the local uniform convergence in \eqref{eq:delta n}.
 Finally, the integrals on the second line are positive since $G_0$ is not harmonic near $p_j$, so $G_0\circ \psi_j$ is not harmonic near the origin. 
\end{proof}

Building on similar ideas, Gauthier \cite{gauthier} relaxed 
the transversality assumption in Theorem \ref{thm:buff epstein}
as follows. Assume as before that $\la_0\in \La$ is a parameter where $c_1(\la_0), \ldots , c_k(\la_0)$ fall onto respective repelling   periodic  
points $p_1(\la_0), \ldots p_k(\la_0)$. Define $\chi:\La\cv \cc^k$ as before Theorem \ref{thm:buff epstein}. We say that  the critical points $c_j(\la_0)$ {\em fall properly} onto the  respective repelling points  $p_j(\la_0)$ if $\chi^{-1}(0)$ has codimension $k$ at $\la_0$. To say it differently, we are requesting that in  $\La\times (\pu)^k$ the     graphs of the two mappings 
$\la\mapsto (p_1(\la), \ldots p_k(\la))$ and $\la\mapsto (f^{n_1}_\la(c_1(\la)), \ldots , 
f^{n_k}_\la(c_k(\la)))$ intersect properly\footnote{The   is the usual  terminology 
   in intersection theory, 
   see \cite{fulton, chirka}.} at $(\la_0, p_1(\la_0), \ldots p_k(\la_0))$.

\begin{thm}[Gauthier]\label{thm:gauthier proper} 
Let $(f_\la)_{\la\in \La}$ 
be  a holomorphic family of rational maps of degree $d\geq 2$ with marked critical points
$c_1,\ldots, c_k$, and associated bifurcation currents $T_1,\ldots , T_k$. 
Let $\la_0$ be a parameter at which  $c_1(\la_0), \ldots , c_k(\la_0)$ fall properly onto repelling cycles. Then $\la_0\in \supp(T_1\wedge \cdots \wedge T_k)$. 
\end{thm}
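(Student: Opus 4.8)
The plan is to reduce Theorem \ref{thm:gauthier proper} to Theorem \ref{thm:buff epstein} by a perturbation argument, exploiting the fact that the hypothesis of \emph{proper} intersection is stable while that of \emph{transverse} intersection is generic. Concretely, I would work in $\La\times(\pu)^k$ and consider the two holomorphic maps
\[
\Phi(\la) = (\la, f^{n_1}_\la(c_1(\la)), \ldots, f^{n_k}_\la(c_k(\la))), \qquad \Psi(\la) = (\la, p_1(\la), \ldots, p_k(\la)),
\]
whose images are the graphs $G_\Phi$, $G_\Psi$ of dimension $\dim(\La)$. After slicing by a generic $k$-dimensional plane $\La'\ni\la_0$ (as in the proof of Theorem \ref{thm:buff epstein}, using that all the currents involved have continuous potentials so slicing behaves well), we may assume $\dim(\La)=k$; the properness hypothesis says $\chi^{-1}(0)=\{\la_0\}$ is an isolated point, i.e.\ $\la_0$ is an isolated solution of $f^{n_j}_\la(c_j(\la))=p_j(\la)$, $1\le j\le k$. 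The local intersection multiplicity $\mu_0$ of $G_\Phi$ and $G_\Psi$ at $(\la_0,\mathbf{p}(\la_0))$ is then a well-defined positive integer.

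The key step is to \emph{deform} the situation to create transverse intersections whose bifurcation mass we control from below. For this I would first note that the properness condition persists in a holomorphic family of rational maps obtained by deforming $(f_\la)$: enlarge the parameter space, or simply observe that one may perturb within the \emph{given} family by moving $\la_0$ slightly and correspondingly adjusting, but the cleanest route is to use the stability of proper intersection under small perturbation of $\Phi$. One shows that arbitrarily close to $\la_0$ there are parameters $\la_0'$ at which each $c_j$ still falls onto a repelling cycle (this follows because the set $\{\la: f^{n_j}_\la(c_j(\la))=p_j(\la)\}$ is a hypersurface of $\La$ near $\la_0$, on a dense open subset of which the transversality $d\chi_j\ne 0$ holds; Sard's theorem plus genericity gives a point where all $k$ of these hypersurfaces meet transversely). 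At such a parameter $\la_0'$ Theorem \ref{thm:buff epstein} applies and gives $\la_0'\in\supp(T_1\wedge\cdots\wedge T_k)$. Since $\la_0'$ can be taken arbitrarily close to $\la_0$ and $\supp(T_1\wedge\cdots\wedge T_k)$ is closed, we conclude $\la_0\in\supp(T_1\wedge\cdots\wedge T_k)$.

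The alternative, and probably the argument actually intended, is to redo the renormalization argument of Theorem \ref{thm:buff epstein} directly without linearizability in the transverse direction. One still rescales by $\delta_n(\la)=(\la_1/m_1^n,\ldots,\la_k/m_k^n)$ and shows $d^{n+n_j}G_{\delta_n(\la)}(c_j(\delta_n(\la)))$ converges to $G_0\circ\psi_j(\la)$ where now $\psi_j$ need no longer depend on $\la_j$ alone, but the proper-intersection hypothesis guarantees that the collection $(\psi_1,\ldots,\psi_k)$ defines a map $\cc^k\to\cc^k$ with an \emph{isolated} zero at the origin, of some multiplicity $m\ge 1$. Then
\[
d^{-\sum(n+n_j)}\int_{D(0,\delta)^k} dd^c(G_0\circ\psi_1)\wedge\cdots\wedge dd^c(G_0\circ\psi_k)
\]
is bounded below because this mixed Monge--Amp\`ere integral is $\ge m$ times the corresponding integral for the pure ``argument'' part, which is positive since $G_0$ is strictly subharmonic near each $p_j$. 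This yields $\liminf d^{\sum(n+n_j)}(T_1\wedge\cdots\wedge T_k)(\delta_n(D(0,\delta)^k))>0$, hence $\la_0\in\supp(T_1\wedge\cdots\wedge T_k)$.

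The main obstacle, in either approach, is the step that replaces the Fubini splitting of the original proof: in Theorem \ref{thm:buff epstein} transversality made the limit $\psi_j$ a function of a single coordinate, so the mixed MA mass factored as a product of one-variable integrals, each manifestly positive. Under mere properness this factorization fails, and one must instead argue that a mixed Monge--Amp\`ere integral $\int (dd^c u_1)\wedge\cdots\wedge(dd^c u_k)$ over a polydisk is positive when the $u_j=G_0\circ\psi_j$ have no common ``critical set'' in a suitable sense --- i.e.\ that the holomorphic map $\psi=(\psi_1,\ldots,\psi_k)$ being finite at the origin forces the pushforward of Lebesgue-type mass to be nonzero, so that $\psi^*$ of the product current $\bigwedge dd^c G_0$ on $(\pu)^k$, restricted to a neighborhood, has positive mass. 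Controlling this --- and verifying that the rescaled dynamics genuinely converges to such a finite map, with the convergence locally uniform enough to pass to the limit in the currents --- is the technical heart of Gauthier's refinement.
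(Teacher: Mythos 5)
The paper gives no proof of this statement (it defers entirely to \cite{gauthier}), so your proposal has to stand on its own, and both routes you sketch break down at the same essential point. The first route cannot work as written. Properness means that $V=\bigcap_{j}\chi_j^{-1}(0)$ has codimension $k$ at $\la_0$ (an isolated point after slicing to $\dim(\La)=k$), and nothing forces the intersection to be transverse at \emph{any} point of $V$ near $\la_0$: already for $k=1$ and $\chi_1(\la)=\la_1^2$ in $\cc^2$, the set $V=\set{\la_1=0}$ is a proper hypersurface along which $d\chi_1$ vanishes identically. Your appeal to Sard and genericity produces points where each individual hypersurface $\set{\chi_j=0}$ is smooth, not points of the \emph{common} intersection where the differentials are independent; after slicing there are no other common points at all. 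To manufacture nearby transverse Misiurewicz parameters one would have to change the combinatorial data (the integers $n_j$ or the cycles themselves), and producing such parameters is essentially the content of the theorem rather than a reduction of it.

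The second route is the right general idea, but the rescaling you retain from Theorem \ref{thm:buff epstein}, $\delta_n(\la)=(\la_1/m_1^n,\ldots,\la_k/m_k^n)$, fails precisely when transversality fails. By linearization, $f^{n+n_j}_{\delta_n(\la)}(c_j(\delta_n(\la)))-p_j(\delta_n(\la))\sim m_j^n\,\chi_j(\delta_n(\la))$; if $\chi_j$ vanishes to order $d_j\geq 2$ at $\la_0$ (say with all multipliers equal to $m$), then $m^n\chi_j(\delta_n(\la))=m^{n(1-d_j)}\chi_j(\la)+o(1)\cv 0$, so the limit $\psi_j$ is the \emph{constant} $p_j$ and the map $(\psi_1,\ldots,\psi_k)$ has no isolated zero --- contrary to what you assert properness ``guarantees'' (with unequal multipliers the renormalized quantity may instead blow up; either way no finite nonconstant limit is produced). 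The correct fix is to renormalize through the finite germ $\chi$ itself --- e.g.\ pull back by local branches of $\chi^{-1}$, or work on the normalization of the graph of $\chi$ --- so that in the new ramified coordinates the landing becomes transverse; this is exactly the technical content of Gauthier's refinement and is absent from your sketch. I will grant that the final positivity step you describe is sound once a finite limit map $\psi$ is in hand: $\bigwedge_j dd^c(G_0\circ\psi_j)=\psi^*\lrpar{\mu_0^{\otimes k}}$ locally, and its mass on a polydisk is at least $\mu_0^{\otimes k}(\psi(D^k))>0$ because finite holomorphic maps are open and $(p_1,\ldots,p_k)\in\supp(\mu_0^{\otimes k})$.
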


Actually, it is enough that $c_1(\la_0), \ldots , c_k(\la_0)$ fall properly into an arbitrary 
 hyperbolic set. We refer to \cite{gauthier} for details.
 
 \medskip

The results in this section show that for $1\leq k\leq \dim(\La)$, $\supp(\tbif^k)$ is a reasonable candidate for the locus of ``bifurcations of order $k$". We will see in the next sections that when $\La$ is the space of all polynomials or rational maps and $k$ is maximal,    $\supp(\tbif^k)$ can be characterized precisely.
 The picture is not yet complete in intermediate codimensions. In this respect let us state a few open questions. 
 
 \begin{question} \label{q:tbifk}
 Let $(f_\la)_{\la\in \La}$ be a holomorphic family of rational maps on $\pu$ 
 of degree $d\geq 2$, with marked critical points $c_1, \ldots, c_k$.
 \begin{enumerate}[1.]
 \item Is it true that 
 $$ \supp(T_1\wedge \cdots \wedge T_k) = \overline{\set{\la_0, \ c_1(\la_0), \cdots ,c_k(\la_0) \text{ fall transversely onto   repelling cycles}}} $$ (by Theorem \ref{thm:buff epstein} only the inclusion $\subset$ needs to be established).
\item More generally, do the codimension $k$ subvarieties   $$\percrit_1(n, k(n))\cap \cdots \cap  \percrit_k(n, k(n))$$ equidistribute towards $T_1\wedge \cdots \wedge T_k$?    Arithmetic methods could help here, especially when $\La$ is the space of polynomials and $k$ is  maximal (see the next paragraph). 
\item Is the following characterization of $\supp(T_1\wedge \cdots \wedge T_k)$ true: $\la_0\in \supp(T_1\wedge \cdots \wedge T_k)$ iff for every neighborhood $U$ of $\la_0$, there exists a pluripolar subset $\mathcal{E}\subset(\pu)^k$ such that the values of $f^n_\la(c_j(\la))$ for   $n\in \nn$ 
and $\la\in U$ cover $ (\pu)^k\setminus \mathcal{E}$?
\end{enumerate}
\end{question}
  
Work in progress of the author indicates that the answer to Question \ref{q:tbifk}.{1.} should be ``yes''.  


\subsection{The space of polynomials}\label{subs:poly}
In this paragraph we specialize the discussion to the case where $\La$ is the space of polynomials of degree $d$, and will mostly concentrate on  the maximal exterior power of the bifurcation current (the {\em bifurcation measure}). Our purpose is to show that  it is in many respected 
the right analogue in higher degree of the   harmonic measure of the Mandelbrot set. All results except Theorem 
\ref{thm:HD polynomial} come  from \cite{preper}. 

\medskip

The space $\mathcal{P}_d$ of polynomials of degree $d$ with marked critical points is a singular affine algebraic variety. To work on this space, in practice we use an ``orbifold parameterization" (not injective) $\pi:\cc^{d-1}\cv\mathcal{P}
_d$, defined as follows: $\pi$ maps $(c_1, \cdots, c_{d-2},a)
\in\mathbb{C}^{d-1}$ to the primitive of $z\prod_1^{d-2} (z-c_i)$
whose value at $0$ is $a^d$. In coordinates, denoting    $c=(c_1, \ldots , c_{d-2})$, we get 
\begin{equation}\label{e:111}
\pi(c,a) =  {P}_{c,a} (z) 
= \frac1d\, z^d+\sum_{j=2}^{d-1}(-1)^{d-j}\, \sigma_{d-j}(c)\,
\frac{z^j}{j} + a^d~,  
\end{equation}
where $\sigma_i(c)$ is the elementary symmetric polynomial in the  $\{c_j\}_1^{d-2}$
 of degree $i$.  The critical points of $P_{c,a}$ are $\{0, c_1, \cdots , c_{d-2}\}$. We put $c_0 = 0$.

The choice of this  parameterization (inspired from that used by Branner and Hubbard in \cite{branner-hubbard1}) is motivated by the fact that the bifurcation currents $T_i$ associated to the $c_i$ have the same projective mass. Furthermore, it is well suited in order  to understand the behavior at infinity 
 of certain parameter space subsets (this will be used to check condition (H) of Theorem 
 \ref{thm:cvg preper}).  
 
We let $\mathcal{C}$ be the connectedness locus, which is compact in $\cc^{d-1}$ by \cite{branner-hubbard1}. For $1\leq i\leq d-2$ we also define the closed subsets $\mathcal{C}_i$ by  
$$\mathcal{C}_i 
= \set{(c,a), \  c_i\text{ has bounded orbit}}.$$ It is clear that  $\mathcal{C} = \bigcap_i\mathcal
{C}_i$ and that  $\fr \mathcal{C}_i$ is the activity locus of  $c_i$. 

We  also let $g_{c,a}$ be the Green's function of the polynomial $P_{c,a}$. Then $T_i = dd^cg_i$, where $g_i = g_{c,a}(c_i)$. Recall that the Manning-Przytycki formula asserts that 
$\chi = \log d+ \sum_{i=0}^{d-2} g_i$, hence $\tbif = \sum_{i=0}^{d-2} T_i$. 

\medskip

In this specific  situation we are able to solve the problem raised after Theorem \ref{thm:tbifk per}. 

\begin{thm}\label{thm:tbifk preper}
In $\mathcal{P}_d$, for every $1\leq k\leq d-1$, 
$$\supp(\tbif^k)\subset \overline{\set{\la,\ f_\la \text{ admits }k\text{ strictly 
preperiodic critical points}}}.$$

More precisely, for any collection of integers $i_1, \cdots , i_k \in \{ 0,
  \cdots , d-2\}$, the analytic subset  $W_{n_1,  \ldots, n_k} =
  \bigcap_{j=1}^k\prepercrit_{i_j}(n_j,n_j-1)$ is of pure codimension $k$ and 
  \begin{equation}\label{eq:induc}
\lim_{n_k \cv \infty} \cdots \lim_{n_1\cv\infty}   \,
\frac1{d^{n_k+ \cdots +n_1 }}\, [W_{n_1,
  \ldots, n_k}] =T_{i_1} \wedge \cdots \wedge
T_{i_k}~.
\end{equation}
\end{thm}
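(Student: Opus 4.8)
The plan is to prove the result by induction on $k$, the base case $k=1$ being Corollary \ref{cor:strpreper} applied in $\mathcal{P}_d$ (condition (H) is known to hold there). The key point, which makes the statement more delicate than Theorem \ref{thm:tbifk per}, is that after restricting to $\prepercrit_{i_1}(n_1, n_1-1)$ one must re-run the one-dimensional equidistribution argument for the \emph{next} critical point \emph{on a subvariety}, and there is no reason for assumption (H) to persist when we restrict. So the first thing I would do is establish a robust version of the convergence $\frac1{d^n}[\prepercrit_i(n,n-1)]\to T_i$ on analytic subsets of $\mathcal{P}_d$, or rather a statement that bypasses (H) entirely in this concrete setting. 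Concretely: fix the outer indices and subvarieties, let $\La_1 = W_{n_1,\ldots,n_{k-1}}$ (which by the inductive step has pure codimension $k-1$), and work on the normalization/desingularization of $\La_1$. One wants $\frac1{d^{n_k}}[\prepercrit_{i_k}(n_k,n_k-1)\cap \La_1]\to (T_{i_k})\rest{\La_1}$, i.e. convergence of the restricted potentials $\frac1{d^{n_k}}\log\abs{P_{c,a}^{n_k}(c_{i_k}) - P_{c,a}^{n_k-1}(c_{i_k})}$ to $g_{i_k}\rest{\La_1}$ in $L^1_{\mathrm{loc}}(\La_1)$.

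The core of the argument is then the same case analysis as in the proof of Theorem \ref{thm:cvg preper}, now carried out along $\La_1$: where $c_{i_k}$ escapes to infinity or is attracted to an attracting cycle the convergence is immediate; on the locus where $c_{i_k}$ has bounded orbit the potentials are locally bounded and one uses the Hartogs-lemma / maximum-principle argument. The compactness input that replaced (H) in $\mathcal{P}_d$ comes from the fact, recorded in \S\ref{subs:poly}, that the parameterization \eqref{e:111} is adapted to controlling behavior at infinity: the connectedness locus $\mathcal{C}$ is compact in $\cc^{d-1}$, and on $\La_1\setminus \mathcal{C}$ at least one of the \emph{outer} critical points $c_{i_1},\ldots,c_{i_{k-1}}$ is already preperiodic (by definition of $W$), which pins down enough of the dynamics to force the escape/attraction alternative for $c_{i_k}$ near the ends of $\La_1$. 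This is the mechanism that lets one verify the relevant compactness hypothesis on each restricted family, and I expect verifying it carefully — i.e. checking that restricting to the preperiodicity subvarieties $W_{n_1,\ldots,n_{k-1}}$ does not destroy the properness-at-infinity needed to run the maximum principle — to be the main obstacle. One also needs the purity-of-codimension claim: inductively $W_{n_1,\ldots,n_{k-1}}$ has pure codimension $k-1$, and one must check that $\prepercrit_{i_k}(n_k,n_k-1)$ does not contain any component of it; this follows because $T_{i_1}\wedge\cdots\wedge T_{i_{k-1}}$ has continuous potential, hence gives no mass to the analytic set $\prepercrit_{i_k}(n_k,n_k-1)$, so for generic (indeed all large) $n_k$ the intersection is proper — exactly the trick used in the proof of Corollary \ref{cor:strpreper}.

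Finally I would assemble the iterated limit. Having $\frac1{d^{n_1}}[\prepercrit_{i_1}(n_1,n_1-1)]\to T_{i_1}$ with continuous potentials, wedging with the (already-constructed, continuous-potential) current $T_{i_2}\wedge\cdots\wedge T_{i_k}$ is continuous under this convergence, giving $\lim_{n_1}\frac1{d^{n_1}}[\prepercrit_{i_1}(n_1,n_1-1)]\wedge T_{i_2}\wedge\cdots\wedge T_{i_k} = T_{i_1}\wedge\cdots\wedge T_{i_k}$; but $[\prepercrit_{i_1}(n_1,n_1-1)]\wedge(T_{i_2}\wedge\cdots\wedge T_{i_k})$ equals $(T_{i_2}\wedge\cdots\wedge T_{i_k})\rest{\prepercrit_{i_1}(n_1,n_1-1)}$, so after fixing $n_1$ large one repeats the restricted equidistribution for $i_2$ inside $\prepercrit_{i_1}(n_1,n_1-1)$, and so on down to $i_k$. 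Taking the limits in the order $n_1\to\infty$ innermost, then $n_2$, ..., then $n_k$ — which is precisely the iterated-limit ordering in \eqref{eq:induc} — yields the claim, and the support statement $\supp(\tbif^k)\subset\overline{\{\la:\ f_\la\text{ has }k\text{ strictly preperiodic critical points}\}}$ follows by expanding $\tbif^k$ via \eqref{eq:tbifk} and observing that each $T_{i_1}\wedge\cdots\wedge T_{i_k}$ is a limit of integration currents on the $W_{n_1,\ldots,n_k}$.
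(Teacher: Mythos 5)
Your overall strategy is the one the paper follows: induction on the number of marked critical points, successively restricting to the subvarieties $W_{n_1,\ldots,n_\ell}$ and re-running the codimension-one equidistribution result (Corollary \ref{cor:strpreper}) on these restricted families, the crux being the verification of a substitute for (H) there; you are also right that this is where the special parameterization \eqref{e:111} and the compactness of $\mathcal{C}$ enter, and the assembly of the iterated limit via continuity of wedge products against continuous-potential currents is the intended one. (That your limits come out with $n_k$ innermost rather than $n_1$ is immaterial, by symmetry of the statement in the indices $i_1,\ldots,i_k$.)

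There is, however, a genuine gap in your argument for purity of codimension. You claim that since $T_{i_1}\wedge\cdots\wedge T_{i_{k-1}}$ has continuous potentials and hence charges no analytic set, ``for generic (indeed all large) $n_k$'' no component of $W_{n_1,\ldots,n_{k-1}}$ is contained in $\prepercrit_{i_k}(n_k,n_k-1)$. This does not follow: the no-mass property of the \emph{limit} current says nothing about the finite-level varieties $W_{n_1,\ldots,n_{k-1}}$ (in Corollary \ref{cor:strpreper} that trick is used only to discard, in a limit, the contribution of a fixed hypersurface $\percrit(k,0)$ --- not to guarantee properness of any intersection). Worse, genericity in $n_k$ cannot rescue the argument: if a component $V$ of $W_{n_1,\ldots,n_{k-1}}$ lies in $\prepercrit_{i_k}(n_k,n_k-1)$ for one $n_k$, then $f^{n_k-1}_\la(c_{i_k}(\la))$ is persistently fixed on $V$, hence $V\subset\prepercrit_{i_k}(m,m-1)$ for \emph{every} $m\geq n_k$. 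What is actually needed is the dynamical statement that $c_{i_k}$ is not persistently preperiodic (equivalently, that $g_{i_k}$ does not vanish identically) on any component of $W_{n_1,\ldots,n_{k-1}}$, and this comes from the same analysis of the ends of these varieties at infinity that underlies (H). Relatedly, your heuristic for (H) is not yet an argument: on $W$ the constrained critical points contribute $g_{i_j}=0$, so near infinity in $W$ \emph{some} critical point must escape, but it could be one of the unconstrained ones rather than $c_{i_k}$; one must actually produce, through each point of each component of $W$, a curve inside $W$ going to infinity along which $c_{i_k}$ itself escapes or is attracted, using the precise growth of the $g_i$ in the coordinates \eqref{e:111}. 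You correctly flag this as the main obstacle, but it --- together with the properness point above, which is really part of the same analysis --- is exactly the content that remains to be supplied.
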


Of course in \eqref{eq:induc} we can replace $\prepercrit(n,n-1)$ by $\percrit(n,k(n))$ for any sequence $k(n)$.
As already observed, to prove this theorem one cannot simply take wedge products in Corollary \ref{cor:strpreper} (resp. Theorem \ref{thm:cvg preper}). The proof goes by induction on $\ell\leq k$, by successively applying this corollary to the parameter space $\La = W_{n_1, \cdots, n_\ell}$. It is not obvious to check that assumption (H) is satisfied, and at this point  the particular choice of the parameterization is useful (see also \cite{berteloot survey} for neat computations).  It is unknown whether in \eqref{eq:induc} one can take $n_1= \cdots = n_k=n$ (see Question \ref{q:tbifk}.2.)

\medskip

Let us now focus on the maximal codimension case $k = d-1$. We set  $\mu_{\rm bif} = 
T_0\wedge \cdots \wedge T_{d-2} = \unsur{(d-1)!}\tbif^{d-1}$, which is a probability measure supported on the boundary of the connectedness locus.  

\begin{prop}\label{prop:equilibrium}
The bifurcation measure is the pluripotential equilibrium measure of the connectedness locus. In particular  $\supp(\mu_{\rm bif})$ is the Shiloff boundary of $\mathcal{C}$.
\end{prop}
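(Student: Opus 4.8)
The statement has two parts: (1) that $\mubif = T_0\wedge\cdots\wedge T_{d-2}$ is the pluripotential equilibrium measure of the connectedness locus $\C$, and (2) that consequently $\supp(\mubif)$ is the Shilov boundary of $\C$. The natural strategy is to produce the correct psh potential for $\mubif$ and identify it with the pluricomplex Green function of $\C$. The key observation is that each $g_i = g_{c,a}(c_i)$ is a continuous nonnegative psh function on $\cc^{d-1}$ which vanishes \emph{exactly} on $\C_i$, and that $\C = \bigcap_i \C_i$. One first notes the logarithmic growth: since $g_{c,a}(z) = \log^+|z| + O(1)$ and the critical points $c_i$ themselves have size comparable to the parameters, each $g_i(c,a)$ grows like $\log\|(c,a)\| + O(1)$ as $(c,a)\to\infty$ (this is precisely the kind of behavior-at-infinity estimate the paper advertises as a virtue of the Branner--Hubbard-type parameterization). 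Set $G := \max_{0\le i\le d-2} g_i$; then $G$ is continuous, psh, vanishes exactly on $\C$, and has logarithmic growth at infinity, so $G$ is (up to the standard normalization) the pluricomplex Green function of $\C$ with pole at infinity.

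The plan is then to compute $(dd^c G)^{d-1}$ and show it equals $T_0\wedge\cdots\wedge T_{d-2}$. Here one uses the following standard fact about maxima of psh functions: if $u_0,\dots,u_{d-2}$ are continuous psh functions and $U = \max_i u_i$, then the Monge--Amp\`ere measure $(dd^c U)^{d-1}$ is carried on $\bigcap_i \{U = u_i\}$, and on suitable pieces it decomposes as a sum of terms $dd^c u_{j_1}\wedge\cdots\wedge dd^c u_{j_{d-1}}$. In our situation the crucial input is that for each $i$, $g_i$ is \emph{harmonic} (pluriharmonic) off $\supp(T_i) = \fr\C_i$, i.e. $dd^c g_i = T_i$ is supported on $\fr\C_i$; moreover $T_i\wedge T_i = 0$ by the Proposition proved just above (via the $\percrit_i(n,0)$ approximation). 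Combining these: on $\bigcap_i\fr\C_i$ one gets $(dd^c G)^{d-1} = T_0\wedge\cdots\wedge T_{d-2}$ after checking that the mixed terms with a repeated index drop out (because $T_i\wedge T_i=0$) and that the ordering of the wedge product is immaterial (the $T_i$ have continuous potentials so all these products are well-defined symmetric currents, cf. \cite{demailly}). One should double-check the total mass normalization: both $\mubif$ and the equilibrium measure are probability measures — the former by \eqref{eq:tbifk} together with the fact, recalled in this subsection, that the parameterization is chosen so the $T_i$ have equal projective mass summing correctly, and the latter by the definition of equilibrium measure — so the identification is consistent.

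For part (2), I would invoke the general pluripotential-theoretic fact (Bedford--Taylor, Siciak, Zaharjuta) that the support of the equilibrium measure of a compact set $\C\subset\cc^N$ is precisely its Shilov boundary with respect to the algebra of functions that are uniform limits of polynomials on $\C$ — equivalently, the smallest closed subset of $\C$ on which every function psh near $\C$ attains its maximum over $\C$. Given part (1), this is immediate. Alternatively one can argue directly: $\supp(\mubif)$ is contained in $\fr\C$, and any peak set must carry the equilibrium measure, while the approximation results of this section (e.g. the equidistribution of parameters with $d-1$ transversally prerepelling critical points, Theorems \ref{thm:buff epstein}, \ref{thm:gauthier proper}) show $\supp(\mubif)$ is a genuine boundary phenomenon; but citing the standard equivalence is cleanest.

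\textbf{Main obstacle.} The delicate point is the decomposition of $(dd^c G)^{d-1}$ for $G = \max_i g_i$ into the mixed wedge products $T_{i_1}\wedge\cdots\wedge T_{i_{d-1}}$. A priori $(dd^c\max_i g_i)^{d-1}$ is \emph{not} simply $\sum dd^c g_{i_1}\wedge\cdots$; one needs the structure of the coincidence sets. The clean way is to use that $g_i$ is pluriharmonic off $\fr\C_i$ so that, away from $\bigcap_i\fr\C_i$, locally at most $k<d-1$ of the currents $T_i$ are nonzero and the $(d-1)$-fold wedge of $dd^c G$ vanishes there by a Fornaess--Sibony-type support lemma (cf. \cite[Lemma 6.10]{fs2}); hence $(dd^c G)^{d-1}$ is carried on $\bigcap_i\fr\C_i$, where $G = g_i$ for every $i$ simultaneously, and there $(dd^c G)^{d-1}$ unambiguously equals $(dd^c g_0)\wedge\cdots\wedge(dd^c g_{d-2}) = T_0\wedge\cdots\wedge T_{d-2}$. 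Verifying carefully that no mass is lost or double-counted at the boundary stratification — and that the comparison-of-masses argument forces equality rather than just one inequality — is the technical heart; the rest is bookkeeping with the growth estimate at infinity.
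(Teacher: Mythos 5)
The paper itself does not prove this proposition — it is quoted from \cite{preper} — so I am comparing your proposal against the argument given there. Your global strategy is the correct one and is indeed the one used in \cite{preper}: identify $G:=\max_{0\le i\le d-2}g_i$ with the pluricomplex Green function of $\C$ via the Branner--Hubbard estimates, show $(dd^cG)^{d-1}=T_0\wedge\cdots\wedge T_{d-2}$, and invoke the Bedford--Taylor description of the support of an equilibrium measure for the Shilov boundary statement (that last step is fine). But two points in your first half are off: it is only the maximum $\max_i g_i$, not each individual $g_i$, that grows like $\log\norm{(c,a)}+O(1)$ (along the unbounded curve $\percrit_i(n,0)$ one has $g_i\equiv 0$); and logarithmic growth plus vanishing exactly on $\C$ gives only $G\le G_{\C}$ — the reverse inequality needs maximality of $G$ off $\C$, i.e. that $(dd^cG)^{d-1}$ is carried by $\C$, so the Green-function identification cannot be settled before the Monge--Amp\`ere computation as your ordering suggests.

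The genuine gap is in that computation, which you correctly flag as the heart but do not actually carry out. There is no standard fact decomposing $(dd^c\max_i u_i)^{d-1}$ into a sum of mixed products $dd^cu_{j_1}\wedge\cdots\wedge dd^cu_{j_{d-1}}$; you appear to be importing the multinomial algebra of $(dd^c\sum_i g_i)^{d-1}=\tbif^{d-1}$ — where $T_i\wedge T_i=0$ genuinely kills the repeated indices, cf.~\eqref{eq:tbifk} — into the setting of a maximum, where no such expansion exists. Likewise, localizing $(dd^cG)^{d-1}$ to $\bigcap_i\fr\C_i$ via \cite[Lemma 6.10]{fs2} would require exhibiting, through each point off that set, a holomorphic disk on which $G$ itself (not merely one $g_i$) is harmonic, and you do not produce such disks. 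The mechanism that actually proves the identity in \cite{preper} is different and more elementary: each $g_i$ is nonnegative, continuous, and satisfies $g_i\,dd^cg_i=0$ (its current lives on $\fr\C_i\subset\set{g_i=0}$), and one replaces the factors $dd^cg_i$ by $dd^cG$ one at a time inside the wedge product. The basic step is an integration by parts: writing $S=\bigwedge_{j\ne i_0}T_j$, the continuous function $G-g_{i_0}=\bigl(\max_{j\ne i_0}g_j-g_{i_0}\bigr)^+$ vanishes on $\bigcap_{j\ne i_0}\set{g_j=0}\supset\supp S$, whence $dd^c(G-g_{i_0})\wedge S=0$ and $dd^cG\wedge S=\bigwedge_j T_j$; iterating (with some care about the supports appearing at the intermediate stages) yields $(dd^cG)^{d-1}=T_0\wedge\cdots\wedge T_{d-2}$, and in particular the support of $(dd^cG)^{d-1}$ lies in $\C$, which closes the Green-function identification. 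So: right roadmap, but the key identity is asserted on the basis of a decomposition that does not exist, and the input you invoke for it ($T_i\wedge T_i=0$) is not the relevant one.
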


It is important to understand  that when $d\geq 3$,  $\supp(\mubif)$ is a proper subset of $\fr\C$. To get a (crude but instructive!) mental picture of the situation, think about the boundary of a polydisk in $\cc^{d-1}$. This boundary  can be decomposed into foliated pieces of varying dimension between $1$ and $d-2$, together with  the unit torus, 
the unit torus $\mathbb{T}^{d-1}$ being the Shiloff boundary. The structure of $\fr\C$ should be somehow similar to this, with foliated pieces of dimension $j$ corresponding to parts of $\fr\C$ where 
$j$   critical points are passive.  The precise picture is far from being understood, except for cubic polynomials (see below \S \ref{subs:laminarity}).

\medskip

It is a well-known open question whether in higher dimension   the connectedness locus is the closure of its interior. Theorem \ref{thm:tbifk per} provides a partial answer to this question: if $\la_0\in \fr \C\cap \supp(\tbif^j)$ and $j$ critical points are active at $\la_0$, then $\la_0\in \overline{\Int (\C)}$. Indeed, there exists a neighborhood $U\ni\la_0$ where $d-1-j$ critical points are passive, hence persistently do not escape, and by Theorem \ref{thm:tbifk per} there is a sequence of parameters 
$\la_n\cv \la_0$ for which the $j$ remaining critical points are periodic. Hence  $\la_n\in  \Int (\C)$ and we are done. 

We see that the answer to the problem lies in the set of parameters in $\fr\C\cap \supp(\tbif^j)$ with more than $j$ active critical points (like in  Example \ref{ex:douady}). So far there does not seem to be any reasonable (even conjectural) understanding of the structure of this set of  parameters.

\medskip

We can give a satisfactory dynamical characterization of $\supp(\mubif)$.
A polynomial is said to be {\em Misiurewicz}   if all critical points fall onto repelling cycles. 

\begin{thm} \label{thm:mis}
$\supp(\mu_{\rm bif})$  is the closure of the set of Misiurewicz parameters. 
\end{thm}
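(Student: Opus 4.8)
The statement has two inclusions. The easier one is that every Misiurewicz parameter lies in $\supp(\mubif)$: if $P_{\la_0}$ is Misiurewicz, then all $d-1$ critical points $c_0,\dots,c_{d-2}$ fall onto repelling cycles at $\la_0$, and one needs this to happen \emph{properly} (in the sense of Gauthier, or transversely in the sense of Buff--Epstein). The plan is to invoke Theorem~\ref{thm:gauthier proper}: it suffices to verify that at a Misiurewicz parameter the map $\chi=(\chi_0,\dots,\chi_{d-2}):\La\cv\cc^{d-1}$ (recording the difference between $f^{n_j}_\la(c_j(\la))$ and the continued repelling point $p_j(\la)$) has $\chi^{-1}(0)$ of codimension $d-1$ at $\la_0$, i.e.\ is $0$-dimensional there. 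Since $\dim\mathcal{P}_d=d-1$ this is the maximal possible codimension, so properness means $\la_0$ is isolated in $\chi^{-1}(0)$. This is where I expect the real work: one must rule out that $\la_0$ lies in a positive-dimensional family of parameters along which all $d-1$ critical orbits remain pinned to the \emph{same} repelling cycles. This can be done by a rigidity/Thurston-type argument — a positive-dimensional such family would yield a nontrivial holomorphic deformation of a postcritically finite (hence rigid, by Thurston rigidity for the relevant expanding combinatorics) polynomial, contradiction — or, staying closer to the paper, by a direct argument that the connectedness locus $\C$ is bounded and that the $W_{n_1,\dots,n_{d-1}}$ of Theorem~\ref{thm:tbifk preper} are of pure codimension $d-1$, so a Misiurewicz parameter, being an isolated point of the appropriate $W$, falls properly. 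Given properness, Theorem~\ref{thm:gauthier proper} gives $\la_0\in\supp(T_0\wedge\cdots\wedge T_{d-2})=\supp(\mubif)$, and taking closures yields $\overline{\{\text{Misiurewicz}\}}\subset\supp(\mubif)$.

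For the reverse inclusion $\supp(\mubif)\subset\overline{\{\text{Misiurewicz}\}}$, the plan is to use the equidistribution machinery. By Theorem~\ref{thm:tbifk preper} with $k=d-1$ and $i_j=j$, one has
\begin{equation*}
\lim_{n_{d-1}\cv\infty}\cdots\lim_{n_1\cv\infty}\frac{1}{d^{n_1+\cdots+n_{d-1}}}[W_{n_1,\dots,n_{d-1}}]=T_0\wedge\cdots\wedge T_{d-2}=\mubif,
\end{equation*}
where $W_{n_1,\dots,n_{d-1}}=\bigcap_{j}\prepercrit_j(n_j,n_j-1)$. Since $\mubif$ is a positive measure, any point $\la_0\in\supp(\mubif)$ is a limit of points in the supports of the approximating currents $[W_{n_1,\dots,n_{d-1}}]$, i.e.\ of parameters where all $d-1$ critical points are \emph{strictly preperiodic}. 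The issue is that "strictly preperiodic critical point" is weaker than "critical point falling onto a \emph{repelling} cycle": the limit cycle could a priori be parabolic. So the task is to upgrade preperiodicity to Misiurewicz-ness on a dense subset. I would do this by noting that a parameter where some critical point is preperiodic but lands on a non-repelling (hence parabolic) cycle can be perturbed — keeping that critical point on a nearby repelling cycle of the same combinatorics is impossible, but one can instead perturb within $W$ to a nearby parameter where the relevant cycle has become repelling, using that parabolic parameters are non-generic along these subvarieties (they satisfy an extra algebraic condition $\per(m,1)=0$). Concretely, the set of non-Misiurewicz parameters inside $W_{n_1,\dots,n_{d-1}}$ is contained in a proper analytic subset (those with a multiplier-one cycle among the continued cycles), and since $\mubif$ puts no mass on analytic subsets (it has continuous local potentials, $g_i=g_{c,a}(c_i)$ being continuous), the same trick as in the proof of Corollary~\ref{cor:strpreper} shows that the Misiurewicz parameters alone already equidistribute towards $\mubif$. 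Hence $\supp(\mubif)\subset\overline{\{\text{Misiurewicz}\}}$.

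Combining the two inclusions gives $\supp(\mubif)=\overline{\{\text{Misiurewicz parameters}\}}$. The main obstacle, as indicated, is the properness/transversality verification at Misiurewicz parameters needed to apply Theorem~\ref{thm:gauthier proper}: one must confirm that, in the space $\mathcal{P}_d$ of polynomials of degree $d$ with all $d-1$ critical points marked, a parameter with all critical orbits landing on repelling cycles is genuinely isolated among parameters sharing that property. I would handle this via the purity-of-codimension statement in Theorem~\ref{thm:tbifk preper} (which already packages the needed geometric control of the $\prepercrit_{i_j}$ hypersurfaces), reducing the claim to the observation that a Misiurewicz parameter is a point of a codimension-$(d-1)$ intersection inside a $(d-1)$-dimensional space and hence isolated; the boundedness of $\C$ from \cite{branner-hubbard1} guarantees there is no escape of intersection components to infinity. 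A secondary technical point worth spelling out is the passage from "preperiodic onto \emph{some} cycle" to "preperiodic onto a \emph{repelling} cycle," handled by the zero-mass-on-analytic-subsets property of $\mubif$ exactly as in Corollary~\ref{cor:strpreper}.
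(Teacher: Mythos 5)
Your proof follows the same route as the one the paper sketches: the inclusion $\supp(\mubif)\subset\overline{\{\text{Misiurewicz}\}}$ comes from Theorem \ref{thm:tbifk preper}, and the converse from Theorem \ref{thm:gauthier proper}, with properness at a Misiurewicz parameter obtained exactly as you indicate --- a positive-dimensional component of $\bigcap_j\prepercrit_{i_j}(n_j,k_j)$ would be a positive-dimensional analytic subset of the compact connectedness locus $\C$, which is impossible. (The paper's original proof of the hard inclusion, in \cite{preper}, instead goes through the landing of external rays at Misiurewicz critical portraits; your Thurston-rigidity alternative for the properness check is also viable but is not what the paper does.)

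One correction on the other inclusion: your concern that a strictly preperiodic critical point might land on a parabolic cycle is vacuous, and this is precisely the point the paper uses. At a parameter where all $d-1$ critical points are strictly preperiodic the polynomial is strictly postcritically finite, and by classical Fatou theory every non-repelling cycle (attracting, parabolic, or irrationally indifferent) requires an infinite critical orbit in its basin or closure; since all critical orbits are finite and no critical point is periodic, every cycle is repelling, so such a parameter is automatically Misiurewicz --- no excision of a multiplier-one locus is needed, and indeed that excision targets the wrong set. The genuine residual issue, if one argues from the currents $[W_{n_1,\dots,n_{d-1}}]$ rather than from the first assertion of Theorem \ref{thm:tbifk preper}, concerns the non-generic points of $W_{n_1,\dots,n_{d-1}}$ at which some $c_j$ becomes genuinely periodic (producing a superattracting, not parabolic, cycle); these lie in the fixed analytic set $\bigcup_j\percrit_j(1,0)$ and are discarded by exactly the no-mass-on-analytic-subsets argument of Corollary \ref{cor:strpreper} that you invoke, once the bad set is correctly identified.
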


The fact that $\supp(\mubif)$ is contained in the closure of Misiurewicz polynomials follows from Theorem \ref{thm:tbifk preper}, since a strictly postcritically finite polynomial is automatically Misiurewicz. So the point here is to prove the converse. There are actually several 
   proofs of this. The original one  in \cite{preper} uses landing of  external rays (see below).   
Another proof goes by observing that the properness assumption of Theorem 
\ref{thm:gauthier proper} is satisfied at every Misiurewicz parameter. Indeed, as already  observed in Theorem \ref{thm:tbifk preper}, for every $(n_j)_{j=1, \ldots , d-1}$ and  $(n_j)_{j=1, \ldots , d-1}$ with $k_j<n_j$, 
 $\bigcap_{j=1}^{d-1} \prepercrit_j(n_j, k_j)$ is of dimension 0. Indeed, otherwise,  this intersection would contain an analytic set contained in the connectedness locus, contradicting its compactness. Therefore, at a Misiurewicz point, critical points fall properly on the corresponding repelling points, and Theorem \ref{thm:gauthier proper} applies. 

Notice that the work of Buff and Epstein \cite{buff epstein} implies that these intersections are actually transverse. 

\medskip

  Theorem \ref{thm:gauthier proper} in its general form \cite{gauthier} 
  shows that one can alternately characterize the support of  
$\mubif$ as  being the closure of  {\em generalized Misiurewicz} polynomials, where generalized here means that critical points fall into a hyperbolic set disjoint from the critical set. These considerations lead to a   generalization in higher degree   the  well-known theorem of Shishikura on the Hausdorff dimension of the boundary of the Mandelbrot set \cite{shishikura}. Notice that Tan Lei \cite{tan lei} extended Shishikura's theorem by showing that the bifurcation locus has maximal Hausdorff dimension in any family of rational maps. 

\begin{thm}[Gauthier]\label{thm:HD polynomial}
If $U$ is any open set   such that $U\cap \supp(\mubif)\neq \emptyset$, then 
 $\supp(\mubif)\cap U$ has maximal Hausdorff dimension $2(d-1)$.
  \end{thm}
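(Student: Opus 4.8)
The plan is to reduce the statement to a local Hausdorff dimension estimate at a suitably chosen parameter, and then to use the renormalization/transversality machinery behind Theorems~\ref{thm:buff epstein} and \ref{thm:gauthier proper} together with a careful Cantor-set construction in parameter space. First I would recall that by Theorem~\ref{thm:mis}, if $U\cap\supp(\mubif)\neq\emptyset$ then $U$ contains a Misiurewicz parameter $\la_0$, that is, a parameter where all $d-1$ marked critical points $c_0,\ldots,c_{d-2}$ fall onto repelling cycles. It therefore suffices to prove that for every Misiurewicz parameter $\la_0$ and every neighborhood $U\ni\la_0$, the set $\supp(\mubif)\cap U$ has Hausdorff dimension $2(d-1)$; the global statement follows immediately. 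Since $2(d-1)$ is the real dimension of $\mathcal{P}_d$ (identified via the orbifold parameterization \eqref{e:111} with an open subset of $\cc^{d-1}$), "maximal" just means full topological dimension.

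Next, the heart of the matter is to build, near a Misiurewicz parameter $\la_0$, a Cantor set of positive Hausdorff dimension $2(d-1)$ contained in $\supp(\mubif)$. The idea (following Shishikura's original scheme, as adapted to the multidimensional setting by Gauthier) is a parabolic-like or hyperbolic-set renormalization combined with a thermodynamic-formalism estimate of the dimension. Concretely: near $\la_0$ one can find a small perturbation creating a rich family of "generalized Misiurewicz" parameters --- parameters where the $d-1$ critical points land (properly, in the sense of Theorem~\ref{thm:gauthier proper}) into a hyperbolic set whose Hausdorff dimension is close to $2$ in each of the $d-1$ "critical directions". The transversality in parameter space --- which by the last remarks following Theorem~\ref{thm:mis} holds at Misiurewicz parameters (the intersections $\bigcap_j \prepercrit_{j}(n_j,k_j)$ are of dimension $0$, hence proper, and in fact transverse by Buff--Epstein) --- is what allows one to transfer a dimension-$\approx 2$ hyperbolic set in dynamical space for each critical point into a dimension-$\approx 2$ Cantor set in the corresponding coordinate of parameter space. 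Taking a product over the $d-1$ marked critical points, and using that these can be controlled independently by the transversality of the map $\chi=(\chi_1,\ldots,\chi_{d-1})$, one obtains a Cantor set of dimension $\approx 2(d-1)$ in $\La$ all of whose points are (limits of) generalized Misiurewicz parameters, hence in $\supp(\mubif)$ by Theorem~\ref{thm:gauthier proper}. Letting the hyperbolic sets fill up, the dimension tends to $2(d-1)$.

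The main obstacle, and the step I expect to be hardest, is making the dimension estimate for the parameter-space Cantor set \emph{sharp} --- i.e. genuinely reaching the full value $2(d-1)$ rather than merely something positive. This requires: (i) a version of Shishikura's argument producing hyperbolic subsets of the Julia set of Hausdorff dimension arbitrarily close to $2$ for maps near a Misiurewicz parameter (this is delicate already in one variable and uses near-parabolic renormalization or, in the hyperbolic-set version, a careful pressure estimate); and (ii) a quantitative transversality/distortion control showing that the parameterization $\la\mapsto\chi(\la)$ transports these hyperbolic sets to parameter space without losing dimension, uniformly over the $d-1$ critical marks so that the product construction is legitimate. Point (ii) is where the specific orbifold parameterization \eqref{e:111} --- chosen precisely so that the $T_i$ have equal projective mass and the behavior at infinity is controlled --- and the properness statements established in and after Theorem~\ref{thm:tbifk preper} do the real work. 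Once the local Cantor set of dimension $2(d-1)-\e$ is constructed inside $\supp(\mubif)\cap U$ for every $\e>0$, the theorem follows by letting $\e\to 0$ and invoking countable stability of Hausdorff dimension.

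For the write-up I would state the local reduction as a lemma, cite Theorem~\ref{thm:mis} and Theorem~\ref{thm:gauthier proper} for the two inputs, sketch the hyperbolic-set renormalization only at the level of "following \cite{shishikura} and \cite{gauthier}", and refer the reader to \cite{gauthier} for the quantitative transversality estimates, since those constitute the genuinely new technical content and are carried out there in full.
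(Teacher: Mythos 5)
The paper does not actually prove this theorem: it is stated as a result of Gauthier with a pointer to \cite{gauthier}, prefaced only by the remark that $\supp(\mubif)$ is the closure of the set of \emph{generalized} Misiurewicz parameters (critical points falling properly into a hyperbolic set) and that the result generalizes Shishikura's theorem. Your outline is consistent with that indicated strategy --- reduce to a local statement near a Misiurewicz parameter via Theorem \ref{thm:mis}, produce nearby parameters whose critical points fall properly into hyperbolic sets of Hausdorff dimension close to $2$, and use the transversality/properness underlying Theorem \ref{thm:gauthier proper} to transfer dimension to parameter space, one factor per critical point --- so there is no divergence of approach to report, only the observation that both your write-up and the paper ultimately defer the two genuinely hard steps (the Shishikura-type construction of large hyperbolic sets, which requires passing through near-parabolic parameters and parabolic implosion rather than working directly at the Misiurewicz point, and the quantitative dimension-preserving transfer) to \cite{shishikura} and \cite{gauthier}. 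As a self-contained proof your text would be incomplete for exactly those two reasons, but as a survey-level sketch it matches the level of detail the paper itself provides.
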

 
 We now discuss external rays, following \cite{preper}. Let $P\in \mathcal{P}_d$ be a polynomial for which the  Green's function takes the same value $r>0$ at all critical points ($J(P)$ is then a Cantor set). The set
 $\Theta$  of external arguments of external rays landing at the critical points 
 enables to describe in a natural fashion the combinatorics of $P$. This set of angles is known as the {\em critical portrait} of $P$. Now we can deform $P$ by keeping $\Theta$ constant and let $r$ vary in $\mathbb{R}_+^*$ (this is the ``stretching" operation of \cite{branner-hubbard1}). This defines a ray in parameter space corresponding to the critical portrait $\Theta$.
 
The set  $\mathrm{Cb}$ of combinatorics/critical portraits is endowed with a natural Lebesgue measure  $
\mu_{\mathrm{Cb}}$  coming from $\re/\zz$.  It is easy to show that  $\mu_
{\mathrm{Cb}}$-a.e. ray lands as $r\cv 0$ (this follows from Fatou's theorem on the existence of radial limits of bounded holomorphic functions). We thus obtain a measurable landing map
 $e: \mathrm{Cb} \cv \mathcal{C}$.  The measures $\mu_{\rm Cb}$ and $\mu_{\rm bif}$ are related as follows:
 
\begin{thm}\label{thm:landing}  $\mubif$ is the landing measure, i.e. 
$e_*\mu_{\mathrm{Cb}} = \mu_{\rm bif}$.
\end{thm}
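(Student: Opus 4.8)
The plan is to identify both measures by testing against the same family of dynamically defined hypersurfaces and using the fact that $\mubif$ gives no mass to pluripolar sets. First I would set up the dictionary between the stretching rays and the exit structure of the escape-regions foliation: outside the connectedness locus $\C$, the function $(c,a)\mapsto \max_i g_{c,a}(c_i)$ (or, on each escape component, the Green's function at the fastest-escaping critical point) is a pluriharmonic submersion onto $(0,\infty)$, and the critical portrait $\Theta$ is locally constant along its fibers; this is exactly the stretching operation of Branner--Hubbard \cite{branner-hubbard1}. Thus a stretching ray is a leaf of a (singular) one-dimensional foliation on $\cc^{d-1}\setminus \C$, and the combinatorial space $\mathrm{Cb}$ with its measure $\mu_{\mathrm{Cb}}$ is naturally the transverse measured quotient at any fixed large radius $r_0$. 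The key structural input I would invoke is that along such a ray the boundary value as $r\cv 0$ exists for $\mu_{\mathrm{Cb}}$-a.e.\ $\Theta$ (Fatou radial limits), giving the landing map $e:\mathrm{Cb}\cv\C$, and that the critical portrait of $e(\Theta)$ is $\Theta$ whenever $e(\Theta)$ is a Misiurewicz parameter with Cantor Julia set--type combinatorics.

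The heart of the argument is to compute the potential of $e_*\mu_{\mathrm{Cb}}$ and compare it with that of $\mubif$. I would proceed by a ``harmonic measure on the fibers of the Green's function'' computation: for each radius $r>0$ the current $\tbif$ restricted to the level hypersurface $\{\chi = \text{(value forced by }r)\}$ can be pushed toward $\C$ as $r\cv 0$, and the resulting limit is $\mubif$ because $\chi = \log d + \sum_i g_i$ is the sum of the potentials of the $T_i$ and the $T_i$ charge no pluripolar set (Proposition \ref{prop:equilibrium}; the bifurcation measure is the pluripotential equilibrium measure of $\C$, hence equals the sweeping-out of Lebesgue measure at infinity along the pluricomplex Green's function of $\complement\C$). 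On the other side, the measure $\mu_{\mathrm{Cb}}$ at radius $r$ pushed along the stretching foliation is, by construction, the normalized Lebesgue-type measure on the level set of the escape-rate function, and its transverse quotient under the foliation is exactly the harmonic/equilibrium measure of $\C$ by the same equilibrium-measure characterization. Letting $r\cv 0$ and using that $\mu_{\mathrm{Cb}}$-a.e.\ ray lands, the pushforward $e_*\mu_{\mathrm{Cb}}$ is the weak limit of these, which is $\mubif$.

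Concretely I would carry this out in four steps: (1) show $\mubif$ is the unique pluripotential equilibrium measure of $\C$ and recall it is supported on the Shilov boundary (Proposition \ref{prop:equilibrium}); (2) identify the stretching rays with the gradient lines of the pluricomplex Green's function $G_{\complement\C}$ of the complement of $\C$, so that $\mu_{\mathrm{Cb}}$ is the disintegration of Lebesgue-type measure on spheres $\{G_{\complement\C}=r\}$ transverse to these lines; (3) prove that the sweeping-out measures $\Pi_{r*}(\text{Leb on }\{G_{\complement\C}=r\})$ converge weakly to the equilibrium measure $\mubif$ as $r\cv0$, which is the classical balayage description of equilibrium measure in pluripotential theory (Bedford--Taylor \cite{bt}; Demailly \cite{demailly}); and (4) invoke the a.e.\ landing of stretching rays to identify this weak limit with $e_*\mu_{\mathrm{Cb}}$, using dominated convergence on the combinatorial space. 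The main obstacle I expect is Step (3) together with the matching of normalizations: one must control how the Lebesgue measure on the combinatorial torus $\mathrm{Cb}\subset(\re/\zz)^{\text{(something)}}$ relates, under the stretching parameterization, to the trace of $(dd^cG_{\complement\C})^{d-1}$ or of the spherical measure at radius $r$, uniformly as $r\cv0$; this is where one genuinely needs the Branner--Hubbard-adapted parameterization \eqref{e:111} (chosen precisely so the $T_i$ have equal projective mass) and a quantitative version of the fact that almost every ray lands, so that the potential-theoretic limit and the ray-by-ray limit agree. A secondary technical point is that $e$ is only defined $\mu_{\mathrm{Cb}}$-a.e.\ and $\mubif$ gives no mass to the (pluripolar) bad set of non-landing or ambiguously-landing combinatorics, which is what makes the identification of the two limit measures legitimate.
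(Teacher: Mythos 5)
Your overall strategy --- relate the two measures through the stretching-ray structure near $\C$ and pass to the limit $r\cv 0$ --- points in the right direction, but the way you set up the correspondence between $\mu_{\mathrm{Cb}}$ and potential theory contains a genuine gap, visible already from a dimension count. For $d\geq 3$ the stretching rays do \emph{not} foliate the escape locus $\cc^{d-1}\setminus\C$: a critical portrait is only defined at parameters where \emph{all} critical points escape at the \emph{same} rate $r$, so the union of the rays at level $r$ is the real $(d-1)$-dimensional set $\bigcap_i\set{g_i=r}$, which has measure zero inside the real $(2d-3)$-dimensional level set $\set{G=r}$ of the pluricomplex Green's function $G=\max_i g_i$. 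Consequently $\mu_{\mathrm{Cb}}$ is not the transverse quotient of surface (or swept-out Monge--Amp\`ere) measure on $\set{G=r}$ along gradient lines, the critical portrait is not ``locally constant along the fibers'' of the escape-rate function (it is undefined on most of each fiber), and your steps (2)--(4) --- identifying the balayage of Lebesgue measure on $\set{G=r}$ with $(e_r)_*\mu_{\mathrm{Cb}}$ --- do not connect. (For $d=2$ the dimensions match and your argument reduces to the classical Douady--Hubbard statement that the harmonic measure of the Mandelbrot set is the landing measure of external rays; that is precisely the case where nothing new happens.)

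The proof in \cite{preper} avoids this by exploiting the \emph{product} structure $\mubif=T_0\wedge\cdots\wedge T_{d-2}$ rather than the Monge--Amp\`ere of the single function $\max_i g_i$. One regularizes each factor separately, replacing $g_i$ by $\max(g_i,r)$: since $g_i$ is pluriharmonic on $\set{g_i>0}$, the current $dd^c\max(g_i,r)$ is supported exactly on the real hypersurface $\set{g_i=r}$, and in B\"ottcher-type coordinates near infinity (this is where the parameterization \eqref{e:111} is genuinely used) it is uniformly laminar with transverse measure given by angular Lebesgue measure. The wedge product $\bigwedge_i dd^c\max(g_i,r)$ is then a geometric intersection of these $d-1$ laminar currents, supported on $\bigcap_i\set{g_i=r}$, and an explicit computation identifies it with $(e_r)_*\mu_{\mathrm{Cb}}$, where $e_r(\Theta)$ is the point of the ray with portrait $\Theta$ at level $r$. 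Letting $r\cv 0$, continuity of mixed Monge--Amp\`ere operators under uniform convergence of the (continuous) potentials gives $\mubif$ on one side, while $\mu_{\mathrm{Cb}}$-a.e.\ landing together with dominated convergence gives $e_*\mu_{\mathrm{Cb}}$ on the other. Note finally that your step (1), Proposition \ref{prop:equilibrium}, is itself a \emph{consequence} of this product computation (i.e.\ of the identity $\bigwedge_i dd^c g_i=(dd^c\max_i g_i)^{d-1}$), not an independent input from which the landing statement can be recovered by balayage.
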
 

The proof of Theorem \ref{thm:mis} given in \cite{preper} relies on a more precise landing theorem for ``Misiurewicz combinatorics''. A critical portrait $\Theta\in \mathrm{Cb}$ is said to be of Misiurewicz type if the   external angles it contains are strictly preperiodic under multiplication by $d$. A combination of  results 
due to Bielefeld, Fisher and Hubbard \cite{bfh}  and Kiwi \cite{kiwi-portrait} asserts that the landing map $e$ is continuous at  Misiurewicz combinatorics and   that the landing point is a Misiurewicz point. It then follows from Theorem  \ref{thm:landing} that Misiurewicz points belong to $\supp(\mubif)$. 

\medskip

The description of  $\mu_{\rm bif}$ in terms of external rays allows to generalize to higher dimensions a 
result  of Graczyk-\'Swi\c{a}tek \cite{gs} and Smirnov \cite{smirnov}.

\begin{thm}\label{thm:ce}
  The Topological Collet-Eckmann property holds for  $\mu_{\rm
    bif}$-almost every polynomial $P$.
    
 In particular for  a $\mu_{\rm bif}$-a.e. $P$, we have that:
\begin{itemize}
  \itm all cycles are repelling; \itm the orbit of each critical
  point is dense in the Julia set; \itm $K_P=J_P$ is locally connected
  and its Hausdorff dimension is  smaller  than 2.
\end{itemize}
\end{thm}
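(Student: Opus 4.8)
The plan is to transport the one-dimensional results of Graczyk--\'Swi\c{a}tek \cite{gs} and Smirnov \cite{smirnov} to higher degree by means of the landing map $e:\mathrm{Cb}\cv\C$ and Theorem \ref{thm:landing}. First I would recall that in \cite{gs, smirnov} the Topological Collet--Eckmann (TCE) condition is verified for Lebesgue-a.e. external angle in the quadratic family, the point being that TCE is implied by an a.e.-defined combinatorial recurrence condition on the kneading sequence. The higher-degree analogue is a condition on the critical portrait $\Theta\in\mathrm{Cb}$: for a $\mu_{\mathrm{Cb}}$-generic critical portrait, the external angles it contains, under multiplication by $d$, satisfy the relevant non-recurrence/non-accumulation estimates. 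I would extract from \cite{gs, smirnov} the precise combinatorial statement and check that it is a tail event of full measure for the natural Lebesgue measure $\mu_{\mathrm{Cb}}$ on $\mathrm{Cb}$ coming from $\re/\zz$ (this is essentially a Borel--Cantelli estimate on the orbits of angles under $z\mapsto dz$, uniform in the number of marked critical points).

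The second step is to push this forward. Since $e_*\mu_{\mathrm{Cb}}=\mu_{\rm bif}$ by Theorem \ref{thm:landing}, a property that holds for $\mu_{\mathrm{Cb}}$-a.e. critical portrait and that is \emph{preserved} by the landing construction holds for $\mu_{\rm bif}$-a.e. polynomial. So I must argue that if $\Theta$ is a generic critical portrait then the landing point $e(\Theta)=P$ actually has the combinatorial/metric structure encoded by $\Theta$: the external rays at angles in $\Theta$ land at the critical points of $P$ with the prescribed kneading data, so that the TCE estimates on $\Theta$ translate into the TCE property (exponential shrinking of pullbacks of a fixed neighborhood along the critical orbit, with definite degree bound) for $P$ itself. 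This is where a landing/stability statement in the spirit of the Bielefeld--Fisher--Hubbard \cite{bfh} and Kiwi \cite{kiwi-portrait} results is needed, but now along a full-measure set of angles rather than just the Misiurewicz ones.

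Once TCE is established $\mu_{\rm bif}$-a.e., the listed consequences are standard consequences of TCE for polynomials and require no new parameter-space input: TCE forces every cycle to be repelling (a parabolic or Siegel/Cremer cycle would violate the uniform expansion/shrinking estimate), forces the $\omega$-limit set of each critical point to be all of $J_P$ (TCE polynomials have no attracting or neutral basins and no rotation domains, and a Fatou component other than the immediate basin structure is excluded, so by the classification of Fatou components the critical orbits accumulate on the whole Julia set), and forces $K_P=J_P$ to be locally connected with $\dim_H J_P<2$ (local connectivity of the Julia set for TCE maps, and the strict dimension bound, are the main theorems of \cite{gs, smirnov} in their metric form, and they are purely dynamical statements about an individual TCE polynomial). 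The main obstacle I expect is the second step: making rigorous that the combinatorial TCE condition on a generic critical portrait really does descend, through the a.e.-defined and only measurable landing map, to the dynamical TCE condition on $e(\Theta)$ — i.e. controlling the geometry of the landing construction (non-degeneracy of the landing, correct identification of the kneading data, and quantitative control of pullback diameters) for $\mu_{\mathrm{Cb}}$-almost every, rather than for finitely many special, critical portraits.
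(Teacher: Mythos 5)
Your plan follows exactly the route the paper indicates: transport the Graczyk--\'Swi\c{a}tek/Smirnov result, which gives the Topological Collet--Eckmann condition for a full-measure set of external data, to $\mu_{\rm bif}$-almost every polynomial via the landing map $e$ and the identity $e_*\mu_{\mathrm{Cb}}=\mu_{\rm bif}$ of Theorem \ref{thm:landing}, the listed consequences then being standard dynamical corollaries of TCE. You have also correctly isolated the genuine technical point (controlling the landing construction for $\mu_{\mathrm{Cb}}$-generic, not just Misiurewicz, critical portraits), which is precisely what the detailed argument in \cite{preper} handles.
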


Notice that Gauthier \cite{gauthier} shows that for a topologically generic polynomial $P\in \supp(\mubif)$, $\mathrm{HD}(J_P)=2$. So the topologically and metrically generic pictures differ. 

\medskip

The connectedness of  $\mathrm{Cb}$ 
naturally suggests the following generalization of the connectedness of the boundary of the  Mandelbrot set.
 
\begin{question} 
Is $\supp(\mubif)$ connected?
\end{question}

\subsection{The space of rational maps}\label{subs:rat}
The space $\mathrm{Rat}_d$ of rational maps of degree $d$ is a smooth complex manifold of dimension $2d+1$, actually a Zariski open set of $\mathbb{P}^{2d+1}$. 
For $d=2$ it is isomorphic to $\cd$ \cite{milnor quadratic}, and it was recently proven  
to be rational for all $d$ by Levy \cite{levy}. Automorphisms of $\pu$ act by conjugation of rational maps, and the moduli space $\mathcal{M}_d$ is the quotient $\mathrm{Rat}_d/\PSL$. Even if the action is not free it can be shown that 
$\mathcal{M}_d$ is a normal quasiprojective variety of dimension $2(d-1)$ \cite{silverman book}. 
Exactly as we did for polynomials, to work on this space, it is usually convenient to work on a smooth family $\La$ which is transverse to the fibers of the projection $\mathrm{Rat}_d \cv \mathcal{M}_d$. 
Through every point of $\mathrm{Rat}_d$ there exists such a family. 
Working in such a family, we can  consider
 elements of $\mathcal{M}_d$ as rational maps rather than conjugacy classes.

\medskip 

In this paragraph we briefly give some properties of the bifurcation measure $\mubif= \tbif^{2(d-1)}$ on $\mathcal{M}_d$. 
The first basic result was obtained in \cite{bas-ber1}. 

\begin{prop}
 The bifurcation measure has  positive and finite mass on $\mathcal{M}_d$. 
\end{prop}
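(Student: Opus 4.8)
The statement splits into a finiteness and a positivity assertion, which I would handle separately.

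\emph{Finiteness.} The plan is to exploit that $\tbif=dd^c\chi$ carries the \emph{global} continuous plurisubharmonic potential $\chi$ (the Lyapunov exponent function) on $\mathcal M_d$, and to reduce the finiteness of $\norm{\tbif^{2(d-1)}}$ to a growth estimate near the boundary of a projective compactification. Fix a projective variety $X\supset\mathcal M_d$ with boundary divisor $D=X\setminus\mathcal M_d$; in the natural model $\mathrm{Rat}_d\subset\pp^{2d+1}$ (parametrizing $f=[P:Q]$ by coefficients) one may take $D$ to be cut out, up to a power, by the resultant $\mathrm{Res}(P,Q)$, and for $d=2$ one has simply $X=\pp^2$. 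The step to establish is that $\chi$ has \emph{at most logarithmic growth} near $D$, i.e. $\chi\le C+A\log(1/\abs s)$ for a local defining function $s$ of $D$. This is precisely where DeMarco's formula for the Lyapunov exponent enters: it writes $\chi$ as a sum of values of the Green's function $G_F$ of a homogeneous lift $F:\cd\to\cd$ at the critical points plus a pluriharmonic term, and both ingredients degenerate only logarithmically in the resultant as $f$ tends to a lower degree map (this is the content of the estimates relating $G_F$ to the homogeneous capacity, resp. to $\log\abs{\mathrm{Res}}$). Granting this, $\chi+A\log\abs s$ is plurisubharmonic and locally bounded above near the pluripolar set $D$, hence extends plurisubharmonically across $D$; equivalently $\tbif$ extends to a closed positive $(1,1)$ current $\overline T$ on $X$, cohomologous to a fixed K\"ahler class $[\theta]$, whose potential has analytic singularities along $D$. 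The self-intersection $\overline{T}^{2(d-1)}$, computed by Bedford--Taylor theory away from $D$ and as a non-pluripolar product globally, is then a positive measure on $X$ of finite total mass, bounded by an intersection number on $X$. Since $\chi$ is continuous on $\mathcal M_d=X\setminus D$, this product agrees there with the classical local one, so $\overline{T}^{2(d-1)}$ restricts to $\mubif$ on $\mathcal M_d$; hence $\norm{\mubif}<\infty$.

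\emph{Positivity.} It suffices to check that $\supp(\mubif)=\supp(\tbif^{2(d-1)})$ is non-empty, since $\dim_\cc\mathcal M_d=2(d-1)$ and a positive measure with non-empty support is non-zero. By Theorem~\ref{thm:buff epstein}, or better by its refinement Theorem~\ref{thm:gauthier proper}, it is then enough to produce one parameter $f_0\in\mathcal M_d$ at which all $2d-2$ critical points fall properly onto repelling cycles: marking the critical points locally near $f_0$ and applying Theorem~\ref{thm:gauthier proper} gives $f_0\in\supp(T_1\wedge\cdots\wedge T_{2d-2})$, and locally near $f_0$ this coincides with $\supp(\tbif^{2(d-1)})$ via the identity $\tbif^{2(d-1)}=(2d-2)!\,T_1\wedge\cdots\wedge T_{2d-2}$. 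For $f_0$ I would take a postcritically finite rational map of degree $d$ with $2d-2$ distinct critical points, all of which are strictly preperiodic --- a \emph{Misiurewicz} map, so that the cycles carrying the postcritical set are automatically repelling --- and which is not a Latt\`es example; such maps exist in every degree $d\ge2$. For such an $f_0$, Thurston's rigidity theorem forces the local analytic set of nearby parameters with the same critical portrait to reduce to $\{f_0\}$: a transverse family injects into $\mathcal M_d$, and a positive-dimensional family of postcritically finite maps sharing the portrait of a non-Latt\`es example is excluded by rigidity. This is exactly the properness hypothesis of Theorem~\ref{thm:gauthier proper}, and positivity follows.

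\emph{Main obstacle.} The finiteness half is the substantial one, and inside it the boundary estimate for $\chi$. Away from $D$ everything is soft --- continuous potentials and local Bedford--Taylor theory --- but one must genuinely control the Lyapunov exponent, equivalently the Green's function of a degenerating homogeneous endomorphism of $\cd$, as the rational map degenerates to lower degree, and check that the blow-up is only logarithmic so that $\tbif$ extends to $X$ with a finite-mass self-intersection. Positivity, by contrast, is formal once the transversality/properness machinery of Theorems~\ref{thm:buff epstein} and~\ref{thm:gauthier proper} is granted, modulo the classical existence of Thurston-rigid Misiurewicz rational maps.
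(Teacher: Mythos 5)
Your strategy is sound, but note that the survey itself does not prove this proposition: it cites Bassanelli--Berteloot \cite{bas-ber1} for the mass bound and records, as the accompanying remark, their positivity argument, which is quite different from yours. That argument runs: a rational map is a Latt\`es example iff its Lyapunov exponent attains the absolute minimum $\frac12\log d$ \cite{ledrappier cras, zdunik}, so at an isolated (rigid) Latt\`es example --- and these exist in every degree $d\geq 2$, e.g.\ the quotient of multiplication by $\sqrt{-d}$ on $\cc/\zz[\sqrt{-d}\,]$ --- the continuous psh function $\chi$ has a local minimum, and the Bedford--Taylor comparison principle immediately gives that this point lies in $\supp((dd^c\chi)^{2(d-1)})$. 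That route is soft and self-contained, whereas yours goes through Misiurewicz parameters and Theorem~\ref{thm:gauthier proper}, importing the transversality/properness machinery (which historically postdates \cite{bas-ber1}) together with the asserted-but-unproved existence, in every degree, of non-Latt\`es strictly postcritically finite rational maps with all $2d-2$ critical points simple (beware that polynomials never qualify: the critical point at infinity is superattracting). What your route buys is that it already places a point of $\overline{\mathrm{SPCF}_d}$ inside $\supp(\mubif)$, anticipating the Buff--Epstein--Gauthier characterization proved later in the same subsection. For finiteness, your plan --- logarithmic growth of $\chi$ at the boundary via DeMarco's formula, extension of $\tbif$ to a closed positive current on a projective compactification, and a mass bound by an intersection number --- is indeed the substance of the argument in \cite{bas-ber1}.

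Two points in the finiteness half deserve more care than you give them. First, $\mubif=\tbif^{2(d-1)}$ is a measure on the $(2d-2)$-dimensional space $\mathcal{M}_d$, not on $\mathrm{Rat}_d\subset\pp^{2d+1}$, so the compactification and the logarithmic growth estimate must be carried out for the descended potential on (a projective compactification of) the normal quasiprojective variety $\mathcal{M}_d$, or else on transverse slices together with a covering argument; the resultant divisor in $\pp^{2d+1}$ is only the first step. Second, once $\chi+A\log\abs{s}$ is known to extend plurisubharmonically across the boundary, you do not need the non-pluripolar product formalism: a Chern--Levine--Nirenberg type estimate, comparing $\tbif^{k}$ on compact subsets of $\mathcal{M}_d$ with $(dd^c u)^{k}$ for a global potential $u$ with logarithmic poles, already yields the uniform mass bound, and this is closer to what \cite{bas-ber1} actually does.
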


These authors  also give a  nice argument showing that all isolated Lattès examples belong to $\supp(\mubif)$: it is known that a rational map is a Lattès example if and only if its Lyapunov exponent is minimal, that is, equal to $\log d/2$ \cite{ledrappier cras, zdunik}. On the other hand if $u$ is a continuous psh function on a complex manifold of dimension $k$ with a local minimum at $x_0$, then $x_0\in \supp((dd^cu)^k)$ (this follows from the so-called comparison principle  \cite{bt}). The result follows.

\medskip

The precise characterization of $\supp(\mubif)$ is due to Buff and Epstein \cite{buff epstein} and Buff and Gauthier \cite{buff gauthier}. Let
\begin{itemize}
\itm $\mathrm{SPCF}_d$ be the set of (conjugacy classes of)  strictly post-critically finite rational maps;
 \itm $\mathcal{Z}_d$ be the set of rational maps possessing $2d-2$  indifferent cycles, without counting multiplicities (this is the maximal possible number).
\end{itemize}

\begin{thm}[Buff-Epstein, -Gauthier]
 In $\mathcal{M}_d$, $\supp(\mubif) =  \overline{\mathrm{SPCF}_d} = \overline{\mathcal{Z}_d}$.
\end{thm}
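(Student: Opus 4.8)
The plan is to prove the two equalities by establishing a chain of inclusions. First I would record the "easy" inclusions, which follow from the general machinery already developed in the excerpt. By Theorem \ref{thm:tbifk preper}-style reasoning (or, more directly, by iterating the equidistribution Theorem \ref{thm:cvg preper} and Theorem \ref{thm:basber}), taking successive wedge products of the currents $T_1, \ldots, T_{2d-2}$ against the limits of $[\prepercrit_{i_j}(n_j, k_j)]$ and $[\per(n_j, e^{i\theta_j})]$ shows that $\mubif = T_1 \wedge \cdots \wedge T_{2d-2}$ is approximated, on its support, by parameters that are strictly postcritically finite (giving $\supp(\mubif) \subset \overline{\mathrm{SPCF}_d}$) and, separately, by parameters with $2d-2$ indifferent cycles (giving $\supp(\mubif) \subset \overline{\mathcal{Z}_d}$). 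For the reverse directions I would argue that $\mathrm{SPCF}_d \subset \supp(\mubif)$ and $\mathcal{Z}_d \subset \supp(\mubif)$, and then take closures; since $\supp(\mubif)$ is closed, this closes the loop and yields all three sets equal.

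The heart of the matter is the reverse inclusion $\mathrm{SPCF}_d \subset \supp(\mubif)$. Here I would invoke Gauthier's properness criterion, Theorem \ref{thm:gauthier proper}: it suffices to check that at a strictly post-critically finite rational map $\la_0$, the $2d-2$ marked critical points fall \emph{properly} onto the corresponding repelling cycles (a priori one must first argue the relevant postcritical cycles are repelling, or more generally arrange them to land in a hyperbolic set disjoint from the critical set, which is where the Buff--Gauthier refinement enters). The properness is a dimension count: the locus $\bigcap_{j=1}^{2d-2} \prepercrit_{i_j}(n_j, k_j)$ in $\mathcal{M}_d$, which has dimension $2d-2$, cannot have positive dimension at $\la_0$ — otherwise it would contain a positive-dimensional analytic set of parameters with all critical orbits finite, hence bounded, which is impossible because in $\mathcal{M}_d$ such a set would have to be compact and no nonconstant holomorphic map into this quasiprojective variety can land in a suitable compact "connectedness-type" subset. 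This parallels exactly the argument sketched for polynomials after Theorem \ref{thm:mis}. Once properness holds, Theorem \ref{thm:gauthier proper} gives $\la_0 \in \supp(T_1 \wedge \cdots \wedge T_{2d-2}) = \supp(\mubif)$.

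For the inclusion $\mathcal{Z}_d \subset \supp(\mubif)$ I would use a comparison-principle argument in the spirit of the Latt\`es discussion in the excerpt: at a parameter with $2d-2$ distinct indifferent cycles, one shows the Lyapunov exponent function $\chi$ achieves a constrained extremum, or more robustly one applies the inductive step of Theorem \ref{thm:indifferent} in reverse together with the fact that near such a parameter the bifurcation current $\tbif$ must have full-dimensional support; alternatively, one perturbs each indifferent cycle independently and tracks that each $T_j$ contributes a nonzero factor, as in the Fubini computation in the proof sketch of Theorem \ref{thm:buff epstein}. The main obstacle I anticipate is precisely the properness/transversality verification for $\mathrm{SPCF}_d$ in $\mathcal{M}_d$: unlike the polynomial case, where the compactness of the connectedness locus $\mathcal{C} \subset \cc^{d-1}$ gives the dimension bound almost for free, in $\mathcal{M}_d$ one needs an appropriate substitute — controlling the behavior of postcritically finite loci near the boundary of moduli space and ruling out positive-dimensional components. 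This is the technical core handled by Buff--Epstein and Buff--Gauthier, and I would lean on their transversality estimates (via the Thurston-type rigidity / quadratic differential arguments underlying \cite{buff epstein}) to conclude.
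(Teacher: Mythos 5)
Your overall architecture (a cycle of inclusions closed by Gauthier's properness criterion) is the right one, but two of your steps have genuine gaps, and they are precisely the points where the actual proof must take a different route. First, the inclusion $\supp(\mubif)\subset\overline{\mathrm{SPCF}_d}$ cannot be obtained by iterating the equidistribution theorem for $\prepercrit(n,k)$ as you propose: each inductive step requires verifying hypothesis (H) of Theorem \ref{thm:cvg preper} on the successive restricted subvarieties, and this is not known how to do in $\mathcal{M}_d$ (unlike in the polynomial case, where the Branner--Hubbard parameterization makes the connectedness locus compact). The proof instead establishes $\overline{\mathcal{Z}_d}=\overline{\mathrm{SPCF}^*_d}$ directly (Buff--Epstein, via the Ma\~n\'e--Sad--Sullivan theorem), where $\mathrm{SPCF}^*_d$ denotes the strictly postcritically finite maps that are \emph{not} flexible Latt\`es examples; combined with $\supp(\mubif)\subset\overline{\mathcal{Z}_d}$ from Theorem \ref{thm:indifferent} and with $\mathrm{SPCF}^*_d\subset\supp(\mubif)$, this closes the cycle without ever proving your first inclusion separately. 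Your separate argument for $\mathcal{Z}_d\subset\supp(\mubif)$ (constrained extrema of $\chi$, reversing the induction of Theorem \ref{thm:indifferent}) is likewise not needed and, as sketched, is not a proof: that inclusion falls out of the same identity.

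Second, and more seriously, your dimension count for properness at a strictly postcritically finite $f$ is false as stated: the intersection $\bigcap_j\prepercrit_j(n_j,k_j)$ \emph{can} be positive dimensional at $f$, namely when $f$ is a flexible Latt\`es map, since these form positive-dimensional algebraic families of strictly postcritically finite maps. There is no compact ``connectedness-type'' subset of $\mathcal{M}_d$ to appeal to; the correct substitute is McMullen's rigidity theorem, which asserts that any stable algebraic family of rational maps is a family of flexible Latt\`es maps. This yields dimension $0$, hence properness and the applicability of Theorem \ref{thm:gauthier proper}, only for $f\in\mathrm{SPCF}^*_d$. The flexible Latt\`es maps must then be handled by a separate argument: the Buff--Gauthier explicit deformation showing that every flexible Latt\`es map is approximated by non-Latt\`es strictly postcritically finite maps, whence $\overline{\mathrm{SPCF}^*_d}=\overline{\mathrm{SPCF}_d}$. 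You invoke Buff--Gauthier for something else (arranging critical orbits to land in hyperbolic sets, which is rather Gauthier's generalization of the properness criterion), and so you miss the actual role of that reference and leave the Latt\`es locus unaccounted for.
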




\begin{proof}[Idea of proof] 
It is no loss of generality to assume that all critical points are marked as $\set{c_1,\ldots , c_{2d-2}}$.
Let $\mathrm{SPCF}^*_d$ be the set of (conjugacy classes of) strictly post-critically finite rational maps, which are  not flexible Lattès examples\footnote{ A {\em flexible Lattès map} is a    rational mapping  descending from an integer multiplication on  elliptic curve. These can be deformed with the elliptic  curve, hence the terminology.}.
We already know from Theorem \ref{thm:indifferent} that $\supp(\mubif)\subset \overline{\mathcal{Z}_d}$. It should be possible to prove that $\supp(\mubif)\subset \overline{\mathrm{SPCF}_d}$ in the spirit of Theorem \ref{thm:tbifk preper}, but again it is not easy to check assumption (H). Instead Buff and Epstein prove directly that $\overline{\mathrm{SPCF}^*_d} = \overline{\mathcal{Z}_d}$ 
using the Ma\~né-Sad-Sullivan theorem. 

Let us  show that $\mathrm{SPCF}^*_d \subset\supp(\mubif)$.
 If $f\in \mathrm{SPCF}^*_d$, it satisfies a relation of the form $f^{n_j+p_j}(c_j) = f^{n_j}(c_j)$, $j=1,\ldots ,2d-2$. Consider the subvariety of $\mathcal{M}_d$ defined by these algebraic equations.  
We claim  that this subvariety    
is of dimension 0 at $f$. Indeed a theorem of McMullen \cite{mcm algo} asserts that any stable algebraic  family of rational maps is a family of flexible Lattès maps, hence the result. 
It then follows from Theorem \ref{thm:gauthier proper} that $f\in \supp(\mubif)$. 

The original argument of \cite{buff epstein} was (under some mild restrictions on $f$) to check  
the transversality assumption of Theorem 
\ref{thm:buff epstein}, using Teichmüller-theoretic techniques.  
Another proof of this fact was given by Van Strien \cite{van strien}.

%

Finally,  by using an explicit deformation,  Buff and Gauthier \cite{buff gauthier} recently proved  that every flexible Lattès map can be approximated by  strictly post-critically finite rational maps which are not Lattès examples. Therefore   $\overline{\mathrm{SPCF}^*_d} = \overline{\mathrm{SPCF}_d}$. 
\end{proof}
  
As in the  polynomial case, in the previous result one may relax the critical finiteness assumption by only requiring that the critical points map to a hyperbolic set disjoint from the critical set.  Similarly to Theorem \ref{thm:HD polynomial} one thus gets \cite{gauthier}:

\begin{thm}[Gauthier]
 The Hausdorff dimension of $\supp(\mubif)$ equals $2(2d-2)$.
\end{thm}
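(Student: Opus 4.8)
The plan is to follow the strategy of Theorem~\ref{thm:HD polynomial}, itself the multi-critical version of Shishikura's theorem \cite{shishikura}, now carried out in $\mathcal{M}_d$ for the maximal power $\tbif^{2(d-1)}$. Since $\dim_\cc\mathcal{M}_d=2(d-1)$, the inequality $\mathrm{HD}(\supp(\mubif))\leq 2(2d-2)$ is automatic, so only the lower bound needs an argument; as in Theorem~\ref{thm:HD polynomial} it is no harder to prove the local statement $\mathrm{HD}(\supp(\mubif)\cap U)\geq 2(2d-2)$ for every open $U$ meeting $\supp(\mubif)$, the assertion of the theorem being the case $U=\mathcal{M}_d$. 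Fix such a $U$. By the characterization $\supp(\mubif)=\overline{\mathrm{SPCF}_d}$ above, in its generalized Misiurewicz form, the parameters at which all $2d-2$ marked critical points are eventually mapped into a hyperbolic set disjoint from the critical set, \emph{properly} in the sense of Theorem~\ref{thm:gauthier proper}, are dense in $\supp(\mubif)$; I would pick one, call it $f_0$, inside $U$.

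The first real step is to replace $f_0$ by a better parameter nearby. Hausdorff dimension close to the maximum comes from the fact that a hyperbolic repeller can be fattened: given $\e>0$ there is a rational map $f_1$, as close as we wish to $f_0$, carrying a hyperbolic set $E_1$ disjoint from its critical set with $\mathrm{HD}(E_1)>2-\e$ --- this is where one invokes, as in \cite{shishikura}, a near-parabolic perturbation or a suitable explicit hyperbolic family --- and such that the $2d-2$ critical points of $f_1$ still fall into $E_1$, properly. Keeping the absorption proper along the deformation is possible because the relevant intersection in $\mathcal{M}_d\times(\pu)^{2d-2}$ cannot acquire a positive-dimensional component: such a component would be a stable algebraic family, hence a family of flexible Latt\`es maps by McMullen's rigidity theorem \cite{mcm algo}, which is excluded in the present configuration. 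Then $f_1\in U$ and, by Theorem~\ref{thm:gauthier proper}, $f_1\in\supp(\mubif)$.

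The second step is to transfer the dynamical dimension into parameter space. Slicing to a $2(d-1)$-dimensional transversal through $f_1$ and using properness, the map $\chi:\La\cv\cc^{2d-2}$ recording the positions of the $2d-2$ postcritical landing points relative to the continuation of $E_1$ --- as in the hyperbolic-set version of Theorem~\ref{thm:gauthier proper} --- is a finite branched covering near $f_1$, and after an arbitrarily small further perturbation one may take it to be a local biholomorphism. The hyperbolic set $E_1$ moves holomorphically over a neighbourhood of $f_1$, by the $\lambda$-lemma, with holonomy uniformly quasiconformal of dilatation tending to $1$ as the neighbourhood shrinks. Composing with $\chi$, the set of parameters at which every $c_j$ lands exactly on a point of the moved copy of $E_1$ contains a H\"older-bi-H\"older image, with exponents $1+o(1)$, of the $(2d-2)$-fold product $E_1\times\cdots\times E_1$; since $E_1$ is Ahlfors-regular, this product has Hausdorff dimension $(2d-2)\,\mathrm{HD}(E_1)>(2d-2)(2-\e)$. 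Every such parameter is of Misiurewicz type relative to a hyperbolic set, hence belongs to $\supp(\mubif)$ by Theorem~\ref{thm:gauthier proper}. As these correspondences distort Hausdorff dimension by a factor $1+o(1)$, letting the neighbourhood shrink and then $\e\to 0$ gives $\mathrm{HD}(\supp(\mubif)\cap U)\geq 2(2d-2)$, which is what we wanted.

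The main obstacle is the simultaneous control demanded in the first step: one perturbation of $f_0$ must produce (a) a hyperbolic set of dimension arbitrarily close to the maximum $2$, (b) postcritical absorption of all $2d-2$ critical points into it, and (c) properness of this absorption, so that the intersection-theoretic input of Theorem~\ref{thm:gauthier proper} applies with the correct codimension. The tension is that the perturbations enlarging a hyperbolic set (parabolic implosion and the like) are exactly the degenerate ones, far from transverse, so one must check that the weaker, more robust notion of properness survives. This is precisely where Gauthier's replacement of the transversality hypothesis of Buff--Epstein (Theorem~\ref{thm:buff epstein}) by mere properness, together with the rigidity statement \cite{mcm algo} forbidding positive-dimensional stable algebraic families other than flexible Latt\`es, carries the argument.
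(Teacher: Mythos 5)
The paper does not actually prove this theorem: it is stated as a citation of \cite{gauthier}, with only the one-line indication that it follows ``similarly to Theorem \ref{thm:HD polynomial}'' once the critical-finiteness assumption is relaxed to proper capture by a hyperbolic set. Your outline is the correct Shishikura--Gauthier strategy and matches what the paper gestures at: density in $\supp(\mubif)$ of parameters where all $2d-2$ critical orbits are properly absorbed by a hyperbolic set, a parabolic-implosion--type perturbation fattening that set to Hausdorff dimension $>2-\e$ while keeping the absorption, and a transfer to parameter space via the holomorphic motion of the hyperbolic set and the near-conformality of its holonomy, yielding a bi-H\"older copy of $E_1^{2d-2}$ inside $\supp(\mubif)\cap U$.

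Two steps of your sketch are softer than you present them. First, the assertion that ``after an arbitrarily small further perturbation one may take $\chi$ to be a local biholomorphism'' is not justified and is in fact the point Gauthier's method is designed to avoid: properness gives only a finite branched covering, and the dimension transfer must be carried out for such a covering directly (the preimage of the product set still has the right dimension because a finite holomorphic branched cover does not decrease Hausdorff dimension of preimages), rather than by reducing to the transversal case of Theorem \ref{thm:buff epstein}. Second, preserving \emph{properness} of the capture while performing the implosion that enlarges the hyperbolic set is the genuinely delicate point; your appeal to McMullen's rigidity theorem \cite{mcm algo} rules out positive-dimensional stable algebraic families, but one still has to check that the perturbed critical orbits land in the continued hyperbolic set at all, which requires the shadowing/persistence argument of \cite{gauthier} (and, for the polynomial analogue, of \cite{shishikura, tan lei}). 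With those two points filled in, your argument is the intended one.
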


A famous theorem of Rees \cite{rees} asserts that the set of rational maps that are ergodic with respect to Lebesgue measure 
is of positive measure in parameter space. It is then natural to ask: are these parameters inside  $\supp(\mubif)$?  

\subsection{Laminarity}\label{subs:laminarity} 
 A positive current $T$ of bidegree $(q,q)$ in a complex manifold $M$ of dimension $k$ is said to be 
\textit{locally uniformly laminar}
 if in the neighborhood of every point of $\supp(T)$ there exists  a lamination by $q$ dimensional disks embedded in $M$ such that $T$ is an average of integration currents over the leaves. More precisely, the restriction of $T$ to a flow box of this lamination is of the form $\int_\tau [\Delta_t] dm(t)$, where $\tau$ is a local transversal to the lamination, $m$ is a positive measure on $\tau$, and $\Delta_t$ is the plaque through $t$.
 
A current  $T$ of bidegree $(q,q)$ in $M$ 
is said to be \textit{laminar} if there exists a sequence of open subsets $\om_i\subset M$ and a sequence of currents $T_i$, respectively locally uniformly laminar in $\om_i$, such that $T_i$ increases to $T$ as $i\cv\infty$.
Equivalently, $T$ is laminar iff there exists a measured family $((\Delta_a)_{a\in \mathcal{A}}, m)$ 
of compatible holomorphic disks of dimension $q$ in $M$ such that $T = \int_\mathcal{A} [\Delta_a]dm(a)$. Here compatible means that the intersection of two disks in the family is relatively open (possibly empty) in each of the disks (i.e. the disks are analytic continuations of each other). It is important to note that for laminar currents there is no   control on the geometry of the disks (even locally). These geometric currents  appear rather  frequently in holomorphic dynamics. The reader is referred to 
\cite{bls} for a general account on this notion (see also \cite{maximal, cantat survey}). 

\medskip

Why should we wonder  about the laminarity of the bifurcation currents? We have been emphasizing the fact that $\supp(\tbif^k)$ is in a sense the  locus of ``bifurcations of order $k$''. If true, this would mean that on $\supp(\tbif^k)\setminus \supp(\tbif^{k+1})$ we should see ``stability in codimension $k$'', that is, we should expect   
$\supp(\tbif^k)\setminus \supp(\tbif^{k+1})$ to be filled 
with submanifolds of codimension $k$ where the dynamics is stable.  There is a natural stratification of parameter space according to the dimension of the space of deformations, and our purpose is to compare this stratification with that of the supports of the successive bifurcation currents. Laminarity is the precise way to formulate this problem. 

Let us be more specific.  Throughout this paragraph $\La$ is either the space of polynomials     or the space of rational maps of degree $d$, with marked critical points if needed. We let $D = \dim(\La)$.
We say that two rational maps are deformations  of each other if there is a $J$-stable   family
connecting them\footnote{Notice that this is weaker than the notion considered in \cite{mcms} (stability over the whole Riemann sphere), which  introduces some distinctions which are not relevant
from our point of view, like distinguishing the center from the other parameters in 
 a hyperbolic component.}. 

 In \cite{mcms} McMullen and Sullivan   ask for a general description of the way the deformation space of a given rational map embeds in parameter space.  Our thesis is that these submanifolds tend to be organized into laminar currents\footnote{We do not address the problem of the global holonomy of these laminations, which gives rise to interesting phenomena \cite{branner-turning}.}. 

\medskip

We start with a few general facts. The first easy proposition asserts  that if $\tbif^k$ is laminar on some set of positive measure outside $\supp(\tbif^{k+1})$, then the corresponding disks are indeed disks of deformations.

\begin{prop}
Assume that $S$ is a   laminar current of bidegree $(k,k)$ such that $S\leq \tbif^k$ and $\supp(S)\cap \supp(\tbif^{k+1})=\emptyset$. Then the disks subordinate to $S$ are disks of deformations.
\end{prop}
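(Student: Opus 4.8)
The plan is to reduce the statement to DeMarco's theorem $\supp(\tbif)=\bif$, applied not to the ambient family but to its restriction to the disks $\Delta_a$ subordinate to $S$: I would show that the Lyapunov exponent function $\chi$ is pluriharmonic along $m$-almost every such disk, which forces the restricted family to be $J$-stable there. The starting point is the identity $\tbif\wedge S=0$. Since $\tbif=dd^c\chi$ has local continuous (indeed Hölder) psh potentials, wedging $\tbif$ against a positive current is well defined, positive and monotone \cite{demailly} (everything here is local, and in a flow box $S$ is a superposition of integration currents over closed plaques, so there is no subtlety); hence $0\leq S\leq\tbif^k$ gives $0\leq\tbif\wedge S\leq\tbif\wedge\tbif^k=\tbif^{k+1}$, so that $\supp(\tbif\wedge S)\subseteq\supp(S)\cap\supp(\tbif^{k+1})=\emptyset$ by hypothesis, and therefore $\tbif\wedge S=0$.

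Next I would pass this vanishing to the individual disks. Working in a flow box where $S=\int_\tau[\Delta_t]\,dm(t)$ with closed complex plaques $\Delta_t$, continuity of $\chi$ allows one to interchange $\tbif\wedge(\cdot)$ with the integral, so that $0=\tbif\wedge S=\int_\tau [\Delta_t]\wedge dd^c_{\Delta_t}(\chi\rest{\Delta_t})\,dm(t)$ is a superposition of positive currents. Hence $dd^c_{\Delta_t}(\chi\rest{\Delta_t})=0$ for $m$-a.e.\ $t$, i.e.\ $\chi$ restricts to a pluriharmonic function on $m$-a.e.\ plaque; and since plaques of a lamination converge uniformly on compact subsets while $\chi$ is continuous, pluriharmonicity passes to the limit, so $\chi\rest{\Delta_a}$ is pluriharmonic for \emph{every} disk $\Delta_a$ subordinate to $S$.

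Finally, for such a disk $\Delta$, the restricted family $(f_\la)_{\la\in\Delta}$ is a holomorphic family of rational maps with Lyapunov exponent function $\chi\rest{\Delta}$, hence with bifurcation current $dd^c_\Delta(\chi\rest{\Delta})=0$; by DeMarco's theorem its bifurcation locus is $\supp(0)=\emptyset$, so the family is $J$-stable on $\Delta$, and by Theorem \ref{thm:mss} any two of its parameters are deformations of one another, i.e.\ $\Delta$ is a disk of deformations. I do not expect a genuine obstacle: the only conceptual ingredient is the line $\supp(\tbif\wedge S)\subseteq\supp(S)\cap\supp(\tbif^{k+1})=\emptyset$, and the rest is routine bookkeeping about positive currents with continuous potentials — well-definedness, positivity and monotonicity of the wedge products, interchanging $\tbif\wedge(\cdot)$ with the defining integral of the laminar current, and upgrading ``$m$-a.e.\ disk'' to ``every disk subordinate to $S$'' through the continuity of $\chi$.
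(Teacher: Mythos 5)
Your argument is correct and essentially the paper's own: both rest on deducing $\tbif\wedge S=0$ from $S\leq\tbif^k$ together with the support hypothesis, and then spreading pluriharmonicity of $\chi$ from $m$-a.e.\ plaque to the given disk by continuity of $\chi$ (the paper works with a nonzero locally uniformly laminar $U\leq S$ containing the disk in a leaf --- which is exactly what the definition of a subordinate disk provides and what legitimizes your flow-box decomposition, since $S$ itself is only laminar). The paper phrases the last step as a contradiction and leaves the final appeal to DeMarco's theorem on the restricted family implicit, but these are presentational differences only.
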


\begin{proof}
By definition, a holomorphic disk of codimension $k$ is said to be subordinate to $S$ if there exists a non zero locally uniformly laminar current $U\leq S$ such that $\Delta$ is contained in a leaf of $U$. Observe that since $T_{\rm bif}^{k+1}$ gives no mass to analytic sets, there are no isolated leaves in $\supp(U)$.
Since $U\leq \tbif^k$ and $\supp(U)$ is disjoint from  $\supp(\tbif^{k+1})$, we see that $U\wedge \tbif =0$. Now if $\chi$ was not pluriharmonic along $\Delta$, then by continuity, it wouldn't be harmonic on the nearby leaves of $U$, implying that $U\wedge \tbif$ would be non-zero, a contradiction.
\end{proof}

\begin{prop}
For almost every parameter relative to the trace measure of 
$\tbif^k$, the dimension of the deformation space is at most  $D-k$. 

In particular a $\mubif$ generic parameter is rigid, that is it admits no deformations.
\end{prop}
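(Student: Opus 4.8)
The plan is to derive both assertions from the single claim that the trace measure of $\tbif^k$ gives no mass to the set of parameters whose deformation space has dimension $>D-k$, where $D=\dim(\La)$; the rigidity statement is then the special case $k=D$, since $\mubif$ is a constant multiple of $\tbif^{D}$ (resp. $\tbif^{2(d-1)}$ in the space of rational maps). So write $m=D-k+1$, let $\Sigma_m\subset\La$ be the set of $\la$ for which $f_\la$ admits a deformation space of dimension $\geq m$, and let $\nu=\tbif^k\wedge\beta^{D-k}$ be the trace measure; the goal is $\nu(\Sigma_m)=0$. Since $\supp(\tbif^k)\subset\supp(\tbif)=\bif$, the current $\tbif^k$, hence $\nu$, charges only the bifurcation locus, so it suffices to prove $\nu(\Sigma_m\cap\bif)=0$.

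The basic input is that $\chi$ is pluriharmonic along deformations. First I would record this exactly as in the proof of DeMarco's theorem: along any $J$-stable family the partial sums in \eqref{eq:bdm} are pluriharmonic by characterization {\em i.} of Theorem \ref{thm:mss}, and they converge locally uniformly to $\chi$, so $\chi$ is pluriharmonic there and $\tbif=dd^c\chi$ restricts to $0$ on any such family. Next, for $\la_0\in\Sigma_m\cap\bif$ I would invoke the McMullen--Sullivan description of deformation spaces to produce a germ of immersed complex submanifold $V_0\ni\la_0$ of dimension $\geq m$ (thus of codimension $\leq k-1$) carrying a $J$-stable family; by the previous point $\chi|_{V_0}$ is pluriharmonic, and since every parameter of $V_0$ again has deformation dimension $\geq m$, the set $\Sigma_m$ is swept out by such submanifolds $V_0$.

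The final step is a pluripotential-theoretic one. What I would like to use is the local statement: if $u$ is a continuous psh function on a ball $\Omega\subset\cc^D$ and $L\subset\Omega$ is a set through each point of which passes a germ of complex submanifold of dimension $\geq D-k+1$ on which $u$ is pluriharmonic, then $(dd^cu)^k\wedge\beta^{D-k}$ gives no mass to $L$. When these submanifolds are the leaves of a genuine holomorphic foliation filling an open subset of $\Omega$, this is exactly the kind of statement behind the Forn\ae ss--Sibony lemma \cite[Lemma 6.10]{fs2} already used in the proof of Theorem \ref{thm:indifferent}: pluriharmonicity of $u$ along the leaves forces the rank of $dd^cu$ to be at most the codimension $r\leq k-1$, hence $(dd^cu)^{r+1}=0$ and a fortiori $(dd^cu)^k=0$. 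In the general case one would reduce to this foliated model by disintegrating the measure $(dd^cu)^k\wedge\beta^{D-k}$ along the measurable family of deformation disks. Applying this with $u=\chi$ and $L=\Sigma_m\cap\bif$ over coordinate balls covering $\La$ yields $\nu(\Sigma_m\cap\bif)=0$, which is what we want.

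I expect the main obstacle to be precisely this last step: upgrading the Forn\ae ss--Sibony rank estimate from an honest holomorphic foliation on an open set to a merely measurable family of deformation disks covering a measurable set, and controlling the possible degeneration or jump in dimension of the deformation submanifolds as one approaches the bifurcation locus. Along the way one also needs the (routine but not purely formal) facts that the deformation dimension is a Borel function on $\La$ and that $\Sigma_m$ is, locally, a countable union of pieces along which the deformation disks assemble holomorphically; both should follow from the McMullen--Sullivan analysis via grand-orbit relations.
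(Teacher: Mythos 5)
Your overall strategy --- pluriharmonicity of $\chi$ along deformations combined with a pluripotential-theoretic ``no mass'' lemma --- is the right one, but you diverge from the paper at exactly the step you flag as the main obstacle, and as written that step is a genuine gap. The paper does not attack the intermediate-codimension statement (that $(dd^c\chi)^k\wedge\beta^{D-k}$ charges no set swept by $(D-k+1)$-dimensional pluriharmonicity germs) head on. Instead it first performs a slicing reduction, as in the proof of Theorem \ref{thm:buff epstein}: writing the trace measure as an average over generic $k$-dimensional affine slices $\La'$, a parameter whose deformation space in $\La$ has dimension at least $D-k+1$ generically acquires a disk of deformations \emph{inside} $\La'$, so it suffices to treat the maximal-codimension case on each slice. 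There the required input is precisely \cite[Lemma 6.10]{fs2} in the form quoted in the paper: if through every point of a measurable set $E$ there passes a holomorphic disk along which the continuous psh function $\chi$ is harmonic, then the top power $(dd^c\chi)^{\dim}$ gives no mass to $E$. That lemma is already stated for a measurable family of disks, not for a foliation, so the ``upgrade'' you worry about is circumvented rather than carried out. By contrast, your direct disintegration of $(dd^c u)^k\wedge\beta^{D-k}$ along the family of deformation germs is left unproved, and the pointwise rank argument $(dd^c u)^{r+1}=0$ is only valid for smooth $u$ and an honest foliation on an open set; to make your route work you would in effect have to re-prove a sliced version of the Forn\ae ss--Sibony lemma, which is what the paper's reduction accomplishes in one line.

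Two smaller remarks. The appeal to the McMullen--Sullivan structure theory is heavier than needed: by the paper's definition of deformation (existence of a connecting $J$-stable family), a positive-dimensional deformation space already yields a holomorphic disk of $J$-stable parameters through the point, which is all the lemma requires; one does not need to assemble these into immersed submanifolds or verify Borel measurability of the deformation dimension. Your verification that $\chi$ is pluriharmonic on stable families via \eqref{eq:bdm} and Theorem \ref{thm:mss} is correct and matches the paper.
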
 

We should expect the codimension to be a.e. equal to $k$ on $\supp(\tbif^k)\setminus \supp(\tbif^{k+1})$. 

\begin{proof}
Similarly to Theorem \ref{thm:buff epstein}, a slicing argument shows that is enough to consider the case of maximal codimension, that is, to show that for $\tbif^D =\mubif$-a.e. parameter the deformation space is zero dimensional. 

If $f_0\in \supp(\mubif)$ possesses  a disk $\Delta$ of deformations, then $\chi\rest{\Delta}$ is harmonic.  An already mentioned pluripotential theoretic lemma \cite[Lemma 6.10]{fs2} asserts that if $E$ is a measurable  set with the property that through every point of $E$ there exists a holomorphic disk along which $\chi$ is harmonic, then $(dd^c\chi)^{D} (E)= \mubif(E) =0$. The result follows. 
\end{proof}

Let now $\la_0\in \supp(\tbif^k)$. There are at least $k$ active critical points at $f_{\la_0}$. Assume the number is exactly $k$, say $c_1, \ldots, c_k$, so the remaining  $D-k$ are passive ones. 
One might expect  each of these passive critical points to give rise to a modulus of deformations of $f_{\la_0}$, but there is no general construction for this. 

If these $D-k$ passive critical points lie in attracting basins,   the existence of $D-k$ moduli of deformation for $f_{\la_0}$ should follow from classical quasi-conformal surgery techniques. 

Observe that if the hyperbolicity conjecture holds, then $c_{k+1}, \ldots, c_D$ must be attracted by cycles.  Indeed let $U$ be an open set where these points are passive. By Theorem \ref{thm:tbifk per}, there exists $\la\in U$ such that $c_1(\la), \ldots, c_k(\la)$ are periodic. Hence there is an open set $U'\subset U$ where all critical points are passive. Assuming the hyperbolicity conjecture, in $U'$ all critical points lie in attracting basins. Thus this property persists for $c_{k+1}, \ldots, c_D$ throughout $U$.  

When $D=2$ and $k=1$, one can indeed construct these deformations and relate them to the geometry of $\tbif$. The following is a combination of results of   Bassanelli-Berteloot \cite{bas-ber2} and  the author \cite{cubic}.

\begin{thm}\label{thm:unif lamin}
If $\La = \mathcal{P}_3$ or $\mathcal{M}_2$ and if $U$ is an open set where one critical point is attracted by a cycle, then $\tbif$ is  locally uniformly laminar in $U$. 
\end{thm}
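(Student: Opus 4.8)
The plan is to exploit the hypothesis that one critical point, say $c_1$, is attracted by an attracting cycle throughout the open set $U$, in order to manufacture a one-parameter family of quasi-conformal deformations whose associated foliation carries $\tbif$. First I would observe that since $c_1$ is passive and attracted by an attracting cycle in $U$, the bifurcation current $\tbif$ restricted to $U$ is just $T_2$ (in the $\mathcal{P}_3$ case) or $T_1+T_2+\cdots$ with only one summand surviving (in the $\mathcal{M}_2$ case, where $D=2$, $2d-2=2$, so there are exactly two marked critical points and one of them is passive); in all cases $\tbif\rest U = T_{c}$ for the single remaining critical point $c$, which is the activity current of $c$. So one reduces to showing that the activity current of a single marked critical point is locally uniformly laminar on $U$. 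The potential of $T_c$ on $U$ is $\la\mapsto g_\la(c(\la))$ (the value of the dynamical Green's function at the marked critical point, which is continuous and plurisubharmonic).

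The key step is to build, through each point $\la_0\in U\cap\supp(\tbif)$, a holomorphic disk along which the dynamics is $J$-stable and along which $g_\la(c(\la))$ is harmonic, and to see that these disks fit together into a lamination with $\tbif$ an average of currents of integration over leaves. The construction of the disks is the classical quasi-conformal surgery/holomorphic motion argument: since $c_1$ lies in the (attracting) basin of a cycle, one has a full modulus of quasi-conformal deformation coming from the complex structure one can impose on the quotient torus/annulus of that basin (a Beltrami coefficient supported in a fundamental domain), and pulling this back by the dynamics and solving the Beltrami equation produces a holomorphic family $\la(t)$, $t$ in a disk, of conjugate maps through $\la_0$. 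These deformation disks are $J$-stable (the conjugacy extends to the Julia set), hence by Theorem \ref{thm:mss} and DeMarco's theorem $\chi$, and therefore $g_\la(c(\la))$, is pluriharmonic along them. One then checks that distinct such disks are either disjoint or coincide on a relatively open set — this is where one uses that the surgery construction depends canonically on the data (the position of $c_1$ in the linearizing coordinate of its basin), so the disks are the plaques of a genuine lamination of a neighborhood of $\supp(\tbif)\cap U$. Finally, since $T_c = dd^c_\la\, g_\la(c(\la))$ has continuous potential and this potential is pluriharmonic along every leaf, a standard disintegration argument (as in \cite{bls}) shows that $T_c$ restricted to a flow box is of the form $\int_\tau[\Delta_t]\,dm(t)$ for a positive transverse measure $m$; that is, $T_c$ is locally uniformly laminar.

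The main obstacle I expect is the second half of the key step: verifying that the surgery disks actually assemble into a \emph{lamination} (plaques disjoint or analytically continuing each other) rather than just an abstract measured family of disks, and that their union covers all of $\supp(\tbif)\cap U$ — in other words, that \emph{every} point of the support lies on such a deformation disk, with no "exceptional" points where the disk degenerates or several disks cross transversally. In the $\mathcal{P}_3$ setting this was carried out by the author in \cite{cubic} and in $\mathcal{M}_2$ by Bassanelli–Berteloot in \cite{bas-ber2}, where the analysis of the boundary behaviour of the deformation disks (as one approaches $\supp(\tbif^2)=\supp(\mubif)$, where this whole picture must break down) is the delicate point; one must rule out, using the specific low-dimensional geometry ($D=2$, $k=1$), that leaves accumulate on themselves in a pathological way or that the transverse measure fails to be locally finite. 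Once laminarity of $T_c$ on flow boxes is established, summing over a locally finite cover of $\supp(\tbif)\cap U$ and invoking the continuity of the potential to glue gives local uniform laminarity of $\tbif$ on all of $U$, which is the assertion.
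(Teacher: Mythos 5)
Your outline is essentially the argument the paper has in mind: the survey gives no proof of this theorem but defers to \cite{bas-ber2} and \cite{cubic}, and the strategy there is exactly the one you describe --- reduce $\tbif\rest{U}$ to the activity current of the single remaining critical point, produce through each parameter a one-dimensional disk of quasi-conformal deformations supported on the attracting basin of the passive critical point (so the disks are $J$-stable and the continuous potential $g_\la(c(\la))$ is harmonic along them), check that these disks assemble into a lamination via the $\la$-lemma for the resulting holomorphic motion, and disintegrate the current over a transversal. The only points worth adding are that in the $\mathcal{P}_3$ case the instance actually treated in \cite{cubic} is the escape locus, where the cycle is the superattracting fixed point at infinity and the deformation disk is the Branner--Hubbard wringing rather than a quotient-torus modulus (your surgery description covers the non-superattracting case, which is the one relevant to \cite{bas-ber2} in $\mathcal{M}_2$), and that you have correctly identified where the real work lies, namely showing that the disks are pairwise disjoint or coincident and that their union is closed and carries all of $\supp(\tbif)\cap U$.
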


In the general case one is led  to the following picture.

\begin{conjecture}\label{conj:lamin}  
If $U\subset \La$ is an open set where $D-k$ critical points (counted with multiplicity)  lie in attracting basins, then $\tbif^k$ is a laminar current in $U$,  locally uniformly laminar outside a closed analytic set.
\end{conjecture}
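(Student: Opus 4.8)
The statement being conjectural, what follows is a program rather than a proof. The idea is to exhibit the laminar structure in three steps: reduce $\tbif^k\rest U$ to a wedge of the bifurcation currents of the \emph{active} critical points; build a holomorphic foliation of $U$ (away from a thin exceptional set) whose leaves are the $(D-k)$-dimensional deformation submanifolds obtained by quasiconformal surgery on the attracting basins; and then invoke a structure theorem for closed positive currents that are harmonic along the leaves of a holomorphic foliation. Because $\tbif^k$ has continuous potentials it charges no pluripolar set, so it suffices to produce the uniformly laminar structure on the complement of the exceptional set and then recover laminarity on all of $U$ as an increasing limit.

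For the reduction: on $U$ each of the $D-k$ critical points lying in attracting basins is passive, so by Theorem \ref{thm:activity support} its bifurcation current vanishes on $U$; the same holds for any further critical point that happens to be passive on $U$ (e.g.\ inside a Siegel disk). If strictly fewer than $k$ critical points are active on $U$ then, by \eqref{eq:tbifk} together with $T_i\wedge T_i=0$, one gets $\tbif^k\rest U=0$ and there is nothing to prove. Otherwise exactly $k$ critical points, say $c_{i_1},\dots,c_{i_k}$, are active, and \eqref{eq:tbifk} collapses to $\tbif^k\rest U=k!\,T_{i_1}\wedge\cdots\wedge T_{i_k}\rest U$; thus on $U$ the higher bifurcation current is governed entirely by these $k$ critical points, the other $D-k$ being frozen in their basins.

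For the foliation: through each $\la\in U$ one expects a $(D-k)$-dimensional family of deformations of $f_\la$, produced by quasiconformal surgery supported on the attracting basins of the passive critical points (one complex modulus per basin, by classical surgery), along which the dynamics is $J$-stable and hence $\chi$ is pluriharmonic. One must show that these families vary holomorphically with $\la$ and assemble into a genuine holomorphic lamination $\mathcal F$ of $U\setminus Z$ by $(D-k)$-disks, for a suitable exceptional set $Z$ along which the surgery degenerates. In a flow box where $\mathcal F$ is straightened as $\Delta^k\times\Delta^{D-k}$ with leaves $\{z\}\times\Delta^{D-k}$, pluriharmonicity of $\chi$ on the leaves means that the leaf block of the Levi form of $\chi$ vanishes; since $dd^c\chi\ge0$, positivity forces the mixed block to vanish as well, so $\tbif=dd^c\chi$ carries only the transverse block and $\tbif^k$ takes the form $\Phi\,\beta_z$ with $\beta_z=\bigwedge_{j=1}^k\frac i2\,dz_j\wedge d\bar z_j$ and $\Phi\ge0$. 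Closedness of $\tbif^k$ then forces $\Phi$ to be independent of the leaf variable, whence $\tbif^k=\int_{\Delta^k}[\{z\}\times\Delta^{D-k}]\,\Phi(z)\,dV(z)$ is uniformly laminar subordinate to $\mathcal F$ on the flow box; gluing the flow boxes and adding back the mass carried near $Z$ through an increasing limit yields laminarity on $U$. (Since $\chi$ is only Hölder, the linear-algebra step has to be run at the level of currents, but this is standard; for $D=2$, $k=1$ the whole scheme is realized in $\mathcal P_3$ and $\mathcal M_2$, cf.\ Theorem \ref{thm:unif lamin}, with $Z=\emptyset$.)

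The main obstacle is the construction of $\mathcal F$ and the control of $Z$. Quasiconformal surgery produces deformations through each parameter but says nothing a priori about how the deformation classes of nearby parameters glue into a lamination --- the global-holonomy phenomena already visible in the work of Branner and Hubbard are a warning --- and the modulus count degenerates where a passive critical point becomes periodic, so $Z$ seems to involve the countably many hypersurfaces $\percrit_i(n,0)$ with $i\notin\{i_1,\dots,i_k\}$; reconciling this with the requirement that $Z$ be a closed analytic set (or, alternatively, showing that the surgery extends across these parameters so that no exceptional set is needed, as happens when $D=2$) is the crux of the matter. By contrast, the reduction step is immediate from the results recorded above, and the passage from ``$\mathcal F$ exists'' to ``$\tbif^k$ is laminar'' is a soft structure-theorem argument.
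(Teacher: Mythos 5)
This statement is labelled as a conjecture and the paper contains no proof of it; the surrounding text only offers the heuristic that you have elaborated (quasi-conformal surgery on the attracting basins should furnish $D-k$ moduli of deformation, and the exceptional analytic set should come from accidental critical orbit relations). So your proposal cannot be measured against an argument in the paper, only against that heuristic --- and it matches it. Within the program, the parts you treat as established are indeed sound: the reduction $\tbif^k\rest{U}=k!\,T_{i_1}\wedge\cdots\wedge T_{i_k}\rest U$ follows from Theorem \ref{thm:activity support} and \eqref{eq:tbifk} because the wedge products localize (the potentials are continuous), and the passage from ``$\chi$ is pluriharmonic along the leaves of a holomorphic foliation straightened in a flow box'' to ``$\tbif^k$ is a superposition of leaves'' is the standard argument via \cite[Lemma 6.10]{fs2}-type slicing: the leafwise coefficients of the positive current $dd^c\chi$ vanish, Cauchy--Schwarz kills the mixed coefficients, and closedness of the resulting purely transverse $(k,k)$-current makes its density constant along leaves. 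The no-mass-on-pluripolar-sets argument for recovering $U$ from $U\setminus Z$ is also correct.

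The genuine open content is exactly where you place it, but two points deserve sharpening. First, the modulus count from surgery is one modulus per critical \emph{grand orbit} in an attracting basin, not one per basin (cf.\ McMullen--Sullivan \cite{mcms}); when two of the $D-k$ passive critical points share a basin or a grand orbit the count drops, and these coincidences are precisely the ``exceptional critical orbit relations'' the paper alludes to. Second, your worry that $Z$ is forced to be a countable union of hypersurfaces is partly unfounded for the superattracting degenerations: on a connected $U$ where $c_i$ is persistently attracted to a cycle, $c_i$ can only become periodic by landing \emph{on} that cycle, whose period is locally constant, so only finitely many $\percrit_i(n,0)$ meet $U$. The relations $f_\la^n(c_i(\la))=f_\la^m(c_j(\la))$ between two passive critical points in the same basin, however, do form a genuinely countable family of hypersurfaces, and it is there (together with the holomorphic dependence of the surgery on $\la$ and the global holonomy issues you cite from Branner--Hubbard) that the conjecture remains open. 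Note finally that even the case $D=2$, $k=1$ settled by Theorem \ref{thm:unif lamin} required a specific construction in \cite{bas-ber2, cubic} rather than the soft scheme above, which is a fair indication of where the difficulty lies.
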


The necessity of   an analytic subset where the uniform laminar structure might have singularities is due to the possibility of (exceptional) critical orbit relations

\medskip

To address the question of laminarity of $\tbif^k$ outside $\supp(\tbif^{k+1})$, we also need to analyze the structure of $\tbif^k$ in the neighborhood of the parameters lying outside $\supp(\tbif^{k+1})$ but having  more than $k$ active critical points. As Example \ref{ex:douady} shows, these parameters can admit deformations.
 There does not seem to be any reasonable understanding of the bifurcation current near these parameters, even for cubic polynomials.  Another interesting situation is that of cubic polynomials with a Siegel disk $\Delta$, such that a critical point falls in $\Delta$ after iteration. Of course in $\mathcal{P}_3$ both critical points are active. These parameters can be deformed by moving the  critical point in the Siegel disk \cite{zakeri}. We do not know whether these parameters belong to $\supp(\mubif)$. 
 
 \medskip

We also need to consider the possibility of  ``queer" passive critical points (whose existence contradicts the hyperbolicity conjecture, as seen above). In this case we have a positive 
result \cite{cubic}.

\begin{thm}\label{thm:queer lamin}
If $\La = \mathcal{P}_3$ and $U$ is an open set where one critical point is passive, then 
$\tbif$ is  laminar in $U$.
\end{thm}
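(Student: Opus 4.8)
The plan is to reduce the statement to the laminarity of a single activity current, and then to realize that current as an increasing limit of uniformly laminar currents built from the level sets of the postcritical map, the required geometric control coming from the passivity of the \emph{other} critical point.

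First I would reduce the problem. Write the two marked critical points of the cubic family as $c_0\equiv 0$ and $c_1$, with activity currents $T_0$ and $T_1$; by Proposition \ref{prop:sum ci} one has $\tbif=T_0+T_1$. Passivity of $c_0$ on $U$ means precisely that $U$ is disjoint from $\supp(T_0)$ (the activity locus of $c_0$), so $T_0|_U=0$ and $\tbif|_U=T_1|_U$; hence it suffices to prove that $T_1$ is laminar in $U$. Next I would bring in an approximation of $T_1$ by algebraic curves: by Theorem \ref{thm:nanamarre} there is a pluripolar (hence area-zero) set $\mathcal{E}\subset\pu$ such that for $a\notin\mathcal{E}$ the curves $H_{n,a}=\{\la\in U:\ f_\la^n(c_1(\la))=a\}$ satisfy $d^{-n}[H_{n,a}]\to T_1$, and the control $|g_n-g_\infty|=O(d^{-n})$ from the proof of Theorem \ref{thm:activity support} gives convergence of the potentials $d^{-n}\log|f_\la^n(c_1(\la))-a|$ to $g_\la(c_1(\la))$ with geometric speed on $U$. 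One may equivalently work with the curves $\prepercrit_1(n,n-1)$, using Corollary \ref{cor:strpreper} (condition (H) holds on $\mathcal{P}_3$). Fixing such an $a$ and a subdomain $V\Subset U$, the problem becomes: show every limit of $d^{-n}[H_{n,a}\cap V]$ is laminar.

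The core of the argument is a geometric estimate on these curves, after which one invokes the standard criterion for laminarity of limits of dynamically defined curves on a surface (Bedford--Lyubich--Smillie, de Th\'elin, and the author's work on laminar currents \cite{bls, maximal}): if $C_n$ is a sequence of curves in $V$ of areas $A_n\to\infty$ whose geometric genus --- equivalently, the number of pieces that fail to be graphs over a fixed pencil of lines --- is $o(A_n)$, then any limit of $A_n^{-1}[C_n]$ is laminar. Here $A_n\sim d^n\,\m_V(T_1)$, so one must bound the genus of $H_{n,a}\cap V$ by $o(d^n)$. Writing $f_n:\la\mapsto f_\la^n(c_1(\la))$, the ramification of $f_n$ over $V$ is governed by the parameters at which the forward orbit $\{f_\la^j(c_1(\la))\}_{j<n}$ meets the critical set $\{c_0(\la),c_1(\la)\}$: the contribution of encounters with $c_1$ gives exactly the curves $\percrit_1(\cdot,\cdot)$ and is reabsorbed using $T_1\wedge T_1=0$, while normality of $\bigl(f_\la^j(c_0(\la))\bigr)_j$ on $U$ prevents the ``$c_0$-part'' of the ramification from concentrating along $H_{n,a}$. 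With the genus bound in hand one extracts uniformly laminar currents $S_n\le d^{-n}[H_{n,a}]$ with $d^{-n}[H_{n,a}]-S_n\to0$ on $V$; exhausting $U$ by such $V$'s then presents $T_1|_U$ as an increasing limit of uniformly laminar currents, which is exactly laminarity.

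The main obstacle is precisely the genus estimate on $H_{n,a}$ inside $U$. Since $U$ need not be contained in the stability locus --- the critical point $c_1$ may well be active throughout $U$ --- the curves $H_{n,a}$ are genuinely complicated, and the sole structural input is the passivity of $c_0$. Turning ``$(f_\la^n(c_0(\la)))$ is normal on $U$'' into an $o(d^n)$ control on the number of non-graph pieces of $H_{n,a}\cap V$ requires a case analysis according to the behaviour of $c_0$: when $c_0$ escapes or is attracted to a cycle one can say more (and in the attracting case one even gets a \emph{uniformly} laminar current, cf. Theorem \ref{thm:unif lamin}), but the delicate case is the ``queer'' one, where $c_0$ is neither escaping nor attracted to a cycle, and it is there that the specific geometry of the space of cubic polynomials has to be exploited.
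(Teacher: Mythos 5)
Your proposal follows essentially the same route as the paper: reduce to the laminarity of the single activity current $T_1$ on $U$ (since passivity of $c_0$ kills $T_0$ there) and apply De Th\'elin's criterion --- laminarity of limits $\lim d_n^{-1}[C_n]$ with $\mathrm{genus}(C_n)=O(d_n)$ --- to dynamically defined curves of $\percrit$ type. The genus estimate you single out as the main obstacle is exactly the content the paper defers to \cite{cubic}; note only that De Th\'elin's hypothesis is $\mathrm{genus}(C_n)=O(d_n)$, which is weaker than the $o(A_n)$ count of non-graph pieces you present as ``equivalent'' to it.
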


Perhaps unexpectedly, this result does not follow from the construction 
of some explicit deformation for a cubic polynomial with   one active and one (queer) passive critical point.  Instead we use a general     laminarity criterion
  due to De Thélin \cite{dt-boule}: if a closed positive  current  $T$ in $U\subset \cd$ is the limit in the sense of currents of a sequence of integration currents   $T = \lim  {d_n}\inv [C_n]$ with $\mathrm{genus}(C_n) =O(d_n)$, then $T$ is laminar. We refer to \cite{cubic} for the construction of the curves $C_n$.

Thus in the space of cubic polynomials, Theorems \ref{thm:unif lamin} and \ref{thm:queer lamin} show that $\tbif$ is laminar outside the locus where two critical points are active (which is slightly larger than 
$\supp(\mubif)$). In the escape locus $\cd\setminus \mathcal{C}$, where $\tbif$ is uniformly laminar by Theorem  \ref{thm:unif lamin}, we can actually give a rather precise description  of $\tbif$, which   nicely  complements  the topological description given by Branner and Hubbard \cite{branner-hubbard1, branner-hubbard2}. 

We are also able to show that this laminar structure   really degenerates when approaching  $\supp(\mubif)$, in the sense that there cannot exist a set of positive transverse measure of deformation disks  ``passing through" $\supp(\mubif)$\footnote{Milnor discusses in \cite[\S 3]{milnor cubic} the possibility of so-called ``product configurations" in the connectedness locus of real cubic polynomials. Our result actually  asserts that in the complex setting 
such configurations cannot be of positive $\mubif$ measure.}.

A consequence of this is that the genera of the curves $\percrit(n,k)$ must be asymptotically larger than $3^n$ near $\supp(\mubif)$. We refer to \cite{cubic} for details.  The geometry of these curves was studied by Bonifant, Kiwi and Milnor \cite{milnor smooth, bkm} in a series of papers (see also the figures in \cite[\S 2]{milnor cubic} for some visual evidence of the complexity 
of the $\percrit(n,k)$ curves).

\medskip

The results in this paragraph suggest the following alternate characterization of the support of the bifurcation measure. 

\begin{question} 
If $\La$ is the moduli space of polynomials or rational maps of degree $d\geq 2$, is $\supp(\mubif)$ equal to the closure of the  set of rigid parameters?
\end{question}

\section{Bifurcation currents for families of Möbius subgroups}\label{sec:kleinbif}

The famous Sullivan dictionary provides a deep and fruitful analogy between the  dynamics of rational maps on $\pu$ and that of Kleinian groups \cite{sullivan1}. In this section we  explain that  bifurcation currents make also sense on the Kleinian group side, leading to interesting new results.   Unless otherwise stated, all results are due to   Deroin and the author \cite{kleinbif, bers slice}.

\subsection{Holomorphic families of subgroups of $\PSL$}
Here we gather some preliminary material and present the basic bifurcation theory of Möbius subgroups. We refer to the monographs of Beardon \cite{beardon} and Kapovich \cite{kapovich} for basics on the theory of Kleinian groups.
Throughout this section, $G$ is a finitely generated group and $\La$ a connected complex manifold. We consider a holomorphic family of representations of $G$ into $\PSL$, that is  a mapping $\rho:\La
\times G\cv \PSL$, such that for fixed  $\la\in \La$, $\rho(\la, \cdot)$  is a group homomorphism, and for fixed $g\in G$ 
  $\rho(\cdot, g)$ is holomorphic. The family will generally be denoted by  $(\rho_\la)_{\la\in \La}$. 

We make three standing assumptions:
\begin{enumerate}
\item[(R1)] the family is non-trivial, in the sense that there exists $\la_1$, $\la_2$ such that 
the representations  $\rho_{\la_i}$, $i=1,2$ are not   conjugate  in $\PSL$;
\item[(R2)] there exists  $\la_0\in \La$ such that  $\rho_{\la_0}$ is faithful;
\item[(R3)] for every $\la\in \La$, $\rho_\la$ is non-elementary. 
\end{enumerate}

Assumptions (R1) and (R2) do not really restrict our scope: this is obvious for (R1), and for (R2) is suffices to take a quotient of $G$. Notice that under (R2), the representations $\rho_\la$ are \textit{generally faithful}, that is, the set of of non faithful representations is a union of Zariski closed sets. 

Recall that a representation is said to be {\em elementary} when it admits a finite orbit on $\mathbb{H}^3 \cup\pu(\cc)$ ($\mathbb{H}^3$ is the 3-dimensional hyperbolic space).  Then, either $\Gamma$ fixes a point in $\mathbb{H}^3$ and is conjugate to a subgroup of 
$\mathrm{PSU}(2)$\footnote{or $\mathrm{SO}(3,\re)$ if we view $\mathbb{H}^3$ in its ball model.}  and in particular it contains only  elliptic elements, or it has a finite orbit (with one or two elements) on $\pu$. It can easily be proved that the subset of elementary representations of a given family $(\rho_{\la})$  is a real analytic subvariety $E$ of $\La$. Hence (R3) will be satisfied upon restriction to $\La\setminus E$ \footnote{In order to study the space of all representations of $G$ to $\PSL$, 
it is nevertheless interesting to understand which results remain true when allowing a proper subset of elementary representations. This issue is considered in \cite{kleinbif}, but  here for simplicity  we only work   with non-elementary representations.}. 

We identify $\PSL$ with the group of transformations of the form $\gamma(z)=\frac{az
+b}{cz+d}$, with $\lrpar{\begin{smallmatrix} a& b \\ c&d\end{smallmatrix}}\in \SL$
  and let 
$$\norm{\gamma} = \sigma(A^*A)^{1/2} \text{ and }
\tr ^2{\gamma} = (\tr A)^2, $$
where $A= \lrpar{\begin{smallmatrix} a& b \\ c&d\end{smallmatrix}}$ is a  lift of $\gamma$ to $\SL$, and $\sigma(\cdot)$ is the spectral radius.  Of course these quantities do not depend on the choice of the lift.

As  it is well-known,    Möbius transformations are classified into three types according to the value of their trace: 
\begin{itemize}
\itm {\em parabolic} if $\tr^2(\gamma)=4$ and $\gamma \neq\mathrm{id}$; it is then conjugate to  $z\mapsto z+1$;
\itm {\em elliptic} if $\tr^2(\gamma)\in [0,4)$, it is then conjugate to $z\mapsto e^{i\theta}z$ for some real number $\theta$, and $\tr^2{\gamma} = 2+2\cos(\theta)$.
\itm {\em loxodromic} if $\tr^2(\gamma)\notin [0,4]$, it is then conjugate to $z\mapsto kz$, with $\abs{k}\neq 1$.
\end{itemize}

There is a well-established notion of bifurcation for a family of Möbius subgroups, which is the translation in the Sullivan dictionary of Theorem \ref{thm:mss}. It has its roots in the work of  Bers, Kra,  Marden, Maskit, Thurston, and others on deformations of Kleinian groups.

\begin{thm} [Sullivan \cite{sullivan} (see also Bers \cite{bers})]\label{thm:sullivan}
Let $(\rho_\lambda)_{{\lambda\in \La}}$ be a holomorphic family of representations of $G$ into $\PSL$ satisfying (R1-3), and let $\om\subset\Lambda$ be a connected open set. Then the following assertions are equivalent:

\begin{enumerate}[{ i.}]
\item  for every $\lambda\in \om$, $\rho_\la(G) $ is discrete;
\item for every $\lambda\in \om$, $\rho_\lambda$ is faithful;
\item for every $g$ in $G$, if for some $\lambda_0\in \om$,  $\rho_{\lambda_0}(g)$ is loxodromic (resp. parabolic, elliptic), then $\rho_{\lambda}(g)$ is loxodromic
(resp. parabolic, elliptic) throughout $\om$;
\item for any $\lambda_0$, $\lambda_1$ in $\om$
the representations $\rho_{\lambda_0}$ and $\rho_{\lambda_1}$ are  quasi-conformally conjugate on $\pu$.
\end{enumerate}
\end{thm}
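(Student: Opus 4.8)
The plan is to transpose the proof of the Ma\~n\'e--Sad--Sullivan theorem (Theorem~\ref{thm:mss}) almost verbatim: the limit set $L_\la$ of $\rho_\la(G)$ --- the closure of the set of fixed points of loxodromic elements, which under (R3) is an infinite perfect subset of $\pu$ --- plays the role of the Julia set, loxodromic fixed points replace repelling periodic points, and the theory of holomorphic motions supplies the main engine. Since the four assertions concern only the restriction of $\rho$ to $\om$, one first passes to the universal cover and assumes $\om$ simply connected, even a ball in $\cc^{\dim\La}$. The most economical scheme is to prove that \emph{each} of (i), (ii), (iv) implies (iii) and, conversely, that (iii) implies \emph{each} of (i), (ii), (iv); the only substantial implication is $\mathrm{(iii)}\Rightarrow\mathrm{(iv)}$.

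Here are the soft implications. For $\mathrm{(iv)}\Rightarrow\mathrm{(iii)}$, a topological conjugacy is a homeomorphism of $\pu$ carrying $\rho_{\la_0}(g)$ to $\rho_{\la_1}(g)$, hence preserving the number of fixed points together with their local dynamics (attracting/repelling versus rotation), i.e. the loxodromic/parabolic/elliptic type. For $\mathrm{(iv)}\Rightarrow\mathrm{(ii)}$, the same conjugacy induces an isomorphism $\rho_{\la_0}(G)\simeq\rho_{\la_1}(G)$ intertwining the markings, so $\ker\rho_{\la_0}=\ker\rho_{\la_1}$; together with (R2) this gives that every $\rho_\la$, $\la\in\om$, is faithful. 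For $\mathrm{(ii)}\Rightarrow\mathrm{(iii)}$ and $\mathrm{(i)}\Rightarrow\mathrm{(iii)}$ one uses that $\la\mapsto\tr^2\rho_\la(g)$ is holomorphic, hence constant or open: a change of type of $g$ in $\om$ would make it non-constant, and by openness it would then take either a value $2+2\cos(2\pi p/q)$, producing a non-trivial torsion element $\rho_\la(g)^q$ --- impossible if $\rho$ is faithful and $g$ has infinite order, and impossible in a discrete (finitely generated, hence boundedly-torsioned) group, while if $g$ has finite order then $\tr^2\rho_\la(g)$ takes finitely many values and is constant --- or, in the would-be loxodromic/parabolic case, a value in $[0,4)$ arbitrarily close to the parabolic locus, reducing to the previous case. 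Finally $\mathrm{(iii)}\Rightarrow\mathrm{(i)}$: if $\rho_\la(G)$ were non-discrete there would be distinct $g_n$ with $\rho_\la(g_n)\to\mathrm{id}$; pairing these with a fixed loxodromic $h$ of independent axis (which exists by non-elementariness) and using J\o rgensen's inequality $\lvert\tr^2 A-4\rvert+\lvert\tr[A,B]-2\rvert\ge 1$ for non-elementary $\langle A,B\rangle$ gives a contradiction, after the standard reduction to that case.

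The heart of the matter is $\mathrm{(iii)}\Rightarrow\mathrm{(iv)}$. Fix a base point $\la_\star\in\om$. For $g$ loxodromic at $\la_\star$, assertion (iii) keeps it loxodromic throughout $\om$, so its two distinct fixed points $\alpha_g(\la),\beta_g(\la)$ vary holomorphically; the assignment $\alpha_g(\la_\star)\mapsto\alpha_g(\la)$ (and likewise for $\beta_g$), over all loxodromic $g$, is a candidate holomorphic motion of the set $E_\star$ of these fixed points, which is dense in $L_{\la_\star}$. The crucial step is to show this assignment is \emph{well defined and injective}: fixed points coinciding at $\la_\star$ must coincide over all of $\om$, and fixed points distinct at $\la_\star$ must never collide. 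A collision at some $\la_1$ of fixed points of $\rho_\la(g)$ and $\rho_\la(h)$ forces $\langle\rho_{\la_1}(g),\rho_{\la_1}(h)\rangle$ to fix a point of $\pu$, hence to be elementary although it is non-elementary at $\la_\star$; running this with the translates $g^n h g^{-n}$ in place of $h$ and feeding the resulting loxodromic elements back into (iii) produces the contradiction --- this is precisely the Kleinian counterpart of the fact, in \cite{mss}, that repelling cycles move holomorphically without collisions. Granting it, $\phi_\la\colon E_\star\to\pu$ is a genuine holomorphic motion satisfying $\phi_\la\circ\rho_{\la_\star}(g)=\rho_\la(g)\circ\phi_\la$; the Ma\~n\'e--Sad--Sullivan $\la$-lemma extends it to a $\rho$-equivariant holomorphic motion of $\overline{E_\star}=L_{\la_\star}$ with quasiconformal fibre maps, and S\l odkowski's extension theorem --- in its group-equivariant form, or by applying the canonical (hence equivariant) Douady--Earle barycentric extension on $\pu\setminus L_{\la_\star}$ --- upgrades this to a $\rho$-equivariant holomorphic motion of all of $\pu$. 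Each $\phi_\la$ is then a quasiconformal homeomorphism conjugating $\rho_{\la_\star}$ to $\rho_\la$, and composing $\phi_{\la_1}\circ\phi_{\la_0}^{-1}$ yields (iv).

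I expect the main obstacle to be the ``well defined and injective'' step for the motion of the limit set: this is where non-elementariness is genuinely used, through the collision/ping-pong mechanism above, and where one must be most careful. A secondary point is to have the equivariant form of S\l odkowski's theorem available over a base $\om$ that is only simply connected and possibly of dimension $>1$; this is arranged by restricting to one-dimensional slices through $\la_\star$ and appealing to the naturality of the Douady--Earle construction on the complement of the limit set.
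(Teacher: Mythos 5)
The paper does not prove Theorem \ref{thm:sullivan}: it imports it from Sullivan and Bers, so there is no internal proof to compare with. Measured against the classical sources, your overall architecture and the hard implication $\mathrm{(iii)}\Rightarrow\mathrm{(iv)}$ are along the right lines: the holomorphic dependence of loxodromic fixed points, the no-collision lemma via the auxiliary elements $g^{-n}hg^{n}$ and $h^{-1}g^{-n}hg^{n}$ (whose type under (iii) detects whether the fixed-point sets of $g$ and $h$ meet), the $\la$-lemma on the closure of the loxodromic fixed points, and an equivariant extension to $\pu$ after slicing $\om$ by one-dimensional disks.

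The genuine gap is $\mathrm{(iii)}\Rightarrow\mathrm{(i)}$. You quote ``J\o rgensen's inequality $\abs{\tr^2 A-4}+\abs{\tr[A,B]-2}\geq 1$ for non-elementary $\langle A,B\rangle$'', but the inequality requires $\langle A,B\rangle$ to be \emph{discrete} as well as non-elementary; it is a necessary condition for discreteness and fails for non-discrete groups (take $A$ close to the identity). Applying it at the very parameter where you assume $\rho_\la(G)$ is non-discrete is circular. Nor can you transport the problem to a discrete parameter: assumption (R2) only provides a \emph{faithful} representation somewhere in $\La$, not a discrete one, so no parameter of $\om$ is known a priori to be discrete. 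The implication ``(ii) or (iii) or (iv) on an open set implies discreteness'' is in fact the deepest part of the theorem --- it is precisely the statement that the interior of the set of faithful representations coincides with $\Int(\mathrm{DF})$ --- and it needs a genuine separate argument exploiting the equivariant holomorphic motion already constructed (or the trace functions of words $w_n$ with $\rho_{\la_1}(w_n)\cv\mathrm{id}$), not a one-line appeal to J\o rgensen. J\o rgensen's inequality is correctly used in the opposite direction, e.g.\ for $\mathrm{(i)}\Rightarrow\mathrm{(ii)}$, $\mathrm{(i)}\Rightarrow\mathrm{(iii)}$, and the closedness of $\mathrm{DF}$. A secondary, fixable point: $\mathrm{(iv)}\Rightarrow\mathrm{(ii)}$ requires a faithful parameter \emph{inside} $\om$, which (R2) yields only via the Baire-category observation that the non-faithful representations form a countable union of proper analytic subsets of $\La$.
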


If one of these conditions is satisfied, we say that the family is \textit{stable} in $\om$. We define $\stab$ to be the maximal such open set, and $\bif$ to be  its complement, so that $\La = \stab\cup\bif$. 

  Theorem  \ref{thm:sullivan} shows that $\stab = \Int(\mathrm{DF})$ is the interior of the set of discrete and faithful representations. 
One main difference with rational dynamics is that, as a consequence of the celebrated
 J\o rgensen-Kazhdan-Margulis-Zassenhauss 
theorem, $\mathrm{DF}$ is a closed subset of parameters space. This is also referred to as Chuckrow's theorem, see \cite[p. 170]{kapovich}. This implies that, whenever non-empty, $\bif$ has non-empty interior, which is in contrast with Theorem \ref{thm:dense}. 

The following corollary is immediate:

\begin{cor}\label{cor:accidental}
 For every $t\in [0,4]$, the set of such parameters $\lambda_0$  at which there exists  $g\in G$ such that   
$\tr^2{\rho_{\lambda_0}(g)}=t$ and  $\lambda \mapsto \tr^2{\rho_{\lambda}(g)}$ is not locally constant, 
is dense in $\bif$.
\end{cor}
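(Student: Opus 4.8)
The plan is to establish the statement locally. Fix $t\in[0,4]$, a parameter $\lambda_*\in\bif$ and a connected neighbourhood $U$ of $\lambda_*$; it will be enough to produce some $g\in G$ and some $\lambda_0\in U$ with $\tr^2\rho_{\lambda_0}(g)=t$ and with $\varphi_g:=\tr^2\rho_\cdot(g)$ non-constant on $U$, for then by the identity principle $\varphi_g$ is not locally constant at $\lambda_0$, and letting $U$ shrink down to $\lambda_*$ gives density in $\bif$. The first thing I would do is extract a ``moving'' element. Since $\stab$ is the maximal open set on which the family is stable and $\lambda_*\notin\stab$, the connected set $U$ is not stable, so condition~iii.\ of Theorem~\ref{thm:sullivan} fails on $U$: there is $g_0\in G$ whose type is non-constant on $U$, and hence $\varphi_0:=\tr^2\rho_\cdot(g_0)$ is a non-constant holomorphic function on $U$. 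By the open mapping theorem $\varphi_0(U)$ is open in $\cc$; and because $\rho_\lambda(g_0)$ fails to be loxodromic for some $\lambda\in U$, $\varphi_0(U)$ meets $[0,4]$, hence, being open, contains a genuine interval $J\subset(0,4)$.

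For $t\in[0,4)$ I would then use a power trick. The parameters with $\varphi_0(\lambda)\in J$ are precisely those where $\rho_\lambda(g_0)$ is elliptic, and their rotation angles fill an open interval $K\subset(0,\pi)$. Let $\theta_t\in(0,\pi]$ be the angle with $2\cos\theta_t=t-2$. For $k$ large the points of the form $(\theta_t+2\pi j)/k$ lying in $(0,\pi)$ form a $\tfrac{2\pi}{k}$-net of $(0,\pi)$, so one of them, $\alpha^*=(\theta_t+2\pi j)/k$, lies in $K$; pick $\lambda_0\in U$ with $\varphi_0(\lambda_0)=2+2\cos\alpha^*\in J$, so that $\rho_{\lambda_0}(g_0)$ is elliptic of angle $\pm\alpha^*$. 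Then $\rho_{\lambda_0}(g_0^{\,k})$ is elliptic of angle $\pm k\alpha^*\equiv\pm\theta_t\pmod{2\pi}$, whence $\tr^2\rho_{\lambda_0}(g_0^{\,k})=2+2\cos\theta_t=t$; and since $\mu^k+\mu^{-k}$ is a non-constant polynomial in $\mu+\mu^{-1}$ (Chebyshev), the function $\varphi_{g_0^k}$ is a non-constant polynomial in $\varphi_0$ and hence non-constant on $U$. So $g=g_0^{\,k}$ does the job.

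The value $t=4$ is the case I expect to be the real obstacle, because $\tr^2\rho_{\lambda_0}(g)=4$ with $\rho_{\lambda_0}(g)\ne\mathrm{id}$ means $g$ is accidentally parabolic, and a power of an elliptic element is never parabolic, so the argument above breaks down. Here I would first note a reduction: since $\mathrm{DF}$ is closed and $\stab=\Int(\mathrm{DF})$, the open set $\La\setminus\mathrm{DF}$ is dense in $\bif$, and inside it the non-discrete parameters are dense (the non-faithful ones form a countable union of proper subvarieties by (R2)); so it suffices to work near a parameter $\lambda'$ where $\rho_{\lambda'}$ is non-discrete (near a non-faithful $\lambda'$ one instead takes $g\ne e$ with $\rho_{\lambda'}(g)=\mathrm{id}$, so that $\varphi_g(\lambda')=4$, which is typically non-constant by (R2)). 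At such a $\lambda'$ the closure $\overline{\rho_{\lambda'}(G)}$ is a positive-dimensional Lie subgroup of $\PSL$ which, since $\rho_{\lambda'}$ is non-elementary, must be a conjugate of $\mathrm{PSL}(2,\re)$ or the whole of $\PSL$, and in either case contains parabolic elements; hence there are $g_i\in G$ with $\rho_{\lambda'}(g_i)$ converging to a parabolic, i.e.\ with $\varphi_{g_i}(\lambda')\to 4$. The delicate point---the one I would spend most of the effort on---is to promote one of these to an \emph{exact} accidental parabolic at a nearby parameter, i.e.\ to solve $\varphi_{g_i}(\lambda_0)=4$ with $\lambda_0$ close to $\lambda'$; I would try to do this by a Rouch\'e/normal-families argument keeping control of the derivatives $d\varphi_{g_i}$ along the long words $g_i$, this being in essence the classical density of cusps in Kleinian deformation spaces. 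Granting this, assembling the three cases and shrinking $U$ yields the asserted density in $\bif$.
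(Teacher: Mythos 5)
Your argument for $t\in[0,4)$ is correct, and it is exactly the argument the paper has in mind when it calls the corollary ``immediate'' after Theorem \ref{thm:sullivan} (no proof is written out there): localize at $\la_*\in\bif$, extract from the failure of condition \emph{iii.} an element $g_0$ with $\varphi_{g_0}=\tr^2\rho_\cdot(g_0)$ non-constant, use the open mapping theorem to produce an interval of elliptic rotation angles, and hit the prescribed trace with a power $g_0^k$ via the Chebyshev relation $\tr(A^k)=P_k(\tr A)$. One small point to patch in the first step: ``type non-constant $\Rightarrow$ $\varphi_{g_0}$ non-constant'' has a loophole, because the parabolic elements and the identity share the value $\tr^2=4$; you should either rule out the degenerate case where the only type changes on $U$ are between parabolic and identity along a function $\varphi_g\equiv 4$, or argue around it (this is the only collision among the $\tr^2$-value sets of the four types, so in every other case the implication is automatic).

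The genuine problem is your treatment of $t=4$, where you explicitly write ``Granting this\dots'' for the key step (upgrading almost-parabolics to exact accidental parabolics by a Rouch\'e argument). As written this is a gap --- that step is essentially the density-of-cusps theorem of McMullen, which the paper cites as a deep result and certainly does not expect here. But the gap is an artifact of a misreading: the corollary asks only that $\tr^2\rho_{\la_0}(g)=4$ with $\la\mapsto\tr^2\rho_\la(g)$ not locally constant; it does \emph{not} ask that $\rho_{\la_0}(g)$ be parabolic. Your own power trick already delivers this: taking $\theta_t=0$, choose $k$ large and $j$ so that $\alpha^*=2\pi j/k$ lies in $K$; then $\rho_{\la_0}(g_0^{\,k})=\mathrm{id}$, hence $\tr^2\rho_{\la_0}(g_0^{\,k})=4$, while $\varphi_{g_0^{k}}=Q_k(\varphi_{g_0})$ is still non-constant. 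This is precisely the ``accidental new relations'' alternative that the paper discusses in the paragraph immediately following the corollary (``one either gets accidental new relations or new parabolic elements''). So the entire second half of your proposal --- the reduction to non-discrete parameters, the classification of the closure $\overline{\rho_{\la'}(G)}$, and the unproved Rouch\'e step --- should simply be deleted, and the uniform power argument covers all $t\in[0,4]$.
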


Again, a basic motivation for the introduction of bifurcation currents is the study of the asymptotic distribution of such parameters. 
The most emblematic value of $t$ is $t=4$. In this case one   either gets ``accidental'' new relations or new parabolic elements. Notice that when $t = 4\cos^2(\theta)$ with $\theta\in \pi\mathbb{Q}$ (e.g. $t=0$), then if $\tr^2{\rho_{\lambda_0}(g)}=t$, $g$ is of finite order, so these parameters also correspond to accidental new relations in $\rho_\la(G)$. 

A   famous result in this area of research is a theorem by McMullen \cite{mcm cusps} which asserts that accidental parabolics are dense {\em in the boundary} of certain components of stability. One might also wonder what happens of Corollary \ref{cor:accidental} when additional assumptions are imposed on $g$. Here is a question (certainly well-known to the experts) which was communicated to us by McMullen: if $G = \pi_1(S, \ast)$ is the fundamental group of a surface of finite type, does Corollary \ref{cor:accidental} remain true when restricting to the elements $g\in G$ corresponding to simple closed curves on $S$?

\medskip

Another important feature  of the space  of all  representations of $G$ into $\PSL$ (resp. modulo conjugacy) is that it admits a natural action of the automorphism group of $\mathrm{Aut}(G)$ (resp. the outer automorphism group $\mathrm{Out}(G)$). 
Despite recent advances, the dynamics of this action is 
not well understood (see the expository papers of Goldman \cite{goldman} and 
 Lubotzky \cite{lubotzky} for an account on this topic).  There is a promising interplay between these issues and  holomorphic dynamics, which was recently illustrated by the work of  Cantat \cite{cantat bers}.

\subsection{Products of random matrices}
To define a bifurcation current we use a  notion of   Lyapunov exponent of a representation, arising  from  a random walk on $G$. The properties of this Lyapunov exponent   will be studied using the theory of random walks on groups and
random products of matrices (good references on these topics are \cite{bougerol-lacroix, furman}). 

\medskip

Let us  fix a probability measure $\mu$ on $G$, satisfying the following assumptions:
 \begin{enumerate}
\item[(A1)] $\supp(\mu)$ generates $G$ as a semi-group;
\item[(A2)] there exists  $s>0$ such that  $\int_G \exp(s\;  \length (g)) d\mu(g) <\infty$.
\end{enumerate}
The length in (A2) is relative to the choice of any finite system of generators of $G$; of course the validity of (A2) does not depend on this choice. A typical case where these assumptions are satisfied is that of a finitely supported measure on a symmetric set of generators (like in the case of the simple random walk on the associated Cayley graph).  

Loosely speaking, the choice of  such a measure a measure on $G$ is somehow similar to the choice of a time parameterization for a flow, or more generally of a Riemannian metric along the leaves of a foliation. 

\medskip

We denote by  $\mu^n$ the $n^{\rm th}$ convolution power of $\mu$, that is the image of  $\mu^
{\otimes n}$ under $(g_1, \ldots ,g_n)\mapsto g_1\cdots g_n$.  This is the law of the  $n^{\rm th}$  step of the (left- or right-) random walk on $G$ with transition probabilites  given by  $\mu$.

If $\mathbf{g} = (g_n)_{n\geq 1}\in G^\nn$,  we let  $l_n(\mathbf{g}) = g_n\cdots g_1$ be 
the product on the left of the  $g_k$ (resp. $r_n(\mathbf{g}) = g_1\cdots g_n$ the product on the right). We denote by  $\mu^\nn$ the product measure on  $G^\nn$  so that  $\mu^n = (l_n)_*\mu^\nn = (r_n)_*
\mu^\nn$. 

\medskip

The {\em Lyapunov exponent} of a representation $\rho
: G \rightarrow \PSL$ is defined by the formula
\begin{equation}\label{def:lyapunov exponent} \chi (\rho ) : = \lim _{n\rightarrow \infty} \frac{1}{n}\int _G 
\log \norm{\rho(g)} d\mu^{n}(g) = \lim _{n\rightarrow \infty} \frac{1}{n}\int \log \norm{\rho(g_1\cdots 
g_n)} d\mu(g_1)\cdots d\mu(g_n), \end{equation}
in which the limit exists by sub-addivity. It immediately follows from Kingman's sub-additive ergodic theorem that 
\begin{equation} \label{eq: lyapunov a.e.}
\text{for }\mu^\nn \text{-a.e. }\mathbf{g}, \  \lim _{n\rightarrow \infty} \frac{1}{n} \log \norm
{\rho (l_n({\bf g}))} =\chi (\rho)  \end{equation}
(this was before Kingman a theorem due to Furstenberg and Kesten \cite{furstenberg kesten}).

The following fundamental theorem is due to Furstenberg \cite{furstenberg}:

\begin{thm}[Furstenberg]\label{thm:furstenberg}
Let $G$ be a finitely generated group and $\mu$ a probability measure on $G$   satisfying  (A1-2). Let 
$\rho:G\cv\PSL$ be a non-elementary representation.
 Then the Lyapunov exponent  $\chi(\rho)$  is positive and depends continuously on $\rho$.
\end{thm}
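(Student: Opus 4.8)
The plan is to treat positivity and continuity as two separate issues, with positivity being by far the harder one. For positivity, the standard approach is to argue by contradiction: suppose $\chi(\rho)=0$. One first recalls Furstenberg's theorem on the existence of a stationary measure, namely that there exists a probability measure $\nu$ on $\pu$ satisfying $\mu * \nu = \nu$, i.e. $\int_G \gamma_* \nu \, d\mu(\gamma) = \nu$, where here $\gamma$ acts via $\rho(\gamma)$. Such a $\nu$ exists by compactness of $\pu$ and a standard Krylov--Bogolyubov/fixed-point argument applied to the convolution operator. The key dichotomy is then: either $\chi(\rho)>0$, or $\nu$ is supported on an orbit of at most two points, or $\rho(G)$ is (up to conjugacy) contained in a compact subgroup (i.e. conjugate into $\mathrm{PSU}(2)$). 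The first two alternatives of this trichotomy are exactly excluded by the hypothesis that $\rho$ is non-elementary: a finite orbit on $\pu$ or fixing a point in $\mathbb{H}^3$ is precisely the definition of elementary recalled before Theorem \ref{thm:sullivan}. So the heart of the matter is proving this trichotomy.

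To prove the trichotomy, I would follow Furstenberg's original argument (or the cleaner contraction-based proof due to Furstenberg--Kifer / Guivarc'h): assuming $\chi(\rho)=0$, use the martingale convergence theorem to show that for $\mu^\nn$-a.e. sequence $\mathbf{g}$, the measures $\rho(l_n(\mathbf{g}))_* \nu$ converge weakly to some limit measure $\nu_{\mathbf{g}}$, and that $\nu = \int \nu_{\mathbf{g}} \, d\mu^\nn(\mathbf{g})$. If $\chi(\rho)>0$ failed, a Cartan (KAK) decomposition argument shows these limits $\nu_{\mathbf{g}}$ cannot all be Dirac masses unless the norms $\norm{\rho(l_n(\mathbf{g}))}$ fail to grow; tracking when the singular values stay comparable forces $\rho(G)$ to preserve additional structure. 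Quantitatively: write $\rho(l_n(\mathbf g)) = k_n a_n k_n'$ with $a_n = \mathrm{diag}(e^{t_n}, e^{-t_n})$, $t_n \geq 0$; the derivative estimate $\norm{d\rho(l_n(\mathbf g))}$ at generic points is of order $e^{-2t_n}$, so $\chi(\rho) = \lim \frac1n t_n$, and $\chi(\rho)=0$ means $t_n = o(n)$. One then shows $\nu_{\mathbf g} = \delta_{z(\mathbf g)}$ unless $t_n$ stays bounded along a subsequence, and a bounded-$t_n$ scenario, combined with the stationarity equation, forces the group to be elementary. This contradiction with non-elementarity gives $\chi(\rho)>0$.

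For continuity, the argument is softer. Upper semicontinuity of $\chi$ in $\rho$ is immediate from the subadditive definition \eqref{def:lyapunov exponent}, since $\chi(\rho) = \inf_n \frac1n \int_G \log\norm{\rho(g)} \, d\mu^n(g)$ and each integrand depends continuously on $\rho$ (using assumption (A2) to control the tail of $\mu^n$ and ensure uniform integrability of $g \mapsto \log\norm{\rho(g)}$ under the exponential-moment bound, so the integrals are finite and vary continuously). The reverse inequality, lower semicontinuity, is the delicate half and is where one genuinely uses non-elementarity again: one invokes Furstenberg's formula $\chi(\rho) = \int_G \int_{\pu} \log \frac{\norm{\rho(g) v}}{\norm{v}} \, d\nu_\rho([v]) \, d\mu(g)$ for a $\rho$-stationary measure $\nu_\rho$, together with a continuity/compactness property of the assignment $\rho \mapsto \nu_\rho$ (uniqueness of the stationary measure for non-elementary $\rho$ makes $\rho\mapsto\nu_\rho$ continuous on the non-elementary locus). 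Combining the Furstenberg formula with this continuity of stationary measures yields joint continuity of $\chi$. The main obstacle throughout is the positivity proof — specifically, the careful use of martingale convergence together with the $KAK$-decomposition to extract the contraction behavior and derive the elementary-or-positive trichotomy; the continuity statements, while requiring care with the exponential moment hypothesis (A2) and uniqueness of stationary measures, are comparatively routine.
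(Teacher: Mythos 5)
The paper does not prove this statement: Theorem \ref{thm:furstenberg} is quoted as a classical result of Furstenberg \cite{furstenberg} (see also \cite{bougerol-lacroix}), so the only meaningful comparison is with the standard proof in the literature. Your overall architecture --- existence of a stationary measure, martingale convergence of its pushforwards, $KAK$ decomposition for positivity, upper semicontinuity from subadditivity and lower semicontinuity from the Furstenberg formula combined with uniqueness (hence continuity) of the stationary measure on the non-elementary locus --- is indeed that standard proof, and the continuity half of your sketch is essentially complete once one notes that (A2) supplies the domination $\log\norm{\rho(g)}\leq C(\rho)\length(g)$, locally uniformly in $\rho$, needed to pass to the limit under $\int\cdot\,d\mu$.

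There is, however, a genuine gap at the heart of the positivity argument. Writing $\rho(g_1\cdots g_n)=k_na_nk_n'$ with $a_n=\mathrm{diag}(e^{t_n},e^{-t_n})$, the assumption $\chi(\rho)=0$ only gives $t_n=o(n)$, not boundedness of $t_n$ along a subsequence. Your dichotomy (``$\nu_{\mathbf g}$ is a Dirac mass unless $t_n$ stays bounded'') therefore does not close the argument: in the regime where $t_n\to\infty$ subexponentially, the limit measures $\nu_{\mathbf g}$ are Dirac masses --- exactly as they are when $\chi(\rho)>0$ --- and no contradiction with non-elementarity is produced. The missing ingredient is the lemma that if the two exponents coincide (for $\SL$ this is exactly $\chi=0$, since they sum to zero), then every $\mu$-stationary measure on $\pu$ is invariant under $\mu$-almost every $g$, hence under the closed group generated by $\supp(\mu)$; one then concludes with Furstenberg's lemma that the stabilizer in $\PSL$ of a probability measure on $\pu$ is either relatively compact or preserves a set of at most two points, both of which are excluded by non-elementarity. (A smaller point in the same step: in the paper's notation the measure-valued martingale is $(r_n(\mathbf g))_*\nu=\rho(g_1\cdots g_n)_*\nu$, not $\rho(l_n(\mathbf g))_*\nu$; the pushforwards by the left products need not converge.)
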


The next result  we need is due to Guivarc'h \cite{guivarch}:

\begin{thm}[Guivarc'h]\label{thm:guivarch}
Let $G$ be a finitely generated group and $\mu$ a probability measure on $G$   satisfying  (A1-2). Let 
$\rho:G\cv\PSL$ be a non-elementary representation.
Then 
 for  $\mu^\nn$-a.e $\mathbf{g}$, we have that 
 \begin{equation}\label{eq:guivarch}
\unsur{n}\log \abs{\tr (\rho( l_n(\mathbf{g})))} = \unsur{n}\log\abs{ \tr(\rho(g_n\cdots g_1))} \underset{n
\cv\infty}\longrightarrow
\chi(\rho).
\end{equation}
\end{thm}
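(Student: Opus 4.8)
The plan is to prove $\limsup_n\tfrac1n\log\abs{\tr(\rho(l_n(\mathbf g)))}\le\chi(\rho)$ and $\liminf_n\tfrac1n\log\abs{\tr(\rho(l_n(\mathbf g)))}\ge\chi(\rho)$ for $\mu^\nn$-a.e.\ $\mathbf g$ separately; only the second is substantial. The first is immediate: any $A\in\SL$ has eigenvalues $\lambda,\lambda^{-1}$ with $\abs\lambda,\abs{\lambda}^{-1}\le\norm A$, so $\abs{\tr A}\le 2\norm A$, and \eqref{eq: lyapunov a.e.} then gives the upper bound. Note that since $\chi(\rho)>0$ by Furstenberg's Theorem~\ref{thm:furstenberg}, the asserted conclusion in particular forces $\rho(l_n(\mathbf g))$ to be loxodromic for all large $n$, almost surely.

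For the lower bound, write a Cartan decomposition $\rho(l_n(\mathbf g))=K_n\,\mathrm{diag}(e^{t_n},e^{-t_n})\,K_n'$ of (a lift of) $\rho(l_n(\mathbf g))$, with $K_n,K_n'\in\mathrm{SU}(2)$ and $t_n\ge0$, so that $\norm{\rho(l_n(\mathbf g))}=e^{t_n}$ and $t_n/n\cv\chi(\rho)$ almost surely. By cyclicity, $\tr(\rho(l_n(\mathbf g)))=e^{t_n}a_n+e^{-t_n}d_n$, where $a_n,d_n$ are the diagonal entries of the $2\times2$ unitary matrix $K_n'K_n$; since $\abs{a_n}=\abs{d_n}\le1$ one obtains
\[
\abs{\tr(\rho(l_n(\mathbf g)))}\ \ge\ (e^{t_n}-e^{-t_n})\,\abs{a_n},\qquad
\abs{a_n}=\bigl|\langle(K_n')^{-1}e_1,\,K_ne_1\rangle\bigr|,
\]
i.e.\ $\abs{a_n}$ is the cosine of the angle in $\pu$ between the most expanded \emph{input} direction $[(K_n')^{-1}e_1]$ and the most expanded \emph{output} direction $[K_ne_1]$ of $\rho(l_n(\mathbf g))$. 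As $t_n/n\cv\chi(\rho)$, the theorem reduces to showing $\tfrac1n\log\abs{a_n}\cv0$ almost surely (the bound $\le0$ being trivial). The obstruction is that these two directions are read off the \emph{same} word $l_n(\mathbf g)$ and are therefore correlated.

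To decorrelate, split $l_{2n}(\mathbf g)=B_nC_n$ with $B_n=g_{2n}\cdots g_{n+1}$, $C_n=g_n\cdots g_1$, which are independent, each distributed as $l_n$. The same computation applied to $\rho(B_n)\rho(C_n)$, keeping only the leading monomial of the trace, yields
\[
\abs{\tr(\rho(l_{2n}(\mathbf g)))}\ \ge\ \norm{\rho(B_n)}\,\norm{\rho(C_n)}\,\bigl(\abs{p_n}\abs{q_n}-3\,e^{-2\min(s_n,u_n)}\bigr),
\]
where $e^{s_n}=\norm{\rho(B_n)}$, $e^{u_n}=\norm{\rho(C_n)}$, $\abs{p_n}$ is the cosine of the angle in $\pu$ between the most expanded input direction of $\rho(B_n)$ and the most expanded output direction of $\rho(C_n)$, and $\abs{q_n}$ is the symmetric quantity. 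Now $p_n$ and $q_n$ are built from \emph{independent} families of random directions; by the regularity theory of Furstenberg--Guivarc'h--Le~Page (in which non-elementariness of $\rho$ and the exponential moment (A2) are used), the relevant singular directions of $\rho(B_n)$ and $\rho(C_n)$ are distributed, \emph{uniformly in $n$}, according to H\"older-regular measures on $\pu$: there are $C,\alpha>0$ with $\sup_{w\in\pu}\mathbb P\bigl([\text{that direction}]\in B(w,\e)\bigr)\le C\e^\alpha$ for all $\e\in(0,1)$ and all $n\ge1$. Conditioning $p_n$ on $\rho(B_n)$ gives $\mathbb P(\abs{p_n}<e^{-\e n})\le C\,e^{-\alpha\e n}$, which is summable, so Borel--Cantelli yields $\tfrac1n\log\abs{p_n}\cv0$, and likewise $\tfrac1n\log\abs{q_n}\cv0$, almost surely. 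Since $\tfrac1{2n}(s_n+u_n)\ge\tfrac1{2n}\log\norm{\rho(l_{2n}(\mathbf g))}\cv\chi(\rho)$ a.s., and a one-sided large-deviation bound for $\log\norm{\rho(l_n)}$ (valid under (A2)) forces $\min(s_n,u_n)\ge\tfrac12\chi(\rho)\,n$ eventually, so that the correction term is exponentially dominated, taking $\tfrac1{2n}\log$ of the display gives $\liminf_n\tfrac1{2n}\log\abs{\tr(\rho(l_{2n}(\mathbf g)))}\ge\chi(\rho)$. Splitting $l_m(\mathbf g)$ into two independent blocks of lengths $\lfloor m/2\rfloor$ and $\lceil m/2\rceil$ handles all $m$, completing the proof.

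The main obstacle is precisely the uniform non-concentration estimate invoked above — equivalently, the H\"older regularity of the Furstenberg stationary measure $\nu$, uniform in $n$ for the laws of the singular directions of $\rho(l_n)$. This is a genuine theorem, resting on a spectral gap for the transfer operator of the induced random walk on $\pu$ acting on H\"older functions (equivalently, on large-deviation estimates for $\log\norm{\rho(l_n)}$); strong irreducibility and proximality of $\rho(G)$, guaranteed by non-elementariness, together with the exponential moment (A2) are exactly what make these available. An alternative is to invoke Oseledets' multiplicative ergodic theorem on the two-sided extension of the shift and compare $\tfrac1n\log\abs{\lambda_1(\rho(l_n(\mathbf g)))}$ (the larger eigenvalue modulus, comparable to $\abs{\tr}$ once $\ge1$) with $\chi(\rho)$; but upgrading the resulting convergence from ``in probability'' to ``almost sure'' again requires controlling, uniformly in $n$, the position of the eigendirection of $\rho(l_n(\mathbf g))$ relative to its contracting direction, which re-introduces the same regularity input. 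Either way, the regularity of the Furstenberg measure is the crux of the argument.
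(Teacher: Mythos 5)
The paper does not prove this theorem: it is quoted verbatim from Guivarc'h's article \cite{guivarch}, so there is no in-paper argument to compare against. Judged on its own, your reconstruction is essentially the standard proof of this result (it is the line of argument in Guivarc'h's original paper and in later expositions such as Benoist--Quint or Aoun's thesis), and the computations check out: the upper bound via $\abs{\tr A}\le 2\norm{A}$ and \eqref{eq: lyapunov a.e.}; the identity $\tr(KDK')=\tr(DU)$ with $U=K'K\in\mathrm{SU}(2)$ and $\abs{U_{11}}=\abs{U_{22}}$; the two-block expansion of $\tr\bigl(\mathrm{diag}(e^{s},e^{-s})U\,\mathrm{diag}(e^{u},e^{-u})V\bigr)$, whose three non-leading monomials are each bounded by $e^{s+u}e^{-2\min(s,u)}$; and the Borel--Cantelli arguments, including the use of a one-sided large deviation bound to get $\min(s_n,u_n)\ge(\chi-\delta)n$ eventually (which is genuinely needed, since the blocks $B_n$ change with $n$ and Furstenberg--Kesten alone does not apply to them along a single sequence).

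The one caveat is that the entire weight of the proof rests on the input you defer to: the uniform-in-$n$ H\"older non-concentration of the laws of the top singular directions of $\rho(l_n)$, coming from Le Page's spectral gap under (A1--2) and non-elementariness (which does give strong irreducibility and proximality for the action on $\cc^2$). You identify this correctly as the crux and it is a citable theorem, so the reduction is legitimate; but be aware that this estimate is of comparable depth to the statement being proved, so what you have written is a correct and complete reduction to that input rather than a self-contained proof. If you wanted to make the dependence sharper, the cleanest citable form is Le Page's large deviation estimate $\sup_{\norm{v}=1}\mu^{\nn}\bigl(\tfrac1n\log(\norm{\rho(l_n)v}/\norm{\rho(l_n)})<-\e\bigr)\le Ce^{-cn}$, which yields exactly the non-concentration of $[(K')^{-1}e_1]$ against any fixed direction.
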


A trivial remark which turns out to be a   source  of technical difficulties, is that, as opposite to 
 \eqref{eq: lyapunov a.e.}, one cannot in general integrate with respect to $\mu^\nn$ in \eqref{eq:guivarch}.  The reason of course is that some words can have zero or very small trace. 
Conversely, if $h$ is a function on $\PSL$, which is bounded below and equivalent to $\log\abs{\tr(\cdot)}$ as the trace tends to infinity, then one can integrate with respect to $\mu^\nn$.

An example of such a function is given by  the spectral radius, and   under the assumptions of Theorem  \ref{thm:guivarch} we obtain that
\begin{equation}\label{eq:lambdamax}
 \chi(\rho) = \lim_{n\cv\infty}\unsur{n}\int  \log\abs{\sigma(\rho( g))} d\mu^n(g). 
\end{equation}


\subsection{The bifurcation current}
We are now ready for the introduction of the bifurcation current, following \cite{kleinbif}. Let $G$ be a finitely generated group, and $(\rho_\la)_{\la\in \La}$ be a holomorphic family of representations satisfying (R1-3). Fix a probability measure $\mu$ on $G$ satisfying (A1-2). It follows immediately from \eqref{def:lyapunov exponent} that $\chi:\la\mapsto\chi(\rho_\la)$ 
is a psh function on  $\La$. Motivated by the analogy with rational dynamics, it is natural to suggest the following definition. 

\begin{defi}\label{def:bifcur}
Let $(G,\rho,\mu)$ be as above.  The bifurcation current associated to $(G,\mu,\rho)$ is
$\tbif  = dd^c\chi$. 
\end{defi}

At this point it is still unclear whether this gives rise to a meaningful concept. This will be justified by the following results. 

First, it is easy  to see that $\supp(\tbif)$ is contained in the bifurcation locus. Indeed, if $(\rho_\la)$ is stable on $\om$, then the Möbius transformations $\rho_\la(g)$, $g\in G$, do not change type throughout $\om$. In particular for every $g\in G$, $\om\ni\la\mapsto \log\abs{\sigma(\rho_\la(g))}$ is pluriharmonic. We thus infer from \eqref{eq:lambdamax} that $\chi$ is pluriharmonic on $\om$. 

It is a remarkable fact that conversely, pluriharmonicity of $\chi$ characterizes stability:

\begin{thm}\label{thm:support}
Let $(G, \rho, \mu)$ be a holomorphic family of representations of $G$, satisfying (R1-3), endowed with a measure $\mu$ satisfying (A1-2). Then $\supp(\tbif)$ is equal to the bifurcation locus. 
\end{thm}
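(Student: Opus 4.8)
One inclusion, $\supp(\tbif)\subseteq\bif$, is the elementary observation made just above: on any stability component each $\la\mapsto\log\abs{\sigma(\rho_\la(g))}$ is pluriharmonic, whence $\chi$ is pluriharmonic there by \eqref{eq:lambdamax}. So the content is the reverse inclusion: if $\Omega$ is a connected component of $\La\setminus\supp(\tbif)$ — so that $\chi$ is pluriharmonic on $\Omega$ — then $(\rho_\la)$ is stable on $\Omega$. The plan is to verify stability by producing a holomorphic motion of the limit set of $\rho_\la(G)$ over $\Omega$, which is one of the equivalent characterizations (conditions \textit{iii.}--\textit{iv.}) in Theorem \ref{thm:sullivan}; this is the Kleinian counterpart of the Ma\~n\'e--Sad--Sullivan argument.

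The first step is to encode the limit set through random products, exactly as Lyapunov exponents of rational maps are read off periodic orbits in \eqref{eq:bdm}. Fix $\mu^\nn$-a.e.\ $\mathbf g=(g_n)\in G^\nn$ and set $w_n(\la)=\tfrac1n\log\norm{\rho_\la(l_n(\mathbf g))}$, a psh function on $\La$. Using assumption (A2) and the strong law of large numbers for $\length(g_1),\dots,\length(g_n)$ one checks that $(w_n)$ is locally uniformly bounded above; by \eqref{eq: lyapunov a.e.} (and Fubini in $(\mathbf g,\la)$) $w_n(\la)\to\chi(\rho_\la)$ for Lebesgue-a.e.\ $\la$; and $\chi$ is continuous and pluriharmonic on $\Omega$. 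The Hartogs lemma then gives $w_n\to\chi$ in $L^1_{\mathrm{loc}}(\Omega)$ and $\limsup_n\sup_K w_n\le\max_K\chi$ on compacts. By Furstenberg's theorem (Theorem \ref{thm:furstenberg}) $\chi>0$ on $\Omega$, and by Guivarc'h's theorem (Theorem \ref{thm:guivarch}) together with \eqref{eq:lambdamax} one has $\tfrac1n\log\abs{\tr\rho_\la(l_n(\mathbf g))}\to\chi(\rho_\la)$ for a.e.\ $\la$; combining these, for $n\gg1$ and a.e.\ $\la\in\Omega$ the element $\rho_\la(l_n(\mathbf g))$ is loxodromic, with expanding eigenvalue $k_n(\la)$ of modulus $\exp\bigl(n\chi(\rho_\la)+o(n)\bigr)$. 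Since $\tfrac1n\log\abs{k_n}$ is pluriharmonic on the loxodromic locus and is squeezed between $\tfrac1n\log\abs{\tr\rho_\la(l_n(\mathbf g))}-\tfrac{\log2}{n}$ and $w_n(\la)+O(1/n)$ — both of which converge to the pluriharmonic function $\chi$ — one wants to conclude, via a normal-families/Hartogs argument and the maximum principle, that $\rho_\la(l_n(\mathbf g))$ remains loxodromic throughout $\Omega$ for $n$ large, so that its attracting fixed point $a_n(\la)$ depends holomorphically on $\la\in\Omega$.

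One then lets $\mathbf g$ range over a countable family of sequences whose products $l_n(\mathbf g)$ have attracting fixed points dense in the limit set of $\rho_{\la_0}(G)$, and checks that the holomorphic graphs $\{(\la,a_n(\la))\}$ so obtained are pairwise disjoint over $\Omega$ — again by a maximum-principle argument, since a collision would force $\chi$ to fail pluriharmonicity or produce an accidental relation contradicting condition \textit{iii.} of Theorem \ref{thm:sullivan}. This yields a holomorphic motion of a dense subset of the limit set of $\rho_{\la_0}(G)$, which by the $\lambda$-lemma extends to a holomorphic motion of the whole limit set, conjugating the actions of $\rho_\la(G)$, $\la\in\Omega$. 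By Theorem \ref{thm:sullivan} the family is then stable on $\Omega$, so $\Omega\cap\bif=\emptyset$ and the proof is complete.

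The main obstacle is the passage from almost-everywhere to everywhere. Guivarc'h's theorem, unlike the ergodic-theorem statement \eqref{eq: lyapunov a.e.} for $\norm{\cdot}$, gives convergence of $\tfrac1n\log\abs{\tr\rho_\la(l_n(\mathbf g))}$ only for Lebesgue-a.e.\ $\la$, because one cannot integrate $\log\abs{\tr(\cdot)}$ against $\mu^\nn$ (words of tiny trace). Consequently, the loxodromic type of the random products $\rho_\la(l_n(\mathbf g))$ — and the non-collision of their fixed points — at \emph{every} parameter of $\Omega$, which is what a genuine holomorphic motion requires, has to be forced out of the pluriharmonicity of $\chi$ on all of $\Omega$ via Hartogs-type semicontinuity combined with a maximum principle on the real-analytic ``accidental'' loci $\set{\la:\tr^2\rho_\la(g)\in[0,4]}$. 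This global input is the exact counterpart of the compactness hypothesis (H) appearing in the equidistribution results of Section \ref{sec:bifcur}, and it is the technical heart of the argument.
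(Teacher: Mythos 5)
You have the right overall shape (reduce to the converse inclusion, build a holomorphic motion, invoke Theorem \ref{thm:sullivan}), and you correctly identify the central obstacle — the passage from almost every $\la$ to every $\la$. But the route you propose does not get around that obstacle, and the two steps you leave as ``one wants to conclude'' are precisely where it fails. First, loxodromy of $\rho_\la(l_n(\mathbf g))$ throughout $\Omega$: Guivarc'h's theorem only gives $\frac1n\log\abs{\tr\rho_\la(l_n(\mathbf g))}\to\chi(\la)$ for a.e.\ $\la$, and this function is merely psh, not pluriharmonic, so there is no squeeze or maximum principle that upgrades the a.e.\ statement to a pointwise one on $\Omega$; the locus where a given word fails to be loxodromic is a real-analytic set that could a priori be large, and your ``expanding eigenvalue'' $k_n(\la)$ is simply not globally defined there. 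Second, disjointness of the graphs of the attracting fixed points: a collision of fixed points of two distinct group elements at an isolated parameter does not contradict pluriharmonicity of $\chi$, and invoking condition \emph{iii.}\ of Theorem \ref{thm:sullivan} at this stage is circular, since stability on $\Omega$ is what you are trying to prove. (This is exactly where the Kleinian setting departs from the Ma\~n\'e--Sad--Sullivan argument for repelling periodic points.)

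The paper's proof avoids both problems by different means. Instead of fixed points of loxodromic words it tracks the orbit of a base point: it forms the averaged currents $\widehat T_n = \frac1n\int[\widehat g(\La\times\set{z_0})]\,d\mu^n(g)$, shows $\widehat T_n\to\pi_1^*\tbif$, and — using Le Page's exponential convergence of the transition operator to get $\chi_n-\chi=O(1/n)$ — deduces that over $\om\Subset\La\setminus\supp(\tbif)$ the average area of the graphs is bounded. Bishop compactness (as in Lemma \ref{lem:dim1}) together with Furstenberg's martingale convergence then produces genuine holomorphic limit graphs over all of $\om$ (minus finitely many bubbles), parameterized by the Poisson boundary; this is what replaces your everywhere-loxodromy step. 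Disjointness of the graphs is then obtained not by a maximum principle but by Kaimanovich's double ergodicity theorem applied to the intersection-count function on $P(G,\mu)\times P(G,\mu)$, and the residual exceptional finite set is killed using the closedness of the set of discrete faithful representations. These three inputs — the $O(1/n)$ mass estimate via Le Page, the Bishop/martingale construction of limit graphs, and double ergodicity — are the technical heart of the proof and are absent from your proposal.
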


Here is a sketch of the proof, which consists in several steps, and involves already encountered arguments. It is no loss of generality to assume that $\dim(\La) =1$. We need to show that if $\om$ is an open set disjoint from $\supp(\tbif)$, then $(\rho_\la)$ is stable in $\om$. 
 The main idea is to look for a geometric interpretation of $\tbif$, in the spirit of what we did in \S\ref{subs:marked}. 
 
 For this we need a substitute for the equilibrium  measure of a rational map: this will be the unique stationary measure under the action of 
$\rho_\la(G)$ on $\pu$. Let us be more specific. For every representation, 
$(G,\mu)$ acts by convolution on the set of probability measures on $\pu$ by the assignment
$$\nu\longmapsto \int \rho(g)_* \nu \; d\mu(g).$$
   Any fixed point of this action is called a {\em stationary measure}. The following theorem is intimately related to Theorem \ref{thm:furstenberg}. It is in a sense the analogue of the Brolin-Lyubich theorem in this context.
   
 \begin{thm}
 Let $G$ be a finitely generated group and $\mu$ a probability measure on $G$   satisfying  (A1-2). Let 
$\rho:G\cv\PSL$ be a non-elementary representation. Then   
     there exists a unique stationary probability measure $\nu$ on $\pu$. Furthermore, for any $z_0\in \pu$, the sequence  $\int  ({\rho_{\la_0}(g)})_*\delta_{z_0} d\mu^n(g)$ converges to $\nu$. 
\end{thm}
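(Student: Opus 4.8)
The plan is to run Furstenberg's classical argument. Fix $\la_0$ and abbreviate $\rho=\rho_{\la_0}$, which is non-elementary by (R3). \emph{Existence} is soft: the set $\mathcal P$ of probability measures on $\pu$ is weak-$*$ compact and the convolution map $\nu\mapsto\int\rho(g)_*\nu\,d\mu(g)$ is continuous and affine on it, so a weak-$*$ limit point of the Ces\`aro averages $\unsur N\sum_{n=0}^{N-1}\int\rho(g)_*\delta_{z_0}\,d\mu^n(g)$ is stationary (Krylov--Bogolyubov; one could also invoke Markov--Kakutani directly). The first genuine step is that every stationary $\nu$ is \emph{non-atomic}: if $m:=\sup_x\nu(\set x)>0$ then $A:=\set{x:\nu(\set x)=m}$ is finite and non-empty, and expanding $\nu(\set x)=\int\nu(\set{\rho(g)\inv x})\,d\mu(g)$ shows that for $x\in A$ one has $\rho(g)\inv x\in A$ for $\mu$-a.e.\ $g$; hence $\rho(g)A=A$ for all $g\in\supp(\mu)$, so the finite set $A$ is $\rho(G)$-invariant since $\supp(\mu)$ generates $G$, contradicting (R3). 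Thus $m=0$.

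For \emph{uniqueness} I would exploit the measure-valued martingale. Work on $(G^\nn,\mu^\nn)$ with $\mathbf g=(g_n)$, and let $\nu$ be stationary. Since $\int\rho(g)_*\nu\,d\mu(g)=\nu$, the process $\nu_n:=\rho(g_1\cdots g_n)_*\nu$ is a bounded martingale for the filtration generated by $g_1,\dots,g_n$; as $\mathcal P$ is compact metrizable it converges weak-$*$ for a.e.\ $\mathbf g$ to some $\nu_\infty(\mathbf g)$ with $E[\nu_\infty]=\nu$. The heart of the matter is to prove that $\nu_\infty$ is almost surely a single Dirac mass and that its location does not depend on $\nu$. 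Lifting $\rho(g_1\cdots g_n)$ to $A^{(n)}=A_1\cdots A_n\in\SL$, write the singular value decomposition $A^{(n)}=K_nD_nL_n$ with $K_n,L_n$ unitary and $D_n=\mathrm{diag}(\sigma_1,\sigma_2)$, so $\sigma_1\sigma_2=\abs{\det A^{(n)}}=1$. By Theorem \ref{thm:furstenberg} the Lyapunov exponent $\chi(\rho)$ is positive, hence $\unsur n\log\sigma_1(A^{(n)})=\unsur n\log\norm{\rho(g_1\cdots g_n)}\to\chi(\rho)>0$ for a.e.\ $\mathbf g$ (Furstenberg--Kesten; cf.\ \eqref{eq: lyapunov a.e.}), so $\sigma_2/\sigma_1=\sigma_1^{-2}\to0$ exponentially fast. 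Therefore $D_n$ crushes the complement of a round ball $U_n\ni[e_2]$ of radius $\to0$ into a ball of radius $\to0$ about $[e_1]$; since $L_n$ is an isometry of the round sphere and $\nu$ is non-atomic, $\nu(L_n\inv U_n)\le\sup_x\nu\big(B(x,\mathrm{rad}(U_n))\big)\to0$, and so $\nu_n=A^{(n)}_*\nu$ carries mass $\to1$ on a shrinking ball about $K_n[e_1]$. Together with the a.s.\ convergence of $\nu_n$ this forces $\nu_\infty=\delta_{z(\mathbf g)}$ with $z(\mathbf g)=\lim_nK_n[e_1]$, a point built from $\mathbf g$ alone. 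Hence $\nu=E[\delta_{z(\mathbf g)}]$ for every stationary $\nu$, which gives uniqueness and identifies $\nu$ with the harmonic (hitting) measure of the random walk.

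For the convergence assertion, rewrite $\int(\rho(g))_*\delta_{z_0}\,d\mu^n(g)=E\big[\delta_{\rho(g_1\cdots g_n)z_0}\big]$. By the standard theory of random products of matrices, for each fixed $z_0\in\pu$ and $\mu^\nn$-a.e.\ $\mathbf g$ one has $\rho(g_1\cdots g_n)z_0\to z(\mathbf g)$; the only way this could fail is if $z_0$ were the ``most contracted'' direction of the sequence, whose law is the (non-atomic) stationary measure of the reversed walk, an event of probability $0$. Dominated convergence then yields $\int(\rho(g))_*\delta_{z_0}\,d\mu^n(g)\to E[\delta_{z(\mathbf g)}]=\nu$.

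The main obstacle is precisely the contraction step in the middle paragraph: converting the positivity of the Lyapunov exponent (Theorem \ref{thm:furstenberg}) into honest contraction of the M\"obius maps $\rho(g_1\cdots g_n)$ on $\pu$, so that $\nu_\infty$ is a single atom \emph{and} its position is a $\nu$-independent point. The non-atomicity estimate $\nu(L_n\inv U_n)\to0$ is exactly what neutralizes the lone exceptional direction along which no contraction takes place; without it one cannot upgrade ``$\nu_n$ converges'' to ``$\nu_n$ converges to a point determined by $\mathbf g$''.
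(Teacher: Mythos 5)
The paper does not prove this theorem: it is quoted as part of Furstenberg's theory of random matrix products (the surrounding text points to \cite{furstenberg} and \cite{bougerol-lacroix}), so there is no internal argument to compare yours against and I am judging the sketch on its own merits. Your existence step, the non-atomicity of every stationary measure, and the uniqueness argument via the measure-valued martingale $\nu_n=\rho(g_1\cdots g_n)_*\nu$ combined with the singular value decomposition are correct and are exactly the classical route: positivity of the exponent forces $\sigma_2/\sigma_1\to 0$, non-atomicity kills the mass near the exceptional direction $L_n\inv[e_2]$, and the limit $\nu_\infty$ is forced to be $\delta_{z(\mathbf{g})}$ with $z(\mathbf{g})$ read off from the matrices alone, which yields uniqueness. (A cosmetic point: the a.s.\ convergence $\unsur{n}\log\norm{\rho(g_1\cdots g_n)}\to\chi$ is for the products in the \emph{reverse} order from the one in the displayed formula of the paper, but $\log\norm{\rho(g_1\cdots g_n)}$ is again a subadditive cocycle over the shift, so Kingman still applies.)

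The genuine gap is in the last paragraph, i.e.\ in the convergence of $\int\rho(g)_*\delta_{z_0}\,d\mu^n(g)$ to $\nu$ for \emph{every} fixed $z_0$. You assert that $\rho(g_1\cdots g_n)z_0\to z(\mathbf{g})$ almost surely because the only obstruction is ``$z_0$ being the most contracted direction, whose law is the non-atomic stationary measure of the reversed walk, an event of probability $0$.'' This does not hold up as stated. For the products $g_1\cdots g_n$ the most contracted input direction $b_n=L_n\inv[e_2]$ is a moving random point which in general does \emph{not} converge (it is the attracting direction $K_n[e_1]$ that stabilizes for this ordering, since $b_{n+1}\approx\rho(g_{n+1})\inv b_n$), so there is no single random point that $z_0$ must avoid, and hence no well-defined probability-zero event. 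What almost sure convergence actually requires is quantitative: since the contraction factor $\sigma_2/\sigma_1$ is only exponentially small, one needs $d(z_0,b_n)\geq e^{-\e n}$ eventually for every $\e>0$, which is equivalent to the nontrivial classical theorem that $\unsur{n}\log\norm{\rho(g_1\cdots g_n)v_0}\to\chi(\rho)$ almost surely for \emph{every} fixed nonzero vector $v_0$; knowing merely that the laws of $b_n$ have a non-atomic weak limit gives no control on how close $b_n$ comes to $z_0$ infinitely often. The cheapest repair is to aim only at convergence \emph{in probability} of $\rho(g_1\cdots g_n)z_0$ to $z(\mathbf{g})$, which is all that convergence of the laws requires: then one only needs that the probability of $\set{d(z_0,b_n)<\delta}$ has small $\limsup$ in $n$ when $\delta$ is small, and this does follow from identifying $b_n$ with the attracting direction of the transposed (or inverted) products and rerunning your uniqueness argument for that auxiliary walk, whose unique stationary measure is again non-atomic. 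Either way, an additional argument about the distribution of the contracted directions is indispensable; as written, this step is an assertion rather than a proof.
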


In analogy with \S\ref{subs:marked},  let us now    work   in 
$\La\times \pu$, and  consider the fibered action $\widehat g$ on $\La\times \pu$ defined by $\widehat g:(\la,z)\mapsto (\la, \rho_\la(g)(z))$. We  seek for a current $\widehat T$ on $\La\times \pu$ ``interpolating'' the stationary measures.  Given $z_0\in \pu$ , 
we then  introduce the  sequence of positive closed currents $ \widehat{T}_n$ defined by 
\begin{equation}\label{eq:Tn hat}
 \widehat{T}_n = \unsur{ n} \int \left[\widehat{g}\left(\La\times\set{z_0}\right) \right] d\mu^n(g).
\end{equation}

To understand why it is natural to have a linear    normalization in \eqref{eq:Tn hat}, think of a polynomial family of representations. Precisely, assume that $\La=\cc$ and that 
for a set of  generators $g^1, \ldots, g^k$ of $G$, the matrices $\rho_\la(g^j)$ are polynomial in $\la$. Then if $w$ is a word in $n$ letters in the generators, $\rho_\la(w)$ has degree $O(n)$ in $\la$, hence in 
$\cc\times \pu\subset\pu\times \pu$, the 
degree of the graph $\widehat{w}(\La\times \set{z_0})$   is $O(n)$. 

It turns out that this sequence does {\em not} converge to a current interpolating the stationary measures (since e.g. it vanishes above the stability locus) nevertheless it   gives another crucial information.

\begin{prop}
 The sequence of currents  $\widehat{T}_n$  converges to $\pi_1^*\tbif$, where $
\pi_1:\La\times\pu\cv\La$ is the natural projection. 
\end{prop}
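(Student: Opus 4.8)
The plan is to write down, for each $n$, an explicit potential-theoretic decomposition of $\widehat T_n$ that isolates a part carrying the limit from an error part that vanishes. Fix a lift $Z_0\in\cc^2\setminus\set{0}$ of $z_0$, let $\omega$ be the Fubini--Study form on $\pu$, and for $g\in G$ pick, locally in $\la$, a holomorphic lift $A_\la(g)\in\SL$ of $\rho_\la(g)$; the quantities below do not depend on this choice. The graph $\widehat g\lrpar{\La\times\set{z_0}}$ is the reduced zero hypersurface of $(\la,Z)\mapsto A_\la(g)Z_0\wedge Z$ in $\La\times\pu$, and the Lelong--Poincar\'e formula gives
$$
\left[\widehat g\lrpar{\La\times\set{z_0}}\right]=dd^c\psi_g+\pi_2^*\omega+\pi_1^*\lrpar{dd^c_\la\log\norm{A_\la(g)Z_0}},\qquad \psi_g(\la,z)=\log\frac{\abs{A_\la(g)Z_0\wedge Z}}{\norm{A_\la(g)Z_0}\norm{Z}}.
$$
Here $\log\norm{A_\la(g)Z_0}$ is a globally defined psh function of $\la$, and $\psi_g\leq 0$ since $\abs{u\wedge v}\leq\norm u\norm v$ in $\cc^2$. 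Averaging against $\mu^n$ and using linearity of $dd^c$ and $\pi_i^*$,
$$
\widehat T_n=dd^c\lrpar{\unsur n\int\psi_g\,d\mu^n(g)}+\unsur n\,\pi_2^*\omega+\pi_1^*\,dd^c\lrpar{\unsur n\int\log\norm{A_\la(g)Z_0}\,d\mu^n(g)}.
$$
The middle term tends to $0$, so it remains to treat the other two.

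First I would dispose of the $\psi_g$ term. For fixed $\la$, the function $z\mapsto\psi_g(\la,z)$ equals, up to a bounded additive error, $\log$ of the Fubini--Study distance from $z$ to the point $[A_\la(g)Z_0]$, so $\int_\pu\abs{\psi_g(\la,\cdot)}\,\omega$ is bounded by a constant independent of $\la$ and $g$. By Fubini, $\int_{K\times\pu}\abs{\psi_g}\,dV\leq C_K$ uniformly in $g$ for every compact $K\Subset\La$; hence $\unsur n\int\psi_g\,d\mu^n(g)\to 0$ in $L^1_{\rm loc}(\La\times\pu)$ and $dd^c$ of it tends to $0$ as a current.

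The main point is the last term, for which one must show that for every $\la$,
$$
\unsur n\int\log\norm{A_\la(g)Z_0}\,d\mu^n(g)=\unsur n\int\log\norm{\rho_\la(l_n(\mathbf g))Z_0}\,d\mu^\nn(\mathbf g)\ \longrightarrow\ \chi(\rho_\la).
$$
Since $\rho_\la$ is non-elementary (assumption (R3)), the theory of random products of matrices (Furstenberg's theorem in its refined form, valid for strongly irreducible contracting actions) gives $\unsur n\log\norm{\rho_\la(l_n(\mathbf g))Z_0}\to\chi(\rho_\la)$ for $\mu^\nn$-a.e.\ $\mathbf g$ and for the \emph{fixed} vector $Z_0$. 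To pass to convergence of integrals, I would use uniform integrability: by the exponential moment hypothesis (A2) and Kingman's subadditive theorem, $\unsur n\log\norm{\rho_\la(l_n(\mathbf g))}\to\chi(\rho_\la)$ in $L^1(\mu^\nn)$, and using $\norm A=\norm{A^{-1}}$ for $A\in\SL$ one bounds $\abs{\unsur n\log\norm{\rho_\la(l_n(\mathbf g))Z_0}}$ by $\unsur n\log\norm{\rho_\la(l_n(\mathbf g))}+\unsur n\abs{\log\norm{Z_0}}$, so the family is uniformly integrable, hence $L^1$-convergent with integral tending to $\chi(\rho_\la)$. Finally $\la\mapsto\unsur n\int\log\norm{A_\la(g)Z_0}\,d\mu^n(g)$ is psh and, by subadditivity, locally uniformly bounded above; as it converges pointwise to the continuous psh function $\chi$, the Hartogs lemma upgrades this to $L^1_{\rm loc}(\La)$ convergence, and taking $dd^c$ yields $\tbif$. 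Assembling the three terms gives $\widehat T_n\to\pi_1^*\tbif$.

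I expect the delicate step to be the $L^1$-upgrade in the last paragraph: one must rule out that $\unsur n\int\log\norm{\rho_\la(g)Z_0}\,d\mu^n(g)$ picks up a spurious contribution from the rare words $g$ for which the \emph{fixed} vector $Z_0$ lies near the most contracted direction of $\rho_\la(g)$, and this is precisely where (R3) and (A2) enter. An alternative that avoids the random-walk estimate would be to compare $\log\norm{A_\la(g)Z_0}$ with $\log\abs{\sigma(\rho_\la(g))}$ and invoke \eqref{eq:lambdamax}, but controlling the discrepancy uniformly requires essentially the same input.
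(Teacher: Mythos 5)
Your proof is correct and follows essentially the same route as the paper: the Lelong--Poincar\'e decomposition of $[\widehat g(\La\times\{z_0\})]$ isolates exactly the potential $\chi_n(\la)=\unsur{n}\int\log\norm{\rho_\la(g)Z_0}\,d\mu^n(g)$ that appears in the paper's own computation of the mass of $\widehat T_n$, and the fibrewise $\log$-distance term is handled the same way. The only (minor) divergence is in how $\chi_n\to\chi$ is obtained: you use the soft combination of Furstenberg's a.e.\ convergence for a fixed vector, uniform integrability via $\norm{A}=\norm{A^{-1}}$ in $\SL$, and the Hartogs lemma, whereas the paper invokes Le Page's quantitative estimate $\chi_n-\chi=O(1/n)$ (which it needs anyway for the subsequent mass estimate); your argument suffices for the qualitative convergence claimed here.
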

 
Figure \ref{fig:hat} is  a visual interpretation of this result. 

\begin{figure}[h]\label{fig:hat}
\begin{center}
  \psfrag{L}{$\La$}
  \psfrag{P}{$\pu$}
  \psfrag{S}{$\stab$}
  \psfrag{B}{$\bif$}
  \psfrag{z}{\small $\La\times\set{z_0}$}
  \psfrag{g}{\small $\widehat{g_1\cdots g_n}(\La\times\set{z_0})$}
\includegraphics[scale=0.4]{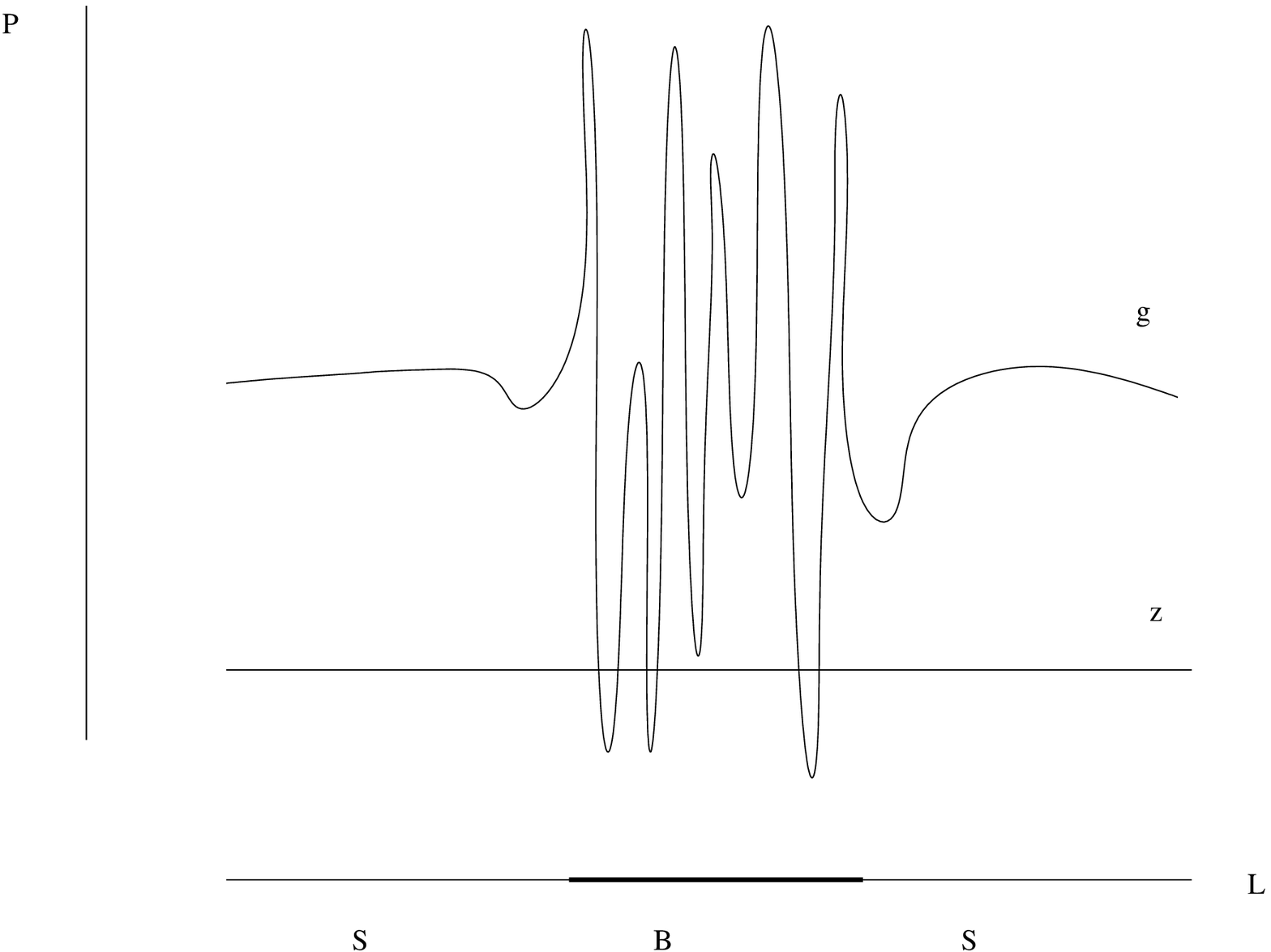}
\caption{Fibered action of $g_1\cdots g_n$} 
\end{center}
\end{figure}

This   proposition implies  that if $\om$ is disjoint from $\supp(\tbif)$, the average growth of the area of the sequence of graphs $\left(\widehat{g_1\cdots g_n}\right)\left(\La\times\set{z_0}\right)$ over $\om$ is sublinear in $n$.
The attentive reader will have noticed   that the situation   is similar to that of Section \ref{sec:prologue}, except that the sequence of graphs over $\La$ is replaced by an average of graphs. The next result is in the spirit of  Proposition \ref{prop:support}.

\begin{prop}
 If $\om$ is an open subset of $\La$ such that $\overline\om$ is compact and 
 disjoint from  $\supp(\tbif)$, then the mass of $\widehat{T}_n$ in $\om$ is $O\lrpar{\unsur{n}}$. In other words: 
$$  \int \mathrm{Area}\lrpar{  \widehat{g}\left(\La\times\set{z_0}\right) \cap \pi_1^{-1}(\om) }d\mu^n(g) = O
(1).$$
\end{prop}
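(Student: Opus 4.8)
The plan is to reduce the statement, via a slicing argument (so that $\dim(\La)=1$, exactly as in the proof of Theorem \ref{thm:support}; one works in one component of $\stab$ at a time) together with the graph-area computation of Lemma \ref{lem:dim1} --recall from \S\ref{subs:marked} that $\mathrm{Area}\bigl(\widehat g(\La\times\set{z_0})\cap\pi_1\inv(\om)\bigr)=\vol_\La(\om)+\int_\om\psi_g^*\omega$, where $\psi_g(\la):=\rho_\la(g)(z_0)$ is holomorphic in $\la$-- to the single estimate
\[
\int_G\lrpar{\int_\om\psi_g^*\omega}\,d\mu^n(g)=O(1).
\]
First I would record a trivial \emph{deterministic} bound: lifting $\rho_\la(g)$ to $\SL$ and choosing a lift $v_0\in\cc^2$ of $z_0$, the subharmonic function $\la\mapsto\log\norm{\rho_\la(g)\,v_0}$ is a global potential of $\psi_g^*\omega$ on $\La$; since $\norm{\rho_\la(g)}\geq1$ in $\SL$ and $\norm{\rho_\la(g_n\cdots g_1)}\leq\prod_k\norm{\rho_\la(g_k)}$, the standard estimate of $\int_\om dd^cu$ by the oscillation of a subharmonic function $u$ over a relatively compact $\om'$ with $\overline\om\Subset\om'\subset\stab$ gives $\int_\om\psi_g^*\omega\leq C\bigl(\sum_{k=1}^n\length(g_k)+1\bigr)$, with $C=C(\om,\om')$. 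By (A2) the right-hand side has finite $\mu^\nn$-moments, of size $O(n)$ and $O(n^2)$; so the target integral is \emph{a priori} only $O(n)$, and the whole point is to gain this factor of $n$ from stability.

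The gain should come from the fact that over $\stab$ the limit sets move holomorphically. By Theorem \ref{thm:sullivan} the family $(\rho_\la)$ is quasi-conformally stable over $\om'$; one obtains a holomorphic motion $h:\om'\times\pu\cv\pu$, based at some $\la_0\in\om'$ and equivariant for $(\rho_{\la_0},\rho_\la)$. Being holomorphic in $\la$ and --by the $\lambda$-lemma-- jointly continuous, $h$ has, on the compact set $\overline\om\times\pu$, spherical derivative in $\la$ bounded uniformly in $\xi$; hence every holomorphic graph $\la\mapsto h_\la(\xi)$ has area over $\om$ bounded by a constant $C_1=C_1(\om)$ \emph{independent of $\xi$}. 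I would then argue that for $\mu^\nn$-almost every $\mathbf g$ there is an integer $n_0(\mathbf g)$ with $\int_\om\psi_{g_n\cdots g_1}^*\omega\leq2C_1$ for all $n\geq n_0(\mathbf g)$. Indeed, by equivariance $\psi_{g_n\cdots g_1}(\la)=h_\la\bigl(\rho_{\la_0}(g_n\cdots g_1)(h_\la\inv(z_0))\bigr)$, and the products $\rho_{\la_0}(g_n\cdots g_1)$ contract $\pu$ --uniformly away from the most contracted direction of the corresponding matrix, which converges to a backward limit point $\xi^-(\mathbf g)$-- towards the forward limit point $\xi^+(\mathbf g)$, at the geometric rate given by the singular value gap. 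Since $\set{h_\la\inv(z_0):\la\in\overline\om}$ is compact and, for $\mu^\nn$-a.e.\ $\mathbf g$, meets $\xi^-(\mathbf g)$ in at most a discrete set of $\la$ (which one absorbs by shrinking $\om$ slightly), this gives $\sup_{\la\in\om'}\dist\bigl(\psi_{g_n\cdots g_1}(\la),h_\la(\xi^+(\mathbf g))\bigr)\to0$, where $\dist$ is the spherical distance; a Cauchy estimate then converts this $C^0$-smallness into $C^1$-smallness on $\om$, whence $\int_\om\psi_{g_n\cdots g_1}^*\omega\leq C_1+C(\om,\om')\,\e_0(1+C_1)\leq2C_1$ once the $C^0$-distance drops below a fixed $\e_0$.

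It then remains to handle the ``bad'' event $\set{n<n_0(\mathbf g)}$. Large deviation estimates for products of random matrices in $\SL$ --equivalently, for the linear drift of the random walk on $G$; these rest on $\chi>0$ (Theorem \ref{thm:furstenberg})-- yield $\mu^\nn\set{n<n_0(\mathbf g)}\leq Ce^{-cn}$ for some $c>0$. Combining this with the deterministic bound of the first step and the Cauchy--Schwarz inequality,
\[
\int_{\set{n<n_0(\mathbf g)}}\lrpar{\int_\om\psi_{g_n\cdots g_1}^*\omega}\,d\mu^\nn(\mathbf g)\leq\lrpar{\int_{G^\nn}\Bigl(C\textstyle\sum_k\length(g_k)+C\Bigr)^2d\mu^\nn}^{1/2}\lrpar{Ce^{-cn}}^{1/2}=O\bigl(n\,e^{-cn/2}\bigr)\longrightarrow0,
\]
so that $\int_G(\int_\om\psi_g^*\omega)\,d\mu^n(g)\leq2C_1+o(1)=O(1)$, as required.

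I expect the main obstacle to be the second step: making rigorous the uniform (in $\la\in\om$) convergence of the graph $\widehat g(\La\times\set{z_0})$ over $\om$ to the fixed holomorphic graph coming from the motion of the limit set, together with a sufficiently quantitative description of the exceptional $\mathbf g$'s so that the large-deviation bound genuinely absorbs their (a priori only linearly growing) contribution. Once that is in hand, the first and third steps are soft.
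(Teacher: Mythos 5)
Your reduction to $\int_G\lrpar{\int_\om\psi_g^*\omega}d\mu^n(g)=O(1)$ and the a priori $O(n)$ bound via the potential $\log\norm{\rho_\la(g)V_0}$ are sound, but the heart of your argument, step 2, has a genuine gap. You claim that for $\mu^\nn$-a.e.\ $\mathbf g$ the graphs $\la\mapsto\rho_\la(g_n\cdots g_1)(z_0)$ converge, uniformly and in the $C^1$ sense over $\om$, to a single holomorphic graph $\la\mapsto h_\la(\xi^+(\mathbf g))$, so that their areas over $\om$ are eventually bounded by a constant \emph{independent of} $\mathbf g$. First, for the left products $l_n=g_n\cdots g_1$ the point $\rho_\la(l_n)(z_0)$ does not converge a.s.\ (it equidistributes along the stationary measure); it is the right products $r_n=g_1\cdots g_n$ that converge by the martingale argument --- a fixable slip, since $\mu^n$ is the common law. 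More seriously, even for $r_n$ the convergence degenerates (the derivative of $\psi_{r_n}$ blows up) near those $\la$ for which the moving point $h_\la\inv(z_0)$ approaches the repelling direction of $\rho_{\la_0}(r_n)$; this is precisely the bubbling phenomenon, and the limit graphs $\Gamma_{\mathbf g}$ do in general carry vertical bubbles over $\om$, each contributing at least $\mathrm{Area}(\pu)$, in number and position depending on $\mathbf g$. So the uniform bound $\int_\om\psi_{r_n}^*\omega\leq 2C_1$ for $n\geq n_0(\mathbf g)$ is false as stated; any true bound must depend on $\mathbf g$ through the number of bubbles, and controlling its expectation is essentially the proposition itself. (Note also that $\la\mapsto h_\la\inv(z_0)$ is only continuous, not holomorphic, so the bad set where it meets $\xi^-(\mathbf g)$ need not be discrete, and it depends on $\mathbf g$, so it cannot be removed by shrinking $\om$ once and for all; and the tail bound $\mu^\nn\set{n_0(\mathbf g)>n}\leq Ce^{-cn}$ does not follow from standard large deviations for the norm, since $n_0$ is governed by uniform-in-$\la$ contraction away from a moving point, not by the linear drift.) In fact the paper's logic runs in the opposite direction from yours: the $O(1)$ area bound is established first, and only then is it used, via Bishop's theorem and Lelong's lower bound, to conclude that the limit graphs have finitely many bubbles.

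The paper's proof sidesteps all of this by averaging before doing any geometry: it writes the mean area as $\mathrm{Area}(\om)+n\int_\om dd^c\chi_n$ with $\chi_n(\la)=\unsur{n}\int\log\lrpar{\norm{\rho_\la(g)Z_0}/\norm{Z_0}}d\mu^n(g)$, observes that $dd^c\chi=\tbif$ vanishes near $\overline\om$, and reduces everything to the single estimate $\chi_n-\chi=O(1/n)$, which is Le Page's exponential convergence of the transition operator. If you want to keep a pathwise argument, that spectral-gap input --- or a quantitative contraction estimate uniform in $\la$ together with an integrable control of the contribution of the bad parameters where bubbling occurs --- is what must replace your step 2.
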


To prove this estimate we give an analytic expression of this area:
\begin{align*}
\int \mathrm{Area}\big(  \widehat{g}\left(\La\times\set{z_0}\right) & \cap \pi_1^{-1}(\om) \big) d\mu^n(g) =
\int_{\pi_1^{-1}(\om)}\big[\widehat{g}\left(\La\times\set{z_0}\right)\big]\wedge (\pi_1^*\omega_\La + 
\pi_2^*\omega_\pu)d\mu^n(g) \\
&=\mathrm{Area}(\om) + \int_\om (\pi_1)_* \left(\pi_2^*\omega_{\pu}\rest{\widehat{g}\left(\La\times\set
{z_0}\right)}\right) d\mu^n(g) \\
&= \mathrm{Area}(\om)+n \int_\om dd^c\chi_n   \text{ , where } \chi_n =  \unsur{n} \int\log\frac{\norm{\rho_\la(g)
(Z_0)}}{\norm{Z_0}} d\mu^n(g).
\end{align*}
Here $Z_0$ is a lift of $z_0$ to $\cd$, $\norm{\cdot}$ is the Hermitian norm, and the $dd^c$ is taken w.r.t. $\la$. Therefore, exactly as in Theorem \ref{thm:activity support} we are left to prove that $\chi_n-\chi = O\lrpar{\unsur{n}}$. This estimate in turn follows from ergodic theoretic properties of random matrix products --specifically, from the ``exponential convergence of the transition operator'', a result due to Le Page \cite{lepage}. 

\medskip

The next step is to combine this estimate on the average area of the graphs over $\om$ with  another  result of Furstenberg which asserts that for each fixed $\la$,   for  $\mu^\nn$-a.e. sequence  $\mathbf{g}=(g_n)$, $\rho_\la(g_1\cdots g_n)({z_0})$ converges  to a point $z_{\mathbf{g}}$ (a consequence of the martingale convergence theorem). From this, as in Lemma \ref{lem:dim1},
  we infer that for  a.e.   $\mathbf{g} = (g_n)$, the sequence of graphs  $\left(\widehat
{g_1\cdots g_n}\right)\left(\La\times\set{z_0}\right)$ converges   to a limiting graph $\Gamma_{\mathbf
{g}}$, outside a finite set of vertical bubbles. We then obtain a  measurable and $G$-equivariant family  of limiting graphs over $\om$. Formally, this family is parameterized by the \textit{Poisson boundary $P(G, \mu)$} of $(G,\mu)$.

To show that the family of representations is stable over $\om$, we  need to upgrade this family of graphs into a holomorphic motion of the Poisson boundary, that is, we need to show that two distinct graphs are disjoint. For this, we use the fact that the number of intersection points between two graphs over some domain $D\subset \om$ is a function on  
$P(G, \mu)\times P(G,\mu)$ satisfying certain invariance properties. 
By an ergodic theorem due to Kaimanovich   \cite{kaimanovich}, this function is a.e. equal\footnote{As stated here, the result is true only when $\mu$ is invariant under $g\mapsto g\inv$. 
The general case needs some adaptations. Notice also that the Kaimanovich theorem can be viewed as a far reaching generalization of the ergodicity of the geodesic flow for manifolds of constant negative curvature. } to a
 constant $\iota_D$ on $P(G, \mu)\times P(G,\mu)$, depending only on $D$ . The assignement $D\mapsto \iota_D$ being integer valued, we infer that there exists a locally finite set of points $F$  such that if $D\cap F = \emptyset$, then $\iota_D=0$. On such a $D$,  
it follows 
that the family of graphs is a holomorphic motion, and ultimately, 
  that the family is stable. Finally, to show that the exceptional set $F$ is empty and    conclude that the family is stable on all $D$, we use the  fact that the set of discrete faithful representations is closed, 
 which implies that $\bif$ cannot have isolated points. \qed

\subsection{Equidistribution of representations with an element of    a given trace}\label{subs:trace}
Another important aspect of the bifurcation currents is that they enable to obtain equidistribution results in Corollary \ref{cor:accidental}. This is the analogue in our context  of the results of \S \ref{subs:multiplier}.
 For $t\in \cc$ denote by  $Z(g,t)$ the analytic subset of 
 $\La$ defined by  $Z(g,t) = \set{\la, \ \tr^2(\rho_\la(g)) = t}$.  We study the asymptotic properties of the integration currents $[Z(g,t)]$. Note that if $\tr^2(\rho_\la(g))\equiv t$, then
 $Z(g,t)=\La$, and by convention, $[\La]=0$.  

The first equidistribution result is the following.

\begin{thm}[Equidistribution for random sequences]\label{thm:equidist1}
Let $(G,\rho,\mu)$   be a holomorphic family of representations of $G$, satisfying (R1-3), endowed with a measure $\mu$ satisfying (A1-2). Fix
  $t\in\cc$. Then for  $\mu^\nn$ a.e. sequence 
$(g_n)$,  we have that 
$$\unsur{2n}\left[Z(g_n\cdots g_1, t )\right] \underset{n\cv\infty}\longrightarrow
 T_{\rm bif}.$$
 \end{thm}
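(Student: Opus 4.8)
The plan is to realize $[Z(g,t)]$ as the $dd^c$ of an explicit potential and then combine the pointwise convergence from Guivarc'h's theorem (Theorem \ref{thm:guivarch}) with a Hartogs-type $L^1_{\rm loc}$ upgrade, exactly in the spirit of the proof of Theorem \ref{thm:basber}. Concretely, fix a lift of $\rho_\la(g)$ to $\SL$ depending holomorphically on $\la$ (locally this is no loss of generality, and the quantities $\tr^2$ are lift-independent anyway). Then $\la\mapsto \tr^2(\rho_\la(g)) - t$ is holomorphic, so
$$\unsur{2n}[Z(g_n\cdots g_1,t)] = \unsur{2n}\, dd^c \log\bigl\lvert \tr^2(\rho_\la(g_n\cdots g_1)) - t\bigr\rvert = dd^c\lrpar{\unsur{2n}\log\bigl\lvert \tr^2(\rho_\la(g_n\cdots g_1)) - t\bigr\rvert}.$$
So it suffices to show that for $\mu^\nn$-a.e.\ $\mathbf{g}=(g_n)$, the potentials
$$u_n(\la) := \unsur{2n}\log\bigl\lvert \tr^2(\rho_\la(l_n(\mathbf{g}))) - t\bigr\rvert = \unsur{n}\log\bigl\lvert \tr(\rho_\la(l_n(\mathbf{g})))\bigr\rvert + \unsur{2n}\log\bigl\lvert 1 - t/\tr^2(\rho_\la(l_n(\mathbf{g})))\bigr\rvert$$
converge to $\chi$ in $L^1_{\rm loc}(\La)$, after which taking $dd^c$ (which is continuous for the weak topology) gives the claim.

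The next step is the pointwise statement: for $\mu^\nn$-a.e.\ $\mathbf{g}$ and for Lebesgue-a.e.\ $\la$ (or every $\la$ outside a pluripolar set), $u_n(\la)\cv\chi(\rho_\la)$. Here one must be careful because the two randomness sources ($\mathbf{g}$ and $\la$) interact. I would argue as follows: by Theorem \ref{thm:guivarch} applied to the fixed representation $\rho_\la$, for each fixed $\la$ there is a full-measure set $\Omega_\la\subset G^\nn$ on which $\unsur{n}\log\lvert\tr(\rho_\la(l_n(\mathbf{g})))\rvert\cv\chi(\rho_\la)$; since $\chi(\rho_\la)>0$ by Furstenberg (Theorem \ref{thm:furstenberg}), the traces tend to infinity, so the correction term $\unsur{2n}\log\lvert 1 - t/\tr^2\rvert\cv 0$ on $\Omega_\la$ as well, whence $u_n(\la)\cv\chi(\rho_\la)$ on $\Omega_\la$. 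Then Fubini on $\La\times G^\nn$ (with Lebesgue measure on $\La$ and $\mu^\nn$ on $G^\nn$) gives that for $\mu^\nn$-a.e.\ $\mathbf{g}$, $u_n(\la)\cv\chi(\rho_\la)$ for Lebesgue-a.e.\ $\la$.

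To turn a.e.\ pointwise convergence into $L^1_{\rm loc}$ convergence of psh functions, I would invoke the Hartogs lemma: a sequence of psh functions that is locally uniformly bounded above and converges a.e.\ to a psh function converges in $L^1_{\rm loc}$. The upper bound is routine — $\log\lvert\tr(\rho_\la(l_n(\mathbf{g})))\rvert \leq \log(2\norm{\rho_\la(l_n(\mathbf{g}))})$, and $\unsur n\log\norm{\rho_\la(l_n(\mathbf{g}))}$ is controlled locally uniformly in $\la$ using submultiplicativity and the moment assumption (A2), essentially the same estimate that makes $\chi$ psh in the first place; one also needs the limit $\chi$ to be not identically $-\infty$, which holds since it is a genuine psh function (indeed continuous) by Furstenberg. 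The functions $u_n$ are psh in $\la$ as decreasing-in-$n$... no, not monotone, but each is psh since $\log\lvert h(\la)\rvert$ is psh for $h$ holomorphic; that is all Hartogs needs. So the main obstacle, and the step I would spend the most care on, is the Fubini argument handling the two independent randomnesses together with the verification that the exceptional $\mathbf{g}$-set is $\mu^\nn$-null and that the a.e.-in-$\la$ convergence it yields is enough to feed Hartogs (it is, since a.e.\ convergence plus a locally uniform upper bound suffices). A secondary subtlety is the passage between $\tr$ and $\tr^2$ and the behavior when $t\in[0,4]$ so that $Z(g,t)$ may pick up elliptic/finite-order parameters, but since we only divide by $\tr^2$ which is large on $\Omega_\la$, this causes no trouble in the limit.
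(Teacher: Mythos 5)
Your proof is correct, and it differs from the paper's argument at the one genuinely delicate step. Both proofs reduce the statement to the $L^1_{\rm loc}$ convergence of the potentials $u_n(\la)=\unsur{2n}\log\abs{\tr^2(\rho_\la(l_n(\mathbf{g})))-t}$ to $\chi$ for $\mu^\nn$-a.e.\ $\mathbf{g}$, and both must confront the fact that the null set in Guivarc'h's theorem depends on $\la$. The paper decouples the two sources of randomness by applying Theorem \ref{thm:guivarch} only at a \emph{countable dense} set of parameters, and then identifies the subsequential limits using the continuity of $\chi$ together with the sharp upper bound $u_n\le \unsur{n}\log\norm{\rho_\la(l_n(\mathbf{g}))}+o(1)$, whose right-hand side is shown to converge to $\chi$ in $L^1_{\rm loc}$ by a sub-additive ergodic theorem with values in the space of psh functions (a nontrivial input of \cite{kleinbif}). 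You decouple instead by Fubini--Tonelli on $\La\times G^\nn$, getting pointwise convergence for Lebesgue-a.e.\ $\la$ once $\mathbf{g}$ is a.e.\ fixed, and then identify every subsequential $L^1_{\rm loc}$ limit with $\chi$ by extracting an a.e.-convergent sub-subsequence; for this you only need the crude locally uniform upper bound coming from submultiplicativity and the law of large numbers for $\length(g_i)$, not the psh-valued ergodic theorem, and Furstenberg's theorem enters only to ensure $\chi>0$ (ruling out the $-\infty$ alternative in the compactness dichotomy and killing the correction term $\unsur{2n}\log\abs{1-t/\tr^2}$). Your route is thus more elementary and self-contained for this particular statement; the paper's scheme is built to cover the more general admissible sequences of \cite{kleinbif}, where a pointwise a.e.\ theorem at each fixed parameter is not available. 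Two minor points to record: the set $\set{(\la,\mathbf{g}):\,u_n(\la,\mathbf{g})\to\chi(\la)}$ is jointly measurable (each $u_n$ depends continuously on $\la$ and on finitely many coordinates of $\mathbf{g}$), so Fubini does apply; and for the (a.e.\ finitely many) $n$ at which $\tr^2(\rho_\cdot(l_n(\mathbf{g})))\equiv t$ one has $[Z(l_n(\mathbf{g}),t)]=0$ by the paper's convention, which does not affect the limit.
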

  
  Notice that, if instead of considering a random sequence in the group, we take a word obtained by applying to $g\in G$ an iterated element of $\mathrm{Aut}(G)$, then similar equidistribution results were obtained by Cantat in \cite{cantat bers} 
  
  \medskip
  
The following ``deterministic'' corollary makes
 Corollary \ref{cor:accidental} more precise. It seems difficult to prove it without using probabilistic methods. 
 
 \begin{cor}
Under the assumptions of Theorem \ref{thm:equidist1}, let  $\e>0$ and  $\La'\Subset \La$. Then there exists $g\in G$ such that 
$\la\mapsto \tr^2(\rho_\la(g))$ is non-constant and  $\set{\la, \tr^2(\rho_\la(g))=t}$ is $\e$-dense in 
$\bif\cap \La'$.
 \end{cor}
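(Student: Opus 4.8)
The plan is to deduce the corollary from Theorem \ref{thm:equidist1} by a standard extraction argument, exploiting that convergence in the sense of currents of the normalized integration currents forces the underlying subvarieties to become dense in $\bif$. First I would fix a relatively compact open set $\La'\Subset \La$ and $\e>0$, and cover the compact set $\overline{\bif\cap\La'}$ by finitely many balls $B_1,\dots,B_N$ of radius $\e/2$ centered at points of $\bif\cap\La'$. By Theorem \ref{thm:support}, each $B_i$ meets $\supp(\tbif)$, so for a suitable test function $\varphi_i\geq 0$ supported in $B_i$ we have $\bra{\tbif,\varphi_i}>0$. Now apply Theorem \ref{thm:equidist1}: for $\mu^\nn$-a.e.\ sequence $(g_n)$, one has $\frac{1}{2n}[Z(g_n\cdots g_1,t)]\to\tbif$, hence for each $i$, $\bra{\frac1{2n}[Z(g_n\cdots g_1,t)],\varphi_i}\to\bra{\tbif,\varphi_i}>0$ as $n\to\infty$. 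Fixing one such sequence and taking $n$ large enough, all $N$ quantities $\bra{[Z(g_n\cdots g_1,t)],\varphi_i}$ are strictly positive simultaneously.

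From the positivity $\bra{[Z(g_n\cdots g_1,t)],\varphi_i}>0$ for every $i$ I would conclude that the analytic set $Z(g_n\cdots g_1,t)$ meets every ball $B_i$; in particular it is a proper subvariety of $\La$ (not all of $\La$), which by the convention $[\La]=0$ is exactly what is recorded by the nonvanishing of the current, and it therefore corresponds to an element $g:=g_n\cdots g_1\in G$ with $\la\mapsto\tr^2(\rho_\la(g))$ non-constant. Since the centers of the $B_i$ were chosen in $\bif\cap\La'$ and their union covers $\overline{\bif\cap\La'}$, and each $B_i$ has radius $\e/2$, any point of $\bif\cap\La'$ lies within $\e/2$ of some center $p_i$, which in turn lies within $\e/2$ of a point of $Z(g,t)\cap B_i$; hence $Z(g,t)$ is $\e$-dense in $\bif\cap\La'$, as desired.

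The main technical point to be careful about is the passage from "$\bra{[Z(g,t)],\varphi_i}>0$" to "$Z(g,t)\cap B_i\neq\emptyset$": this is immediate because $\varphi_i$ is supported in $B_i$, so a nonzero pairing forces $Z(g,t)$ to carry mass inside $B_i$. A secondary point is that one must choose the single sequence $(g_n)$ simultaneously good for all finitely many $\varphi_i$, which is automatic since a countable (here finite) intersection of full-measure sets is of full measure. I expect no serious obstacle here — the genuine work has already been done in Theorem \ref{thm:equidist1} and in the identification $\supp(\tbif)=\bif$ (Theorem \ref{thm:support}); the corollary is essentially a covering-compactness packaging of these two facts. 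One minor subtlety worth a remark is that $\La'$ needs to be relatively compact precisely so that $\overline{\bif\cap\La'}$ is compact and the finite cover exists; without it one could only get density on compact subsets of $\bif$.
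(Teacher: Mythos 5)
Your proof is correct and is precisely the covering--compactness deduction that the paper intends (the corollary is stated there without proof as a direct consequence of Theorem \ref{thm:equidist1} and Theorem \ref{thm:support}). The only cosmetic point is that since $[Z(g,t)]$ is a $(1,1)$-current one should pair it with a positive test form of bidegree $(d-1,d-1)$, e.g.\ $\chi_i\,\beta^{d-1}$ with $\chi_i\geq 0$ supported in $B_i$, rather than with a "test function"; this changes nothing in the argument.
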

 
 For the value $t=4$, it is unclear which of 
 accidental relations or parabolics prevail in   $\left[Z(g_n\cdots g_1, 4)\right ]$.
  
  \medskip
  
 Theorem \ref{thm:equidist1} is actually a consequence of a more general theorem \cite[Thm 4.1]{kleinbif}, which gives rise to several other random equidistribution results.
 
  \medskip
  
As opposed to the case of rational maps, we are able to estimate the speed of convergence, up to some averaging on $g$ and a global assumption on $\La$. 

\begin{thm}[Speed in the equidistribution theorem]\label{thm:equidist speed}
Let $(G,\rho,\mu)$   be a holomorphic family of representations of $G$, satisfying (R1-3), endowed with a measure $\mu$ satisfying (A1-2). Fix
  $t\in\cc$. Assume further that one of the following hypotheses is satisfied:
   \begin{enumerate}
 \item[i.]   $\La$  is an algebraic family of representations defined over  $\overline{\mathbb{Q}}$.
 \item[ii.] There exists a geometrically finite representation in  $\La$.
 \end{enumerate}
Then there exists a constant $C$ such that for every test form  $\phi$,
$$\bra{\unsur{2n} \int  \left[Z(g,t)\right] d\mu^n(g) -T_{\rm bif}, \phi}\leq
 C \frac{\log n }{n} \norm{\phi}_{C^2}.
$$
 \end{thm}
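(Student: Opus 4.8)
The plan is to reduce the statement to an estimate on the speed of convergence $\chi_n - \chi = O(\log n / n)$ in $L^\infty_{\mathrm{loc}}$, where $\chi_n(\la) = \frac1n\int \log\frac{\norm{\rho_\la(g)(Z_0)}}{\norm{Z_0}}\,d\mu^n(g)$, together with a similar control of $\frac1{2n}\int \log\abs{\tr^2(\rho_\la(g)) - t}\,d\mu^n(g) - \chi$. Recall from the proof sketch of Theorem \ref{thm:support} that $\chi_n$ is a psh potential (up to a harmonic term) of the average current $\frac1n\widehat T_n$ pushed to $\La$, and that $\chi_n \to \chi$ with a quantitative rate coming from Le Page's theorem on the exponential convergence of the transition operator acting on a suitable space of H\"older functions on $\pu$. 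The first step is thus to invoke Le Page's spectral gap to get $\norm{\chi_n - \chi}_{L^\infty(\La')} = O(1/n)$ (the $\log n$ will enter at the truncation step, not here); this already handles the ``spectral radius'' side.

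The harder input is the trace side, because one cannot integrate $\log\abs{\tr(\rho_\la(g))}$ directly against $\mu^\nn$: words with very small trace cause the integral to diverge from $n\chi$. The strategy is a truncation argument. Write $[Z(g,t)] = dd^c_\la \log\abs{\tr^2(\rho_\la(g)) - t}$ and split the potential $\frac1{2n}\log\abs{\tr^2(\rho_\la(g))-t}$ according to whether $\abs{\tr^2(\rho_\la(g))-t}$ is $\geq e^{-\eta n}$ or $< e^{-\eta n}$ for a small parameter $\eta$ to be optimized. On the ``large trace'' part one compares $\frac1{2n}\log\abs{\tr^2(\rho_\la(g))-t}$ with $\chi_n$ using $\tr^2 \asymp \norm{\cdot}^2$ when the trace is not small, picking up an error $O(\eta)$ plus the $O(1/n)$ from Le Page. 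On the ``small trace'' part one needs an upper bound, for fixed test form $\phi$, on the $\mu^n$-measure of the set of $g$ for which $\la\mapsto \tr^2(\rho_\la(g))-t$ comes $e^{-\eta n}$-close to $0$ somewhere on $\supp\phi$; this is exactly where either hypothesis (i) or (ii) is used. Under (i), the family being algebraic over $\overline{\mathbb Q}$, one controls heights / degrees of the polynomials $\la\mapsto \tr^2(\rho_\la(g))-t$ (their degree is $O(n)$, with arithmetic complexity linear in $n$), so an \L ojasiewicz-type or height inequality bounds the bad set; under (ii), the existence of a geometrically finite representation provides a definite lower bound on $\chi$ and a uniform non-degeneracy, again bounding the measure of the bad set by something like $e^{-c\eta n}$ times a combinatorial factor.

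Combining: choose $\eta = \eta_n \asymp (\log n)/n$. Then the ``small trace'' contribution is bounded, after integrating $dd^c$ against $\phi$ and using that $\log^- $ is integrable with the standard submean-value / capacity estimate, by $C\,\eta_n\,\norm{\phi}_{C^2} + C\,e^{-c\log n}\norm{\phi}_{C^2} = O((\log n)/n)\norm{\phi}_{C^2}$, while the ``large trace'' part contributes $O(\eta_n) + O(1/n) = O((\log n)/n)$ as well. Taking $dd^c$ of the potential estimate and pairing with $\phi$ (integrating by parts twice, which is where $\norm{\phi}_{C^2}$ appears) yields
$$
\Bra{\unsur{2n}\int[Z(g,t)]\,d\mu^n(g) - \tbif,\ \phi} \leq C\,\frac{\log n}{n}\,\norm{\phi}_{C^2}.
$$
The main obstacle I expect is the quantitative bound on the ``small trace'' bad set: turning the qualitative non-degeneracy (no word has identically small trace, away from the trivial case $Z(g,t)=\La$) into an effective estimate with the right dependence on $n$, which genuinely requires the arithmetic hypothesis (i) or the geometric hypothesis (ii) — without such a hypothesis I would not expect a polynomial-in-$1/n$ rate at all. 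A secondary technical point is checking that Le Page's theorem applies uniformly in $\la$ over compact subsets of $\La$, i.e. that the spectral gap of the transition operator is locally uniform; this follows from Furstenberg continuity (Theorem \ref{thm:furstenberg}) plus the openness of the spectral-gap condition, but must be spelled out.
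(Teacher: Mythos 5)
Your overall architecture (psh potentials, a truncation at an exponentially small scale, Le Page's spectral gap for the norm part, the hypotheses \emph{i.}/\emph{ii.} entering as a quantitative non-degeneracy statement, and an optimization producing $\log n/n$) is in the spirit of the actual argument, and you correctly guess that under \emph{i.} the arithmetic input is a Liouville-type lower bound. However, two of your steps, as set up, would fail.

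First, your ``small trace'' bad set is the wrong object. The set of $g$ for which $\la\mapsto\tr^2(\rho_\la(g))-t$ comes $e^{-\eta n}$-close to $0$ somewhere on $\supp\phi$ has $\mu^n$-measure close to $1$, not close to $0$: for typical $g$ the hypersurface $Z(g,t)$ actually meets $\supp\phi$ (this is precisely what equidistribution asserts whenever $\supp\phi$ meets $\bif$), so the trace function vanishes there. There is nothing to discard here --- $\log\abs{\tr^2(\rho_\la(g))-t}$ is psh with $dd^c$ equal to $[Z(g,t)]$ and is locally integrable despite its zeros. What must be controlled is, for each fixed $g$, the Lebesgue \emph{volume in parameter space} of the set of $\la$ where $\abs{\tr^2(\rho_\la(g))-t}$ is super-exponentially small, via H\"ormander's volume estimates for sublevel sets of psh functions. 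To normalize the psh function $\frac{1}{2n}\log\abs{\tr^2(\rho_\la(g))-t}$ for that estimate one needs a \emph{single} parameter at which it is bounded below by $-C$, i.e. one parameter where the trace is not super-exponentially close to $t$; this single-point lower bound is exactly what hypothesis \emph{i.} (via Philippon's generalization of the Liouville inequality) or \emph{ii.} provides. So the global assumptions enter through a bound on bad parameters for each word, not through a bound on the measure of bad words.

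Second, the ``large trace'' comparison does not work as stated: knowing $\abs{\tr^2(\rho_\la(g))-t}\geq e^{-\eta n}$ only gives $\frac{1}{2n}\log\abs{\tr^2(\rho_\la(g))-t}\geq -\eta/2$, whereas $\frac1n\log\norm{\rho_\la(g)}\approx\chi(\la)>0$; the discrepancy between the two is of order $\chi$, not $O(\eta)$. To conclude one must know that for $\mu^n$-most $g$ the trace is comparable to $\norm{\rho_\la(g)}^2\approx e^{2n\chi}$, i.e. one needs a large deviations version of Guivarc'h's theorem (Theorem \ref{thm:guivarch}) controlling $\mu^n\lrpar{\set{g:\ \abs{\tr(\rho_\la(g))}\leq e^{(\chi-\e)n}}}$. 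This estimate (obtained independently by Aoun \cite{aoun}) is a key ingredient of the proof and is missing from your plan; Le Page's theorem for the averaged norm $\chi_n$ does not substitute for it.
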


The meaning of the notion of an algebraic family of representations is the following: the space $\mathrm{Hom}(G,\PSL)$ of representations of $G$ in $\PSL$ admits a natural structure of an affine algebraic variety over $\mathbb{Q}$, simply by describing it as a set of matrices satisfying certain polynomial relations\footnote{To view $\PSL$ as a set of matrices, observe that $\PSL$ is isomorphic to $\mathrm{SO}(3,\cc)$ by the adjoint representation.}. Changing this set of generators amounts to performing algebraic changes of coordinates, so that this structure of algebraic variety  is well-defined.  We say that an arbitrary family of representations, viewed as a holomorphic mapping $\rho:\La\cv\mathrm{Hom}(G,\PSL)$  is algebraic (resp. algebraic over $K$) if    $\rho\rest{\La}$ is a dominating map to some    algebraic subvariety (resp. over $K$) of $\mathrm{Hom}(G,\PSL)$. To say it differently,  there exists an open subset  $\om\subset \La$ such that $\rho_{\om}$ is an open subset of an algebraic subvariety of $\mathrm{Hom}(G,\PSL)$. .

\medskip

These results parallel those of \S \ref{subs:multiplier}, and exactly as in Theorem \ref{thm:basber}, they become much easier after some averaging with respect to the multiplier. Let us illustrate this by proving the following result:

\begin{prop}
Let $m$ be the normalized Lebesgue measure on  $[0,4]$. Then under the assumptions of Theorems  
\ref{thm:equidist1}, we have that 
$$\unsur{2n} \int  \left[Z(g,t)\right] d\mu^n(g)dm(t) \underset{n\cv\infty}\longrightarrow
 T_{\rm bif}.$$
\end{prop}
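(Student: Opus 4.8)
The plan is to reduce the statement to a computation of potentials, exactly as in the proof of Theorem \ref{thm:basber}(ii). The key point is that averaging over $t \in [0,4]$ against Lebesgue measure transforms a sum of $\log\abs{t - (\text{something})}$-type potentials into a sum of $\log^+$-type potentials, which are the ones that behave well under integration against $\mu^{\nn}$.

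First I would write down the potential. For fixed $\la$ and $g$, the subvariety $Z(g,t) = \set{\la, \ \tr^2(\rho_\la(g)) = t}$ has the global psh potential $\la \mapsto \log\abs{\tr^2(\rho_\la(g)) - t}$, so that $[Z(g,t)] = dd^c \log\abs{\tr^2(\rho_\la(g)) - t}$. Hence
$$
\unsur{2n}\int [Z(g,t)]\, d\mu^n(g)\, dm(t) = dd^c\lrpar{\unsur{2n}\int \lrpar{\int_0^4 \log\abs{\tr^2(\rho_\la(g)) - t}\,\frac{dt}{4}}d\mu^n(g)}.
$$
Now for the inner integral over $t$, I would use the elementary identity $\int_0^4 \log\abs{s - t}\,\frac{dt}{4} = \log^+\abs{s} + \mathrm{O}(1)$ uniformly in $s \in \cc$ (this is just a rescaled version of $\int_0^1 \log\abs{s-t}\,dt$; one checks that the left side minus $\log^+\abs{s}$ is a bounded function on all of $\cc$, tending to a constant as $\abs{s}\to\infty$ and staying bounded near $[0,4]$). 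The bounded error term contributes $\mathrm{O}(1/n)$ after dividing by $2n$, so it is killed by the $dd^c$ in the limit. Applying this with $s = \tr^2(\rho_\la(g))$, the potential becomes, up to a uniformly bounded error,
$$
\unsur{2n}\int \log^+\abs{\tr^2(\rho_\la(g))}\, d\mu^n(g) = \unsur{n}\int \log^+\abs{\tr(\rho_\la(g))}\,d\mu^n(g).
$$

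Next I would identify the pointwise limit of this sequence of potentials. Since $\log^+\abs{\tr(\cdot)}$ is bounded below and equivalent to $\log\abs{\tr(\cdot)}$ as the trace tends to infinity, it is one of the ``integrable substitutes'' discussed just before \eqref{eq:lambdamax}; combining Guivarc'h's theorem (Theorem \ref{thm:guivarch}) with the spectral-radius comparison \eqref{eq:lambdamax} and a domination argument, one gets that for each fixed $\la$,
$$
\unsur{n}\int \log^+\abs{\tr(\rho_\la(g))}\, d\mu^n(g) \underset{n\cv\infty}{\longrightarrow} \chi(\rho_\la).
$$
Then I would check that this sequence of psh functions is locally uniformly bounded above — which follows from the subadditivity bound $\int \log^+\norm{\rho_\la(g)}\,d\mu^n(g) \leq n\,\chi(\rho_\la) + \mathrm{o}(n)$ together with continuity of $\chi$ and assumption (A2) — so that the Hartogs lemma upgrades pointwise convergence to $L^1_{\rm loc}$ convergence. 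Taking $dd^c$ then yields convergence to $dd^c\chi = T_{\rm bif}$, as claimed.

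The main obstacle is the domination argument needed to pass from the $\mu^{\nn}$-almost-sure convergence in Guivarc'h's theorem to convergence of the integrals $\unsur{n}\int \log^+\abs{\tr(\rho_\la(g))}\,d\mu^n(g)$. Almost-sure convergence alone does not give convergence in mean; one needs uniform integrability of the sequence $\frac{1}{n}\log^+\abs{\tr(\rho_\la(g_1\cdots g_n))}$ with respect to $\mu^{\nn}$. This is exactly where the lower bound $\log^+\abs{\tr(\cdot)} \geq 0$ and the upper bound $\log^+\abs{\tr(\rho_\la(g))} \leq \log^+\norm{\rho_\la(g)} \leq \mathrm{cst}\cdot \length(g) + \mathrm{cst}$ come in: the latter, combined with the exponential moment assumption (A2) and a large-deviation / Fekete-type estimate for $\frac1n\log\norm{\rho_\la(l_n(\mathbf g))}$, provides the required uniform integrability (this is in the same spirit as the discussion following Theorem \ref{thm:guivarch}). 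Everything else is routine once this is in place.
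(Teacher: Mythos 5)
Your proposal is correct and follows essentially the same route as the paper: the paper writes the averaged potential as $v(\tr^2(\rho_\la(g)))$, where $v$ is the logarithmic potential of $m$, notes that it is bounded below and equivalent to $\log\abs{\tr^2(\cdot)}$ at infinity (your $\log^+ + \mathrm{O}(1)$ reformulation is the same observation), and then applies Guivarc'h's theorem together with the remark preceding \eqref{eq:lambdamax} to integrate in $g$ and take $dd^c$. Your extra care about the domination step and the Hartogs upgrade from pointwise to $L^1_{\rm loc}$ convergence only makes explicit what the paper leaves implicit.
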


\begin{proof}  We prove the $L^1_{\rm loc}$ convergence of the potentials. Let $u(g,\cdot)$ be 
the psh function on $\La$ defined by 
$$u(g,\la) = \int \log \abs{\tr^2(\rho_\la(g))-t} dm(t) = v(\tr^2(\rho_\la(g))),$$ 
where $v$ is the logarithmic potential of  $m$ in $\cc$.  The function   $u$ is   bounded below and  
$u(g, \la ) \sim \log\abs{\tr^2 (\rho_\la(g))}$  as $\tr^2 (\rho_\la(g))$ tends to infinity.

If $\la$ is fixed, then by Theorem \ref{thm:guivarch}, for  $\mu^\nn$ a.e. sequence $(g_n)$, 
$\unsur{2n} u(g_1\cdots g_n, \la)\cv \chi(\la)$.    Since $u$ is bounded below, we can apply the dominated convergence theorem and integrate with respect to  $g_1,\ldots ,g_n$. We conclude that 
$$\unsur{2n} \int u(g,\la) d\mu^n(g)\underset{n\cv\infty}\longrightarrow \chi(\la)$$  for all  $\la\in \La$, which, by taking the $dd^c$ in $\la$, implies the desired statement. 
\end{proof}

\begin{proof}[Sketch of proof of Theorem \ref{thm:equidist1}]
We need to show that for a.e. sequence $(g_n)$,  the   sequence of psh functions $\unsur{2n}\log\abs{\tr^2(\rho_\la (g_1\cdots g_n)) -t}$ converges to $\chi$. The point is to find a choice of random sequence $(g_n)$  which does not depend on $\la$. For this,  a kind of  sub-additive ergodic theorem with values in the space of psh functions   shows that for a.e. $(g_n)$, $\unsur{n} \log \norm{\rho_\la (g_1\cdots g_n)}$ converges in $L^1_{\rm loc}$ to $\chi$. 
By  Theorem  \ref{thm:guivarch}, it is possible to choose the 
  sequence $(g_n)$   so that for $\la$ belonging to a countable dense sequence of parameters, 
$\unsur{2n}\log\abs{\tr^2(\rho_\la (g_1\cdots g_n)) -t}$ converges to $\chi(\la)$.  
  On the other hand, 
$$\unsur{2n}\log\abs{\tr^2(\rho_\la (g_1\cdots g_n)) -t}\leq \unsur{n} \log \norm{\rho_\la (g_1\cdots g_n)} + o(1).$$  Using the continuity of $\chi$ and 
  the Hartogs lemma, we conclude that 
$\unsur{2n}\log\abs{\tr^2(\rho_\la (g_1\cdots g_n)) -t}$  converges to $\chi$ in $L^1_{\rm loc}$. 
\end{proof}

For Theorem \ref{thm:equidist speed}, the main difficulty is that for a given parameter
 we cannot in general integrate with respect to 
$g_1, \cdots, g_n$ in the almost sure convergence 
$$
\unsur{2n}\log \abs{\tr^2(\rho_\la(g_1\cdots g_n)) - t} \cv \chi(\la),
$$ 
due to the possibility of elements with trace very close to $t$. This is exactly similar to the
 difficulty encountered in Theorem \ref{thm:basber}.{\em iii}. We estimate the size of the set of parameters where this exceptional phenomenon happens by using volume estimates for sub-level sets of psh functions and the global assumption {\em i.} or {\em ii.} 
 In both cases  this  global assumption is used to show the existence of 
  a parameter at which $ \abs{\tr^2(\rho_\la(g_1\cdots g_n)) - t}$ is not 
  super-exponentially small in $n$. Under {\em i.}, this follows from a nice number-theoretic lemma 
(a generalization of the so-called {\em Liouville inequality}), which was communicated to us by P. Philippon.
  Another key ingredient is a large deviations estimate in Theorem \ref{thm:guivarch}, which was obtained independently by Aoun \cite{aoun}.

\subsection{Canonical bifurcation currents}
One might object that our  definition of   bifurcation currents in spaces of representations lacks of naturality, for   it depends on the choice of a measure $\mu$ on $G$ --recall however from Theorem \ref{thm:support} that the support of the bifurcation current is independent of $\mu$. In this paragraph, following \cite{bers slice}, we briefly explain how 
 a canonical bifurcation current can be constructed under natural assumptions. 
 
Let $X$ be a compact Riemann surface of genus $g\geq 2$, and 
  $G = \pi_1(X, \ast)$ be its fundamental group. Let $(\rho_\la)$ be a  
  holomorphic family of representations of $G$ into $\PSL$ satisfying (R1-3). We claim that 
  there is a   Lyapunov exponent function on $\La$ which  is canonically associated to the Riemann surface structure of $X$ (up to a multiplicative constant). 
  
  For this, let $\widetilde{X}$ be the universal cover of $X$ (i.e. the unit disk). $G$ embeds naturally as a subgroup of $\mathrm{Aut}(\widetilde X)$. For any representation $\rho\in \La$, consider its suspension $X_\rho$, that is the quotient of $\widetilde{X}\times \pu$ by the    diagonal action of $G$. The  suspension is a fiber bundle over $X$, with $\pu$ fibers, and admits  a holomorphic foliation transverse to the fibers whose holonomy is  given by $\rho$. 
If  $\gamma$ is any path on $X$, we denote by  $h_\gamma$ its holonomy  $\pu_{\gamma(0)}\cv \pu_{\gamma(1)}$. 
 
The Poincaré metric endows $X$ with a  natural Riemannian structure, so we can consider the Brownian motion on $X$. It follows from  the sub-additive ergodic theorem that  for a.e. Brownian path $\om$ (relative to the Wiener measure), the limit  
$$\chi  (\omega) = \lim_{t\cv\infty} \unsur{t}\log\norm{h_{\omega(0), \omega(t)}}$$ 
  (where $\norm{\cdot}$ is any smoothly varying  spherical metric on the fibers) exists, and does not depend on $\om$. 

We define   $\chi_{\rm Brownian}(\rho)$ to be this number, and introduce a natural  bifurcation current on $\La$ 
by putting $\tbif = dd^c\chi_{\rm Brownian}$. We have the following theorem.

\begin{thm}
Let as above $(\rho_\la)$   be a holomorphic family of non-elementary representations of the fundamental group of a compact Riemann surface, satisfying (R1-3).  

Then the function  $\chi_{\rm Brownian}$ is psh on $\La$ and the support of  $\tbif= dd^c\chi_{\rm Brownian} $ is the bifurcation locus.
\end{thm}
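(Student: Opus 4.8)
The strategy is to reduce the statement about Brownian motion to the discrete setting of random walks on $G$, where Theorems \ref{thm:furstenberg} and \ref{thm:support} are already available. First I would recall the standard discretization procedure of Furstenberg and Lyons--Sullivan: given the Riemann surface $X$ of genus $g\geq 2$ with its Poincar\'e metric, one fixes a fundamental domain (or a point $\ast\in X$ with a suitable choice of horoballs) and records the successive visits of a Brownian path to the $G$-translates of a base neighborhood. This produces a probability measure $\mu$ on $G=\pi_1(X,\ast)$ whose convolution powers track the Brownian holonomy: the $G$-equivariant harmonic measures on $\partial\widetilde X$ for Brownian motion coincide with the $\mu$-stationary measures, and there is a linear relation $\chi_{\rm Brownian}(\rho) = \kappa\,\chi_\mu(\rho)$ for some constant $\kappa>0$ independent of $\rho$, where $\chi_\mu$ is the random-walk Lyapunov exponent of \eqref{def:lyapunov exponent}. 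One must check that this $\mu$ satisfies (A1) (its support generates $G$ as a semigroup, which holds because Brownian motion reaches every translate of the base domain) and (A2) (the exponential moment bound, which follows from the exponential decay of the exit distribution / the fact that the discretized step length has exponential tails on a hyperbolic surface group).

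Granting the identity $\chi_{\rm Brownian} = \kappa\,\chi_\mu$, the theorem follows almost immediately. Pluriharmonicity: since $\rho\mapsto\chi_\mu(\rho)$ is psh on $\La$ by the remark following \eqref{def:lyapunov exponent} (it is a decreasing limit, or more precisely a limit, of the psh functions $\tfrac1n\int\log\norm{\rho_\la(g)}\,d\mu^n(g)$), multiplying by the positive constant $\kappa$ shows $\chi_{\rm Brownian}$ is psh, hence $\tbif = dd^c\chi_{\rm Brownian} = \kappa\, dd^c\chi_\mu$ is a well-defined positive closed $(1,1)$-current. For the support, Theorem \ref{thm:support} gives $\supp(dd^c\chi_\mu) = \bif$, and since $\kappa>0$ we get $\supp(\tbif) = \supp(dd^c\chi_\mu) = \bif$. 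Note that the bifurcation locus $\bif$ is intrinsic to the family $(\rho_\la)$ (it is $\La\setminus\Int(\mathrm{DF})$ by Theorem \ref{thm:sullivan}) and does not depend on the auxiliary data, so the conclusion is consistent regardless of the choices made in the discretization.

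The main obstacle is establishing the proportionality $\chi_{\rm Brownian} = \kappa\,\chi_\mu$ cleanly, and in particular verifying that the constant $\kappa$ is the \emph{same} for all representations $\rho\in\La$ (this is what makes the two currents agree up to scaling). The cleanest route is to observe that the discretization measure $\mu$ is attached to $X$ alone, not to $\rho$: the comparison between continuous-time holonomy along a Brownian path and discrete-time holonomy along the associated $\mu$-random walk only involves the holonomy \emph{cocycle} restricted to excursions within a fixed compact piece, and the sub-additive ergodic theorem applied to $\log\norm{h_{\omega(0),\omega(t)}}$ versus $\log\norm{\rho_\la(g_1\cdots g_n)}$ yields, after dividing by $t$ resp.\ $n$ and using that the expected number of discretization steps per unit Brownian time is a fixed constant $\kappa^{-1}$ (depending only on the geometry of $X$ and the choice of fundamental domain), the identity $\chi_{\rm Brownian}(\rho) = \kappa\,\chi_\mu(\rho)$. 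Once this is in place the rest is formal. An alternative that avoids discretization entirely would be to run the proof of Theorem \ref{thm:support} directly in the Brownian setting --- constructing $\widehat T_n$ from holonomy along Brownian paths, using Kaimanovich's ergodic theorem for the Brownian foliation and the harmonic measure on $\partial\widetilde X$ in place of the Poisson boundary of $(G,\mu)$ --- but this essentially re-proves the discrete case and is longer; I would only fall back on it if pinning down $\kappa$ uniformly proved delicate.
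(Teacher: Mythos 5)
Your proposal follows essentially the same route as the paper: the paper's proof consists precisely in invoking Furstenberg's discretization procedure to produce a measure $\mu$ on $G$ satisfying (A1--2) with $\chi_{\rm Brownian}=\kappa\,\chi_\mu$ for a constant $\kappa>0$ independent of $\rho$, and then applying Theorem \ref{thm:support}. The only point worth flagging is that the paper singles out the verification of the exponential moment condition (A2) as the genuinely non-trivial step (the discretization measure can never have finite support), whereas you pass over it rather quickly; but the overall architecture of your argument matches the paper's.
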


To prove this theorem, it is enough to exhibit a measure $\mu$ on $G$ satisfying (A1-2) and such that for every $\rho$, 
$\chi_\mu(\rho) = \chi_{\rm Brownian}(\rho)$ (up to a multiplicative constant). Such a measure actually exists and was constructed using a    discretization procedure by Furstenberg \cite{furstenberg discretisation}. It is   non-trivial  to check that $\mu$ satisfies the exponential moment  condition (A2) (for instance this measure can never be of finite support). 

\medskip

There is another natural family of paths on $X$: the geodesic trajectories. An argument similar to the previous one shows that if $(x,v)\in S^1(X)$ (unit tangent bundle) is generic relative to the Liouville measure, and if $\gamma_{(x,v)}$ denotes the unit speed geodesic stemming from $(x,v)$, then the limit  $\lim_{t\cv\infty} \unsur{t}\log\norm{h_{\gamma
(0), \gamma(t)}}$ exists and does not depend on (generic)  $(x,v)$. We denote by  $\chi_{\mathrm{geodesic}}(\rho)$  this number. It follows from the elementary properties of the Brownian motion on the hyperbolic disk that there exists a constant $v$ depending only on $X$ such that $\chi_{\rm Brownian}  = v  \chi_{\mathrm{geodesic}}$. Therefore the associated bifurcation current is the same. 

\medskip

Here is a situation where these ideas naturally apply:  consider the set $\mathcal{P}(X)$ 
of complex projective structures over a Riemann surface $X$, compatible with its 
complex structure (see~\cite{dumas} for a nice introductory text on projective structures). 
This is a complex affine space of dimension $3g-3$,  
 admitting a distinguished point, the ``standard Fuchsian structure'', namely the projective structure obtained by viewing $X$ as a quotient of the unit disk. A projective structure induces  a {\em holonomy representation} (which is always non-elementary and defined only up to conjugacy) so the above discussion applies. We conclude that {\em the space of projective structures on $X$ admits a natural bifurcation current}.

From the standard Fuchsian structure, one classically constructs an embedding of the Teichm\"uller space of 
$X$ as a bounded open subset of $\mathcal{P}(X)$, known as the {\em Bers embedding} (or {\em Bers slice}). This open set can be defined for instance as the component of the distinguished point in the stability locus. 

In \cite{mcm book}, McMullen   suggests the Bers slice as the analogue of the Mandelbrot set through the Sullivan dictionary. From this perspective,   an interesting result in \cite{bers slice} is that
the canonical Lyapunov exponent function $\chi_{\rm Brownian}$ is {\em constant} on the Bers embedding, so the analogy also holds at the level of Lyapunov exponents.  
 
\subsection{Open problems}
There are many interesting open questions in this area, some of them stated in \cite[\S 5.2]{kleinbif}.  
In the spirit of this survey, let us only state two problems related to 
 the   exterior powers of $\tbif$. 

As opposite to the case of rational maps, we believe that the supports of $\tbif^k$ for $k\geq 2$ do not give rise to ``higher bifurcation loci''.

\begin{conjecture}
Let  $(G, \rho, \mu)$ be a family of representations 
satisfying (R1-3) and (A1-2), and  assume further that two representations in $\La$ are never conjugate in $\PSL$ (that is, $\La$ is a subset of the character variety). Then for every $k\leq \dim(\La)$, $\supp(\tbif^k) = \bif$.
\end{conjecture}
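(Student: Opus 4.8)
The plan is to reduce the statement to a statement about the activity of individual matrix coefficients along one-dimensional slices, exactly as in the proof of Theorem \ref{thm:support}, but now iterated $k$ times. Since $\supp(\tbif^k)\subset\supp(\tbif)=\bif$ is automatic, the only thing to prove is the reverse inclusion $\bif\subset\supp(\tbif^k)$. The heuristic reason one expects this — as opposed to the rational maps case — is that in a family of representations there is only \emph{one} "critical-point-like" object driving the bifurcation, namely the global non-normality of the orbit maps $\la\mapsto \rho_\la(g)(z_0)$; unlike rational maps with $2d-2$ independent critical points, there is no mechanism producing a genuine hierarchy of "higher" bifurcations. Concretely, the obstruction encountered in Example \ref{ex:douady} — one critical point forced to be attracted by a cycle while another is active — simply cannot occur: under the hypothesis that $\La$ is a subset of the character variety, two nearby representations on $\bif$ are genuinely different group elements, so one cannot "pin down" part of the dynamics while perturbing the rest.

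First I would fix $\la_0\in\bif$ and, using a slicing argument as in the proof of Theorem \ref{thm:buff epstein}, reduce to showing $\la_0\in\supp(\tbif^k)$ by restricting to a generic $k$-dimensional affine slice $\La'\ni\la_0$; then $\tbif^k\rest{\La'}=(dd^c\chi\rest{\La'})^k$ and the claim becomes $\la_0\in\supp((dd^c\chi)^k)$ for a continuous psh function $\chi$ on a $k$-dimensional ball. Next I would argue by induction on $k$, the case $k=1$ being Theorem \ref{thm:support}. For the induction step, by the inductive hypothesis $\la_0\in\supp(\tbif^{k-1})$ and — using the equidistribution Theorem \ref{thm:equidist1} together with continuity of the potentials of $\tbif$ — one finds nearby a parameter $\la_1$ lying on the smooth part of a hypersurface $Z(g,t_0)$ (for a suitable group element $g$ and value $t_0$, e.g. $t_0=0$, so that $\rho_{\la_1}(g)$ is elliptic of finite order — an "accidental relation"), together with the wedge $(\tbif\rest{Z(g,t_0)})^{k-1}$ being non-zero near $\la_1$. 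The key point, playing the role of the principle "$T^k\wedge[V]=(T\rest V)^k$" used in Theorem \ref{thm:tbifk per}, is that restricting to $Z(g,t_0)$ reduces the dimension by one while keeping the bifurcation locus non-trivial: this is where the character-variety hypothesis is essential, because it guarantees that $Z(g,t_0)$ is a \emph{proper} subvariety (the trace of $\rho_\la(g)$ is genuinely non-constant on any neighborhood of $\la_1$ inside $\bif$) and that the restricted family still satisfies (R1-3).

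The main obstacle, and the place where real work is needed, is verifying that the family restricted to $Z(g,t_0)$ is still non-trivial and non-elementary, and that its bifurcation locus contains $\la_1$ — in other words, that imposing one accidental relation does not artificially stabilize the family. In the rational-maps setting this is exactly the difficulty that forces hypothesis (H) in Theorem \ref{thm:cvg preper}, but here the situation is more favorable: because $\mathrm{DF}$ is \emph{closed} (Chuckrow's theorem, cf. the discussion after Theorem \ref{thm:sullivan}), $\bif$ has non-empty interior and, crucially, no isolated points, and the same rigidity used at the end of the proof of Theorem \ref{thm:support} to kill the exceptional set $F$ should show that the restricted bifurcation locus is non-empty and accumulates $\la_1$. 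A secondary technical point is to ensure the hypersurfaces $Z(g,t_0)$ can be chosen through points arbitrarily close to any prescribed $\la_0\in\bif$ and with the required non-vanishing of $(\tbif\rest{Z(g,t_0)})^{k-1}$; this is a density statement that should follow by combining Corollary \ref{cor:accidental} with the continuity of the potential of $\tbif$ and the fact that $\tbif^{k-1}$ charges no pluripolar set. Assembling these pieces gives $\la_0\in\overline{\supp\big([Z(g,t_0)]\wedge\tbif^{k-1}\big)}\subset\supp(\tbif^k)$, completing the induction.
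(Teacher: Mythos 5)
You are attempting to prove a statement that the paper itself presents as an open \emph{conjecture}: the author offers only heuristic evidence (restricting to $Z(g,t)$ with $t=4\cos^2\theta$, $\theta\notin\pi\mathbb{Q}$, where every representation in the slice is non-discrete, so the restricted bifurcation locus is all of $Z(g,t)$) and says explicitly that this merely makes the equality ``reasonable''. Your strategy is an elaboration of that same heuristic, and it inherits its gaps; two of them are fatal as written. First, the concluding inclusion $\la_0\in\overline{\supp\lrpar{[Z(g,t_0)]\wedge\tbif^{k-1}}}\subset\supp(\tbif^k)$ is false: $[Z(g,t_0)]\wedge\tbif^{k-1}$ is only one term of a sequence whose normalized limit is (hopefully) $\tbif^k$, and the support of an approximant is not contained in the support of the limit. (Compare the rational-maps situation: $\supp\lrpar{[\percrit_1(n,0)]\wedge T_2}$ contains parameters with one superattracting and one active critical point, and Example \ref{ex:douady} shows such parameters need not lie in $\supp(T_1\wedge T_2)$.) For positive currents $S_n\cvf S$ one only has $S(K)\geq\limsup S_n(K)$ on compacts, so to place $\la_0$ in $\supp(\tbif^k)$ you need a lower bound on the mass of $\unsur{2n}[Z_n]\wedge\tbif^{k-1}$ in a fixed compact neighborhood of $\la_0$, \emph{uniform in} $n$ --- exactly the quantitative step that the Buff--Epstein renormalization supplies in Theorem \ref{thm:buff epstein} and that is missing here. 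Second, the convergence $\unsur{2n}[Z(g_n\cdots g_1,t)]\wedge\tbif^{k-1}\cv\tbif^k$ does not follow from Theorem \ref{thm:equidist1}: wedge products are not continuous under $L^1_{\rm loc}$ convergence of potentials, and justifying this exchange of limits is essentially the same difficulty as in part {\em iii.} of Theorem \ref{thm:basber}.

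A further point is your choice of $t_0$. Taking $t_0=0$ (or any finite-order elliptic value) is the wrong move for making the restricted slice entirely bifurcating: a finite-order elliptic element is perfectly compatible with discreteness, and it destroys faithfulness identically on $Z(g,t_0)$, so (R2) fails for the restricted family and Theorems \ref{thm:sullivan} and \ref{thm:support} no longer apply to it as stated. The paper's evidence instead takes $t=4\cos^2\theta$ with $\theta\notin\pi\mathbb{Q}$, precisely because an infinite-order elliptic forces non-discreteness at every point of $Z(g,t)$; then condition {\em i.} of Theorem \ref{thm:sullivan} fails on every open subset of the slice and the restricted bifurcation locus is all of $Z(g,t)$ with no further work. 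If you adopt that choice, your induction scheme reproduces the paper's heuristic, but the two gaps above remain and are the reason the statement is a conjecture rather than a theorem.
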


Here is some evidence for this conjecture: let $\theta \in \re\setminus\pi\mathbb{Q}$, $t=4\cos^2(\theta)$ and consider  the varieties $Z(g,t)$. Since for $\la\in Z(g,t)$, $\rho_\la$ is not discrete, the bifurcation locus of $\set{\rho_\la, \la \in Z(g,t)}$ is equal to $Z(g,t)$. Hence $\supp(\tbif\wedge [Z(g,t)]) = Z(g,t)$, which by equidistribution of $Z(g,t)$ makes the equality 
$\supp(\tbif^2) = \supp(\tbif)$ reasonable.

\medskip

It is also natural to  look for    equidistribution in higher codimension. Here is a specific question: 

\begin{question}
 Let $(G, \rho, \mu)$ be a family of representations  satisfying (R1-3) and (A1-2). Assume that $\dim(\La)\geq 3$. Given a generic element $h\in \PSL$, and a $\mu^\nn$ generic sequence $(g_n)$, do the solutions of the equation $g_1\cdots g_n = h$ equidistribute (after convenient normalization) towards $\tbif^3$?
\end{question}

%
%

\section{Further settings, final remarks}\label{sec:further}

In this section we gather some speculations about possible extensions of the results presented in the paper.

\subsection{Holomorphic dynamics in higher dimension}\label{subs:higher}
It is likely that a substantial part of the theory of bifurcation currents for rational maps on $\pu$  should remain true in higher dimension, nevertheless little has been done so far.  

\medskip

Let us first discuss the case of polynomial automorphisms of $\cd$. A polynomial automorphism $f$ of degree $d$ of $\cd$ with non-trivial dynamics admits a unique measure of maximal entropy, which has two (complex) Lyapunov exponents of opposite sign $\chi^+(f)>0>\chi^-(f)$ and describes the asymptotic distribution of saddle periodic orbits \cite{bls, bls2}. See \cite{cantat survey} in this volume for a presentation of these results for automorphisms of compact complex surfaces.
Notice that a polynomial automorphism has constant jacobian, so $\chi^+ (f)+ \chi^-(f) = \log\abs{\mathrm{Jac}(f)}$ is a pluriharmonic function on parameter space. It is not difficult  to see that the function 
$f\mapsto \chi^+(f)$ is psh (in particular upper semi-continuous), and  it was shown in \cite{lyapunov} that is actually continuous (even for families degenerating to a one-dimensional map). 

Since the Lyapunov exponents are well approximated by the multipliers of saddle orbits \cite{bls2},  it follows that
 near any point in  parameter space where $f\mapsto \chi^+(f)$ is not pluriharmonic, complicated bifurcations of saddle points occur. In the dissipative case they must become attracting. The main idea of Theorem \ref{thm:basber} seems robust enough to enable some generalization to this setting. 

On the other hand, a basic understanding of the 
 phenomena responsible for the bifurcations of a family of polynomial automorphisms 
of $\cd$ --e.g. the role of homoclinic tangencies-- is still missing (see   \cite{bsR} for some results in a particular case). In particular   no reasonable analogue of 
 Theorem \ref{thm:mss} is available for the moment. Therefore it seems a bit  premature to hope for a characterization of the support  of  the  bifurcation current  $dd^c\chi^+$, let alone $(dd^c\chi^+)^p$.

\medskip

The situation is analogous in the case of families of holomorphic endomorphisms of $\pk$ 
(and more generally for families of polynomial-like mappings in higher dimension). The 
regularity properties of  the Lyapunov exponent(s) function(s) are rather well understood, due to the work of Dinh-Sibony \cite{ds-pl} and Pham \cite{pham} (a good account on this is in \cite[\S 2.5]{ds-survey}). In particular it is known that the sum $L_p(f)$ of the $p$ largest Lyapunov exponents of the maximal entropy measure is psh for $1\leq p\leq k$ and the sum of all Lyapunov exponents is Hölder  continuous.  It is also known \cite{bdm} that $L_p(f)$ is well approximated by the corresponding quantity evaluated at repelling periodic cycles, so that 	  any point in parameter space where $L_p$ is not pluriharmonic is accumulated by bifurcations of periodic points.  Notice that the relationship between the currents $dd^cL_p$ is unclear. 

Again, one may reasonably hope for equidistribution results in the spirit of Theorem \ref{thm:basber}. 

Another interesting point is a formula, given in \cite{bas-ber1}, for the sum $L_k$ of Lyapunov exponents of endomorphisms of $\pk$ which generalizes Przytycki's formula \eqref{eq:lyap}. From this formula one may expect to reach some understanding on the role of the critical locus towards bifurcations. 
 
\subsection{Cocycles}
Yoccoz suggests in \cite{yoccoz} to study the geography of the (finite dimensional) space of locally constant $\mathrm{SL}(2,\re)$ 
cocycles over a transitive   subshift of finite type in the same way as spaces of one-dimensional holomorphic dynamical systems, with some emphasis on the description  of hyperbolic components and their  boundaries. 
 For $\SL$ cocycles 
 (and more generally for any cocycle with values in a complex  Lie subgroup of $\mathrm{GL}(n,\cc)$) we have an explicit connection with holomorphic dynamics given by the bifurcation currents. 
 Indeed, locally constant cocycles over a subshift are generalizations of random products of matrices, which correspond to cocycles over the full shift.  In this situation we can define a Lyapunov exponent function relative to a fixed measure on the base dynamical system (the Parry measure is a natural candidate), and construct a bifurcation current  by taking the $dd^c$. 
  
Notice that the subharmonicity properties of Lyapunov exponents are frequently used in this area of research (an early example is \cite{herman}).

For a general holomorphic family of (say, locally constant) $\SL$ cocycles over a fixed subshift of finite type,  one may ask the same questions as in Section \ref{sec:kleinbif}: characterize the support of the bifurcation current, prove equidistribution theorems. Another interest of considering  this setting  is that it is somehow  a simplified model of the tangent dynamics of 2-dimensional diffeomorphisms, so it might provide  some insight on the bifurcation theory of those. In particular there is an analogue of heteroclinic  tangencies in this setting (``heteroclinic connexions"), and it might be interesting to study the distribution of the corresponding parameters. 

\subsection{Random walks on other groups}
Another obvious possible  generalization of Section \ref{sec:kleinbif} is the  study of bifurcation currents associated to holomorphic families of finitely generated subgroups of $\mathrm{SL}(n,\cc)$. Again, if $(\rho_\la)$ is a holomorphic family of strongly irreducible representations (see \cite{furman} for the definition)  of a finitely generated group $G$ endowed with a probability 
 measure satisfying (A1-2), then Definition \ref{def:bifcur} makes sense, with $\chi$ being  
  the top Lyapunov exponent. It is likely that equidistribution theorems for representations possessing an element of given trace should follow as in 
\S \ref{subs:trace}. More generally, one may investigate the distribution of representations with an element belonging to a given hypersurface of $\mathrm{SL}(n,\cc)$, in the spirit of Section \ref{sec:prologue}. 

On the other hand, for the same reasons as in \S\ref{subs:higher}, the characterization of the support of the bifurcation current is certainly a more challenging problem. 

\subsection{Non-archimedian dynamics} It is a standard fact in algebraic geometry that studying families $(X_\la)_{\la\in \La}$ of complex algebraic varieties often amounts to studying varieties over a field extension of $\cc$, that is a function field in the variable $\la$. The same idea applies in   the dynamical context and was   explored by several authors.   This fact was used notably by Culler, Morgan and Shalen \cite{culler shalen, morgan shalen}  
 to construct compactifications of spaces of representations into $\PSL$ and obtain 
 new results on the geometry of 3-manifolds. In  rational dynamics  
 Kiwi \cite{kiwi puiseux, kiwi rational} used a similar construction to study the behaviour at infinity  
of families of cubic polynomials  or quadratic rational maps (see \cite{demarco mcmullen} 
for a different approach to this problem). 

  It would be natural to explore the interaction of bifurcation currents with   these compactifications, 
as well as the general bifurcation theory   of non-archimedian rational dynamical systems.

~


\begin{thebibliography}{[ABCD]}
 \bibitem[Ao]{aoun} Aoun, Richard. {\em Random subgroups of linear groups are free.}   Duke Math. J. 160 (2011), 117--173.
\bibitem[BaH]{baker-hsia}
 Baker, Matthew;    Hsia, Liang-Chung.
\newblock\emph{Canonical heights, transfinite diameters, and 
polynomial dynamics.}
\newblock  J. Reine Angew. Math.  585  (2005), 61--92.

\bibitem[BaDeM]{baker-demarco} Baker, Matthew;   DeMarco, Laura. 
{\em Preperiodic points and unlikely intersections.}
Duke Math. J. 159 (2011), no. 1, 1--29.

\bibitem[BB1]{bas-ber1} 
Bassanelli, Giovanni; Berteloot, Fran{\c c}ois.
\newblock \emph{Bifurcation currents in holomorphic dynamics on ${\bf
    P}^k$.}
\newblock  J. Reine Angew. Math.  608  (2007), 201--235. 

\bibitem[BB2]{bas-ber2} 
Bassanelli, Giovanni; Berteloot, Fran{\c c}ois.
\newblock \emph{Bifurcation currents and holomorphic motions in bifurcation loci.}
\newblock   Math. Ann. 345 (2009),   1--23.

\bibitem[BB3]{bas-ber3} 
Bassanelli, Giovanni; Berteloot, Fran{\c c}ois.
\newblock \emph{Distribution of polynomials with cycles of a given multiplier.}
\newblock  Nagoya Math. J. 201 (2011), 23--43.

\bibitem[Bea]{beardon} Beardon, Alan F.{\em The geometry of discrete groups.} Graduate Texts in Mathematics, 91. Springer-Verlag, New York, 1983.


\bibitem[BLS1]{bls} Bedford, Eric;  Lyubich, Mikhail;  Smillie, John. {\em
Polynomial
diffeomorphisms of $\cc^ 2$. IV. The measure of maximal entropy and
laminar currents.} Invent. Math.  112  (1993), 77-125.

\bibitem[BLS2]{bls2}
Bedford, Eric;  Lyubich, Mikhail;  Smillie, John.
{\em Distribution of periodic points of polynomial diffeomorphisms of
  $\cd$.} Invent. Math.  114 (1993), 277-288.

\bibitem[BS]{bsR} Bedford, Eric;    Smillie, John. {\em Real polynomial diffeomorphisms with maximal entropy: Tangencies.} Ann. of Math. (2) 160 (2004), no. 1, 1--26.

\bibitem[BT]{bt}
  Bedford, Eric;  Taylor, B. Alan.
\newblock \emph{A new capacity for plurisubharmonic functions.}
\newblock  Acta Math.  149  (1982), no. 1-2, 1--40.

\bibitem[Bs]{bers} Bers, Lipman. {\em Holomorphic families of isomorphisms of Möbius groups.}  J. Math. Kyoto Univ.  26  (1986),  no. 1, 73--76.


\bibitem[Bt1]{berteloot parma} Berteloot, François. {\em Lyapunov exponent of a rational map and multipliers of repelling cycles.} Riv. Math. Univ. Parma, 1(2) (2010), 263-269.

\bibitem[Bt2]{berteloot survey} Berteloot, François. {\em Bifurcation currents in one-dimensional holomorphic dynamics.} CIME Lecture notes, 2011. 

\bibitem[BDM]{bdm} Berteloot, François; Dupont, Christophe; Molino, Laura. {\em Normalization of bundle holomorphic contractions and applications to dynamics.} Ann. Inst. Fourier (Grenoble) 58 (2008), no. 6, 2137--2168.

\bibitem[BFH]{bfh}
 Bielefeld, Ben;  Fisher, Yuval;    Hubbard, John H. 
\newblock \emph{The classification of critically preperiodic 
polynomials as dynamical systems.}
\newblock  J. Amer. Math. Soc.  5  (1992),  no. 4, 721--762.

\bibitem[BKM]{bkm} Bonifant, Araceli; Kiwi, Jan; Milnor, John. {\em Cubic polynomial maps with periodic critical orbit. II. Escape regions.} Conform. Geom. Dyn. 14 (2010), 68--112. 

\bibitem[BL]{bougerol-lacroix} Bougerol, Philippe; Lacroix, Jean. {\em Products of random matrices with applications to Schrödinger operators.} Progress in Probability and Statistics, 8. Birkhäuser Boston, Inc., Boston, MA, 1985.

 

\bibitem[Bra]{branner-turning} Branner, Bodil. 
\newblock {\em Cubic polynomials: turning around the 
connectedness locus.}
\newblock in {\em Topological methods in modern mathematics (Stony Brook, NY,
  1991)}, 391--427,  Publish or Perish, Houston, TX, 1993.


\bibitem[BH1]{branner-hubbard1}
Branner, Bodil; Hubbard, John H.
\newblock \emph{The iteration of cubic polynomials. I. 
The global topology of parameter space.}
\newblock  Acta Math.  160  (1988),  no. 3-4, 143--206.

\bibitem[BH2]{branner-hubbard2}
Branner, Bobil; Hubbard, John H.
\newblock \emph{The iteration of cubic polynomials. II. 
Patterns and parapatterns.}
\newblock Acta Math. 169 (1992), no. 3-4, 229--325.


 \bibitem[BD]{briend-duval1} Briend, Jean-Yves; Duval, Julien.
 {\em Exposants de Liapounoff et distribution des points périodiques d'un endomorphisme de $\mathbb{C}\mathbb{P}^{k}$. } Acta Math. 182, 143--157, (1999).

\bibitem[Bro]{brolin}
  Brolin, Hans.
\newblock \emph{Invariant sets under iteration of rational functions.}
\newblock Ark. Mat. 6 1965 103--144 (1965).


\bibitem[BE]{buff epstein} Buff, Xavier; Epstein, Adam.
{\em Bifurcation measure and postcritically finite rational maps.} Complex dynamics, 491--512, A K Peters, Wellesley, MA, 2009.

\bibitem[BG]{buff gauthier} Buff, Xavier; Gauthier, Thomas. {\em Perturbations of flexible Lattès maps.} Preprint, arxiv:1111.5451.

\bibitem[Ca1]{cantat bers} Cantat, Serge. {\em Bers and H\'enon, Painlev\'e and Schr\"odinger.}  Duke Math. J.  149   (2009), 411--460.

\bibitem[Ca2]{cantat survey} Cantat, Serge. {\em Dynamics of automorphisms of compact complex surfaces}.  Preprint (2011).

\bibitem[Ch]{chirka} Chirka, Evgueny M.  {\em Complex analytic sets.}  
 Mathematics and its Applications (Soviet Series), 46. Kluwer Academic Publishers Group, Dordrecht, 1989. 

\bibitem[CS]{culler shalen} Culler, Marc; Shalen, Peter B.
{\em Varieties of group representations and splittings of $3$-manifolds.}
Ann. of Math. (2) 117 (1983), no. 1, 109--146. 

\bibitem[De]{demailly} Demailly, Jean-Pierre.
{\em Complex analytic and differential geometry, Chap. III.} Book available online at 
{\tt http://www-fourier.ujf-grenoble.fr/~demailly/manuscripts/agbook.pdf}.

\bibitem[DeM1]{demarco1}
DeMarco, Laura.
\newblock\emph{Dynamics of rational maps: a current on the bifurcation
  locus.}
\newblock  Math. Res. Lett.  8  (2001),  no. 1-2, 57--66. 

\bibitem[DeM2]{demarco2}
DeMarco, Laura.
\newblock\emph{Dynamics of rational maps: Lyapunov exponents, 
bifurcations, and capacity.}
\newblock  Math. Ann.  326  (2003),  no. 1, 43--73.
 
 \bibitem[DeMc]{demarco mcmullen} DeMarco, Laura; McMullen, Curtis.{\em Trees and the dynamics of polynomials.} Ann. Sci. Éc. Norm. Supér. (4) 41 (2008), no. 3, 337--382..
 
\bibitem[{DD1}]{kleinbif} Deroin, Bertrand;   Dujardin, Romain. {\em Random walks, Kleinian groups, and bifurcation currents.} Invent. Math. (to appear).

\bibitem[{DD2}]{bers slice} Deroin, Bertrand;   Dujardin, Romain. {\em Lyapunov exponents for surface group representations.  } In preparation (2011).


\bibitem[DeT]{dt-boule} De Th{\'e}lin, Henry. 
\newblock{\em  Sur la laminarit{\'e} de certains  courants.}
\newblock Ann. Sci. {\'E}cole Norm. Sup. (4)  37  (2004), 304--311.



\bibitem[DS1]{ds-pl}  Dinh, Tien Cuong; Sibony, Nessim.
{\em Dynamique des applications d'allure polynomiale.  }
J. Math. Pures Appl. (9) 82 (2003), 367-423.

\bibitem[DS2]{ds-tm}  Dinh, Tien Cuong; Sibony, Nessim.
{\em Distribution des valeurs de transformations méromorphes et applications.} Comment. Math. Helv. 81 (2006), no. 1, 221--258.

\bibitem[DS3]{ds-survey} Dinh, Tien Cuong; Sibony, Nessim. {\em Super-potentials for currents on compact Kähler manifolds and dynamics of automorphisms.} J. Algebraic Geom. 19 (2010), no. 3, 473--529. 


\bibitem[DH]{dh} Douady, Adrien; Hubbard, John H.
\newblock {\em Étude dynamique des polynômes complexes. }
 \newblock Publications Mathématiques d'Orsay.  Université de Paris-Sud, Orsay, 1985. 

\bibitem[Du1]{lyapunov}  Dujardin, Romain.  {\em Continuity of Lyapunov exponents for polynomial automorphisms of $\mathbb{C}^2$}. Ergodic Theory and Dynamical Systems  27 (2007), 1111-1133.

\bibitem[Du2]{cubic} Dujardin, Romain. {\em Cubic polynomials: a measurable view on parameter space.} {\em in}  Complex Dynamics and Friends,  A K Peters, Wellesley, MA, 2009. 
 
\bibitem[Du3]{fatou} Dujardin, Romain. {\em Fatou directions along the Julia set for endomorphisms of  $\mathbf{CP}^k$.}  Preprint (2010).

\bibitem[{DF}]{preper}  Dujardin, Romain; Favre, Charles. {\em Distribution of rational  maps with a preperiodic critical point}.
 Amer.  J. Math. 130 (2008), 979-1032.
 
\bibitem[DG]{maximal} Dujardin, Romain; Guedj, Vincent. {\em Homogeneous Monge-Ampère equations and foliations by holomorphic disks}.  in {\em  Complex Monge-Ampère equations and geodesics in the space of Kähler metrics}, 
   Lecture Notes in Mathematics (2038), to appear.

\bibitem[Dum]{dumas} Dumas, David. {\em Complex projective structures.}  Handbook of Teichmüller theory. Vol. II,  455--508,    Eur. Math. Soc., Zürich, 2009.

\bibitem[FRL]{frl} Favre Charles;   Rivera-Letelier, Juan.
  \emph{{\'E}quidistribution  quantitative des points de petite
  hauteur sur la droite projective.}   Math. Ann.  335  (2006),  311--361.

 \bibitem[FS2]{fs2}Fornaess, John Erik; Sibony, Nessim 
{\em Complex dynamics in higher dimension. {II}}, Modern methods in complex analysis, Ann. of Math. Studies  135--182. Princeton Univ. Press.
 
\bibitem[FLM]{flm} Freire, Alexandre; Lopes, Artur; Ma\~n\'e, Ricardo
{\em An invariant measure for rational maps.}
Bol. Soc. Brasil. Mat. 14 (1983),  45--62. 

\bibitem[Ful]{fulton} Fulton, William, {\em Intersection theory.} Second edition. Ergebnisse der Mathematik und ihrer Grenzgebiete. 3. Folge. A Series of Modern Surveys in Mathematics. Springer-Verlag, Berlin, 1998.


\bibitem[Frm]{furman} Furman, Alex. {\em Random walks on groups and random transformations.}  Handbook of dynamical systems, Vol. 1A,  931--1014, North-Holland, Amsterdam, 2002.

\bibitem[Fur1]{furstenberg} Furstenberg, Hillel {\em Noncommuting random products.} Trans. Amer. Math. Soc. 108 (1963) 377--428. 

\bibitem[Fur2]{furstenberg discretisation} Furstenberg, Hillel. {\em  Random walks and discrete subgroups of Lie groups.} 1971 Advances in Probability and Related Topics, Vol. 1 pp. 1--63 Dekker, New York 

\bibitem[FK]{furstenberg kesten} Furstenberg, Hillel; Kesten, Harry.
{\em Products of random matrices.}
Ann. Math. Statist. 31 (1960),  457--469. 

\bibitem[Ga]{gauthier}
Gauthier, Thomas. \newblock {\em  Strong-bifurcation loci of full  Hausdorff dimension.} 
\newblock Preprint, arxiv:1103.2656.



\bibitem[Go]{goldman}Goldman, William M. {\em  Mapping class group dynamics on surface group representations.} Problems on mapping class groups and related topics, 189--214, Proc. Sympos. Pure Math., 74, Amer. Math. Soc., Providence, RI, 2006

\bibitem[G\'S]{gs}
   Graczyk, Jacek;  \'Swi\c atek, Gregorz.
\newblock \emph{Harmonic measure and expansion on the boundary of the
 connectedness locus.}
\newblock  Invent. Math.  142  (2000),  no. 3, 605--629. 


  \bibitem[Gu]{guivarch} Guivarc'h, Yves. {\em Produits de matrices aléatoires et applications aux propriétés géométriques des sous-groupes du groupe linéaire.}
  Ergodic Theory Dynam. Systems  10  (1990),  no. 3, 483--512.

\bibitem[He]{herman} Herman, Michael. {\em Une méthode pour minorer les exposants de Lyapounov et quelques exemples montrant le caractère local d'un théorème d'Arnol'd et de Moser sur le tore de dimension $2$} Comment. Math. Helv. 58 (1983), no. 3, 453--502.


\bibitem[Hö]{hormander}  H{\"o}rmander, Lars. {\em Notions of convexity.}
Progress in Math 127. Birkh{\"a}user, Boston, MA, 1994.

\bibitem[IN]{ivashkovich} Ivashkovich, Sergey; Neji, Fethi. {\em   Weak normality of families of meromorphic mappings and bubbling in higher dimensions.} Preprint (2011), arxiv:1104:3973. 

\bibitem[Kai]{kaimanovich} Kaimanovich, Vadim A. {\em Double ergodicity of the Poisson boundary and applications to bounded cohomology.}  Geom. Funct. Anal.  13  (2003),  no. 4, 852--861.

\bibitem[Kap]{kapovich} Kapovich, Michael {\em Hyperbolic manifolds and discrete groups.} Progress in Mathematics, 183. Birkhäuser Boston, Inc., Boston, MA, 2001.

\bibitem[Ki1]{kiwi-portrait}
 Kiwi, Jan.
\newblock\emph{Combinatorial continuity in complex polynomial dynamics.}
\newblock Proc. London Math. Soc. (3)  91  (2005),  no. 1, 215--248.

\bibitem[Ki2]{kiwi puiseux}
Kiwi, Jan.
\newblock\emph{Puiseux series, polynomial dynamics, and iteration of
  complex cubic polynomials}
\newblock  Ann. Inst. Fourier (Grenoble)  56  (2006),  no. 5, 1337--1404.

\bibitem[Ki3]{kiwi rational}
Kiwi, Jan.
\newblock\emph{Puiseux series dynamics of quadratic rational maps.}
\newblock Preprint (2011) arXiv:1106.0059.


\bibitem[Led]{ledrappier cras} Ledrappier, François. {\em Quelques propriétés ergodiques des applications rationnelles.}
 C. R. Acad. Sci. Paris Sér. I Math. 299 (1) (1984), 37--40.

\bibitem[LeP]{lepage} Le Page, \'Emile. {\em Th\'eor\`emes limites pour les produits de matrices al\'eatoires.} In "Probability measures on groups", ed. H. Heyer. Lecture Notes in Math. no. 928, (1982), 258--303.

\bibitem[Lev1]{levin1} 
 Levin, Guennadi. 
\newblock{\em Irregular values of the parameter of a family of polynomial mappings.}
\newblock Russian Math. Surveys 36:6 (1981), 189--190. 

\bibitem[Lev2]{levin}
  Levin, Gennadi M. 
\newblock \emph{On the theory of iterations of polynomial families in
 the complex plane.}
\newblock   J. Soviet Math.  52  (1990),  no. 6, 3512--3522.

\bibitem[Lvy]{levy} Levy, Alon. {\em The space of morphisms on projective space.} Preprint, arxiv:0903.1318.

\bibitem[Lu]{lubotzky} Lubotzky, Alex. {\em Dynamics of $\mathrm{Aut}(F_n)$ actions on group presentations and representations.}  to appear in  {  
Geometry, Rigidity, and Group Actions. A Festschrift in honor of Robert Zimmer's 60th birthday.} (edited by B. Farb and D. Fisher) Chicago Univ. Press (2011)



\bibitem[Ly1]{lyubich} Lyubich, Mikhail
{\em  Entropy properties of rational endomorphisms of the Riemann  sphere.} 
  Ergodic Theory Dynam. Systems  3  (1983),  351-385.

\bibitem[Ly2]{lyubich-bif} Lyubich, Mikhail Yu.
\newblock {\em Some typical properties of the dynamics of rational mappings.}
\newblock Russian Math. Surveys 38:5 (1983), 154--155.


\bibitem[M\~n]{mane}  Ma{\~n}{\'e}, Ricardo.
\newblock{\em The Hausdorff dimension of invariant probabilities of
  rational maps.}
  Dynamical systems, Valparaiso 1986,  86-117, Lecture Notes in Math.,
  1331,  Springer, Berlin, 1988. 

\bibitem[Mn]{manning} Manning, Anthony. 
\newblock{\em The dimension of the maximal measure for a polynomial
  map. }
\newblock  Ann. of Math. (2)  119  (1984),  425-430.

\bibitem[MSS]{mss}
  Ma{\~n}{\'e}, Ricardo;  Sad, Paulo;  Sullivan, Dennis.
\newblock \emph{On the dynamics of rational maps.}
\newblock Ann. Sci. {\'E}cole Norm. Sup. (4) 16 (1983),   193--217.



\bibitem[McM1]{mcm equidist} McMullen, Curtis T.
\newblock {\em The motion of the maximal measure of a polynomial.} Prépublication (1985). 

\bibitem[McM2]{mcm algo} 
C.T. McMullen. 
  {\em Families of rational maps
    and iterative root-finding algorithms.}  
  Ann. of Math. 125 (1987), 467--493.


\bibitem[McM3]{mcm cusps} McMullen, Curtis T.
\newblock {\em Cusps are dense.}
Ann. of Math. (2) 133 (1991),  217--247. 

\bibitem[McM4]{mcm book} McMullen, Curtis T.
{\em Renormalization and 3-manifolds which fiber over the circle.}
Annals of Mathematics Studies, 142. Princeton University Press, Princeton, NJ, 1996.

\bibitem[McMS]{mcms} McMullen, Curtis T.; Sullivan, Dennis P. 
\newblock{\em
Quasiconformal homeomorphisms and dynamics. III. The Teichm{\"u}ller space
of a holomorphic dynamical system.} 
\newblock Adv. Math.  135  (1998),  351--395.


\bibitem[Mi1]{milnor cubic} 
Milnor, John. {\em Remarks on iterated cubic maps.} Experiment. Math. 1 (1992), no. 1, 5--24.


\bibitem[Mi2]{milnor quadratic} 
Milnor, John. {\em  Geometry and dynamics of quadratic rational maps.} With an appendix by the author and Lei Tan. Experiment. Math. 2 (1993), no. 1, 37--83.

\bibitem[Mi3]{milnor components} Milnor, John. {\em 
Hyperbolic components in spaces of polynomial maps.} With an appendix by Alfredo Poirier. Stony Brook preprint, 1992. 

\bibitem[Mi4]{milnor bicritical}
 Milnor, John. {\em  On rational maps with two critical points.} Experiment. Math. 9 (2000), no. 4, 481--522. 

\bibitem[Mi5]{milnor smooth} 
Milnor, John. {\em Cubic polynomial maps with periodic critical orbit. I.} Complex dynamics, 333--411, A K Peters, Wellesley, MA, 2009.

\bibitem[MS]{morgan shalen} Morgan, John W.; Shalen, Peter B. {\em 
Valuations, trees, and degenerations of hyperbolic structures. I.} Ann. of Math. (2) 120 (1984), no. 3, 401--476.


\bibitem[Ph]{pham}  
  Pham, Ngoc-Mai.
\newblock \emph{Lyapunov exponents and bifurcation current for
  polynomial-like maps.} 
\newblock Preprint (2005),    arxiv math:0512557.


\bibitem[Pr]{prz} Przytycki, Feliks.
\newblock{\em Hausdorff dimension of harmonic 
measure on the boundary of an attractive basin for a holomorphic map.}
\newblock  Invent. Math.  80  (1985),   161-179. 

\bibitem[Re]{rees} Rees, Mary. {\em Positive measure sets of ergodic rational maps. }
 Ann. Sci. École Norm. Sup. (4)  19  (1986),    383--407.

\bibitem[RSh]{russakovskii-shiffman}  Russakovskii, Alexander;  Shiffman, Bernard. 
{\em Value distribution for sequences of rational mappings and complex
  dynamics.}
 Indiana Univ. Math. J.  46  (1997), 897-932.

\bibitem[RSo]{russakovskii-sodin} Russakovskii, Alexander; Sodin, Mikhail. {\em 
Equidistribution for sequences of polynomial mappings.}
Indiana Univ. Math. J. 44 (1995), no. 3, 851--882. 

\bibitem[Sh]{shishikura} Shishikura, Mitsuhiro. {\em The Hausdorff dimension of the boundary of the Mandelbrot 
set and Julia sets.} Ann. of Math. (2) 147 (1998), no. 2, 225--267.

\bibitem[Sib]{sibony-ucla}  Sibony, Nessim.
\newblock{\em Iteration of polynomials.}
\newblock Lecture notes (unpublished), UCLA,   1984.

\bibitem[SW]{sibony-wong} Sibony, Nessim, Wong, Pit Mann. {\em
Some results on global analytic sets.}  Séminaire  Lelong-Skoda (Analyse). Années 1978/79,  pp. 221--237, Lecture Notes in Math., 822, Springer, Berlin, 1980.

\bibitem[Sil]{silverman book}
Silverman, Joseph H. 
{\em The arithmetic of dynamical systems.}
Graduate Texts in Mathematics, 241. Springer, New York, 2007.




\bibitem[Sm]{smirnov}
 Smirnov, Stanislav.
\newblock \emph{Symbolic dynamics and Collet-Eckmann conditions.}
\newblock Internat. Math. Res. Notices 2000, no. 7, 333--351.

\bibitem[Su1]{sullivan1} Sullivan, Dennis. {\em Quasiconformal homeomorphisms and dynamics. I.    Solution of the Fatou-Julia problem on wandering domains.}  Ann. of Math. (2)  122  (1985),  no. 3, 401--418.

\bibitem[Su2]{sullivan} Sullivan, Dennis. {\em Quasiconformal homeomorphisms and dynamics. II. Structural stability implies hyperbolicity for Kleinian groups. } Acta Math.  155  (1985),  no. 3-4, 243--260.

\bibitem[T]{tan lei} Tan, Lei. {\em  Hausdorff dimension of subsets of the parameter space for families of rational maps. (A generalization of Shishikura's result).} Nonlinearity 11 (1998), no. 2, 233--246.

\bibitem[U]{ueda}  Ueda, Tetsuo.  {\em Fatou sets in complex dynamics on projective spaces.}
 J. Math. Soc. Japan  46  (1994),    545--555.

\bibitem[vS]{van strien}
van Strien, Sebastian 
{\em Misiurewicz maps unfold generically (even if they are critically non-finite). }
Fund. Math. 163 (2000), no. 1, 39--54. 

\bibitem[Y]{yoccoz} Yoccoz, Jean-Christophe {\em Some questions and remarks about ${\rm SL}(2,\re)$ cocycles.} Modern dynamical systems and applications, 447--458, Cambridge Univ. Press, Cambridge, 2004.

\bibitem[Za]{zakeri}
Zakeri, Saeed. 
\newblock\emph{Dynamics of cubic Siegel polynomials.}
 \newblock Comm. Math. Phys.  206  (1999),  185--233.

\bibitem[Zd]{zdunik} Zdunik, Anna. {\em Parabolic orbifolds and the dimension of the maximal measure for rational maps.} Invent. Math. 99 (3) (1990), 627--649.
\end{thebibliography}
\end{document}